\documentclass[11pt, oneside, reqno]{amsart}
\usepackage{geometry}
\usepackage{amssymb, amsthm,amsmath,amscd,bbm}
\usepackage[usenames, dvipsnames]{color}
\usepackage[pdfborder={0 0 0}, colorlinks = true, linkcolor=blue, citecolor=OliveGreen]{hyperref}
\usepackage{cleveref} 
\usepackage{enumerate}
\usepackage{array,multirow}
\usepackage{bm}
\usepackage{mathrsfs}
\usepackage{stmaryrd}
\usepackage{graphicx}
\usepackage{todonotes} 
\usepackage{mathtools}

\mathtoolsset{centercolon} 


\makeatletter
  \def\l@subsection{\@tocline{2}{0pt}{3pc}{6pc}{}} 
  \makeatother

\usepackage{mdwlist}


\theoremstyle{plain}
\newtheorem{theorem}[subsubsection]{Theorem}
\newtheorem{proposition}[subsubsection]{Proposition}
\newtheorem{lemma}[subsubsection]{Lemma}
\newtheorem{definition}[subsubsection]{Definition}
\newtheorem{corollary}[subsubsection]{Corollary}

\theoremstyle{remark}
\newtheorem{remark}[subsubsection]{Remark}
\newtheorem{example}[subsubsection]{Example}
\numberwithin{equation}{subsection}


\newcommand{\bbA}{\mathbb A}
\newcommand{\bbC}{\mathbb{C}}
\newcommand{\bbD}{\mathbb{D}}
\newcommand{\bbH}{\mathbb{H}}
\newcommand{\bbK}{\mathbb{K}}
\newcommand{\bbR}{\mathbb{R}}
\newcommand{\bbP}{\mathbb P}
\newcommand{\bbQ}{\mathbb Q}
\newcommand{\bbV}{\mathbb{V}}
\newcommand{\bbZ}{\mathbb{Z}}

\newcommand{\bfH}{\mathbf{H}}

\newcommand{\bfG}{\mathbf G}
\newcommand{\bfk}{\mathbf{k}}
\newcommand{\bfK}{\mathbf{K}}

\newcommand{\bfv}{\mathbf v}

\newcommand{\bfx}{\mathbf{x}}
\newcommand{\bfy}{\mathbf {y}}

\newcommand{\bfalpha}{\pmb \alpha}
\newcommand{\bftau}{\pmb \tau}

\newcommand{\sfa}{\mathsf a}

\newcommand{\sfN}{\mathsf N}
\newcommand{\sfM}{\mathsf M}
\newcommand{\sfp}{\mathsf p}

\newcommand{\ttd}{\mathtt d}

\newcommand{\calE}{\mathcal E}

\newcommand{\calG}{\mathcal G}

\newcommand{\calM}{\mathcal M}

\newcommand{\calO}{\mathcal{O}}
\newcommand{\calP}{\mathcal{P}}

\newcommand{\calX}{\mathcal X}
\newcommand{\calZ}{\mathcal Z}

\newcommand{\scrC}{\mathscr C}

%


\newcommand{\rk}{\mathrm{rk}}

\newcommand{\ddc}{\mathrm{d d^c}}
\newcommand{\KM}{\mathrm{KM}}
\newcommand{\Id}{\mathrm{Id}}
\newcommand{\tr}{\mathrm{tr}}

\newcommand{\Sym}{\mathrm{Sym}}
\newcommand{\GL}{\mathrm{GL}}
\newcommand{\SL}{\mathrm{SL}}
\newcommand{\GSpin}{\mathrm{GSpin}}

\newcommand{\Mp}{\mathrm{Mp}}
\newcommand{\Res}{\mathrm{Res}}

\newcommand{\Herm}{\mathrm{Her}}

\newcommand{\Mat}{\mathrm{Mat}}

\newcommand{\archW}{\mathcal W}

\newcommand{\lbT}{S}

\newcommand{\Eis}{E}

\newcommand{\normEis}{\mathcal E}

\newcommand{\vbE}{\mathcal E}

\newcommand{\transpose}[1]{{}^{t} #1}

\newcommand{\bigcdot}{\raisebox{-0.20ex}{\scalebox{1.4}{$\cdot$}}}

\newcommand{\ChowHat}[1]{ \widehat{\mathrm{CH}}{}^{#1}} 
\newcommand{\ChowFin}[1]{\mathrm{CH}_{\mathrm{fin}} }
\DeclareMathOperator{\str}{\mathrm{tr_s}}

\DeclareMathOperator{\rank}{\mathrm{rk}}

\newcommand{\nuo}{\nu^{\mathtt o}}
\newcommand{\tildenuo}{\widetilde{\nu}{}^{\mathtt o}}
\newcommand{\xio}{ \xi^{\mathtt o}}
\newcommand{\phio}{\varphi^{\mathtt o}}
\newcommand{\archGreen}{ \lie g^\mathtt{o}}

\newcommand{\lie}[1]{\mathfrak{#1}}
\newcommand{\sm}[1]{\begin{smallmatrix}  #1 \end{smallmatrix}}
\newcommand{\ul}[1]{\underline{#1} }

\title{Green forms and the arithmetic Siegel-Weil formula}
\author{Luis E. Garcia and Siddarth Sankaran}
\begin{document}
\begin{abstract}
We construct natural Green forms for special cycles in orthogonal and unitary Shimura varieties, in all codimensions, and, for compact Shimura varieties of type $\mathrm{O}(p,2)$ and $\mathrm{U}(p,1)$, we show that the resulting local archimedean height pairings are related to special values of derivatives of Siegel Eisentein series. A conjecture put forward by Kudla relates these derivatives to arithmetic intersections of special cycles, and our results  settle the part of his conjecture involving local archimedean heights. 
\end{abstract}

\maketitle
\setcounter{tocdepth}{2}
\tableofcontents

\section{Introduction}

The Arakelov theory of Shimura varieties has been intensively studied since, about twenty years ago, Kudla \cite{KudlaCD} launched a program relating families of special cycles in their arithmetic Chow groups with derivatives of Eisenstein series and Rankin-Selberg L-functions. 

This paper is a contribution to the archimedean aspects of this theory. Building upon previous work of the first author \cite{GarciaSuperconnections}, we use Quillen's formalism of superconnections \cite{QuillenChern} as developed by Bismut-Gillet-Soul\'e \cite{BismutGilletSoule1,BismutGilletSouleDuke90} to construct natural Green forms for special cycles, in all codimensions, on orthogonal and unitary Shimura varieties. We show that these forms have good functorial properties and are compatible with star products. Furthermore, for compact Shimura varieties of type $\GSpin(p,2)$ or $\mathrm{U}(p,1)$, we compute the corresponding local archimedean heights of special cyles explicitly.

These results provide compelling evidence for Kudla's conjectural identity, termed the \emph{arithmetic Siegel-Weil formula}, between special derivatives of Eisenstein series and generating series of arithmetic heights of special cycles. More precisely, we show that the non-holomorphic terms on both sides are equal for these Shimura varieties.

Our methods combine Quillen's extension of Chern-Weil theory with the theory of the theta correspondence. This allows us to use representation theoretic arguments and the Siegel-Weil formula when computing archimedean local heights. In this way, we avoid the highly involved computations that feature in prior work, and give a conceptual explanation for the equality of non-holomorphic terms in Kudla's conjectural identities.

\subsection{Main results}
The remainder of the introduction outlines our results in more detail. Throughout the paper, we treat $\GSpin(p,2)$ and $\mathrm{U}(p,q)$ Shimura varieties in parallel; these are referred to as the \emph{orthogonal} and \emph{unitary} cases, respectively.

 Let $F$ denote a totally real field of degree $[F:\bbQ] = d$ and let $E$ be a CM extension of $F$ equipped with a fixed CM type. Suppose that $\bbV$ is a quadratic space over $F$ in the orthogonal case (resp.\ a Hermitian space over $E$ in the unitary case). We assume that there is one archimedean place $\sigma_1$ such that $\bbV_{\sigma_1}$ satisfies the signature condition 
 \begin{equation} \label{eqn:intro-signature-assumption}
\mathrm{signature}(\bbV_{\sigma_1})  = \begin{cases} (p,2) \text{ with } p>0 & \text{orthogonal case,} \\ (p,q) \text{ with } p,q>0 & \text{unitary case}, \end{cases}
\end{equation}
and $\bbV$ is positive definite at all other archimedean places. Let
\begin{equation}
\bfH = \begin{cases} \Res_{F/\bbQ} \, \GSpin(\bbV), & \text{orthogonal case,} \\
\Res_{F/\bbQ} \, \mathrm{U}(\bbV), & \text{unitary case}, \end{cases}
\end{equation}
and $\mathbb{D}$ be the hermitian symmetric domain attached to $\bfH(\mathbb{R})$; concretely, $\bbD$ parametrizes oriented negative-definite real planes (resp.\ negative definite complex  $q$-dimensional subspaces) in $\bbV_{\sigma_1}$ in the orthogonal (resp.\ unitary) case. For a fixed compact open subgroup $K \subset \bfH(\bbA_f)$, let 
$X_{\bbV,K}$ 
be the corresponding Shimura variety, which has a canonical model
over $\sigma_1(F)$ (resp.\ $\sigma_1(E)$). Then $X_{\bbV,K}$ is a finite disjoint union 
\begin{equation} \label{eq:intro_Shimura_var_components}
X_{\bbV,K}\ = \ \coprod \Gamma \backslash \bbD
\end{equation}
of quotients of $\bbD$ by certain arithmetic subgroups $\Gamma \subset \bfH(\bbQ)$. Let $\mathcal{X}_K$ denote the variety obtained by viewing the canonical model of $X_{\bbV,K}$ as a variety over $\mathrm{Spec}(\mathbb{Q})$. The complex points \begin{equation}
\mathcal{X}_K(\mathbb{C}) = \coprod X_{\bbV[k],K}
\end{equation} 
are a finite disjoint union of Shimura varieties attached to $\bbV[1]:=\bbV$ and its nearby spaces $\bbV[k]$ (see \Cref{section:Green_currents_global}).

The variety $\mathcal{X}_K$ is equipped with a family of rational \emph{special cycles}
\begin{equation}
 \{ Z(T, \varphi_f)  \},
  \end{equation}
as defined by Kudla \cite{KudlaOrthogonal}, that are parametrized by pairs $(T,\varphi_f)$ consisting of a matrix $T \in \Sym_r(F)$ (resp.\ $T \in \Herm_r(E)$) and a $K$-invariant Schwartz function $\varphi_f \in \mathcal{S}(\bbV(\bbA_f)^r)^K$. These cycles generalize the construction of Heegner points on modular curves and Hirzebruch-Zagier cycles on Hilbert modular surfaces.

The irreducible components of the cycle $Z(T,\varphi_f)$ on $X_{\bbV,K}$ (say) admit a complex uniformization by certain complex submanifolds $\mathbb{D}_{\bfv}$ of $\mathbb{D}$ defined as follows: for a collection of vectors $\bfv = (v_1, \dots, v_r) \in \bbV^r$ satisfying
\begin{equation}
T(\bfv) := \left( \tfrac{1}{2}Q( v_i, v_j ) \right)_{i,j=1,\dots, r} = T,
\end{equation}
let
\begin{equation}
\bbD_{\bfv} \ := \ \{   z \in \bbD \ | \  z \perp v_i \, \text{ for all } i=1,\dots, r   \},
\end{equation}
so that $\mathbb{D}_\bfv$, if non-empty, is a hermitian symmetric subdomain of $\mathbb{D}$ of codimension $\tilde r :=\mathrm{rk}(T)$ in the orthogonal case (resp.\ $\tilde r := q \cdot \mathrm{rk}(T)$ in the unitary case). 

As a first step towards defining a Green current for $Z(T,\varphi_f)$, we construct a current $\archGreen(\bfv)$ on $\mathbb{D}$ satisfying the equation
\begin{equation} \label{eq:intro_modified_Green_current}
\ddc \archGreen(\bfv) \ + \ \delta_{\bbD_{\bfv}} \wedge \Omega_{\calE^{\vee}}^{r - \rk(T)} \ = \ [\phio_{\KM}(\bfv)].
\end{equation}
Here $\delta_{\bbD_{\bfv}}$ is the current defined by integration along  $\bbD_{\bfv}$ and $\Omega_{\calE^{\vee}}$
is the top Chern form of  the dual of the tautological bundle $\calE$, see \Cref{subsection:Hermitian symmetric domains attached to orthogonal and unitary groups} below; the form $\phio_{\KM}(\bfv) := e^{2 \pi \tr(T(\bfv))} \varphi_{\KM}(\bfv)$ is, up to a normalizing factor, the Schwartz\footnote{This means that $\varphi_{\KM}(\bfv)$ and all its  derivatives are of exponential decay in $\bfv$.} form $\varphi_{\KM}(\bfv) \in A^{\tilde r,\tilde r}(\mathbb{D})$ introduced by Kudla and Millson \cite{KudlaMillson3}. 

In recent work \cite{GarciaSuperconnections}, the first author introduced a superconnection $\nabla_{\bfv}$ on $\mathbb{D}$ and showed that the component of degree $(\tilde r, \tilde r)$ of the corresponding Chern form agrees with $\phio_{\KM}(\bfv)$. This allows us to apply the general results of \cite{BismutGilletSoule1} to obtain an explicit natural form $\nuo(\bfv)$ satisfying the transgression formula
\begin{equation} 
\ddc \, \nuo(\sqrt t \, \bfv) \ =\  - t \, \frac{d}{d t} \phio_{\KM}(\sqrt t \, \bfv), \qquad \qquad t \in \bbR_{>0}.
\end{equation}
Moreover, the forms $\nuo(t\bfv)$ and $\phio(t \bfv)$ are of exponential decay in $t$ on $\mathbb{D} \backslash \mathbb{D}_\bfv$. 

To define $\archGreen(\bfv)$, assume first that $T$ is a non-degenerate matrix (so that $r=\rk(T)$) and consider the integral 
\begin{equation} \label{eq:intro_def_g0_integral}
\archGreen(\bfv) := \int_1^{\infty} \nuo(\sqrt t \bfv) \frac{dt}{t},
\end{equation}
initially defined on $\mathbb{D} \backslash \mathbb{D}_\bfv$. The estimates in \cite{BismutGilletSouleDuke90} show that $\archGreen(\bfv)$ is smooth on $\mathbb{D} \backslash \mathbb{D}_{\bfv}$, locally integrable on $\mathbb{D}$, and satisfies \eqref{eq:intro_modified_Green_current}, which in this case reduces to Green's equation; in other words, $\archGreen(\bfv)$ is a Green form for $\mathbb{D}_\bfv$, in the terminology of \cite[\textsection 1.1]{BostGilletSoule}.

When $T$ is degenerate, the expression \eqref{eq:intro_def_g0_integral} is not locally integrable on $\mathbb{D}$ in general. We will circumvent this problem by regularizing the integral and obtain a current $\archGreen(\bfv)$ satisfying \eqref{eq:intro_modified_Green_current} for every $\bfv$, see \Cref{subsection:green_form_on_D}.

Returning to the special cycles $Z(T,\varphi_f)$, we define currents $\lie g(T, \bfy, \varphi_f)$ on $\calX_K(\bbC)$ as weighted sums of $\archGreen(\bfv)$ over vectors $\bfv$ satisfying $T(\bfv)=T$; these currents also depend on a parameter  $\bfy \in \Sym_r(F\otimes_{\bbQ} \bbR)_{\gg0}$ or $\bfy \in \Herm_r(E \otimes_{\bbQ}\bbR)_{\gg 0}$ in the orthogonal or unitary cases, respectively. 

\begin{theorem} \label{thm:intro_thm_1}
The current $ \lie g(T, \bfy, \varphi_f)$ satisfies 
	\begin{equation} \label{eqn:Intro Generalized Green Equation}
	\ddc \lie g(T, \bfy, \varphi_f) \ +\ \delta_{Z(T, \varphi_f)(\bbC)} \wedge \Omega_{\calE^{\vee}}^{r - \rank(T)} \ = \ \omega(T,\bfy, \varphi_f),
	\end{equation}
	where $\omega(T,\bfy, \varphi_f)$ is the $T$'th coefficient in the $q$-expansion of the theta function attached to  $\varphi_{\KM}\otimes \varphi_f$.
	
	Moreover, if $T_1$ and $T_2$ are non-degenerate and $Z(T_1, \varphi_1)$ and $Z(T_2, \varphi_2)$ intersect properly, then  
	\[
	\lie g(T_1, \bfy_1, \varphi_1) * \lie g(T_2, \bfy_2, \varphi_2) \ \equiv \ \sum_{T = ( \sm{T_1 &*\\ * &T_2})} \lie g \left( T, \left( \sm{\bfy_1 & \\ & \bfy_2} \right), \varphi_1 \otimes \varphi_2 \right) \ \pmod{ \mathrm{im} \, d + \mathrm{im} \, d^{\mathrm c}}.
	\]
\end{theorem}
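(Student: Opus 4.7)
The plan is to deduce both assertions from local analogues on the hermitian symmetric domain $\bbD$, building on the pointwise identity \eqref{eq:intro_modified_Green_current} and the integral definition \eqref{eq:intro_def_g0_integral} of $\archGreen$. The current $\lie g(T, \bfy, \varphi_f)$ will be constructed on each component $\Gamma \backslash \bbD$ of the uniformization \eqref{eq:intro_Shimura_var_components} as a weighted sum $\sum_{\bfv : T(\bfv)=T} \varphi_f(\bfv) \, \archGreen(\bfv_\bfy)$, where $\bfv_\bfy$ denotes the scaling of $\bfv$ by (a suitable square root of) $\bfy$ dictated by the Weil representation. For the Green equation, I apply \eqref{eq:intro_modified_Green_current} to each summand; absolute convergence and the commutation with $\ddc$ follow from the exponential decay of $\archGreen(t\bfv)$ and $\phio_\KM(t\bfv)$ in $t$ away from $\bbD_\bfv$. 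The delta-current terms collect, by the very definition of $Z(T, \varphi_f)$, into $\delta_{Z(T, \varphi_f)(\bbC)} \wedge \Omega_{\calE^\vee}^{r - \rk(T)}$, while the Kudla-Millson terms assemble into the $T$-th Fourier coefficient of $\theta(\varphi_\KM \otimes \varphi_f)$, which is precisely $\omega(T, \bfy, \varphi_f)$.

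For the star product formula, the principal step is the following local identity on $\bbD$: for $\bfv_i \in \bbV^{r_i}$ with $T(\bfv_i)$ non-degenerate and $\bbD_{\bfv_1}, \bbD_{\bfv_2}$ intersecting properly,
\[
\archGreen(\bfv_1) * \archGreen(\bfv_2) \ \equiv \ \archGreen(\bfv_1, \bfv_2) \pmod{\mathrm{im}\, d + \mathrm{im}\, d^c}.
\]
Given this, the theorem follows by expanding each $\lie g(T_i, \bfy_i, \varphi_i)$ as a sum over $\bfv_i$ with $T(\bfv_i) = T_i$, applying the local identity to each pair $(\bfv_1, \bfv_2)$, and regrouping by the joint Gram matrix $T(\bfv_1, \bfv_2)$: its diagonal blocks are $T_1, T_2$, while its off-diagonal entries range over all values realized by inner products of vectors in $\bbV$, reproducing the indexing set on the right-hand side.

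To prove the local identity, I would start from the Bost--Gillet--Soul\'e expression
\[
\archGreen(\bfv_1) * \archGreen(\bfv_2) \equiv \archGreen(\bfv_1) \wedge \phio_\KM(\bfv_2) \ + \ \delta_{\bbD_{\bfv_1}} \wedge \archGreen(\bfv_2)
\]
(modulo $\mathrm{im}\, d + \mathrm{im}\, d^c$), where the non-degeneracy of $T_i$ removes any $\Omega_{\calE^\vee}$-factor. The two key inputs are the multiplicativity $\phio_\KM(\bfv_1, \bfv_2) = \phio_\KM(\bfv_1) \wedge \phio_\KM(\bfv_2)$, which reflects Quillen's product formula for Chern forms of direct-sum superconnections as set up in \cite{GarciaSuperconnections}, together with the corresponding factorization on the transgression level for $\nuo$. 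Substituting \eqref{eq:intro_def_g0_integral} yields a double integral on $\{(s,t) \in \bbR^2 : s, t \ge 1\}$; splitting the region along the diagonal and using the transgression relation $\ddc \nuo(\sqrt t \bfv) = -t \, \frac{d}{dt} \phio_\KM(\sqrt t \bfv)$ followed by an integration by parts should collapse the result, modulo exact currents, to the single integral $\int_1^\infty \nuo(\sqrt t (\bfv_1, \bfv_2)) \, dt/t = \archGreen(\bfv_1, \bfv_2)$.

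The main obstacle is the careful treatment of the wedge products $\archGreen(\bfv_1) \wedge \phio_\KM(\bfv_2)$ and $\delta_{\bbD_{\bfv_1}} \wedge \archGreen(\bfv_2)$ as currents with potentially overlapping singular supports along $\bbD_{\bfv_1} \cap \bbD_{\bfv_2}$, and the interchange of the $t$-integration with those wedges. The proper intersection hypothesis together with the local integrability of $\archGreen$ coming from the Bismut--Gillet--Soul\'e estimates of \cite{BismutGilletSouleDuke90} should make each wedge well-defined, but pushing the double-integral manipulation through requires uniform decay estimates on $\nuo(\sqrt t \bfv)$ and its derivatives that are integrable in $t$ and locally uniform near the intersection. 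These analytic estimates, rather than any formal identity, are the technical heart of the argument.
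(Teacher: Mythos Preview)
Your approach is correct and essentially the same as the paper's. For the local star product identity (Theorem~\ref{theorem:star_products_arch}), the ``double integral plus integration by parts'' you describe is implemented via Stokes' theorem applied to an explicit form $\tildenuo(\bfv)$ on $\bbD\times\bbR_{>0}^2$ over the triangle $\{1\le t_1\le t_2\le M\}$: the diagonal edge gives $\archGreen(\bfv_1,\bfv_2)$, the other two edges give the star product terms (using $\phio(\sqrt{M}\,\bfv_2)\to\delta_{\bbD_{\bfv_2}}$), and the interior integral yields explicit $\partial\alpha+\overline\partial\beta$ correction currents via the transgression relation you cite; your identification of the Bismut--Gillet--Soul\'e estimates as the analytic input needed for local integrability near $\bbD_{\bfv_1}\cap\bbD_{\bfv_2}$ is exactly right.
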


Note that our construction is valid for all $T$; for example, when $\bbV$ is anisotropic we obtain
\begin{equation}
\lie g(\mathbf 0, \bfy, \varphi_f)|_{\calX_{K,\sigma_k}(\bbC)} \ = \ - \varphi_f(0) \, \log(\det \sigma_k(\bfy)) \cdot c_{\mathrm{rk}(\vbE)-1}(\vbE^\vee,\nabla)^*  \wedge \Omega_{\vbE^\vee}^{r-1}
\end{equation} 
for each real embedding $\sigma_k$ of $F$.

When $T$ is non-degenerate, \eqref{eqn:Intro Generalized Green Equation} is Green's equation for the cycle $Z(T,\varphi_f)$ and hence $\lie g(T, \bfy, \varphi_f)$ is a Green current (in fact, a Green form) for $Z(T,\varphi_f)$. When $T$ is degenerate, the cycle $Z(T,\varphi_f)$ appears in the ``wrong" codimension; following \cite{KudlaOrthogonal}, this deficiency can be rectified by intersecting with a power of the tautological bundle and, as we discuss in \Cref{sec:arith Chow groups}, solutions to \eqref{eqn:Intro Generalized Green Equation} correspond naturally to Green currents for this modified cycle.

Our main result computes local archimedean heights of special cycles in terms of Siegel Eisenstein series. We restrict our attention to the case that $\bbV$ is anisotropic, so that the corresponding Shimura variety is compact; our assumption on the signature of $\bbV$ ensures that this is the case whenever $F \neq \bbQ$. We further assume that $q=1$ in the unitary case. 
Fix an integer $r \leq p+1$ and let 
\begin{equation}
s_0 \ := \  \frac{p +1 -r }{2},
\end{equation} 
and, for a Schwartz function $\varphi_f \in \mathcal{S}(\bbV(\bbA_f)^r)^K $,  consider the corresponding genus $r$ Siegel Eisenstein series
\begin{equation}
E(\bftau,  \Phi_f,s) \ =\ \sum_T E_T(\bftau,\Phi_f,s) 
\end{equation}
of parallel scalar weight $l = \dim_F(\bbV)/2$ (resp.\ $l =\dim_E(\bbV)$), see \Cref{subsection:Fourier coefficients of scalar weight Eisenstein series}; here $\bftau =\bfx + i \bfy\in \bbH_r^d$, where $\mathbb{H}_r$ is the Siegel (resp.\ Hermitian) upper half-space of genus $r$. Let
\begin{equation}
E_T'(\bftau, \Phi_f,s_0) \ := \ \frac{d}{d s} E_T(\bftau, \Phi_f,s) \Big|_{s= s_0}
\end{equation}
denote the derivative of its Fourier coefficient $E_T(\bftau,\Phi_f,s)$ at $s=s_0$.

\begin{theorem} \label{thm:Intro Green Integral} Suppose that $\bbV$ is anisotropic and, in the unitary case, that $q=1$. Then for any $T$, there is an explicit constant $\kappa(T, \Phi_f)$, given by \Cref{def:KappaTilde}, such that
\begin{equation} \label{eq:intro_main_thm}
\frac{(-1)^r  \kappa_0}{2\mathrm{Vol}(X_{\bbV,K},\Omega_\vbE)}	  \int_{[\calX_K(\bbC)]} \lie g(T, \bfy, \varphi_f) \, \wedge \Omega_{\vbE}^{p+1-r} \, q^T = \  E'_T(\bftau, \Phi_f, s_0)  \ - \  \kappa(T, \Phi_f) \, q^T.
\end{equation}
Here $q^T=e^{2\pi i \mathrm{tr}(T\bftau)}$, and $\kappa_0=1$ if $s_0>0$ and $\kappa_0=2$ if $s_0=0$.
\end{theorem}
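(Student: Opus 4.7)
The strategy is a three-step reduction: unfold $\lie g(T,\bfy,\varphi_f)$ as a Mellin-type integral in the transgression parameter $t$; apply the Siegel--Weil formula to identify the theta integral of $\phio_\KM$ with $E_T(\bftau,\Phi_f,s_0)$; and match the archimedean section produced by $\nuo$ with the $s$-derivative at $s_0$ of the standard section generating $E(\bftau,\Phi_f,s)$. In this way the Mellin integration in $t$ translates into differentiation in $s$ at $s_0$, producing $E'_T(\bftau,\Phi_f,s_0)$ as the main term.

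Assume first that $T$ is non-degenerate. I would pair \eqref{eq:intro_def_g0_integral} with $\Omega_\vbE^{p+1-r}$, sum over vectors $\bfv$ with $T(\bfv)=T$ weighted by $\varphi_f$, and interchange the integration over $\calX_K(\bbC)$ with the integral over $t\in[1,\infty)$; the exponential decay of $\nuo(\sqrt t\,\bfv)$ on $\bbD\setminus\bbD_{\bfv}$ justifies the interchange. The left-hand side of \eqref{eq:intro_main_thm} then becomes
\[
\int_1^\infty I(t)\,\frac{dt}{t},\qquad I(t)\,=\,\int_{[\calX_K(\bbC)]}\theta_{T,\bfy,\varphi_f}\bigl(\nuo(\sqrt t\,\cdot)\bigr)\wedge\Omega_\vbE^{p+1-r}.
\]
Since $\bbV$ is anisotropic, the classical Siegel--Weil formula applies and identifies the analogous theta integral of $\phio_\KM$ with a constant multiple of $E_T(\bftau,\Phi_f,s_0)$. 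The critical input is to extend this identification from $\phio_\KM$ to $\nuo$: using the Fock-model expressions for $\nuo$ from \cite{GarciaSuperconnections}, I would show that the Siegel--Weil section corresponding to $\nuo(\sqrt t\,\cdot)$ is the Mellin partner in $t$ of the $s$-derivative at $s_0$ of the standard section realizing $E(\bftau,\Phi_f,s)$. Integrating against $dt/t$ and invoking Siegel--Weil then yields the term $E'_T(\bftau,\Phi_f,s_0)$ on the right-hand side of \eqref{eq:intro_main_thm}.

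The main obstacle is precisely this archimedean matching: it amounts to an explicit computation of degenerate Whittaker functions together with a verification that the transgression relation $\ddc\nuo(\sqrt t\,\bfv) = -t\,\tfrac{d}{dt}\phio_\KM(\sqrt t\,\bfv)$ corresponds, representation-theoretically, to the first-order deformation of the $s$-parameter in the degenerate principal series. The holomorphic correction $\kappa(T,\Phi_f)\,q^T$ collects three contributions: the boundary evaluation at $t=1$, the discrepancy between the Mellin transform on $[1,\infty)$ and the full Mellin transform of the archimedean section, and, when $T$ is degenerate, the contribution from the regularization of $\archGreen(\bfv)$; matching this combined term to $\kappa(T,\Phi_f)$ from \Cref{def:KappaTilde} is then a direct calculation. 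The factor $\kappa_0$ reflects whether one is in the convergent range $s_0>0$ or at the boundary point $s_0=0$ of Siegel--Weil. Finally, the degenerate-$T$ case follows by the same argument applied to the regularized $\archGreen(\bfv)$, which by construction preserves \eqref{eq:intro_modified_Green_current}, and whose regularization terms are absorbed into $\kappa(T,\Phi_f)$.
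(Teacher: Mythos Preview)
Your three-step skeleton matches the paper's proof of \Cref{thm:GlobalGreenIntegral} almost exactly: write $\lie g$ as a $t$-integral of $\nuo$, apply Siegel--Weil fiberwise to turn the theta integral into an Eisenstein coefficient, and then relate the archimedean section coming from $\nu$ to the $s$-derivative of the standard weight-$l$ section. So the approach is right.

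Where you diverge is in the mechanism of the archimedean matching, and this is the heart of the argument. You propose to use Fock-model formulas for $\nuo$ and the transgression identity directly. The paper instead proceeds via representation theory of $K'_r$: one first computes that $\nu(\bfv)_{[2r-2]}$ generates an irreducible $K'_r$-type of specific highest weight $\lambda_0$ (\Cref{lemma:weight_lambda_0}); then multiplicity one for $K'$-types in the degenerate principal series $I_r(V,s)$ forces $\lambda(\tilde\nu)=(-1)^{r-1}\Phi^{\lambda_0}(s_0)$; finally an explicit lowering-operator calculation (\Cref{lemma:lowering_explicit} and \Cref{lemma:lowering_W_T_derivative}) gives $X_r^-\Phi^l(s)=\tfrac12(s-s_0)\Phi^{\lambda_0}(s)$ up to Weyl translates, so that $W_T(g',\tilde\Phi,s_0)$ is literally $t\tfrac{d}{dt}$ of $W_T'(g',\Phi^l,s_0)$. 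This is sharper than the transgression heuristic you invoke, and it is what makes the identification exact rather than formal.

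Your description of $\kappa(T,\Phi_f)$ is also off. After the step above, the left-hand side becomes $-\mathrm{CT}_{\rho=0}\int_1^\infty \tfrac{d}{dt}C_T'(t\bfy,\Phi_f,s_0)\,t^{-\rho}\,dt$, which by an elementary lemma equals $C_T'(\bfy,\Phi_f,s_0)-a$ where $a$ is the \emph{asymptotic constant} of $C_T'(t\bfy,\Phi_f,s_0)$ as $t\to\infty$. So the boundary at $t=1$ produces $E_T'$, not $\kappa$; it is the limit $t\to\infty$ that produces $\kappa(T,\Phi_f)$ (plus log terms cancelled by the correction in \Cref{def:global_g_T_Y_phi_case_1}). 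Identifying this asymptotic constant is not a direct calculation: for degenerate $T$ it requires the full machinery of \Cref{subsection:Fourier coefficients of scalar weight Eisenstein series}, decomposing $E_T$ via \Cref{lem:GlobalEisFCDecomp} into terms governed by $\GL_{r-t}$ Eisenstein series and lower-rank Whittaker functionals, and then applying Shimura's estimates (\Cref{prop:NonDegenWhittakerEstimates}, \Cref{prop:NonDegArchWhittakerNonPos}) term by term. This is a substantial part of the paper and should not be treated as bookkeeping.
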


As a special case, suppose that $T$ is non-degenerate, so that there is a factorization
\begin{equation}
E_T(\bftau, \Phi_f,s) \ =\ W_{T, \infty}(\bftau,  \Phi^l_{\infty},s) \cdot W_{T,f}(e,  \Phi_f,s),
\end{equation}
where the  factors on the right are the products of the archimedean and non-archimedean local Whittaker functionals, respectively. 
Let 
\begin{equation}
E_T'(\bftau, \Phi_f,s_0)_\infty \ =\ W'_{T, \infty}(\bftau,  \Phi^l_{\infty},s_0) \cdot W_{T,f}(e,  \Phi_f,s_0)
\end{equation}
denote the archimedean contribution to the special derivative. 
Then \Cref{thm:Intro Green Integral} specializes to the identity
\begin{equation} \label{eq:intro_thm_non_deg_T_not_pos_def}
\frac{(-1)^r  \kappa_0}{2\mathrm{Vol}(X_{\bbV,K},\Omega_\vbE)} \int_{[\mathcal{X}_K(\mathbb{C})]} \lie g(T,\bfy,\varphi_f)\wedge \Omega_\vbE^{p+1-r} q^T = \Eis_T'(\bftau,\Phi_f,s_0)_\infty
\end{equation}
if $T$ is not totally positive definite, and to
\begin{equation} \label{eq:intro_thm_non_deg_T_pos_def}
\begin{split}
& \frac{(-1)^r \kappa_0}{2\mathrm{Vol}(X_{\bbV,K},\Omega_\vbE)} \int_{[\mathcal{X}_K(\mathbb{C})]} \lie g(T,\bfy,\varphi_f)\wedge \Omega_\vbE^{p+1-r} q^T  \\
& \quad =\Eis_T'(\bftau,\Phi_f,s_0)_\infty -\Eis_T(\bftau,\Phi_f,s_0) \left( \frac{\iota d}{2} \left( r \log \pi - \frac{\Gamma_r'(\iota m/2)}{\Gamma_r(\iota m/2)} \right) + \frac{\iota}{2} \log N_{F/\mathbb{Q}} \det T \right)
\end{split}
\end{equation}
if $T$ is totally positive definite; here $\iota=1$ (resp.\ $\iota=2$) in the orthogonal (resp.\ unitary) case.

When $T$ is non-degenerate, the proof of the theorem can be summarized as follows: the current $\lie g(T,\bfy, \varphi_f)$ is given by a sum of integrals of the form \eqref{eq:intro_def_g0_integral}, for vectors $\bfv$ with $T(\bfv) = T$. Interchanging the order of integration,  the Siegel-Weil formula relates the left hand side of \eqref{eq:intro_main_thm} to the Fourier coefficient $E_T(\bftau, \Phi(\nu),s_0)$ of an Eisenstein series attached to the Schwartz form 
\begin{equation} \label{eqn:Intro Nu}  
\nu(\bfv) = e^{- 2 \pi \tr(T(\bfv))} \nuo(\bfv). 
\end{equation} 
We then analyze the behaviour of $\nu(\bfv)$ under the action of the metaplectic group $\mathrm{Mp}_{2r}(\mathbb{R})$ (resp.\ the unitary group $\mathrm{U}(r,r)$) via the Weil representation. A multiplicity one argument allows us to identify $ \Phi(\nu)$ explicitly, and in turn relate $E_T(\bftau, \Phi(\nu),s_0)$ to $E'_T(\bftau, \Phi_f,s_0)$ via a lowering operator. To conclude the proof, we apply work of Shimura   \cite{ShimuraConfluent} to derive asymptotic estimates for the Fourier coefficients $E_T(\bftau,\Phi_f,s)$ as $\bfy \to \infty$. 

When $T$ is degenerate, the idea is roughly the same, though additional care is required in handling the regularization, as well as establishing the required asymptotics of $E_T(\bftau,\Phi_f,s)$. 

Prior results of this form have appeared in only a few special cases in the literature. For divisors, the Green function we define specializes to the one defined by Kudla \cite{KudlaCD}, and \Cref{thm:Intro Green Integral}  was proved in \cite{KRYFaltingsHeights} for Shimura curves; a related result for $\mathrm{U}(p,1)$ Shimura varieties over imaginary quadratic fields was proved by Ehlen and the second author \cite{EhlenSankaran}.

In higher codimension much less was known. For (arithmetic) codimension two cycles on Shimura curves, Kudla \cite{KudlaCD} defined Green currents using star products; this construction does not coincide with ours, but does agree modulo exact currents by \Cref{thm:intro_thm_1}, and \Cref{thm:Intro Green Integral} reduces to results proved by elaborate explicit computations in \cite{KudlaCD} and \cite{KudlaRapoportYang}. Similar methods were used by Liu \cite{LiuYifengArithThetaI} for arithmetic codimension $p+1$ cycles on $\mathrm{U}(p,1)$ to prove a star product version of the particular case of the theorem given by \eqref{eq:intro_thm_non_deg_T_not_pos_def}.
Again in the non-degenerate case, a recent preprint of Bruinier and Yang \cite{BruinierYangArithmeticDegrees} proves a star product version of \eqref{eq:intro_thm_non_deg_T_not_pos_def} for arithmetic codimension $p+1$ cycles on $\mathrm{O}(p,2)$ by a different argument involving induction on $p$.

Finally, we place our results in the context of Kudla's conjectures on special cycles in arithmetic Chow groups. Putting aside the difficult issues involved in constructing integral models, the integral appearing in \Cref{thm:Intro Green Integral} is the archimedean contribution to the height of an arithmetic cycle lifting $Z(T,\varphi_f)$; according to Kudla's conjectural arithmetic Siegel-Weil formula, this height should equal the Fourier coefficient of an appropriately normalized version of the Eisenstein series appearing above. The remaining contribution to the arithmetic height is purely algebro-geometric in nature,  and in particular should be independent of $\bfy$; thus \Cref{thm:Intro Green Integral} asserts that the non-holomorphic terms in Kudla's conjectural identity coincide. Put another way, our theorem reduces Kudla's conjecture to a relatively explicit conjectural formula for the analogue of the Faltings height (as in \cite{BostGilletSoule}) of a special cycle $Z(T,\varphi_f)$ in terms of $T$ and $\varphi_f$; we discuss this point in  more detail in \Cref{sec:Arithmetic Height conjecture}.

\subsection{Notation and conventions} 
Let $\bbK$ be a field endowed with a (possibly trivial) involution $a \mapsto \overline{a}$. We write $\mathrm{Sym}_r(\bbK)$ (resp. $\mathrm{Her}_r(\bbK)$) for the group of symmetric (resp. hermitian) $r$-by-$r$ matrices with coefficients in $\bbK$ under matrix addition. For $a \in \mathrm{GL}_r(\bbK)$ and $b \in \mathrm{Her}_r(\bbK)$, let
\begin{equation}
m(a) =  \left( \begin{array}{cc} a & 0 \\ 0 & {^t \bar{a}^{-1}} \end{array} \right), \quad n(b)=\begin{pmatrix} 1_r & b \\ 0 & 1_r \end{pmatrix}, \quad w_r = \begin{pmatrix} & 1_r \\ -1_r & \end{pmatrix}.
\end{equation}
For $x=(x_1,\ldots,x_r) \in \bbK^r$, we write
\begin{equation}
d(x) =  \mathrm{diag}(x_1, \dots, x_r ) = \begin{pmatrix}x_{1} & & \\ & \ddots & \\ & & x_{r}\end{pmatrix}.
\end{equation}

We fix the following standard choice of additive character $\psi=\psi_{F}:F \to \mathbb{C}^\times$ when $F$ is a local field of characteristic zero. If $F=\mathbb{R}$ we set $\psi(x)=e^{2\pi i x}$; if $F=\mathbb{Q}_p$ we choose $\psi=\psi_{\mathbb{Q}_p}$ so that $\psi(p^{-1})=e^{-2\pi i/p}$; if $F$ is a finite extension of $\mathbb{Q}_v$ we set $\psi_F(x)=\psi_{\mathbb{Q}_v}(\mathrm{tr}_{F/\mathbb{Q}_v}(x))$. If $F$ is a global field, we write $\mathbb{A}_F^\times$ for the ideles of $F$ and set $\psi_F:F^\times \backslash \mathbb{A}_F^\times \to \mathbb{C}^\times$, where $\psi_{F}=\otimes_v \psi_{F_v}$ and the product runs over all places $v$ of $F$.


We denote the connected component of the identity of a Lie group $G$ by $G^0$. 

We denote by $A^*(X)$ (resp. $D^*(X)$) the space of differential forms (resp. currents) on a smooth manifold $X$. Given $\alpha \in A^*(X)=\oplus_{k \geq 0} A^k(X)$, we write $\alpha_{[k]}$ for its component of degree $k$. If $\alpha$ is closed, we write $[\alpha] \in \mathrm{H}^*(X)$ for the cohomology class defined by $\alpha$.

If $X$ is a complex manifold, we let $\mathrm{d}^c = (4\pi i)^{-1}(\partial - \overline{\partial})$, so that $\mathrm{dd}^c = (-2\pi i)^{-1} \partial \overline{\partial}$. We denote by $^*$ the operator on $\oplus_{k \geq 0} A^{k,k}(X)$ acting by multiplication by $(-2\pi i)^{-k}$ on $A^{k,k}(X)$. The canonical orientation on $X$ induces an inclusion $A^{p,q}(X) \subset D^{p,q}(X)$ sending a differential form $\omega$ to the current given by integration against $\omega$ on $X$, which we will denote by $[\omega]$ or simply by $\omega$.

If $f(s)$ is a meromorphic function of a complex variable $s$, we write $\mathop{\mathrm{CT}}\limits_{s=0} f(s)$ for the constant term of its Laurent expansion at $s=0$.

\subsection{Acknowledgments}

This work was done while L. G. was at the University of Toronto and IHES and S.S. was at the University of Manitoba; the authors thank these institutions for providing excellent working conditions. An early draft, with a full proof of the main identity for non-degenerate Fourier coefficients, was circulated and posted online in January 2018; we are grateful to Daniel Disegni, Stephen Kudla and Shouwu Zhang for comments on it, and other helpful conversations. S.S.\ acknowledges financial support from NSERC.

\section{Green forms on hermitian symmetric domains} 

Let $X$ be a complex manifold and $Z \subset X$ be a closed irreducible analytic subset of codimension $c$. A \emph{Green current} for $Z$ is a current $\lie g_Z \in D^{c-1,c-1}(X)$ such that
\begin{equation} \label{eq:def_Green_current}
\ddc \lie g_Z+\delta_Z = [\omega_Z],
\end{equation}
where $\delta_Z$ denotes the current of integration on $Z$ and $\omega_Z$ is a smooth differential form on $X$. A \emph{Green form} is a Green current given by a form that is locally integrable on $X$ and smooth on $X-Z$ (see \cite[\textsection 1.1]{BostGilletSoule}).

Here we will construct Green forms for certain complex submanifolds of the hermitian symmetric space $\mathbb{D}$ attached to $\mathrm{O}(p,2)$ or $\mathrm{U}(p,q)$, where $p,q>0$. Throughout the paper we will refer to the case involving $\mathrm{O}(p,2)$ as case 1 (or as the orthogonal case) and to the case involving $\mathrm{U}(p,q)$ as case 2 (or as the unitary case). Our methods apply uniformly in both cases.


There is a natural hermitian holomorphic vector bundle $\vbE$ over $\mathbb{D}$, and the submanifolds of $\mathbb{D}$ that we consider are the zero loci $Z(s)$ of certain natural holomorphic sections $s \in \mathrm{H}^0((\vbE^\vee)^r)$ ($r \geq 1$). In this setting, the results in \cite{BismutInv90,BismutGilletSoule1, BismutGilletSouleDuke90} (reviewed in \Cref{subsection:Superconnections and characteristic forms of Koszul complexes}) can be applied to construct some natural differential forms on $\mathbb{D}$, as we show in \Cref{subsection:Hermitian symmetric domains attached to orthogonal and unitary groups}. We give explicit examples in \Cref{subsection:explicit formulas in rank one}, showing that this construction recovers some differential forms considered in previous work on special cycles (cf.\ \cite{KudlaCD, KudlaMillson1}), and then (Sections \ref{subsection:Basic properties of the forms varphi and nu} and \ref{subsection:Behaviour under the Weil representation}) we establish the main properties of these forms. Using these results,  in Section \ref{subsection:Green forms} we define some currents related to $Z(s)$, including a Green form for $Z(s)$. The final \Cref{subsection:star_products} 
considers star products and will be used in the proof of \Cref{theorem:star_product_formula_global}.



\subsection{Superconnections and characteristic forms of Koszul complexes} \label{subsection:Superconnections and characteristic forms of Koszul complexes} 

In this section we review the construction of some characteristic differential forms attached to a pair $(\vbE,u)$, where $\vbE$ is a holomorphic hermitian vector bundle and $u$ is a holomorphic section of its dual. The results in this section are due to Quillen \cite{QuillenChern}, Bismut \cite{BismutInv90} and Bismut-Gillet-Soul\'e \cite{BismutGilletSoule1, BismutGilletSouleDuke90}.


We will use Quillen's formalism of superconnections and related notions of superalgebra. For more details, the reader is referred to \cite{QuillenChern, BGV}.
We briefly recall that a super vector space $V$ is just a complex $\mathbb{Z}/2\mathbb{Z}$-graded vector space; we write $V=V_0 \oplus V_1$ and refer to $V_0$ and $V_1$ as the even and odd part of $V$ respectively. We write $\tau$ for the endomorphism of $V$ determined by $\tau(v)=(-1)^{\mathrm{deg}(v)}v$. The supertrace $\mathrm{tr_s} \colon \mathrm{End}(V) \to \mathbb{C}$ is the linear form defined by
\begin{equation}
\mathrm{tr_s}(u)=\mathrm{tr}(\tau u),
\end{equation}
where $\mathrm{tr}$ denotes the usual trace. Thus if $u=\left(\begin{smallmatrix} a & b \\ c & d \end{smallmatrix}\right)$ with $a \in \mathrm{End}(V_0)$, $d \in \mathrm{End}(V_1)$, $b \in \mathrm{Hom}(V_1,V_0)$ and $c \in \mathrm{Hom}(V_0,V_1)$, then $\mathrm{tr_s}(u)=\mathrm{tr}(a)-\mathrm{tr}(d)$.


\subsubsection{} \label{subsubsection:Koszul_complexes} Let $\vbE$ be a holomorphic vector bundle on a complex manifold $X$ and $u \in \mathrm{H}^0(\vbE^\vee)$ be a holomorphic section of its dual $\vbE^\vee$. Let $K(u)$ be the Koszul complex of $u$: its underlying vector bundle is the exterior algebra $\wedge \vbE$ and its differential $u: \wedge^k \vbE \to \wedge^{k-1} \vbE$ is defined by
\begin{equation}
u(e_1 \wedge \cdots \wedge e_k) = \sum_{1 \leq i \leq k} (-1)^{i+1} u(e_i) e_1 \wedge \cdots \wedge \hat{e_i} \wedge \cdots \wedge e_k.
\end{equation}
The grading on $K(u)$ is given by $K(u)^{-k}=\wedge^k \vbE$, so that $K(u)$ is supported in non-positive degrees. We identify $K(u)$ with the corresponding complex of sheaves of sections of $\wedge^k \vbE$,
and say that $u$ is a \emph{regular section} if the cohomology of $K(u)$ vanishes in negative degrees.
If $u$ is regular with zero locus $Z(u)$, then $K(u)$ is quasi-isomorphic to $\mathcal{O}_{Z(u)}$ (regarded as a complex supported in degree zero).

\subsubsection{} \label{subsection:superconnection_Koszul_complex} Assume now that $\vbE$ is endowed with a hermitian metric $\| \cdot \|_\vbE$. This induces a hermitian metric on $\wedge \vbE$: for any $x \in X$, the subspace $\wedge^k \vbE_x$ is orthogonal to $\wedge^j \vbE_x$ if $j \neq k$, and an orthonormal basis of $\wedge^k \vbE_x$ is given by all elements $e_{i_1} \wedge \cdots \wedge e_{i_k}$, where $1 \leq i_1 < \cdots < i_k \leq \mathrm{rk} \vbE$ and $\{e_1,\ldots,e_{\mathrm{rk} \vbE}\}$ is an orthonormal basis of $\vbE_x$. Let $\nabla$ be the corresponding Chern connection on $\wedge \vbE$. We regard $\wedge \vbE$ as a super vector bundle, with even part $\wedge^{\text{even}} \vbE$ and odd part $\wedge^{\text{odd}} \vbE$, and $u$ as an odd endomorphism of $\wedge \vbE$. Let $u^*$ be the adjoint of $u$, define the superconnection
\begin{equation}
\nabla_u = \nabla + i \sqrt{2\pi}(u+u^*)
\end{equation}
on $\wedge \vbE$, and consider Quillen's Chern form
\begin{equation}
\phio(u)=\phio(\vbE,\| \cdot \|_\vbE,u) = \mathrm{tr_s}(e^{\nabla_u^2}) \in \bigoplus_{k \geq 0} A^{k,k}(X).
\end{equation}

We recall some properties of $\phio(u)$ established (in greater generality) by Quillen \cite{QuillenChern}. The form $\phio(u)$ is closed and functorial: given a holomorphic map of complex manifolds $f:X' \to X$, consider the pullback bundle $(f^*\vbE,f^*\|\cdot\|)$ and the pullback section $f^*u \in \mathrm{H}^0(f^*\vbE^\vee)$. Then
\begin{equation}
\phio(f^*u) = f^*\phio(u).
\end{equation}
Let $^*$ be the operator on $\oplus_{k \geq 0} A^{k,k}(X)$ acting by multiplication by $(-2\pi i)^{-k}$ on $A^{k,k}(X)$. Writing $[\phio(u)^*]$ for the cohomology class of $\phio(u)^*$ and $\mathrm{ch}(\cdot)$ for the Chern character, we have
\begin{equation} \label{eq:cohomology_class_of_varphi0}
[\phio(u)^*] = \mathrm{ch}(\wedge \vbE) = \mathrm{ch}(\wedge^{\text{even}}\vbE)-\mathrm{ch}(\wedge^{\text{odd}}\vbE).
\end{equation}

\subsubsection{} In particular, $[\phio(u)^*]$ depends on $\vbE$, but not on $u$. Thus the forms $\phio(tu)$ for $t \in \mathbb{R}_{>0}$ all belong to the same cohomology class, but as $t \to +\infty$ they concentrate on the zero locus of $u$. More precisely, recall that $\nabla_u^2$ is an even element of the (super)algebra
\begin{equation}
A(X,\mathrm{End}(\wedge \vbE)) := A^*(X) \hat{\otimes}_{\mathcal{C}^\infty(X)} \Gamma(\mathrm{End}(\wedge \vbE)).
\end{equation} 
Given a relatively compact open subset $U \subset X$ whose closure $\overline{U}$ is disjoint from $Z(u)$ and a non-negative integer $k$, consider an algebra seminorm $\| \cdot \|_{\overline{U},\vbE,k}$ on $A(X,\mathrm{End}(\wedge \vbE))$ measuring uniform convergence on $\overline{U}$ of partial derivatives of order at most $k$. We will need an estimate of $\| e^{\nabla_{tu}^2} \|_{\overline{U},\vbE,k}$ for large $t$. To obtain it, write $\nabla^2_{tu}=(\nabla^2_{tu})_{[0]}+R_{tu}$, where $(\nabla^2_{tu})_{[0]}$ has form-degree zero and $R_{tu}$ has form-degree $\geq 1$. Note that $R_{tu}$ is nilpotent and that
\begin{equation}
(\nabla^2_{tu})_{[0]} = -2\pi (tu+tu^*)^2 = -2\pi t^2\|u\|_{\vbE^\vee}^2 \otimes \mathrm{id} \in A^0(X) \otimes \mathrm{End}(\wedge \vbE)
\end{equation}
(here $\|\cdot\|_{\vbE^\vee}$ denotes the unique metric on $\vbE^\vee$ such that the isomorphism $\vbE^\vee \simeq \overline{\vbE}$ induced by $\|\cdot \|_\vbE$ is an isometry). In particular, $(\nabla^2_{tu})_{[0]}$ and $R_{tu}$ commute. Hence we have
\begin{equation} \label{eq:expansion_e_nabla2}
\begin{split}
e^{\nabla^2_{tu}} &= e^{(\nabla^2_{tu})_{[0]}} e^{R_{tu}} \\
&= e^{-2\pi t^2\|u\|^2} \sum_{k=0}^{N} \tfrac{1}{k!}R_{tu}^k
\end{split}
\end{equation}
with $N \leq \dim_\mathbb{R} X$. Let $a$ be any positive real number strictly less than $\mathrm{min}_{x \in \overline{U}}\{ \| u(x) \|^2_{\vbE^\vee} \}$. Since $R_{tu}$ is polynomial in $t$, it follows from \eqref{eq:expansion_e_nabla2} that 
\begin{equation} \label{eq:algebra_norm_estimate}
\| e^{\nabla_{tu}^2}\|_{\overline{U},\vbE,k} \leq C e^{-2\pi at^2}, \quad t \in \mathbb{R}_{>0}
\end{equation}
for some positive real number $C$. Thus a similar bound holds for $\phio(tu)$.

\subsubsection{} It follows from \eqref{eq:cohomology_class_of_varphi0} that the form $\tfrac{d}{dt}\phio(t^{1/2}u)$ (for $t \in \mathbb{R}_{>0}$) is exact, and one can ask for a construction of a functorial transgression of this form. Bismut, Gillet and Soul\'e \cite{BismutGilletSoule1} construct such a transgression, and the resulting form is key to our results. To define it, let $N \in \mathrm{End}(\wedge \vbE)$ be the number operator acting on $\wedge^k \vbE$ by multiplication by $-k$ and set
\begin{equation}
\nuo(u) = \mathrm{tr_s}(Ne^{\nabla_u^2}) \in \bigoplus_{k \geq 0} A^{k,k}(X).
\end{equation}
Then $\nuo(u)$ is functorial with respect to holomorphic maps $f:X' \to X$ and satisfies (\cite[Thm. 1.15]{BismutGilletSoule1})
\begin{equation} \label{eq:nu0_transgression}
-\frac{1}{t}\partial\overline{\partial} \nuo(t^{1/2}u) = \frac{d}{dt}\phio(t^{1/2}u), \quad t > 0.
\end{equation}

\subsubsection{} \label{subsection:def_characterization_xi_0} Assume that the section $u$ has no zeroes on $X$. Then the Koszul complex $K(u)$ is acyclic and \eqref{eq:cohomology_class_of_varphi0} shows that $\phio(u)$ is exact. In this case one can define a characteristic form $\xio(u)$ giving a $\overline{\partial} \partial$-transgression of $\phio(u)$ 
by setting
\begin{equation} \label{eq:def_xi0}
\xio(u) = \int_1^{+\infty} \nuo(t^{1/2}u) \frac{dt}{t}.
\end{equation}
The bound \eqref{eq:algebra_norm_estimate} implies that all partial derivatives of $\nuo(t^{1/2}u)$ decrease rapidly as $t \to +\infty$. In particular, the integral converges and defines a form in $\oplus_{k \geq 0} A^{k,k}(X)$; moreover, one can differentiate under the integral sign, so that by \eqref{eq:nu0_transgression} we have
\begin{equation}
\partial \overline{\partial} \xio(u) = \phio(u).
\end{equation}

The form $\xio$ is functorial with respect to holomorphic maps $X' \to X$. In addition, $\xio(tu)$ (for $t \in \mathbb{R}_{>0}$) is rapidly decreasing as $t \to \infty$. More precisely, given a relatively compact open subset $U$ of $X$ with closure $\overline{U}$ and a positive integer $k$, we denote by $\| \cdot \|_{\overline{U},k}$ any  seminorm on $A^*(X)$ measuring uniform convergence on $\overline{U}$ of partial derivatives of order at most $k$. The bound \eqref{eq:algebra_norm_estimate} shows that
\begin{equation}
\|\xio(tu)\|_{\overline{U},k} \leq C e^{-2\pi a t^2} \quad \text{for all } t \geq 1,
\end{equation}
for some positive constants $a$ and $C$.

\subsubsection{} We now go back to the general case where $\vbE$ is a hermitian holomorphic vector bundle on $X$ and $u \in \mathrm{H}^0(\vbE^\vee)$,
and now make the assumption that $u$ is regular (see \Cref{subsubsection:Koszul_complexes}) and the zero locus $Z(u)$ is smooth; thus $K(u)$ is a resolution of $\mathcal{O}_{Z(u)}$ and the codimension of $Z(u)$ (if non-empty) in $X$ equals $\mathrm{rk}(\vbE)$. 
Define
\begin{equation} \label{eq:def_general_g}
\begin{split}
\archGreen(u) &= \xio(u)_{[2\mathrm{rk}(\vbE) -2]}^* \\
&= \left( \tfrac{i}{2\pi} \right)^{\mathrm{rk}(\vbE) -1} \int_1^{\infty} \nuo(t^{1/2} u)_{[2\mathrm{rk}(\vbE)-2]} \frac{dt}{t} . 
\end{split}
\end{equation}
The following proposition is contained in \cite{BismutGilletSouleDuke90}. It shows that $\archGreen(u)$ is a Green form for $Z(u)$. 
\begin{proposition} \phantomsection \label{prop:Green_current_general}
	\begin{enumerate}
		\item The integral \eqref{eq:def_general_g} converges to a smooth differential form $\archGreen(u) \in A^{\mathrm{rk}(\vbE)-1,\mathrm{rk}(\vbE)-1}(X-Z(u))$.
		\item The form $\archGreen(u)$ is locally integrable on $X$.
		\item As currents on $X$ we have
		\begin{equation*}
		\mathrm{dd}^c\archGreen(u) + \delta_{Z(u)} = \phio(u)_{[2\mathrm{rk}(\vbE)]}^*,
		\end{equation*}
		where $\ddc = \frac{i}{2 \pi} \partial \overline \partial$.
	\end{enumerate}
\end{proposition}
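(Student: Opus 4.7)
The plan mirrors the three parts of the statement, with the main local analytic work being (2) and the main global computation being (3).

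For (1), on the open set $X \setminus Z(u)$ the section $u$ is nowhere vanishing, so the construction of $\xio(u)$ leading to \eqref{eq:def_xi0} applies verbatim. The bound \eqref{eq:algebra_norm_estimate} yields $\|\nuo(t^{1/2}u)\|_{\overline U, k} \leq C e^{-2\pi a t^2}$ uniformly on any $U \Subset X \setminus Z(u)$, which justifies differentiation under the integral sign and shows $\archGreen(u) \in A^{r-1,r-1}(X\setminus Z(u))$, where $r = \rank(\vbE)$.

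For (2), I would work in a neighbourhood of an arbitrary point of $Z(u)$: by regularity of $u$ and smoothness of $Z(u)$, there exist a holomorphic trivialization of $\vbE$ and local holomorphic coordinates $(z_1, \dots, z_r, w_1, \dots, w_{n-r})$ on a polydisc (with $n = \dim_{\bbC} X$) such that $u$ corresponds to the tuple $(z_1, \dots, z_r)$. In this frame one writes out the superconnection $\nabla_u$ and its curvature explicitly; combining the exponential Gaussian factor $e^{(\nabla^2_{t^{1/2}u})_{[0]}} = e^{-2\pi t H(z)}$ (with $H(z)$ comparable to $\|z\|^2$) with the nilpotent expansion of the remaining curvature shows that the component $\nuo(t^{1/2}u)_{[2r-2]}$ is a finite sum of terms of the form $P(t; z, dz, d\overline z, dw, d\overline w) \cdot e^{-2\pi t H(z)}$, where $P$ is polynomial in $t$ of degree at most $r-1$ (the form-degree constraint $[2r-2]$ permits at most $r-1$ factors of $[\nabla, u+u^*]$, each contributing one factor of $t^{1/2}$). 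Substituting $s = tH(z)$ in \eqref{eq:def_general_g} then converts the $t$-integral into a finite combination of Bochner-Martinelli-type kernels in $z$, whose singularity along $\{z = 0\}$ is of order $\|z\|^{2-2r}$ and hence locally integrable.

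For (3), I would test against a smooth compactly supported form $\eta$ of complementary bidegree. Fubini (justified by (2)) together with integration by parts on $X$ and the transgression formula \eqref{eq:nu0_transgression} rewrite the pairing $\int_X \archGreen(u) \wedge \ddc\eta$ as a constant multiple of $\int_1^\infty \int_X \tfrac{d}{dt} \phio(t^{1/2}u)_{[2r]} \wedge \eta \, dt$, which telescopes in $t$. The contribution at $t=1$ produces $\int_X \phio(u)^*_{[2r]} \wedge \eta$, while as $t \to \infty$ the top-degree Chern form $\phio(t^{1/2}u)^*_{[2r]}$ must be shown to converge as a current to $\delta_{Z(u)}$; tracking the constants then yields the identity. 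The main obstacle is this concentration limit, which is cleanest to verify in the local model of (2): it reduces to the fact that $(2\pi t)^r e^{-2\pi t \|z\|^2}$ times the Euclidean volume form on $\mathbb{C}^r$ converges to $\delta_0$ with total mass one, combined with the exponential decay of $\phio(t^{1/2}u)^*_{[2r]}$ off $Z(u)$ supplied by \eqref{eq:algebra_norm_estimate}.
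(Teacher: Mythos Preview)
Your approach is the paper's approach: part (1) is identical, and part (3) proceeds by testing against a compactly supported form, applying the transgression formula, telescoping in $t$, and using the concentration $\phio(t^{1/2}u)^*_{[2r]} \to \delta_{Z(u)}$. The only difference is that the paper outsources the local integrability estimate in (2) and the concentration limit in (3) to \cite{BismutGilletSouleDuke90} (Theorem~3.3 and (1.14), (1.19) there), whereas you sketch these directly in a local trivialization.

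One slip in your part (2): the form-degree constraint $[2r-2]$ allows up to $2r-2$ factors of $[\nabla, u+u^*]$ (each of form-degree $1$), not $r-1$. Since each contributes $t^{1/2}$, the resulting polynomial in $t$ still has degree at most $r-1$, so your conclusion that the singularity is $O(\|z\|^{2-2r})$ and hence locally integrable on $\bbC^r$ is correct. In (3), the phrase ``Fubini (justified by (2))'' glosses over what is actually needed: a bound on $\int_1^a |\nuo(t^{1/2}u)_{[2r-2]}|\,\tfrac{dt}{t}$ uniform in $a$, so that dominated convergence lets you pass $\ddc$ and the limit $a\to\infty$ inside. Your local analysis in (2) supplies this uniform bound, and the paper invokes exactly this point via \cite[(3.24), (3.26)]{BismutGilletSouleDuke90}.
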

\begin{proof}
	Part $(1)$ has already been discussed in \Cref{subsection:def_characterization_xi_0}. 
	
	Part $(2)$ is shown to hold in the course of the proof of \cite[Thm. 3.3]{BismutGilletSouleDuke90}. In that paper, one considers an immersion $i \colon M' \to M$ of complex manifolds, a vector bundle $\eta$ on $M'$ and a complex $(\xi,v)$ of holomorphic hermitian vector bundles on $M$ that gives a resolution of $i_*\mathcal{O}_{M'}(\eta)$. Assume that $Z(u)$ is non-empty. We set $M=X$, $M'=Z(u)$, $\eta=\mathcal{O}_{Z(u)}$, and for the complex $(\xi,v)$ we take the Koszul complex $(\wedge \vbE,u)$; with these choices, the form $\nuo(t^{1/2}u)$ agrees with the form $\alpha_t$ defined in \cite[(3.12)]{BismutGilletSouleDuke90}.
	
	Let $z_0 \in Z(u)$, choose local coordinates $z_1,\ldots,z_N$ ($N=\dim X$) around $z_0$ such that $Z(u)$ is defined by the equations $z_1=\cdots=z_{\mathrm{rk}(\vbE)}=0$ and let $|y|=(|z_1|^2+\cdots+|z_{\mathrm{rk}(\vbE)}|^2)^{1/2}$. Since the complex normal bundle to $Z(u)$ in $X$ is canonically identified with $\vbE$, the equations \cite[(3.24), (3.26)]{BismutGilletSouleDuke90} show that
	\begin{equation} \label{eqn:Bismut nuo integrability}
	|y|^{2\mathrm{rk}(\vbE)-1} \int_1^{+\infty} \nuo(t^{1/2}u)\frac{du}{u}
	\end{equation}
	is bounded in a neighborhood of $z_0$; part $(2)$ follows since $|y|^{-(2\mathrm{rk}(\vbE)-1)}$ is locally integrable around $z_0$.

	To prove $(3)$, let $\beta$ be a compactly supported form on $X$. Then
	\begin{equation}
	\begin{split}
	\int_X \archGreen(u) \wedge \mathrm{dd}^c \beta &= \lim_{a \to +\infty} \int_1^a \int_X \nuo(t^{1/2}u)_{[2\mathrm{rk}(\vbE)-2]}^* \wedge \mathrm{dd}^c \beta \frac{dt}{t} \\
	&= \lim_{a \to +\infty} \int_1^a \int_X \mathrm{dd}^c \nuo(t^{1/2}u)_{[2\mathrm{rk}(\vbE)-2]}^* \wedge \beta \frac{dt}{t} \\
	&= \lim_{a \to +\infty} \int_1^a \int_X (-t \frac{d}{dt}\phio(t^{1/2}u)_{[2\mathrm{rk}(\vbE)]}^*) \wedge \beta \frac{dt}{t} \\
	&= \int_X \phio(u)_{[2\mathrm{rk}(\vbE)]}^* \wedge \beta - \lim_{a \to +\infty}\int_X \phio(a^{1/2}u)_{[2\mathrm{rk}(\vbE)]}^* \wedge \beta.
	\end{split}
	\end{equation}
	Here the first equality follows from dominated convergence since the integrals 
	\begin{equation} \label{eq:int_nuo_uniformly_bounded}
	|y|^{2\mathrm{rk}(\vbE)-1} \smallint_1^{a} \nuo(t^{1/2}u)^p\frac{du}{u}
	\end{equation}
	for $a \geq 1$ are uniformly bounded in a neighborhood of $z_0$, as shown in the proof of \cite[Thm. 3.3]{BismutGilletSouleDuke90}. 
The third equality follows from \eqref{eq:nu0_transgression}. This establishes $(3)$ since $\phio(a^{1/2}u)_{[2\mathrm{rk}(\vbE)]}^*$ approaches $\delta_{Z(u)}$ as $a \to +\infty$, as shown by \cite[(1.14), (1.19)]{BismutGilletSouleDuke90}. 
\end{proof}

\subsection{Hermitian symmetric domains of orthogonal and unitary groups} \label{subsection:Hermitian symmetric domains attached to orthogonal and unitary groups}
\subsubsection{} \label{subsubsection:hermitian_domain_definitions} 
Let 
\begin{equation}
\bbK = \left\{ \begin{array}{cr} \mathbb{R} &  \text{ case 1} \\ \mathbb{C} &  \text{ case 2} \end{array} \right. 
\end{equation} 
and 
\begin{equation}
\sigma = \left\{ \begin{array}{cc} \text{id} &  \text{ case 1 } \\ \text{complex conjugation} &  \text{ case 2}. \end{array} \right. 
\end{equation}
Let $m \geq 2$ be a positive integer and let $V$ be a $\bbK$-vector space of dimension $m$ endowed with a non-degenerate $\sigma$-Hermitian bilinear form $Q$ of signature $(p,q)$. We assume that $pq \neq 0$ and in case 1 we further assume that $q=2$. Let $\mathrm{U}(V):=\mathrm{Aut}(V,Q)$ denote its isometry group, and set
\begin{equation}
G = \mathrm{U}(V)^0 \cong \left\{ \begin{array}{rc} \mathrm{SO}(p,2)^0 &  \text{ case 1 } \\ \mathrm{U}(p,q) &  \text{ case 2}.\end{array} \right. 
\end{equation}

We fix an orthogonal decomposition $V = V^+ \oplus V^-$ 
with $V^+$ and $V^-$ positive and negative definite respectively; in case 1 we also fix an orientation of $V^-$. Let $K$ be the centralizer in $G$ of the isometry of $V$ acting as the identity on $V^+$ and as $-1$ on $V^-$. Then $K$ is a maximal compact subgroup of $G$, given by
\begin{equation}
K = 
\begin{cases}\mathrm{SO}(V^+) \times \mathrm{SO}(V^-), &  \text{case 1,} \\ \mathrm{U}(V^+) \times \mathrm{U}(V^-), &  \text{case 2.}\end{cases}
\end{equation}

Let $\mathrm{Gr}(q,V)$ be the Grassmannian of oriented two-dimensional subspaces of $V$ in case 1, which consists of two copies of the usual Grassmannian. In case 2, we take $\mathrm{Gr}(q,V)$ to be the space of  $q$-dimensional (complex) subspaces of $V$. Let
\begin{equation} \label{eq:def_unitary_D}
\mathbb{D} = \{z \in \mathrm{Gr}(q,V) | \  Q|_z <0 \}
\end{equation}
be the open subset of $\mathrm{Gr}(q,V)$ consisting of negative definite subspaces. Then $\mathbb{D}$ has two connected components in case $1$ and is connected in case 2. Let $z_0 \in \mathbb{D}$ be the point corresponding to $V^-$ and $\mathbb{D}^+$ be the connected component of $\mathbb{D}$ containing $z_0$. Then $G$ acts transitively on $\mathbb{D}^+$ and the stabilizer of $z_0$ is $K$; thus $\mathbb{D}^+ \simeq G/K$ is the symmetric domain associated with $G$.
In case 2 it is clear that $\mathbb{D}$ carries a $\mathrm{U}(V)$-invariant complex structure; to see this in case 1, one can use the model
\begin{equation} \label{eq:isomorphism_modelsofD_orthogonal}
\mathbb{D} \simeq \{[v] \in \mathbb{P}(V(\mathbb{C})) | Q(v,v)=0, \ Q(v,\overline{v})<0\}.
\end{equation}
The correspondence between both models sends $z \in \mathbb{D}$ to the line $[e_{z}+ie'_{z}] \in \mathbb{P}(V(\mathbb{C}))$, where $e_z$ and $e_z'$ form an oriented orthogonal basis of $z$ satisfying $Q(e_z,e_z)=Q(e_z',e_z')$.


\subsubsection{} \label{subsubsection:taut_bundle_definitions} Let $\vbE$ be the tautological bundle on $\mathbb{D}$, whose fiber over $z \in \mathbb{D}$ is the subspace $z \subset V$. Thus $\vbE$ is a holomorphic line bundle in case 1 (for it corresponds to the pullback of $\mathcal{O}_{\mathbb{P}(V(\mathbb{C}))}(-1)$ under the isomorphism \eqref{eq:isomorphism_modelsofD_orthogonal}), and a holomorphic vector bundle of rank $q$ in case 2. It carries a natural hermitian metric $h_\vbE$ defined by
\begin{equation}
h_\vbE(v_z) = \left\{ \begin{array}{cc} -Q(v_z,\overline{v_z}) & \quad \text{ case 1} \\ -Q(v_z,v_z) & \quad \text{ case 2} \end{array} \right.
\end{equation}
for $v_z \in \vbE_z=z$. 
This metric is equivariant for the natural $\mathrm{U}(V)$-equivariant structure on $\vbE$. We denote by $\nabla_\vbE$ the corresponding Chern connection on $\vbE$ and by
\begin{equation}
\Omega = \Omega_\vbE := c^{\mathrm{top}}(\vbE,\nabla_\vbE)^* = \left( \tfrac{i}{2\pi} \right)^{\mathrm{rk} \vbE} \det(\nabla_\vbE^2) \in A^{\mathrm{rk} \vbE,\mathrm{rk} \vbE}(\mathbb{D})
\end{equation}
its Chern-Weil form of top degree.

We denote by $\Omega_\mathbb{D}$ the K\"ahler form
\begin{equation}
\Omega_{\mathbb{D}} = \partial \overline{\partial} \log k_{\mathbb{D}}(z,z),
\end{equation}
where $k_{\mathbb{D}}$ is the Bergmann kernel function of $\mathbb{D}$. As shown in \cite[p. 219]{Wells}, the invariant form $-\tfrac{i}{2\pi}\Omega_\mathbb{D}$ agrees with the first Chern form $c_1(\Omega^{\mathrm{top}}_X)$ of the canonical bundle $\Omega^{\mathrm{top}}_X$ on any quotient $X=\Gamma \backslash \mathbb{D}^+$ by a discrete torsion free subgroup $\Gamma \subset G$. When $\vbE$ has rank one, the canonical bundle on $\mathbb{D}$ is naturally isomorphic to $\vbE^{\otimes p}$ in case 1 (as an application of the adjunction formula shows) and to $\vbE^{\otimes (p+1)}$ in case 2, and so in both cases $-\tfrac{i}{2\pi}\Omega_\mathbb{D}$ is a positive integral multiple of $\Omega_\vbE$.


An element $v \in V$ defines a global holomorphic section $s_v$ of $\vbE^\vee$: for $v'_z \in \vbE_z$, we define
\begin{equation}
s_v(v'_z) = Q(v'_z,v).
\end{equation}
Let $\mathbb{D}_v$ be the zero locus of $s_v$ on $\mathbb{D}$ and set $\mathbb{D}_v^+=\mathbb{D}_v \cap \mathbb{D}^+$. We have
\begin{equation}
\mathbb{D}_v = \{z \in \mathbb{D} | v \perp z \}
\end{equation}
and so $\mathbb{D}_v$ is non-empty only if $Q(v,v)>0$ or $v=0$. Assume that $Q(v,v)>0$, so that the orthogonal complement $v^\perp$ of $v$ has signature $(p-1,q)$. Writing $G_v$ for the stabilizer of $v$ in $G$, we find that $G_v^0 \simeq \mathrm{Aut}(v^\perp,Q)^0$ acts transitively on $\mathbb{D}_v^+$ with stabilizers isomorphic to $\mathrm{SO}(p-1) \times \mathrm{SO}(q)$ in case 1 and to $\mathrm{U}(p-1) \times \mathrm{U}(q)$ in case 2. Thus $\mathbb{D}_v^+$  is the symmetric domain attached to $G_v^0$. We conclude that $\mathrm{codim}_\mathbb{D} \mathbb{D}_v = \mathrm{rk} \vbE^\vee$
and hence that $s_v$ is a regular section of $\vbE^\vee$. In fact, this shows that $s_v$ is regular whenever $v \neq 0$, since in the remaining case we have $Q(v,v) \leq 0$ and so $s_v$ does not vanish on $\mathbb{D}$.

More generally, given a positive integer $r$ and a vector ${\bf v} =(v_1,\ldots,v_r) \in V^r$, there is a holomorphic section $s_{\bf v} = (s_{v_1},\ldots,s_{v_r})$ of $(\vbE^{\vee})^r$, with zero locus
\begin{equation} \label{eq:def_D_bfv}
\mathbb{D}_{\bf v} := Z(s_{\bf v}) = \cap_{1 \leq i \leq r} \mathbb{D}_{v_i}.
\end{equation}
Note that $\mathbb{D}_{\bf v}$ depends only on the span $\langle v_1,\ldots,v_r \rangle$. It is non-empty if and only if $\langle v_1,\ldots,v_r \rangle$ is a a positive definite subpace of $(V,Q)$ of positive dimension; in that case, its (complex) codimension in $\mathbb{D}$ is $\mathrm{rk}(\vbE) \cdot \dim_\bbK\langle v_1,\ldots,v_r \rangle$. We set $\mathbb{D}_{\bf v}^+=\mathbb{D}_{\bf v} \cap \mathbb{D}^+$.


%

\subsubsection{} \label{subsection:main_definitions_forms} We will now specialize the constructions in \Cref{subsection:Superconnections and characteristic forms of Koszul complexes} to the setting of hermitian symmetric domains.

Let $r$ be a positive integer and $\mathbf{v}=(v_1,\ldots,v_r)$ be an $r$-tuple of vectors in $V$. We write $K(\mathbf{v}) := K(s_{\mathbf{v}})$ for the Koszul complex associated with the section $s_{\mathbf{v}} = (s_{v_1},\ldots,s_{v_r})$ of $(\vbE^r)^\vee$. On its underlying vector bundle $\wedge (\vbE^r)$, we consider the superconnection 
\begin{equation}\label{eq:superconnection_nabla_bfv_definition}
\nabla_{\bf v} := \nabla_{s_{\mathbf{v}}} = \nabla + i \sqrt{2\pi}(s_{\mathbf{v}} + s_{\mathbf{v}}^*),
\end{equation} 
and we define forms
\begin{equation}
\begin{split}
\phio({\mathbf{v}}) &:= \phio(s_{\mathbf{v}})^* = \sum_{k \geq 0} \left( \tfrac{i}{2\pi} \right)^k \mathrm{tr_s}(e^{\nabla_{\mathbf{v}}^2})_{[2k]},  \\
\nuo({\mathbf{v}}) &:= \nuo(s_{\mathbf{v}})^* = \sum_{k \geq 0} \left( \tfrac{i}{2\pi} \right)^k \mathrm{tr_s}(Ne^{\nabla_{\mathbf{v}}^2})_{[2k]},
\end{split}
\end{equation}
where $N$ is the number operator on $\wedge (\vbE^r)$ acting on $\wedge^k (\vbE^r)$ by multiplication by $-k$.


\begin{definition} \label{def:forms_varphi_nu_xi}
	For $\mathbf{v}=(v_1,\ldots,v_r) \in V^r$, write $Q(\mathbf{v},\mathbf{v}) = Q(v_1,v_1) + \cdots + Q(v_r,v_r)$ and define
	\begin{equation*}
	\begin{split}
	\varphi(\mathbf{v}) &= e^{-\pi Q(\mathbf{v},\mathbf{v})} \,  \phio(\mathbf{v}), \\
	\nu(\mathbf{v}) &= e^{-\pi Q(\mathbf{v},\mathbf{v})} \,  \nuo(\mathbf{v}).
	\end{split}
	\end{equation*}
	Thus $\varphi(\mathbf{v})$ and $\nu(\mathbf{v})$ belong to $\oplus_{k \geq 0} A^{k,k}(\mathbb{D})$. 
\end{definition}

The forms $\varphi({\bf v})$ and $\nu({\bf v})$ were already defined and studied in the setting of general period domains in \cite{GarciaSuperconnections}. 



\subsection{Explicit formulas for $\mathrm{O}(p,2)$ and $\mathrm{U}(p,1)$} \label{subsection:explicit formulas in rank one} Let us give some explicit formulas when the tautological bundle $\vbE$ is a line bundle. Thus $V$ is a either a real vector space of signature $(p,2)$ (case 1) or a complex vector space of signature $(p,1)$ (case 2). There is a unique hermitian metric on $\vbE^\vee$ making the isomorphism $\overline{\vbE} \cong \vbE^\vee$ induced by $h_\vbE$ an isometry; we denote this metric by $h$ and its Chern connection by $\nabla_{\vbE^\vee}$. For $v \in V$ we have
\begin{equation} \label{eq:varphi_explicit_formula}
\begin{split}
\varphi(v)_{[2]} &= e^{-\pi (Q(v,v)+2h(s_v))} \left(i\frac{\partial h(s_v)\wedge\overline{\partial} h(s_v)}{h(s_v)}-\Omega_\vbE \right), \\
\varphi(v) &= \varphi(v)_{[2]} \wedge \mathrm{Td}^{-1}(\vbE^\vee,\nabla)^*,
\end{split}
\end{equation}
where
\begin{equation}
\mathrm{Td}^{-1}(\vbE^\vee,\nabla) = \det \left( \frac{1-e^{-\nabla_{\vbE^\vee}^2}}{\nabla_{\vbE^\vee}^2} \right)
\end{equation}
denotes the inverse Todd form of $(\vbE^\vee,\nabla_{\vbE^\vee})$. This is a special case of the Mathai-Quillen formula \cite[Thm. 8.5]{MathaiQuillen}; see also \cite[\textsection 3]{GarciaSuperconnections} for a proof in our setting, where it is also shown that the form $\varphi(v)_{[2]}$ coincides with the form $\varphi_{\mathrm{KM}}(v)$ defined by Kudla and Millson in \cite{KudlaMillson1}.

Let us now consider the form $\nu(v)$ for $v \in V$. Here the Koszul complex $K(v)$ has just two terms: $K(v) = (\vbE \xrightarrow{s_v} \mathcal{O}_\mathbb{D})$,
and the operator $N$ acts by zero on $\mathcal{O}_\mathbb{D}$ and by $-1$ on $\vbE$. For the component of degree zero of $\nuo(v)$ we obtain
\begin{equation}\label{eq:computation_nu0_degree_0_case_1}
\begin{split}
\nuo(v)_{[0]} &= \mathrm{tr_s}(Ne^{\nabla_v^2})_{[0]} \\
&= \mathrm{tr_s}(Ne^{(\nabla_v^2)_{[0]}}) \\
&= \mathrm{tr_s}(Ne^{-2\pi h(s_v)}) \\
&= e^{-2\pi h(s_v)}.
\end{split}
\end{equation}
Given $z \in \mathbb{D}$, let $z^\perp$ be the orthogonal complement of $z$ in $V$, so that $V=z \oplus z^\perp$; we write $v_z$ and $v_{z^\perp}$ for the orthogonal projection of $v \in V$ to $z$ and $z^\perp$ respectively. Let $Q_z$ be the (positive definite) Siegel majorant of $Q$ defined by
\begin{equation} \label{eq:def_Siegel_majorant}
Q_z(v,v)=Q(v_{z^\perp},v_{z^\perp})-Q(v_z,v_z).
\end{equation}
Then we have $Q_z(v,v) = Q(v,v)+2h_z(s_v)$ and we conclude that $\nu(v)_{[0]}$ is just the Siegel gaussian:
\begin{equation} \label{eq:nu0_and_Siegel_gaussian}
\nu(v)_{[0]}=\varphi^{\mathrm{SG}}(v):=e^{-\pi Q_z(v,v)}.
\end{equation}
If $v \neq 0$, then using \eqref{eq:computation_nu0_degree_0_case_1} we also obtain an explicit formula for the Green function $\archGreen(s_{v})$ defined in \eqref{eq:def_general_g} (see Example \ref{example:g(v)_rkE_1} below).

\subsection{Basic properties of the forms $\varphi$ and $\nu$} \label{subsection:Basic properties of the forms varphi and nu} 

\subsubsection{} We first give formulas for the restriction of $\varphi$ and $\nu$ to special cycles. Let $w \in V$ with $Q(w,w)>0$ and recall the group $G_w$ and complex submanifold $\mathbb{D}_w^+ \subset \mathbb{D}^+$ defined in \ref{subsubsection:taut_bundle_definitions}. Then $G_w$ is identified with the isometry group of $(w^\perp,Q)$, and we may identify $\mathbb{D}_w^+$ with the hermitian symmetric domain attached to $G_w^0$. We write $\varphi_{\mathbb{D}_w}(v')$ and $\nu_{\mathbb{D}_w}(v')$ for the forms on $\mathbb{D}_w$ given in \Cref{def:forms_varphi_nu_xi}.

\begin{lemma} \label{lemma:restriction_varphi_nu_special_cycle_type_1}
	Let $v,w \in V$ with $Q(w,w)>0$ and write $v=v'+v''$ with $v' \in w^\perp$ and $v'' \in \langle w \rangle$. Then
	\begin{equation*}
	\begin{split}
	\nuo(v)|_{\mathbb{D}_w} = \nuo_{\mathbb{D}_w}(v'), \quad \nu(v)|_{\mathbb{D}_w} &= e^{-\pi Q(v'',v'')} \nu_{\mathbb{D}_w}(v'), \\
	\phio(v)|_{\mathbb{D}_w} = \phio_{\mathbb{D}_w}(v'), \quad \varphi(v)|_{\mathbb{D}_w} &= e^{-\pi Q(v'',v'')} \varphi_{\mathbb{D}_w}(v').
	\end{split}
	\end{equation*}
\end{lemma}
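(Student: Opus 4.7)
The plan is to deduce all four identities from functoriality of Quillen's construction under the inclusion $i \colon \mathbb{D}_w \hookrightarrow \mathbb{D}$, together with a direct computation showing that this inclusion pulls back the relevant data for $v$ to the corresponding data for $v'$.

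First, I would identify the geometric data. Since $\mathbb{D}_w = \{z \in \mathbb{D} \mid w \perp z\}$, every $z \in \mathbb{D}_w$ automatically lies in $w^\perp$, so the hermitian symmetric domain attached to the isometry group of $(w^\perp, Q)$ is canonically identified with $\mathbb{D}_w$. Under this identification, the tautological bundle $\vbE_{\mathbb{D}_w}$ of $\mathbb{D}_w$ coincides as a hermitian holomorphic bundle with $i^* \vbE$: both have fiber $z$ at $z \in \mathbb{D}_w$, and the metric $h_\vbE$ is intrinsic to the inclusion $z \subset V$ (equivalently $z \subset w^\perp$). By the uniqueness of the Chern connection, $i^* \nabla_\vbE = \nabla_{\vbE_{\mathbb{D}_w}}$.

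Next, I would check that $i^* s_v = s_{v'}$ as holomorphic sections of $\vbE_{\mathbb{D}_w}^\vee$. For $z \in \mathbb{D}_w$ and $v_z \in z$, we have $Q(v_z, w) = 0$ and hence $Q(v_z, v'') = 0$ since $v'' \in \langle w \rangle$; therefore
\begin{equation*}
(i^*s_v)(v_z) = Q(v_z, v) = Q(v_z, v') + Q(v_z, v'') = Q(v_z, v') = s_{v'}(v_z).
\end{equation*}
Since the superconnection $\nabla_{\bf v}$ in \eqref{eq:superconnection_nabla_bfv_definition} is built functorially from the triple $(\vbE, h_\vbE, s_v)$, we get $i^* \nabla_v = \nabla_{v'}$ on $\wedge \vbE_{\mathbb{D}_w}$, and by the functoriality of $\phio$ and $\nuo$ (see \Cref{subsection:superconnection_Koszul_complex} and the discussion after \eqref{eq:nu0_transgression}) this yields $\phio(v)|_{\mathbb{D}_w} = \phio_{\mathbb{D}_w}(v')$ and $\nuo(v)|_{\mathbb{D}_w} = \nuo_{\mathbb{D}_w}(v')$.

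Finally, I would split the Gaussian factor: the decomposition $v = v' + v''$ is orthogonal with respect to $Q$, so $Q(v,v) = Q(v',v') + Q(v'',v'')$, and therefore
\begin{equation*}
\varphi(v)|_{\mathbb{D}_w} = e^{-\pi Q(v,v)} \phio(v)|_{\mathbb{D}_w} = e^{-\pi Q(v'',v'')} \cdot e^{-\pi Q(v',v')} \phio_{\mathbb{D}_w}(v') = e^{-\pi Q(v'',v'')} \varphi_{\mathbb{D}_w}(v'),
\end{equation*}
and identically for $\nu(v)|_{\mathbb{D}_w}$. The only substantive point is the identification of $\vbE_{\mathbb{D}_w}$ with $i^*\vbE$ as hermitian holomorphic bundles; everything else is then immediate from the functoriality already recorded in the preceding section.
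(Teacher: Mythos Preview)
Your proof is correct and follows essentially the same approach as the paper's: identify the restriction $\vbE|_{\mathbb{D}_w}$ with the tautological bundle $\vbE_w$ on $\mathbb{D}_w$ as hermitian holomorphic bundles, observe that this carries $s_v$ to $s_{v'}$ (so $\nabla_v|_{\mathbb{D}_w} = \nabla_{v'}$), and conclude by functoriality. The paper's proof is simply a terser version of what you wrote, omitting the explicit verification that $i^*s_v = s_{v'}$ and the splitting of the Gaussian factor.
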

\begin{proof} 
	Let $\vbE_w$ be the tautological bundle on $\mathbb{D}_w$ whose fiber over $z \in \mathbb{D}_w$ is $z \subset w^\perp$. The restriction of $\vbE$ to $\mathbb{D}_w$ is isometric to $\vbE_w$. For any $v \in V$, this isometry induces an isomorphism $K(v)|_{\mathbb{D}_w} \cong K(v'),$
	where $K(v')$ denotes the Koszul complex with underlying vector bundle $\wedge \vbE_w$ and differential $s_{v'}$. Thus $\nabla_v|_{\mathbb{D}_w}=\nabla_{v'}$ and the lemma follows.
\end{proof}

\subsubsection{} The proof of the next proposition is a straightforward consequence of general properties of Koszul complexes and Chern forms.

\begin{proposition} \label{prop:phi_basic_properties}
	Let $r \geq 1$ and ${\bf v} = (v_1,\ldots,v_r) \in V^r$. Then:
	\begin{enumerate}[(a)]
		\item $\varphi({\bf v}) = \varphi(v_1) \wedge \ldots \wedge \varphi(v_r)$.
		\item $\varphi({\bf v})$ is closed.
		\item $\varphi({\bf v})_{[k]}=0$ if $k < 2r \cdot \mathrm{rk}(\vbE)$.
		\item For every $g \in \mathrm{Aut}(V,Q)$, we have $g^*\varphi(gv_1,\ldots, gv_r) = \varphi(v_1,\ldots,v_r)$. 
		\item $\varphi(0) = c_{\mathrm{rk}(\vbE)}(\vbE^\vee,\nabla)^* \wedge \mathrm{Td}^{-1}(\vbE^\vee,\nabla)^*$ (here we assume $r=1$).
		\item Let 
		\begin{equation*}
		h \in \left\{ \begin{array}{cc} \mathrm{O}(r), & \quad \text{ case 1 } \\ \mathrm{U}(r), & \quad \text{ case 2.} \end{array} \right.
		\end{equation*} 
		Then $\varphi((v_1,\ldots,v_r) \cdot h) = \varphi(v_1,\ldots,v_r)$.
	\end{enumerate}
\end{proposition}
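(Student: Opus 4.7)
The plan is to deduce all six properties from the general Chern-Weil formalism for superconnections of \cite{QuillenChern} recalled in \Cref{subsection:Superconnections and characteristic forms of Koszul complexes}, together with the Mathai-Quillen formula \cite{MathaiQuillen} (specialized to our setting in \cite[\textsection 3]{GarciaSuperconnections}). The structural heart of the argument is (a), which then drives (b), (c), and (f); parts (d) and (e) follow by complementary naturality arguments.

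For (a), the key observation is the identification
$$\wedge(\vbE^r) \cong \wedge \vbE \,\hat\otimes\, \cdots \,\hat\otimes\, \wedge \vbE$$
of $\mathbb{Z}/2$-graded hermitian holomorphic vector bundles. Under this decomposition, the Koszul differential splits as the graded sum $s_\mathbf{v} = \sum_i 1 \hat\otimes \cdots \hat\otimes s_{v_i} \hat\otimes \cdots \hat\otimes 1$, its adjoint splits analogously, and the Chern connection on $\wedge(\vbE^r)$ agrees with the tensor-product Chern connection on the individual factors. The resulting summands of $\nabla_\mathbf{v}$ supercommute across distinct tensor factors, so
$$\nabla_\mathbf{v}^2 = \sum_i 1 \hat\otimes \cdots \hat\otimes \nabla_{v_i}^2 \hat\otimes \cdots \hat\otimes 1, \qquad e^{\nabla_\mathbf{v}^2} = e^{\nabla_{v_1}^2} \hat\otimes \cdots \hat\otimes e^{\nabla_{v_r}^2}.$$
Multiplicativity of the supertrace on graded tensor products then gives $\phio(\mathbf{v}) = \phio(v_1)\wedge\cdots\wedge\phio(v_r)$, and combining with $e^{-\pi Q(\mathbf{v},\mathbf{v})} = \prod_i e^{-\pi Q(v_i,v_i)}$ proves (a). Part (b) is immediate: $\phio(\mathbf{v})$ is closed as a Chern form of a superconnection \cite{QuillenChern}, and $e^{-\pi Q(\mathbf{v},\mathbf{v})}$ is a scalar constant on $\mathbb{D}$. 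For (c), by (a) it suffices to treat $r=1$ and show $\varphi(v)_{[k]} = 0$ for $k < 2\,\mathrm{rk}(\vbE)$; this is the content of the Mathai-Quillen formula, which writes $\phio(v)$ as a Thom form of pure degree $2\,\mathrm{rk}(\vbE)$ wedged with an inverse Todd correction whose degree-zero component equals $1$. In the line bundle case this specializes exactly to \eqref{eq:varphi_explicit_formula}.

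For (d), the isometry $g \in \mathrm{Aut}(V,Q)$ lifts canonically to the equivariant bundle $\vbE$ and satisfies $g^*(s_{gv}) = s_v$; functoriality of Quillen's construction then gives $g^*\phio(g\mathbf{v}) = \phio(\mathbf{v})$, and $g$-invariance of $Q$ finishes the argument. For (e), setting $\mathbf{v}=0$ kills the Koszul differential, so $\nabla_\mathbf{v}$ collapses to the Chern connection $\nabla$ on $\wedge\vbE$ and $\phio(0)$ becomes the alternating sum $\sum_k (-1)^k \mathrm{ch}(\wedge^k\vbE, \nabla)^*$; the classical Koszul identity at the level of Chern-Weil forms identifies this with $c_{\mathrm{rk}(\vbE)}(\vbE^\vee,\nabla)^* \wedge \mathrm{Td}^{-1}(\vbE^\vee,\nabla)^*$. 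Finally, for (f), the matrix $h$ acts isometrically on $\vbE^r = \vbE \otimes \bbK^r$ via $1 \otimes h$; a direct calculation shows that $s_{\mathbf{v}\cdot h}$ and $s^*_{\mathbf{v}\cdot h}$ are the conjugates of $s_\mathbf{v}$ and $s^*_\mathbf{v}$ under the induced even unitary automorphism $\hat h$ of $\wedge(\vbE^r)$, so $\nabla_{\mathbf{v}\cdot h} = \hat h \circ \nabla_\mathbf{v} \circ \hat h^{-1}$ and the supertrace is unchanged; the identity $Q(\mathbf{v}\cdot h,\mathbf{v}\cdot h) = Q(\mathbf{v},\mathbf{v})$ follows from $h h^* = 1$. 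The only step requiring any genuine care is the bookkeeping of graded tensor structures in (a), together with the invocation of Mathai-Quillen in (c); everything else is a naturality check and I anticipate no real obstacle.
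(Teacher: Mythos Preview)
Your proposal is correct and follows essentially the same route as the paper: the paper's proof simply cites \cite[Prop.~2.3 and (2.17)]{GarciaSuperconnections} for all parts except (c), noting that (c) follows from (a) together with the Mathai--Quillen formula \cite[Thm.~8.5]{MathaiQuillen}, and your sketch is precisely an unwinding of those arguments. In particular your tensor-factorization for (a), the naturality arguments for (d) and (f), and the Borel--Serre/Koszul identity for (e) are exactly what one finds upon tracing the reference; the paper's own proof of the analogous \Cref{prop:nu_properties}(f) is verbatim your conjugation-invariance argument.
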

\begin{proof}
	Except for $(c)$, which follows from $(a)$ and the Mathai-Quillen formula \cite[Thm. 8.5]{MathaiQuillen}, all statements are proved in \cite[Prop. 2.3 and (2.17)]{GarciaSuperconnections} in case 1, and the proof there extends without modification to case 2.
\end{proof}

Consider now the form $\nu$. We write 
\begin{equation}
c(F,\nabla_F)=\det(t\nabla_F^2+1_{\mathrm{rk}(F)})=1+c_1(F,\nabla_F)t+\ldots+c_{\mathrm{rk}(F)}(F,\nabla_F)t^{\mathrm{rk}(F)}
\end{equation}
for the total Chern-Weil form of a vector bundle $F$ with connection $\nabla_F$.

\begin{proposition} \label{prop:nu_properties}
	Let $r \geq 1$ and ${\bf v} = (v_1,\ldots,v_r) \in V^r$.
	\begin{enumerate}[(a)]
		\item We have $\nu({\bf v}) = \sum_{1\leq i \leq r} \nu_i({\bf v})$, where
		\begin{equation*}
		\nu_i({\bf v}) := \nu(v_i) \wedge \varphi(v_1,\ldots,\hat{v_i},\ldots,v_r).
		\end{equation*}
		\item For $t >0$:
		\begin{equation*}
		\mathrm{dd}^c \nuo(t^{1/2}{\bf v})=-t\frac{d}{dt}\phio(t^{1/2}{\bf v}).
		\end{equation*}
		\item $\nu({\bf v})_{[k]}=0$ if $k < 2r \cdot \mathrm{rk}(\vbE)-2$.
		\item For any $g \in \mathrm{Aut}(V,Q)$, we have $g^*\nu(gv_1,\ldots,gv_r) = \nu(v_1,\ldots,v_r)$.
	 \item For the zero vector $\mathbf 0 \in V^r$, we have
		\[
				\nu(\mathbf 0)_{[2  r \cdot \rk(\vbE) - 2]} \ =\    r \cdot c_{\mathrm{rk}(\vbE)-1}(\vbE^\vee,\nabla)^* \, \wedge \, \left( c_{\rk(\vbE)} (\vbE^{\vee}, \nabla )^*\right) ^{r-1}.
		\]
		In particular, $\nu(0)_{[0]}=1$ when $r= \rk(\vbE) =1 $. 
		\item Let 
		\begin{equation*}
		h \in \left\{ \begin{array}{cc} \mathrm{O}(r), & \quad \text{ case 1 } \\ \mathrm{U}(r), & \quad \text{ case 2.} \end{array} \right.
		\end{equation*} 
		Then $\nu((v_1,\ldots,v_r) \cdot h) = \nu(v_1,\ldots,v_r)$.
	\end{enumerate}
\end{proposition}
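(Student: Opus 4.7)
The plan is to extend the strategy used for $\varphi$ in \Cref{prop:phi_basic_properties} to cover $\nu$, with the only substantive new input being the explicit identification in (e). For (a), the starting observation is that the Koszul complex factors as $K(\mathbf{v}) = \bigotimes_{i=1}^r K(v_i)$ as complexes of hermitian super vector bundles. Under this decomposition the superconnection becomes $\nabla_\mathbf{v} = \sum_i \nabla_{v_i}$, a sum of odd supercommuting operators acting on distinct tensor factors, so $\nabla_\mathbf{v}^2 = \sum_i \nabla_{v_i}^2$ is a sum of commuting even operators and $e^{\nabla_\mathbf{v}^2} = \bigotimes_i e^{\nabla_{v_i}^2}$. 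Since the number operator satisfies $N = \sum_i N_i$ with each $N_i$ commuting with $e^{\nabla_{v_j}^2}$ for $j \neq i$, the multiplicativity of $\mathrm{tr_s}$ on tensor products gives
\begin{equation*}
\mathrm{tr_s}(N e^{\nabla_\mathbf{v}^2}) \ = \ \sum_{i=1}^r \mathrm{tr_s}(N_i e^{\nabla_{v_i}^2}) \prod_{j \ne i} \mathrm{tr_s}(e^{\nabla_{v_j}^2}),
\end{equation*}
and combining with the manifest multiplicativity of the Gaussian factor $e^{-\pi Q(\mathbf{v},\mathbf{v})}$ yields (a). Part (b) is then immediate from the Bismut-Gillet-Soul\'e transgression formula \eqref{eq:nu0_transgression} applied to $u = s_\mathbf{v}$.

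Parts (c), (d), (f) are relatively formal. For (c), the decomposition in (a) combined with \Cref{prop:phi_basic_properties}(c) reduces the claim to showing that $\nu(v)$ itself vanishes in degrees below $2\rk(\vbE) - 2$; I would derive this from a Mathai-Quillen-type formula for $\nu(v)$ analogous to the one recalled for $\varphi(v)$ in \eqref{eq:varphi_explicit_formula}, as worked out in \cite{GarciaSuperconnections}. Part (d) holds because the tautological bundle with its metric, the sections $s_{v_i}$, the superconnection $\nabla_\mathbf{v}$, and the Gaussian prefactor are all functorial with respect to $\mathrm{Aut}(V,Q)$. For (f), the right action of $h \in \mathrm{O}(r)$ (resp.\ $\mathrm{U}(r)$) on $\bbK^r$ induces an isometric bundle automorphism $\rho(h)$ of $\vbE^r = \vbE \otimes_\bbK \bbK^r$ that extends to $\wedge(\vbE^r)$ preserving its grading; in particular $\rho(h)$ commutes with $N$ and satisfies $\rho(h)^{-1}\nabla_{\mathbf{v} h}\rho(h) = \nabla_\mathbf{v}$, so the cyclic property of $\mathrm{tr_s}$ gives $\nuo(\mathbf{v} h) = \nuo(\mathbf{v})$, while orthogonality/unitarity of $h$ takes care of the Gaussian prefactor.

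The substantive computation is (e). Applying (a) at $\mathbf{v} = \mathbf{0}$ and using symmetry among the summands gives $\nu(\mathbf{0}) = r \cdot \nu(0) \wedge \varphi(0)^{\wedge(r-1)}$. By \Cref{prop:phi_basic_properties}(e), the lowest-degree component of $\varphi(0)$ is $c_{\rk(\vbE)}(\vbE^\vee, \nabla)^*$ sitting in degree $2\rk(\vbE)$, so extracting the degree $2r\rk(\vbE) - 2$ part reduces the problem to identifying $\nu(0)_{[2\rk(\vbE) - 2]} = c_{\rk(\vbE) - 1}(\vbE^\vee, \nabla)^*$. To verify this, I would apply the splitting principle: writing $\vbE \cong L_1 \oplus \cdots \oplus L_n$ with curvature forms $x_i$ and using the factorization $\wedge \vbE = \bigotimes_{i=1}^n (\mathcal{O} \oplus L_i)$ (on which each $N_i$ acts as $0$ on $\mathcal{O}$ and as $-1$ on $L_i$), a short supertrace calculation yields
\begin{equation*}
\mathrm{tr_s}(Ne^{\nabla^2}) \ = \ \sum_{i=1}^n e^{x_i} \prod_{j \ne i}(1 - e^{x_j}),
\end{equation*}
whose degree-$2(n-1)$ component equals $(-1)^{n-1} e_{n-1}(x_1, \ldots, x_n) = c_{n-1}(\vbE^\vee, \nabla)$; the ${}^*$-operator built into the definition of $\nu$ then produces $c_{n-1}(\vbE^\vee, \nabla)^*$, and the case $r = \rk(\vbE) = 1$ is the immediate specialization $\nu(0)_{[0]} = c_0(\vbE^\vee,\nabla)^* = 1$. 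The main obstacle is the minimum-degree vanishing needed in (c), which appears to require the full Mathai-Quillen formalism of \cite{GarciaSuperconnections} rather than a naive degree-counting argument using only the transgression relation (b).
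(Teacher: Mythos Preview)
Your proposal is correct and for parts (a), (b), (d), (f) matches the paper's proof essentially verbatim: same tensor factorization of $K(\mathbf{v})$, same decomposition $N=\sum N_i$ and $\nabla_\mathbf{v}^2=\sum \nabla_{v_i}^2$, same appeal to \eqref{eq:nu0_transgression}, and same conjugation/equivariance argument for the invariance properties.

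There are two small differences worth noting. For (e), the paper does not carry out your splitting-principle computation; instead it invokes an identity from Bismut--Gillet--Soul\'e \cite[(3.35)]{BGSciag}, which (after taking $u=0$ and adjusting for the opposite sign convention on $N$) gives directly
\[
\mathrm{tr_s}(Ne^{\nabla^2})_{[2\mathrm{rk}(\vbE)-2]}^* \;=\; -\left.\frac{d}{db}\det\!\left(\tfrac{i}{2\pi}\nabla_{\vbE^\vee}^2 - b\,1_{\mathrm{rk}(\vbE)}\right)\right|_{b=0} \;=\; c_{\mathrm{rk}(\vbE)-1}(\vbE^\vee,\nabla)^*.
\]
Your Chern-root computation is a valid and more self-contained alternative (the form in question is an $\mathrm{Ad}$-invariant polynomial in the curvature, so the splitting principle applies at the level of invariant polynomials, hence at the form level). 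For (c), the reduction to $r=1$ is the same, but for the $r=1$ case with $\mathrm{rk}(\vbE)>1$ the paper appeals to \cite[(3.72), (3.35), Thm.~3.10]{BGSciag} rather than \cite{GarciaSuperconnections}; the latter treats $\varphi$ rather than $\nu$, so the correct external input here is BGS, though your instinct that a Mathai--Quillen-type structural formula is what is needed is on target.
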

\begin{proof}
	Recall that $\nu({\bf v}) = e^{-\pi Q({\bf v},{\bf v})} \mathrm{tr_s}(Ne^{\nabla_{{\bf v}}^2})$, where $N$ is the number operator on the Koszul complex $K({\bf v}) \simeq \otimes_{1 \leq i \leq r}K(v_i)$. Letting $N_i$ be the number operator on $K(v_i)$, we can write $N=N_1 + \cdots +N_r$ and $\nabla_{\bf v}^2=\nabla_{v_1}^2+\cdots + \nabla_{v_r}^2$, where $[N_i,\nabla_{v_j}^2]=[\nabla_{v_i}^2,\nabla_{v_j}^2]=0$ for $i \neq j$; this proves $(a)$.
	
	Part $(b)$ follows from \eqref{eq:nu0_transgression}. By part $(a)$ and \Cref{prop:phi_basic_properties}.$(c)$, it suffices to prove $(c)$ when $r=1$. Then $(c)$ is vacuously true if $\mathrm{rk}(\vbE)=1$, and in general it follows from \cite[(3.72),(3.35), Thm. 3.10]{BGSciag}.
	
	For any ${\bf v}$ and any $g \in \mathrm{U}(V)$, the $\mathrm{U}(V)$-equivariant structure on $\vbE$ induces an isomorphism $g^*K(g{\bf v}) \simeq K({\bf v})$ preserving the metric; this proves $(d)$.

	For part $(e)$, first consider the case $r=1$. When $\vbE $ also has rank one, the desired relation follows immediately from \eqref{eq:nu0_and_Siegel_gaussian}. For general $\vbE$, by taking  $u=0$ in \cite[(3.35)]{BGSciag} we find that
		\begin{equation}
		\begin{split}
		\mathrm{tr_s}(Ne^{\nabla^2})_{[2\mathrm{rk}(\vbE)-2]}^* &=- \frac{d}{db}\left. \det \left(\tfrac{i}{2\pi}\nabla_{\vbE^\vee}^2-b1_{\mathrm{rk}(\vbE)}\right)\right|_{b=0} \\
		&= (-1)^{r-1} \frac{d}{db}\left. \det \left(\tfrac{i}{2\pi}\nabla_{\vbE}^2+b1_{\mathrm{rk}(\vbE)}\right)\right|_{b=0} \\
		&= (-1)^{r-1} c_{\mathrm{rk}(\vbE)-1}(\vbE,\nabla)^* \\
		&= c_{\mathrm{rk}(\vbE)-1}(\vbE^\vee,\nabla)^*.
		\end{split}
		\end{equation}
		Note that the number operator $N$ in op.\ cit.\ has sign opposite from ours.  The formula for general $r$  follows from part $(a)$ above together with \Cref{prop:phi_basic_properties}(a),(e).
%

	To prove $(f)$, note that $h$ induces an isometry 
	\begin{equation}
	K(v_1,\ldots,v_r) \xrightarrow{i(h)} K((v_1,\ldots,v_r)\cdot h)
	\end{equation}
	that commutes with $N$ and such that $\nabla_{(v_1,\ldots,v_r)\cdot h}=i(h)^{-1}\nabla_{(v_1,\ldots,v_r)}i(h)$. Thus $(f)$ follows from the conjugation invariance of $\mathrm{tr_s}$.
\end{proof}

The properties of $\varphi({\bf v})$ and $\nu({\bf v})$ established in Propositions \ref{prop:phi_basic_properties} and \ref{prop:nu_properties} also hold for $\phio({\bf v})$ and $\nuo({\bf v})$. In contrast, the next result, showing that the forms $\varphi$ and $\nu$ are rapidly decreasing as functions of ${\bf v}$, really requires the additional factor $e^{-\pi Q({\bf v},{\bf v})}$ to hold (note for example that the restriction of $\phio(t{\bf v})$ to $\mathbb{D}_{\bf v}$ is independent of $t \in \mathbb{R}$).

Let $\mathcal{S}(V^r)$ be the Schwartz space of complex-valued smooth functions on $V^r$ all whose derivatives are rapidly decreasing.

\begin{lemma} \label{lem:Phi and Nu are Schwartz}
	For fixed $z \in \mathbb{D}$ and $r \geq 1$, we have
	\begin{equation*}
	\varphi(\cdot,z), \nu(\cdot,z) \in \mathcal{S}(V^r) \otimes \wedge T_{z}^*\mathbb{D}.
	\end{equation*}
\end{lemma}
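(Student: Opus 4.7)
The plan is to reduce the Schwartz property to the simple observation that $\varphi(\mathbf{v})$ and $\nu(\mathbf{v})$ can be written as a Gaussian in the positive-definite form $Q_z$ times a polynomial in $\mathbf{v}$.

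First I would expand $e^{\nabla_{\mathbf{v}}^2}$ just as in \eqref{eq:expansion_e_nabla2}: writing $\nabla_{\mathbf{v}}^2 = (\nabla_{\mathbf{v}}^2)_{[0]} + R_{\mathbf{v}}$ with $R_{\mathbf{v}}$ of positive form degree (hence nilpotent) and using that these two pieces commute, one obtains
\begin{equation*}
e^{\nabla_{\mathbf{v}}^2} = e^{-2\pi \|s_{\mathbf{v}}\|^2_{\vbE^\vee}} \sum_{k=0}^{N} \tfrac{1}{k!} R_{\mathbf{v}}^k.
\end{equation*}
Since $s_{\mathbf{v}}$ depends $\mathbb{R}$-linearly on $\mathbf{v}$, both $s_{\mathbf{v}}$ and its adjoint $s_{\mathbf{v}}^*$ are linear in $\mathbf{v}$, so $R_{\mathbf{v}}$ is a polynomial (of bounded degree $\leq \dim_\mathbb{R} \mathbb{D}$) in $\mathbf{v}$ with coefficients in $A^{\geq 1}(X, \mathrm{End}(\wedge \vbE^r))$. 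Consequently, evaluating at the fixed point $z$, both $\phio(\mathbf{v})(z)$ and $\nuo(\mathbf{v})(z)$ take the form $e^{-2\pi \|s_{\mathbf{v}}(z)\|^2_{\vbE^\vee}} P(\mathbf{v})$, where $P$ is a polynomial in $\mathbf{v}$ with coefficients in $\wedge T_z^* \mathbb{D}$ (for $\nu$ one picks up the additional factor of the number operator $N$, which is independent of $\mathbf{v}$).

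Next I would identify the resulting exponential weight with the Siegel majorant. As noted in \eqref{eq:def_Siegel_majorant} and the discussion preceding \eqref{eq:nu0_and_Siegel_gaussian}, at any $z \in \mathbb{D}$ one has $h_z(s_v) = \|s_v(z)\|^2_{\vbE^\vee} = -Q(v_z, v_z)$, so $Q(v,v) + 2\|s_v(z)\|^2_{\vbE^\vee} = Q_z(v,v)$. Summing over the components of $\mathbf{v}$, the exponential prefactor in $\varphi(\mathbf{v})$ and $\nu(\mathbf{v})$ becomes
\begin{equation*}
e^{-\pi Q(\mathbf{v},\mathbf{v})} \cdot e^{-2\pi \|s_{\mathbf{v}}(z)\|^2} = e^{-\pi Q_z(\mathbf{v},\mathbf{v})}.
\end{equation*}
Since $Q_z$ is a positive-definite quadratic form on $V^r$, multiplication of a polynomial in $\mathbf{v}$ by $e^{-\pi Q_z(\mathbf{v},\mathbf{v})}$ produces a Schwartz function; hence $\varphi(\cdot, z)$ and $\nu(\cdot, z)$ lie in $\mathcal{S}(V^r) \otimes \wedge T_z^* \mathbb{D}$.

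There is no real obstacle here; the only substantive point is to make sure the exponential of the form-degree-zero part of $\nabla_{\mathbf{v}}^2$ combines with the explicit prefactor $e^{-\pi Q(\mathbf{v},\mathbf{v})}$ in \Cref{def:forms_varphi_nu_xi} to give a Gaussian in the positive-definite majorant $Q_z$ rather than in the indefinite form $Q$, which is precisely the role played by the normalization $e^{-\pi Q(\mathbf{v},\mathbf{v})}$ emphasized in the paragraph preceding the lemma.
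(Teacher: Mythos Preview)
Your argument is correct. You use the direct factorization $e^{\nabla_{\mathbf v}^2}=e^{(\nabla_{\mathbf v}^2)_{[0]}}\,e^{R_{\mathbf v}}$ from \eqref{eq:expansion_e_nabla2}, which is valid because $(\nabla_{\mathbf v}^2)_{[0]}=-2\pi\|s_{\mathbf v}\|^2\cdot\mathrm{id}$ is central; this yields immediately that $\varphi(\cdot,z)$ and $\nu(\cdot,z)$ are a Gaussian in the positive-definite majorant $Q_z$ times a polynomial, hence Schwartz. The paper instead first reduces to $r=1$ via the multiplicativity in Propositions~\ref{prop:phi_basic_properties} and~\ref{prop:nu_properties}, and then invokes Duhamel's formula for $e^{-\pi Q(v,v)}e^{\nabla_v^2}$ to obtain a uniform bound $\|e^{-\pi Q(v,v)}e^{\nabla_v^2(z)}\|_{\overline U,k}<Ce^{-2\pi Q_{\overline U}(v)}$. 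Since the degree-zero part is central here, Duhamel's expansion collapses to exactly your finite sum, so the two routes are equivalent; your version is a bit more transparent and makes the ``Gaussian times polynomial'' structure explicit, while the paper's formulation via seminorms on a neighbourhood $\overline U$ is phrased so as to give local uniformity in $z$ as well (something you do not need for the lemma as stated).
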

\begin{proof}
	By \Cref{prop:phi_basic_properties}.(a) and \Cref{prop:nu_properties}.(a), we may assume that $r=1$. Recall that the quadratic form $Q_z(v) = \tfrac{1}{2}Q(v,v)+h_z(s_v)$ on $V$ is positive definite. Write $\nabla_{v}^2(z)=(\nabla_{v}^2)_{[0]}(z)+S(v,z)$. By Duhamel's formula (\cite[p. 144]{SouleBook}) we have
	\begin{equation} \label{eq:Duhamel}
	\begin{split}
	& e^{-\pi Q(v,v)} e^{\nabla_v^2(z)} = e^{-2\pi Q_z(v)} \\
	& + \sum_{k \geq 1} (-1)^k \int_{\Delta^k} e^{-2\pi (1-t_k)Q_z(v)}S(v,z)e^{-2\pi (t_k-t_{k-1})Q_z(v)} \cdots S(v,z) e^{-2\pi t_1 Q_z(v)} dt_1 \cdots dt_k.
	\end{split}
	\end{equation}
	
	Here $\Delta^k=\{(t_1,\ldots,t_k) \in \mathbb{R}^k| 0 \leq t_1 \leq \ldots \leq t_k \leq 1\}$ is the $k$-simplex and the sum is finite since $S(v,z)$ has positive degree. Let $\| \cdot \|_{\overline{U},k}$ be an algebra seminorm as in \Cref{subsection:superconnection_Koszul_complex} and let $Q_{\overline{U}}$ a positive definite quadratic form on $V$ such that $Q_{\overline{U}} < Q_z$ for all $z \in \overline{U}$. Then
	\begin{equation}
	\| e^{-2\pi Q_z(v)} \|_{\overline{U},k} < C e^{-2\pi Q_{\overline{U}}(v)}, \quad v \in V,
	\end{equation}
	for some positive constant $C$. Since $S(v,z)$ grows linearly with $v$, \eqref{eq:Duhamel} implies that
	\begin{equation} \label{eq:bound_Chern_2}
	\| e^{-\pi Q(v,v)} e^{\nabla_v^2(z)} \|_{\overline{U},k} < C e^{-2\pi Q_{\overline{U}}(v)}, \quad v \in V,
	\end{equation}
	(with different $C$) and hence that the same bound holds for $\| \phi(v) \|_{\overline{U},k}$ where $\phi(v)$ is any derivative of $\varphi$ or $\nu$ in the $v$ variable. 
\end{proof}

\subsection{Behaviour under the Weil representation} \label{subsection:Behaviour under the Weil representation}

\subsubsection{} \label{subsubsection:symplectic_group_definitions} Let $r$ be a positive integer. We write $0$ and $1_r$ for the identically zero and identity $r$-by-$r$ matrices respectively. Consider the vector space $W_r:=\bbK^r$ endowed with the $\sigma$-skew-Hermitian form determined by
\begin{equation}
\begin{pmatrix} 0 & 1_r \\ -1_r & 0 \end{pmatrix}.
\end{equation}
Its isometry group is the symplectic group
\begin{equation}
\mathrm{Sp}_{2r}(\mathbb{R}) = \left\{ g \in \mathrm{GL}_{2r}(\mathbb{R})  \, \left| \, g \left(\begin{smallmatrix} 0 & 1_r \\ -1_r & 0 \end{smallmatrix} \right) {^t g} = \left(\begin{smallmatrix} 0 & 1_r \\ -1_r & 0 \end{smallmatrix} \right) \right. \right\}
\end{equation}
in case 1 and the quasi-split unitary group
\begin{equation}
\mathrm{U}(r,r) = \left\{g \in \mathrm{GL}_{2r}(\mathbb{C})  \ \left| \  g \left(\begin{smallmatrix} 0 & 1_r \\ -1_r & 0 \end{smallmatrix} \right) {^t \overline{g}} = \left(\begin{smallmatrix} 0 & 1_r \\ -1_r & 0 \end{smallmatrix} \right) \right. \right\}
\end{equation}
in case 2. Denote by $\mathrm{Mp}_{2r}(\mathbb{R})$ the metaplectic double cover of $\mathrm{Sp}_{2r}(\mathbb{R})$, and identify it (as a set) with $\mathrm{Sp}_{2r}(\mathbb{R}) \times \{ \pm 1 \}$ as in \cite{RallisHoweDuality}. Define
\begin{equation} \label{eq:def_Gr'_archimedean}
G_r' =  \begin{cases} \mathrm{Mp}_{2r}(\mathbb{R}), & \text{orthogonal case} \\ \mathrm{U}(r,r), & \text{unitary case.} \end{cases}
\end{equation}

Let $N_r$ and $M_r$ be the subgroups of $G'_r$ given by
\begin{align}
N_r&=\left\{ (n(b),1) \ | \  b \in \mathrm{Sym}_r(\mathbb{R}) \right\} \\
M_r&=\left\{ (m(a),\epsilon) \ \left|  \ a \in \mathrm{GL}_r(\mathbb{R}), \epsilon = \pm 1 \right. \right\}
\end{align}
in case 1 and by
\begin{equation}
\begin{split}
N_r &= \left\{ n(b) | b \in \mathrm{Her}_{r} \right\}, \\
M_r &= \left\{ m(a) | a \in \mathrm{GL}_{r}(\mathbb{C}) \right\}
\end{split}
\end{equation}
in case 2.

We also fix a maximal compact subgroup $K'_r$ of $G_r'$ as follows. In the orthogonal case,  let $K'_r$ be the inverse image under the metaplectic cover of the standard maximal compact subgroup
\begin{equation} \label{eq:def_max_cmpct_Sp2r}
\left\{\left. \begin{pmatrix} a & -b \\ b & a \end{pmatrix} \right| a+ib \in \mathrm{U}(r) \right\} \cong \mathrm{U}(r)
\end{equation}
of $\mathrm{Sp}_{2r}(\mathbb{R})$. In the unitary case, we define $K'_r = G_r' \cap \mathrm{U}(2r)$. Thus in this case $K'_r \simeq \mathrm{U}(r) \times \mathrm{U}(r)$; an explicit isomorphism $\mathrm{U}(r) \times \mathrm{U}(r) \xrightarrow{\simeq} K'_r $ is given by
\begin{equation} \label{eq:isom_Kr_case2}
(k_1,k_2) \mapsto [k_1,k_2] := \frac{1}{2}\begin{pmatrix} k_1+k_2 & -ik_1+ik_2 \\ ik_1-ik_2 & k_1+k_2\end{pmatrix}.
\end{equation}

\subsubsection{} \label{section: Arch Weil representation} Recall that we have fixed the additive character $\psi(x) = e^{2 \pi i x}$; in the unitary case, we  also fix a character $\chi=\chi_V$ of $\mathbb{C}^\times$ such that $\chi|_{\mathbb{R}^\times} = \mathrm{sgn}(\cdot)^m$. 
Then $G_r' \times \mathrm{U}(V)$ acts on $\mathcal{S}(V^r)$ via the Weil representation
\begin{equation}
\omega  = \begin{cases} \omega_{\psi}, & \text{orthogonal case,} \\ \omega_{\psi,\chi}, & \text{unitary case.} \end{cases} 
\end{equation}
(see \cite[\textsection 1]{HarrisKudlaSweet}). Here the action of $\mathrm{U}(V)$ is in both cases given by
\begin{equation}
\omega(g)\phi({\bf v}) = \phi(g^{-1}{\bf v}), \quad g \in \mathrm{U}(V), \ \phi \in \mathcal{S}(V^r).
\end{equation}
To describe the action of $G_r'$, we write
\begin{equation}
\begin{split}
\underline{m}(a) &= \begin{cases} (m(a),1), & \text{ for } a \in \mathrm{GL}_r(\mathbb{R}) \text{ in case 1,} \\ m(a), & \text{ for } a \in \mathrm{GL}_r(\mathbb{C}) \text{ in case 2,} \end{cases} \\
\underline{n}(b) &= \begin{cases} (n(b),1), & \text{ for } b \in \mathrm{Sym}_r(\mathbb{R}) \text{ in case 1,} \\ n(b), & \text{ for } b \in \mathrm{Her}_r \text{ in case 2,} \end{cases} \\
\underline{w}_r &= \begin{cases} (w_r,1), & \text{ case 1,} \\ w_r, & \text{ case 2.} \end{cases}\\
\end{split}
\end{equation}
Let $|\cdot|_\bbK$ denote the normalized absolute value on $\bbK$; thus $|z|_\mathbb{R}=|z|$ and $|z|_\mathbb{C} = z\overline{z}$. Then 
\begin{equation} \label{eq:Weil_rep_formulas}
\begin{split}
\omega(\underline{m}(a))\phi({\bf v}) \ &= \  |\det a|_{\bbK}^{m/2}\phi({\bf v}\cdot a) \cdot \begin{cases} \chi_\psi(\det a) & \text{ case 1 } \\ \chi(\det a) & \text{ case 2,} \end{cases} \\
\omega(\underline{n}(b))\phi({\bf v})  \ &= \  \psi \left(\mathrm{tr}(bT({\bf v}))\right)  \, \phi({\bf v}), \\
\omega(\underline{w}_r)\phi({\bf v}) \  &= \  \gamma_{V^r}  \, \hat{\phi}({\bf v});
\end{split}
\end{equation}
see \cite{KudlaSplitting} and also \cite[\textsection 4.2]{IchinoPullbacks} for explicit formulas for $\chi_\psi$ and $\gamma_{V^r}$.
Here for ${\bf v}=(v_1,\ldots,v_r)$ we define $T({\bf v})=\tfrac{1}{2}(Q(v_i,v_j))$ and $\hat{\phi}({\bf v})$ denotes the Fourier transform
\begin{equation}
\hat{\phi}({\bf v}) \ = \  \int_{V^r} \phi(\mathbf{w}) \, \psi \left( \tfrac{1}{2}\mathrm{tr}_{\mathbb{C}/\mathbb{R}}\mathrm{tr}(Q(\mathbf{v},\mathbf{w}))\right) d\mathbf{w},
\end{equation}
where $d\mathbf{w}$ is the self-dual Haar measure on $V^r$ with respect to $\psi$.


\subsubsection{} \label{subsubsection:varphi_and_nu_under_Weil_rep} For our purposes it is crucial to understand the action of $K'_r$ on the Schwartz forms $\varphi$ and $\nu$. This was done for the form $\varphi$ in \cite{KudlaMillson1}, where it is shown that $\varphi$ spans a one-dimensional representation of $K'_r$. We will show that $\nu$ also generates an irreducible representation of $K'_r$. To describe it we next recall the parametrization of irreducible representations of $K'_r$ by highest weights; we denote by $\pi_\lambda$ the (unique up to isomorphism) representation with highest weight $\lambda$. 

Consider first the orthogonal case. Then $K'_r$ is a double cover of $\mathrm{U}(r)$ and its irreducible representations are $\pi_\lambda$ with 
\begin{equation} \label{weight_notation_case_1}
\lambda=(l_1,\ldots,l_r) \in \mathbb{Z}^r \cup (\tfrac{1}{2}+\mathbb{Z})^r, \quad l_1 \geq \cdots \geq l_r.
\end{equation}
For an integer $k$, we write ${\det}^{k/2}$ for the character of $K_r'$ whose square factors through $\mathrm{U}(r)$ and defines the $k$-th power of the usual determinant character $\det\colon\mathrm{U}(r) \to \mathbb{C}^\times$. Kudla and Millson \cite[Thm. 3.1.(ii)]{KudlaMillson1} show that
\begin{equation} \label{eq:varphi_KM_weight_1}
\omega(k')\varphi = \det(k')^{m/2} \varphi, \quad k' \in K'_r,
\end{equation}
and so $\varphi$ affords the one-dimensional representation $\pi_{l}$ of $K'_r$ with highest weight 
\begin{equation} \label{eq:def_l0_case_1}
l:=\tfrac{m}{2}(1,\ldots,1).
\end{equation}

Now consider the unitary case. Using the isomorphism \eqref{eq:isom_Kr_case2}, 
 any irreducible representation of $K'_r$  is isomorphic to
 $\pi_{\lambda_1} \boxtimes \pi_{\lambda_2}$ for a unique pair $\lambda=(\lambda_1,\lambda_2)$ of dominant weights of $\mathrm{U}(r)$. Let $k(\chi)$ be the unique integer such that $\chi(z)=\left(z/|z|\right)^{k(\chi)}$, and note that $k(\chi)$ and $m$ have the same parity. Kudla and Millson show that 
\begin{equation} \label{eq:varphi_KM_weight_2}
\omega([k_1,k_2])\varphi = (\det k_1)^{(m+k(\chi))/2}(\det k_2)^{(-m+k(\chi))/2} \varphi, \quad k_1,k_2 \in \mathrm{U}(r),
\end{equation}
and so $\varphi$ generates the one-dimensional representation $\pi_{l}$ of $K'_r$ with highest weight 
\begin{equation}\label{eq:def_l0_case_2}
l:=(\tfrac{m+k(\chi)}{2}(1,\ldots,1),\tfrac{-m+k(\chi)}{2}(1,\ldots,1)).
\end{equation}

We will now determine the $K'_r$ representation generated by the Schwartz form $\nu({\bf v})_{[2r-2]}$. Recall that by \Cref{prop:nu_properties} we can write
\begin{equation}
\nu({\bf v}) = \sum_{1 \leq i \leq r} \nu_i({\bf v}),
\end{equation}
where
\begin{equation} 
\nu_i({\bf v}) = e^{-\pi Q(v_i,v_i)}\mathrm{tr_s}(N_i e^{\nabla_{v_i}^2}) \wedge \varphi(v_1,\ldots,\hat{v_i},\ldots,v_r).
\end{equation}

Let $\epsilon_r=1_r$ and, for $1 \leq i <r$, set
\begin{equation} \label{eq:def_epsilon_i}
\epsilon_i= \begin{pmatrix} 1_{i-1} & & & \\ & 0 & & 1 \\ & & 1_{r-i-1} & \\ & -1 & & 0 \end{pmatrix} \in \mathrm{SO}(r).
\end{equation}
Then
\begin{equation}
\nu_i({\bf v}) = \omega(\underline{m}(\epsilon_i))\nu_r({\bf v}), \quad 1 \leq i \leq r;
\end{equation}
thus $\nu(\mathbf{v})_{[2r-2]}$ belongs to the $K'_r$-representation generated by $\nu_r(\mathbf{v})_{[2r-2]}$.

Define a weight $\lambda_0$ of $K_r'$ by setting
\begin{subequations} \label{eq:def_lambda_0}
	\begin{equation} 
		\lambda_0 := (\tfrac{m}{2},\ldots,\tfrac{m}{2}, \tfrac{m}{2}-2)
	\end{equation} 
	in the orthogonal case, and 
	\begin{equation} 
	\lambda_0 := ((\tfrac{m+k(\chi)}{2},\ldots,\tfrac{m+k(\chi)}{2},\tfrac{m+k(\chi)}{2}-1),(\tfrac{-m+k(\chi)}{2},\ldots,\tfrac{-m+k(\chi)}{2},\tfrac{-m+k(\chi)}{2}+1))
	\end{equation}
	in the unitary case.
\end{subequations}

%
%
\begin{lemma} \label{lemma:weight_lambda_0}
	Let $r \geq 1$ and assume that $\mathrm{rk}(\vbE)=1$. Under the action of $K'_r$ via the Weil representation, the form $\nu(\mathbf{v})_{[2r-2]}$ 
	generates an irreducible representation $\pi_{\lambda_0}$ with highest weight $\lambda_0$. The form $\nu_r(\mathbf{v})_{[2r-2]}$ is a highest weight vector in $\pi_{\lambda_0}$.
\end{lemma}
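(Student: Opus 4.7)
The plan is to first show that $\nu_r(\mathbf{v})_{[2r-2]}$ is a highest weight vector of weight $\lambda_0$, then deduce the claim for $\nu(\mathbf{v})_{[2r-2]}$. With $\mathrm{rk}(\vbE) = 1$, the vanishing ranges in \Cref{prop:phi_basic_properties}(c) and \Cref{prop:nu_properties}(c), combined with \Cref{prop:nu_properties}(a) and \Cref{prop:phi_basic_properties}(a), pin down the bottom degree component:
\[
\nu_r(\mathbf{v})_{[2r-2]} \ = \ \nu(v_r)_{[0]} \cdot \varphi(v_1)_{[2]} \wedge \cdots \wedge \varphi(v_{r-1})_{[2]}.
\]
By \eqref{eq:nu0_and_Siegel_gaussian} the first factor is the Siegel Gaussian $e^{-\pi Q_z(v_r,v_r)}$, and the remaining factors are Kudla-Millson forms.

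Next I would compute the weight under the maximal torus $T \subset K'_r$. The diagonal embedding $(G'_1)^r \hookrightarrow G'_r$ intertwines the Weil representation with the external tensor product of $r$ copies of the single-variable Weil representation, so $T$ factors as $T_1 \times \cdots \times T_r$ with $T_i$ acting only on the variable $v_i$. For $i < r$, applying \eqref{eq:varphi_KM_weight_1}-\eqref{eq:varphi_KM_weight_2} to a single vector gives the $T_i$-weight of $\varphi(v_i)_{[2]}$, which matches the first $r-1$ coordinates of $\lambda_0$. For $i = r$, the Siegel Gaussian is a $K'_1$-weight vector of weight $(p-q)/2 = m/2 - 2$ in the orthogonal case (classical: it is the minimal $K'_1$-type for a real quadratic space of signature $(p,q)$, which decomposes as positive-definite of dimension $p$ contributing weight $p/2$ and negative-definite of dimension $q$ contributing weight $-q/2$) and analogously of weight $((m+k(\chi))/2 - 1, (-m+k(\chi))/2 + 1)$ in the unitary case. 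Assembling these factors recovers exactly $\lambda_0$.

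The main obstacle is showing that $\nu_r(\mathbf{v})_{[2r-2]}$ is annihilated by the positive root vectors of $\mathfrak{k}'_{r,\mathbb{C}}$ for the standard positive system with simple roots $\alpha_i = e_i - e_{i+1}$. For $i < r-1$, the root vector $X_{\alpha_i}$ sits in the subalgebra $\mathfrak{k}'_{r-1,\mathbb{C}}$ that acts only on $(v_1,\ldots,v_{r-1})$; the factor $\varphi^{\mathrm{SG}}(v_r)$ is automatically $K'_{r-1}$-invariant, and $\varphi(v_1,\ldots,v_{r-1})$ transforms by a character under $K'_{r-1}$ (by \eqref{eq:varphi_KM_weight_1}-\eqref{eq:varphi_KM_weight_2} applied with rank $r-1$), so $X_{\alpha_i}$ acts by zero. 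The remaining case is $\alpha_{r-1}$, whose root vector mixes $v_{r-1}$ and $v_r$; here I would compute $X_{\alpha_{r-1}}$ as the explicit second-order differential operator coming from \eqref{eq:Weil_rep_formulas} and verify on the two-variable reduction that it annihilates $\varphi^{\mathrm{SG}}(v_r) \cdot \varphi(v_{r-1})_{[2]}$, using the explicit formula \eqref{eq:varphi_explicit_formula}. This rank-two calculation is the chief technical step. An alternative route is to pass to the Fock model, where the raising operators act as polynomial multiplications and the vanishing becomes visible from the polynomial structure of the Kudla-Millson form.

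Granted the highest weight property, the cyclic $K'_r$-module generated by $\nu_r(\mathbf{v})_{[2r-2]}$ is irreducible of highest weight $\lambda_0$, hence isomorphic to $\pi_{\lambda_0}$. Now $\nu(\mathbf{v})_{[2r-2]} = \sum_{i=1}^r \omega(\underline{m}(\epsilon_i)) \nu_r(\mathbf{v})_{[2r-2]}$ is a sum of $K'_r$-translates of the highest weight vector, so it lies in $\pi_{\lambda_0}$; since $\epsilon_i$ represents the transposition $(i,r)$ in the Weyl group $S_r$, the individual summands occupy the $r$ distinct extremal weight spaces obtained from $\lambda_0$ by swapping its $i$-th and $r$-th entries. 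These weight spaces are one-dimensional and distinct, so the $\nu_i(\mathbf{v})_{[2r-2]}$ are linearly independent and $\nu(\mathbf{v})_{[2r-2]} \neq 0$. By irreducibility of $\pi_{\lambda_0}$, $\nu(\mathbf{v})_{[2r-2]}$ also generates the full representation $\pi_{\lambda_0}$.
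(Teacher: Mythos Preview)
Your proposal is correct and opens exactly as the paper does: the same decomposition $\nu_r(\mathbf{v})_{[2r-2]} = \nu(v_r)_{[0]}\cdot \varphi(v_1,\ldots,v_{r-1})_{[2r-2]}$ and the same weight computation via the Siegel Gaussian and the Kudla--Millson weight. For the highest weight claim, the paper mentions the direct route you sketch (reducing to a rank-two check for the single simple root mixing $v_{r-1}$ and $v_r$), but its preferred argument is different and bypasses that computation entirely: one observes that $\nu_r(\mathbf{v},z_0)$ lies in the polynomial Fock space $(S(V^r)\otimes\wedge\mathfrak{p}^*)^K$ with Howe degree $2r-2$, and then invokes the $K'_r$-type decomposition of $S(V^r)$ due to Kashiwara--Vergne (and Harris--Kudla in the orthogonal case) to see that $\pi_{\lambda_0}$ is the \emph{only} $K'_r$-type containing the weight $\lambda_0$ realized at that Howe degree. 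This buys you the highest weight property without touching the rank-two calculation you flag as the chief technical step. Conversely, your argument that $\nu(\mathbf{v})_{[2r-2]}\neq 0$ and generates $\pi_{\lambda_0}$, via the Weyl group action and distinct extremal weights, is more explicit than the paper's treatment, which leaves that point tacit. One minor imprecision: in the unitary case $K'_r\simeq \mathrm{U}(r)\times\mathrm{U}(r)$ has two blocks of compact roots, so there are two mixing simple roots to handle, not one; your reduction still works, but the bookkeeping doubles.
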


\begin{proof}
	%
	Assume first that $r=1$. By \eqref{eq:nu0_and_Siegel_gaussian}, $\nu(z)_{[0]}$ is the Siegel gaussian, which has weight
	\begin{equation} \label{eq:siegel_gaussian_weight_1}
	\lambda_0 = \tfrac{p-q}{2}=\tfrac{m}{2}-2
	\end{equation}
	in case 1 and
	\begin{equation} \label{eq:siegel_gaussian_weight_2}
	\lambda_0 = (\tfrac{p-q+k(\chi)}{2},\tfrac{q-p+k(\chi)}{2})=(\tfrac{m+k(\chi)}{2}-1,\tfrac{-m+k(\chi)}{2}+1)
	\end{equation}
	in case 2. Now assume that $r>1$. By \Cref{prop:phi_basic_properties}.(c), we have
	\begin{equation}
	\nu_r({\bf v})_{[2r-2]} = \nu(v_r)_{[0]} \cdot \varphi(v_1,\ldots,v_{r-1})_{[2r-2]}
	\end{equation} 
	and hence by \eqref{eq:varphi_KM_weight_1}, \eqref{eq:varphi_KM_weight_2}, \eqref{eq:siegel_gaussian_weight_1} and \eqref{eq:siegel_gaussian_weight_2}, the form $\nu_r({\bf v})_{[2r-2]}$ has weight $\lambda_0$. To show that $\nu_r({\bf v})_{[2r-2]}$ is a highest weight vector, one needs to check that $\omega(\alpha)\nu_r({\bf v})_{[2r-2]}=0$ for every compact positive root $\alpha \in \Delta_c^+$ (see \eqref{eq:roots_of_g} and \eqref{eq:roots_of_g_2}). This can be done by a direct computation using \eqref{eq:Weil_rep_formulas} and the explicit formulas \eqref{eq:nu0_and_Siegel_gaussian} and \eqref{eq:varphi_explicit_formula} for $\nu_{[0]}$ and $\varphi$, or alternatively as follows. Evaluating at $z_0$, \eqref{eq:nu0_and_Siegel_gaussian} and \eqref{eq:varphi_explicit_formula} show that
	\begin{equation}
	\nu_r(\mathbf{v},z_0) \in (S(V^r) \otimes \wedge \mathfrak{p}^*)^K;
	\end{equation}
	here $S(V^r) \subset \mathcal{S}(V^r)$ is the subspace spanned by functions of the form $e^{-\pi Q_{z_0}(\mathbf{v},\mathbf{v})}p(\mathbf{v})$, where $p(\mathbf{v})$ is a polynomial on $V^r$. For $\nu_r(\mathbf{v},z_0)$, the degree of these polynomials (called the Howe degree) is $2r-2$. Now it follows from the formulas in \cite[\textsection III.6]{KashiwaraVergne} (see also \cite[Prop. 4.2.1]{HarrisKudlaGSp2} in case 1)
that the only $K_r'$-representation containing the weight $\lambda_0$ realized in $S(V^r)$ in Howe degree $2r-2$ is $\pi_{\lambda_0}$, and the statement follows.
\end{proof}

\subsection{Green forms} \label{subsection:Green forms}

\subsubsection{} \label{subsection:green_form_on_D} In this section we will construct certain currents on $\mathbb{D}$ depending on a parameter $\bfv=(v_1,\ldots,v_r) \in V^r$ and having singularities at $\mathbb{D}_{\bfv}$. We begin with the following special case.

\begin{definition} \label{def:regular tuple}
A tuple $\bfv = (v_1,\ldots,v_r) \in V^r$ is non-degenerate if $\{v_1,\ldots,v_r\}$ is linearly independent. We say that $\bfv$ is regular if $\mathbb{D}_\bfv$ is either empty or of codimension $r$ in $\mathbb{D}$. Note that the latter occurs if and only if $\bfv$ is non-degenerate and $v_1,\ldots,v_r$ span a positive definite subspace of $V$.
\end{definition}

If $\bfv$ is regular, then $s_\bfv$ is a regular section of $(\vbE^r)^\vee$ in the sense of \Cref{subsubsection:Koszul_complexes}, and $\mathbb{D}_\bfv$ is smooth. Therefore, setting
\begin{equation} \label{eq:def_Green form g(v)}
\archGreen(\bfv) := \int_{1}^{\infty} \nuo( \sqrt t \bfv)_{[2r \cdot \mathrm{rk}(\vbE) -2]} \frac{dt}{t} \in A^{r\cdot \mathrm{rk}(\vbE)-1,r\cdot \mathrm{rk}(\vbE)-1}(\mathbb{D}-\mathbb{D}_\bfv),
\end{equation}
Proposition \ref{prop:Green_current_general} shows that $\archGreen(\bfv)=\archGreen(s_{\bfv})$ is smooth on $\mathbb{D}-\mathbb{D}_{\bfv}$, locally integrable on $\mathbb{D}$ and, as a current, satisfies
\begin{equation} \label{eq:Green_form_gv}
\mathrm{dd}^c \archGreen({\bf v})+\delta_{\mathbb{D}_{\bf v}} = \phio({\bf v})_{[2r \, \mathrm{rk}(\vbE)]},
\end{equation}
so that $\archGreen({\bf v})$ is a Green form for $\mathbb{D}_{\bf v}$.

%

\begin{example}\label{example:g(v)_rkE_1}  Let $v \neq 0 \in V$ and assume that $\mathrm{rk}(\vbE) = 1$. Then, using  \eqref{eq:computation_nu0_degree_0_case_1}, we compute
	\begin{equation}
	\begin{split}
	\archGreen(v)  =  \int_1^{+\infty} \nuo(t^{1/2}v)_{[0]}  \frac{dt}{t}  =  \int_1^{+\infty} e^{-2\pi t h(s_v)} \frac{dt}{t} = -\mathrm{Ei}(-2\pi h(s_v)),
	\end{split}
	\end{equation} 
	where $\mathrm{Ei}(-z) = -\smallint_1^\infty e^{-zt}\tfrac{dt}{t}$ denotes the exponential integral. Thus $\archGreen(v) $ coincides with the Green form defined in \cite[(11.24)]{KudlaCD}. \hfill $\diamond$
\end{example}

\subsubsection{} \label{subsection:regularized_green_current_on_D} If $\bfv$ is no longer assumed regular, then the integral in \eqref{eq:def_Green form g(v)} is often no longer convergent. We shall overcome this deficiency by regularization:
for any $\bfv \in V^r$ and $\rho \in \mathbb{C}$ with $\mathrm{Re}(\rho)>0$, define
\begin{equation} \label{eq:def_g_reg_local}
\archGreen(\bfv;\rho) = \int_1^{+\infty} \nuo(\sqrt{t} \bfv)_{[2r \cdot \mathrm{rk}(\vbE)-2]} \, \frac{dt}{t^{\rho+1}}.
\end{equation}
As $\nuo(t \bfv)$ is  bounded as $t \to \infty$, locally uniformly on $\bbD$, this integral defines a smooth form on $\bbD$. We will show that it admits a meromorphic continuation (as a current) to a neighbourhood of $\rho = 0$, beginning first with the case $r=1$.

\begin{lemma} \label{lemma:GreenReg r=1 case}
	For $v \in V$ with $v \neq 0$ and a complex parameter $\rho$, consider the integral on $\bbD - \bbD_{v}$:
	\[
	\archGreen(v; \rho) \ := \ \int_1^{\infty} \nuo(\sqrt t v)_{[2 \rank(\vbE)-2]} \, \frac{dt}{t^{1+\rho}}.
	\]
	For $\mathrm{Re}(\rho) > -1/2$,  this integral converges to a locally integrable form on $\bbD$, and the convergence is locally uniform on $\bbD$ and in $\rho$. 

	\begin{proof}
		
		Recalling that $\nuo(v) = \str(Ne^{\nabla_{v}^2})$, it follows by taking $s=1$ in \eqref{eq:expansion_e_nabla2} that we may write
		\begin{equation}
		\nuo(\sqrt t v)_{[2 \rk(\vbE) - 2]} \ = \ \sum_{k=0}^{\rank(\vbE) - 1} t^k e^{-2 \pi \, t \,  h(s_v)} \, \eta_k(v)
		\end{equation}
		for some  differential forms $\eta_k(v)$ that are smooth on $\bbD$.
		 For convenience, set  $x =  2 \pi h(s_v)$; then
		 \begin{align}
			\archGreen(v;\rho) = \sum_{k=0}^{\rank( \vbE) - 1} \int_1^{\infty} t^{k-\rho} e^{-tx} \frac{dt}{t} \cdot \eta_k(v)
			= \sum_{k=0}^{\rank(\vbE) -1}  x^{\rho - k}  \left( \int_{x}^{\infty} t^{k-\rho}  e^{-t} \frac{dt}{t}  \right) \eta_k(v).
		\end{align}
		 The forms $\eta_k(v)$ are locally bounded since they are smooth, and	a straightforward computation in local coordinates, as in the proof of \Cref{prop:Green_current_general}, implies that $x^{\rho - k }$ is locally integrable for $\mathrm{Re}(\rho) > - 1/2$ and $k \leq \rk(\vbE) - 1$. As for the integrals, write
		 \begin{equation}
		 \begin{split}
			 \int_x^{\infty} t^{k-\rho }e^{-t} \frac{dt}{t} &=  \ \int_x^1 t^{k-\rho} \frac{dt}{t}  - \int_x^1 t^{k - \rho } \left( 1 - e^{-t} \right)\frac{dt}{t} + \int_1^{\infty} t^{k - \rho} e^{-t} \frac{dt}{t}  \\
			 &= \ \frac{1 - x^{k-\rho}}{k-\rho} - \int_x^1 t^{k - \rho } \left( 1 - e^{-t} \right)\frac{dt}{t} + \int_1^{\infty} t^{k - \rho} e^{-t} \frac{dt}{t}.
		\end{split}
		 \end{equation}
		 	The latter two integrals are absolutely bounded uniformly in $x$ and in $\rho$ for $  -\tfrac{1}{2}<\mathrm{Re}(\rho) < \tfrac{1}{2}$, say, while the first term is evidently holomorphic on this region.
In particular,  multiplying each of these terms by $x^{\rho - k}$ yields locally integrable functions for $-\tfrac{1}{2}<\mathrm{Re}(\rho)<\tfrac{1}{2}$; since the more direct estimate covers the case $\mathrm{Re}(\rho)>0$, this proves the lemma.
\end{proof}
\end{lemma}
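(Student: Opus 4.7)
The strategy is to obtain an explicit closed-form expression for the integrand by exploiting the superconnection expansion \eqref{eq:expansion_e_nabla2}. The key point is that $\nabla_{\sqrt{t}v}^2$ splits as a form-degree-zero piece, which is the scalar $-2\pi t\, h(s_v)\cdot \mathrm{id}$, plus a higher-form-degree piece $R_{\sqrt{t}v}$ that is nilpotent (degree bounded by $\dim_{\mathbb{R}}\bbD$) and commutes with the scalar piece. Since $R_{\sqrt{t}v}$ is polynomial in $\sqrt{t}$ and $\nuo(\sqrt{t}v)_{[2\rk(\vbE)-2]}$ only sees contributions from at most $\rk(\vbE)-1$ powers of $R_{\sqrt{t}v}$ (to land in the correct form-degree, using $\str(Ne^{(\cdot)})$), I expect an expansion of the form
\[
\nuo(\sqrt{t}\,v)_{[2\rk(\vbE)-2]} \ = \ e^{-2\pi t\, h(s_v)} \sum_{k=0}^{\rk(\vbE)-1} t^k \, \eta_k(v),
\]
where each $\eta_k(v)$ is a smooth form on $\bbD$ independent of $t$.

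Substituting this into the integral and making the change of variables $u = 2\pi h(s_v)\, t$ (which is licit wherever $h(s_v)>0$, i.e.\ on $\bbD - \bbD_v$) yields
\[
\archGreen(v;\rho) \ = \ \sum_{k=0}^{\rk(\vbE)-1} \bigl(2\pi h(s_v)\bigr)^{\rho - k} \left( \int_{2\pi h(s_v)}^{\infty} u^{k-\rho}\, e^{-u}\, \frac{du}{u} \right) \eta_k(v).
\]
Away from $\bbD_v$ each summand is smooth, and the integrals are entire in $\rho$ with convergence locally uniform on $(\bbD - \bbD_v) \times \bbC$, giving the first (off-$\bbD_v$) part of the claim. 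To handle the behaviour across $\bbD_v$ it will be convenient to split the incomplete gamma factor as
\[
\int_{2\pi h(s_v)}^{\infty} u^{k-\rho} e^{-u} \tfrac{du}{u} \ = \ \int_{2\pi h(s_v)}^{1} u^{k-\rho-1}\, du \ - \ \int_{2\pi h(s_v)}^{1} u^{k-\rho-1}(1 - e^{-u})\, du \ + \ \int_{1}^{\infty} u^{k-\rho-1}e^{-u}\, du,
\]
where the first term is elementary and the latter two are bounded uniformly in $v$ and in $\rho$ on compact subsets of $\{\mathrm{Re}(\rho) > -1/2\}$, since $1-e^{-u} = O(u)$ near $0$.

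The main obstacle is the local integrability across $\bbD_v$. In local holomorphic coordinates $(y_1,\ldots,y_{\rk(\vbE)})$ transverse to $\bbD_v$, the pullback of $h(s_v)$ vanishes like $|y|^2$, so the potentially singular factor $(2\pi h(s_v))^{\rho - k}$ behaves like $|y|^{2(\mathrm{Re}(\rho) - k)}$. Passing to polar coordinates in the $2\rk(\vbE)$ real transverse directions shows that this is locally integrable whenever $2(\mathrm{Re}(\rho) - k) + 2\rk(\vbE) > 0$, i.e.\ $\mathrm{Re}(\rho) > k - \rk(\vbE)$; the worst case $k = \rk(\vbE)-1$ gives $\mathrm{Re}(\rho) > -1$, which is amply satisfied. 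Combining this with the uniform bounds for the remaining factors in the decomposition above (and smoothness of $\eta_k(v)$) yields both the local integrability on $\bbD$ and the local uniformity of convergence in $\rho$, completing the proof.
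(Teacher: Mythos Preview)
Your proof is correct and follows essentially the same route as the paper's: the same expansion of $\nuo(\sqrt{t}\,v)_{[2\rk(\vbE)-2]}$ via \eqref{eq:expansion_e_nabla2}, the same change of variables, the same three-term splitting of the incomplete gamma integral, and the same local-coordinate argument for the integrability of $h(s_v)^{\rho-k}$ across $\bbD_v$. One small remark: your justification for why only integer powers of $t$ (rather than half-integer powers of $\sqrt{t}$) appear is slightly imprecise---the nilpotent piece $R_{\sqrt{t}v}$ contains a $\sqrt{t}$-linear term of form-degree one, so a priori up to $2(\rk(\vbE)-1)$ factors can contribute; the reason the expansion involves only $t^0,\ldots,t^{\rk(\vbE)-1}$ is a bidegree constraint (landing in type $(\rk(\vbE)-1,\rk(\vbE)-1)$ forces an even number of degree-one factors), which you might make explicit.
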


\begin{proposition} \label{prop:xiRegContinuation} Let $\bfv = (v_1,\ldots,v_r) \in V^r$ and let $r' = \dim \langle v_1,\ldots,v_r \rangle$.
	\begin{enumerate}[(i)]
		\item The integral \eqref{eq:def_g_reg_local} converges to a smooth form on $\bbD$ if $\mathrm{Re}(\rho)>0$.
		\item Let $k \in \mathrm{O}(r)$ (case 1) or $k \in \mathrm{U}(r)$ (case 2). Then
		\begin{equation*}
		\archGreen(\bfv \cdot k;\rho) = \archGreen(\bfv;\rho).
		\end{equation*}
		\item As a current, $\archGreen (\bfv; \rho)$ extends meromorphically\footnote{More precisely, we are asserting that for any compactly supported form $\eta$, the expression $\int_{\bbD}\archGreen (\bfv; \rho) \wedge \eta $ admits a meromorphic extension as a function of $\rho$, and is continuous in $\eta$ in the sense of distributions.} to the right half plane $\mathrm{Re}(\rho)>-\frac12$.
		\item If $\bfv$ is regular, then the current $\archGreen(\bfv;\rho)$ is regular at $\rho=0$ and 
		\begin{equation*}
		\archGreen(\bfv; 0) = \archGreen(\bfv). 
		\end{equation*}
		Similarly, for any $\bfv$, the identity $  \archGreen(\bfv; 0) =  \smallint_1^{+\infty} \nuo(t^{1/2}(\bfv))\tfrac{dt}{t}$ holds on $\bbD - \bbD_{\bfv}$. 
\item The constant term of $\archGreen(\bfv;\rho)$at $\rho=0$ is given by
		 \begin{equation*}
		 \mathrm{CT}_{\rho=0} \ \archGreen(\bfv;\rho) = \int_1^\infty \left(\nuo(t^{1/2}\bfv)_{[2r \cdot \mathrm{rk}(\vbE)-2]} - (r-r') \delta_{\mathbb{D}_\bfv} \wedge c_{\mathrm{rk}(\vbE)-1}(\vbE^\vee,\nabla)^*  \wedge \Omega_{\vbE^\vee}^{r-r'-1} \right)\frac{dt}{t}
		 \end{equation*}
		\item The constant term $\mathop{\mathrm{CT}}\limits_{\rho=0} \archGreen(\bfv;\rho)$ satisfies the  equation
		\begin{equation*}
		\ddc \mathop{\mathrm{CT}}\limits_{\rho=0} \archGreen(\bfv;\rho) +  \delta_{\mathbb{D}_{\bfv}} \wedge \Omega_{\vbE^{\vee}}^{r-r'} = \phio(\bfv )_{[2r \cdot \rank(\vbE)]}
		\end{equation*}
		of currents on $\bbD$, where $\Omega_{\vbE^{\vee}} =  c^{\mathrm{top}}( \vbE^{\vee},\nabla)^*$.
	\end{enumerate}
\end{proposition}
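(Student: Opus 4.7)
My plan is as follows.

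Parts (i), (ii), and (iv) I would handle quickly as consequences of results already in hand. For (i), the explicit expansion \eqref{eq:expansion_e_nabla2} applied to the Koszul superconnection $\nabla_\bfv$, combined with \Cref{prop:nu_properties}(a), shows that $\nuo(\sqrt t\bfv)_{[2r\cdot\mathrm{rk}(\vbE)-2]}$ is locally bounded on $\bbD$ with at worst polynomial growth in $t$ along $\bbD_\bfv$ (exponential decay elsewhere); for $\mathrm{Re}(\rho)>0$ sufficiently large the integrand is integrable, and smoothness follows by differentiating under the integral sign. Part (ii) is an immediate consequence of \Cref{prop:nu_properties}(f): since $Q(\bfv\cdot k,\bfv\cdot k)=Q(\bfv,\bfv)$ for $k\in\mathrm{O}(r)$ or $\mathrm{U}(r)$, the integrand itself is invariant under the right action. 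Part (iv) is then automatic from (iii): for regular $\bfv$ only the ``regular part'' below appears, matching the unregularized definition \eqref{eq:def_Green form g(v)} and \Cref{prop:Green_current_general}.

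The heart of the argument is (iii). Using (ii), I would first reduce to the case $v_{r'+1}=\cdots=v_r=0$ with $\bfv':=(v_1,\ldots,v_{r'})$ non-degenerate (and hence regular whenever $\bbD_\bfv=\bbD_{\bfv'}$ is non-empty). Combining \Cref{prop:nu_properties}(a) with the product formula $\phio(\bfv)=\phio(v_1)\wedge\cdots\wedge\phio(v_r)$ derived from \Cref{prop:phi_basic_properties}(a), one obtains the decomposition
\begin{equation*}
\nuo(\sqrt t\,\bfv) \;=\; \nuo(\sqrt t\,\bfv')\wedge\phio(0)^{r-r'} \;+\; (r-r')\,\nuo(0)\wedge\phio(\sqrt t\,\bfv')\wedge\phio(0)^{r-r'-1}.
\end{equation*}
The integral of the first term in $t^{-\rho-1}\,dt$ produces $\archGreen(\bfv';\rho)\wedge\phio(0)^{r-r'}$; for regular $\bfv'$, the argument of \Cref{lemma:GreenReg r=1 case}, applied componentwise in the Koszul decomposition, extends to show holomorphicity of this term on $\mathrm{Re}(\rho)>-\tfrac12$. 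For the second ``singular'' term the transgression identity \Cref{prop:nu_properties}(b), integration by parts in $t$, and the boundary behavior $[\phio(\sqrt T\,\bfv')^*]\to[\delta_{\bbD_{\bfv'}}]$ (BGS, \cite[(1.14),(1.19)]{BismutGilletSouleDuke90}) together yield, as currents,
\begin{equation*}
\rho\int_1^\infty \phio(\sqrt t\,\bfv')\,t^{-1-\rho}\,dt \;=\; \phio(\bfv') \;-\; \lim_{T\to\infty}\phio(\sqrt T\,\bfv')\,T^{-\rho} \;-\; \ddc\,\archGreen(\bfv';\rho),
\end{equation*}
showing that this factor has at worst a simple pole at $\rho=0$.

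Given this, parts (v) and (vi) come from extracting constant terms. For (v), combining the above with $\nuo(0)_{[2\mathrm{rk}(\vbE)-2]}=c_{\mathrm{rk}(\vbE)-1}(\vbE^\vee,\nabla)^*$ (the $r=1$ case of \Cref{prop:nu_properties}(e)) and $\phio(0)_{[2\mathrm{rk}(\vbE)]}=\Omega_{\vbE^\vee}$, the polar term of $\archGreen(\bfv;\rho)$ is identified with $(r-r')\,\delta_{\bbD_\bfv}\wedge c_{\mathrm{rk}(\vbE)-1}(\vbE^\vee,\nabla)^*\wedge\Omega_{\vbE^\vee}^{r-r'-1}$, which is exactly the current $\int_1^\infty (r-r')\,\delta_{\bbD_\bfv}\wedge c_{\mathrm{rk}(\vbE)-1}(\vbE^\vee,\nabla)^*\wedge\Omega_{\vbE^\vee}^{r-r'-1}\,\tfrac{dt}{t}$ subtracted in the statement. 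Part (vi) follows by applying $\ddc$ to the identity of (v): the regular summand gives $\phio(\bfv')_{[\text{top}]}-\delta_{\bbD_{\bfv'}}$ via Green's equation \eqref{eq:Green_form_gv}, after which the identity $\phio(\bfv)_{[2r\,\mathrm{rk}(\vbE)]}=\phio(\bfv')_{[2r'\,\mathrm{rk}(\vbE)]}\wedge\Omega_{\vbE^\vee}^{r-r'}$ (product formula and top-degree piece of $\phio(0)$) packages the result into the claimed equation. The main obstacle I anticipate is careful bookkeeping of currents in multidegree: verifying that the BGS residue $[\phio(\sqrt T\bfv')^*]\to[\delta_{\bbD_{\bfv'}}]$, combined with the pre-factor $\phio(0)^{r-r'-1}$ of total degree $2(r-r'-1)\mathrm{rk}(\vbE)$, correctly produces the asserted wedge product with $\Omega_{\vbE^\vee}^{r-r'-1}$ and $c_{\mathrm{rk}(\vbE)-1}(\vbE^\vee,\nabla)^*$ in the exact degree $2r\,\mathrm{rk}(\vbE)-2$.
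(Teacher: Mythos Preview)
Your overall plan matches the paper's proof closely: reduce via (ii) to $\bfv=(\bfv',\mathbf 0_{r-r'})$ with $\bfv'$ non-degenerate, split $\nuo(\sqrt t\,\bfv)_{[2rq'-2]}$ into a ``regular'' piece $\nuo(\sqrt t\,\bfv')\wedge\Omega_{\vbE^\vee}^{r-r'}$ and a ``singular'' piece proportional to $\phio(\sqrt t\,\bfv')$, handle the former via \Cref{lemma:GreenReg r=1 case}, and analyze the latter separately. Parts (i), (ii), (iv), and (vi) are fine.

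There is, however, a genuine gap in your treatment of the singular term and of (v). Your integration-by-parts identity
\[
\rho\int_1^\infty \phio(\sqrt t\,\bfv')\,\frac{dt}{t^{\rho+1}} \;=\; \phio(\bfv') - \ddc\,\archGreen(\bfv';\rho)
\]
(the limit term vanishes for $\mathrm{Re}(\rho)>0$) does show meromorphicity with at most a simple pole and correctly identifies the residue as $\delta_{\bbD_{\bfv'}}$. But this route does \emph{not} by itself yield the integral expression in (v): extracting the constant term from your formula gives $-\ddc\bigl(\tfrac{d}{d\rho}\archGreen(\bfv';\rho)\bigr)\big|_{\rho=0}$, not $\int_1^\infty(\phio(\sqrt t\,\bfv')-\delta_{\bbD_{\bfv'}})\,\tfrac{dt}{t}$. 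To know the latter integral even converges you need the quantitative estimate $\phio(\sqrt t\,\bfv')_{[2r'q']}-\delta_{\bbD_{\bfv'}}=O(t^{-1/2})$ as currents (this is Bismut's \cite[(3.9)]{BismutInv90}, cited in the paper as \eqref{eqn:BismutPhiEstimate}); the references you give, \cite[(1.14),(1.19)]{BismutGilletSouleDuke90}, only give the qualitative limit $\phio\to\delta$, which is not enough. Relatedly, your sentence ``which is exactly the current $\int_1^\infty (r-r')\,\delta_{\bbD_\bfv}\wedge\cdots\,\tfrac{dt}{t}$ subtracted in the statement'' is not well-formed: that integral diverges.

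The paper avoids this detour entirely: it writes directly
\[
\int_1^\infty \phio(\sqrt t\,\bfv')_{[2r'q']}\,\frac{dt}{t^{\rho+1}} \;=\; \int_1^\infty\bigl(\phio(\sqrt t\,\bfv')_{[2r'q']}-\delta_{\bbD_{\bfv'}}\bigr)\frac{dt}{t^{\rho+1}} \;+\; \frac{1}{\rho}\,\delta_{\bbD_{\bfv'}},
\]
and invokes Bismut's $O(t^{-1/2})$ estimate to see that the first integral on the right converges for $\mathrm{Re}(\rho)>-\tfrac12$. This simultaneously proves (iii) and gives (v) with no further work. Your argument is easily repaired by inserting this estimate at the appropriate point.
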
	
\begin{proof}
Part $(i)$ follows immediately from the expression \eqref{eq:expansion_e_nabla2}, which shows that (locally on $\mathbb{D}$) $\nuo(\sqrt{t} \bfv)$ and its partial derivatives stay bounded as $t \to +\infty$. Part $(ii)$ follows from \Cref{prop:nu_properties}.$(f)$.

To show $(iii)$, let $k$ be as in $(ii)$ such that $\bfv \cdot k=({\bf 0}_{r-r'},\bfv')$ with $\bfv'$ non-degenerate. For convenience, set $q'=\mathrm{rk}(\vbE)$. By Propositions \ref{prop:phi_basic_properties}.(c) and \ref{prop:nu_properties}.(c),(f) we can write
\begin{equation}\label{eq:proof_regularization_on_D_two_sums_1}
\begin{split}
\nuo(\bfv)_{[2r q'-2]} &= \nuo(\bfv \cdot k)_{[2r q'-2]} \\
&= \nuo(\bfv')_{[2r'q'-2]} \wedge \phio({\bf 0}_{r-r'})_{[2(r-r')q']} + \nuo({\bf 0}_{r-r'})_{[2(r-r') q'-2]} \wedge \phio(\bfv')_{[2r'q']}.
\end{split}
\end{equation}
The same propositions also show that
\begin{equation}
\begin{split}
\phio({\bf 0}_{r-r'})_{[2(r-r')q']} &= \Omega_{\vbE^\vee}^{r-r'} \\
\nuo({\bf 0}_{r-r'})_{[2(r-r') q'-2]} &= (r-r')c_{\mathrm{rk}(\vbE)-1}(\vbE^\vee,\nabla)^* \wedge \Omega_{\vbE^\vee}^{r-r'-1}
\end{split}
\end{equation}
and hence
\begin{equation} \label{eq:proof_regularization_on_D_two_sums}
\begin{split}
\nuo(\bfv)_{[2rq'-2]} &= \nuo(\bfv')_{[2r'q'-2]} \wedge \Omega_{\vbE^\vee}^{r-r'} \\
& \quad + \phio(\bfv')_{[2r'q']} \wedge (r-r')c_{\mathrm{rk}(\vbE)-1}(\vbE^\vee,\nabla)^* \wedge \Omega_{\vbE^\vee}^{r-r'-1}.
\end{split}
\end{equation}
Consider the contribution of each term in the last expression to $\archGreen(\bfv;\rho)$. Writing $\bfv_i'=(v_1',\ldots,\widehat{v_i'},\ldots,v'_{r'})$, the first term contributes
\begin{equation} \label{eq:proof_regularization_on_D_eq_1}
\sum_{1 \leq i \leq r'} \int_1^{+\infty} \nuo(\sqrt{t}v_i')_{[2q'-2]} \wedge \phio(\sqrt{t}\bfv_i')_{[2(r'-1)q']} \frac{dt}{t^{\rho+1}} \wedge \Omega_{\vbE^\vee}^{r-r'}.
\end{equation}
Since $\phio(\sqrt{t}\bfv_i')$ stays bounded as $t \to +\infty$, \Cref{lemma:GreenReg r=1 case} shows that \eqref{eq:proof_regularization_on_D_eq_1} converges to a locally integrable form on $\mathbb{D}$ for $\mathrm{Re}(\rho)>-1/2$. 

The contribution of the second term is
\begin{equation} \label{eq:proof_regularization_on_D_second_contribution}
\int_1^{+\infty} \phio(\sqrt{t}\bfv')_{[2r'q']} \frac{dt}{t^{\rho+1}} \wedge (r-r')c_{\mathrm{rk}(\vbE)-1}(\vbE^\vee,\nabla)^* \wedge \Omega_{\vbE^\vee}^{r-r'-1},
\end{equation}
hence it suffices to prove meromorphic continuation of the integral in this expression. We rewrite this integral as
\begin{equation} \label{eq:proof_regularization_on_D_eq_2}
\int_1^{+\infty}  \phio(\sqrt{t}\bfv')_{[2r'q']} \frac{dt}{t^{\rho+1}} 
 =\int_1^{+\infty} (\phio(\sqrt{t}\bfv')_{[2r'q']}- \delta_{\mathbb{D}_\bfv}) \frac{dt}{t^{\rho+1}} + \frac{1}{\rho} \delta_{\mathbb{D}_\bfv};
\end{equation}
then Bismut's asymptotic estimate \cite[(3.9)]{BismutInv90} 
	implies that
	\begin{equation} \label{eqn:BismutPhiEstimate}
	\phio(\sqrt{t} \bfv')_{[2r'q']} - \delta_{\mathbb{D}_{\bfv}} = \mathrm{O}(t^{-\frac12})
	\end{equation}
	as currents on $\mathbb{D}$, and hence the right hand side gives the desired meromorphic continuation to $\mathrm{Re}(\rho)>-1/2$, proving $(iii)$.
	
	When $\bfv$ is regular, or for general $\bfv$ upon restriction to $\bbD - \bbD_{\bfv}$, the proof of $(iii)$ shows that the integral defining $\archGreen(\bfv;\rho)$ converges when $\mathrm{Re}(\rho)>-1/2$; thus we can set $\rho=0$ and obtain $(iv)$.		

To prove $(v)$, we proceed as in $(iii)$ and analyze the contribution to $\mathrm{CT}_{\rho=0} \ \archGreen(\bfv;\rho)$ of each summand in the right hand side of \eqref{eq:proof_regularization_on_D_two_sums}. Observe that the constant term at $\rho = 0$ of \eqref{eq:proof_regularization_on_D_eq_1} is simply
\begin{equation} \label{eq:proof_regularization_on_D_eq_1_const_term}
\sum_{1 \leq i \leq r'} \int_1^{+\infty} \nuo(\sqrt{t}v_i')_{[2q'-2]} \wedge \phio(\sqrt{t}\bfv_i')_{[2(r'-1)q']} \frac{dt}{t} \wedge \Omega_{\vbE^\vee}^{r-r'},
\end{equation}
whereas the constant term of \eqref{eq:proof_regularization_on_D_eq_2} is
\begin{equation}
\int_1^{+\infty} (\phio(\sqrt{t}\bfv')_{[2r'q']}- \delta_{\mathbb{D}_\bfv}) \frac{dt}{t}.
\end{equation}
Substituting in \eqref{eq:proof_regularization_on_D_second_contribution} and adding these two contributions gives $(v)$.

Finally, note that (by $(ii)$ and \Cref{prop:phi_basic_properties}.(f)) all terms in $(vi)$ are invariant under replacing $\bfv$ with $\bfv \cdot k$ for any matrix $k \in \mathrm{O}(r)$ (case 1) or $k \in \mathrm{U}(r)$ (case 2). Thus we can assume that $\bfv = ({\bf 0}_{r-r'},\bfv')$, where $\bfv' \in V^{r'}$ is non-degenerate. Then $\nuo(\bfv)_{[2rq'-2]}$ is given by \eqref{eq:proof_regularization_on_D_two_sums}; since $\phio(\bfv')$ is closed, we conclude that
\begin{equation}
\begin{split}
\ddc \nuo(\sqrt{t}\bfv)_{[2rq'-2]} &= \ddc \nuo(\sqrt{t}\bfv')_{[2r'q'-2]} \wedge \Omega_{\vbE^\vee}^{r-r'} \\
&= -t \frac{d}{dt} \phio(\sqrt{t}\bfv')_{[2r'q']} \wedge \Omega_{\vbE^\vee}^{r-r'}.
\end{split}
\end{equation}
Using $(v)$ gives
\begin{equation}
\begin{split}
\ddc \mathrm{CT}_{\rho=0} \ \archGreen(\bfv;\rho) &= \int_1^\infty \ddc\left(\nuo(t^{1/2}\bfv)_{[2rq'-2]}  \right. \\
& \left. \hspace{6em} - (r-r') \delta_{\mathbb{D}_\bfv} \wedge c_{\mathrm{rk}(\vbE)-1}(\vbE^\vee,\nabla)^*  \wedge \Omega_{\vbE^\vee}^{r-r'-1} \right)\frac{dt}{t} \\
& = \int_1^\infty \ddc \nuo(t^{1/2}\bfv)_{[2rq'-2]}  \frac{dt}{t} \\
&= \int_1^\infty - t \frac{d}{dt} \phio(t^{1/2}\bfv')_{[2r'q']}  \frac{dt}{t} \wedge \Omega_{\vbE^\vee}^{r-r'} \\
&= \phio(\bfv)_{[2rq']} - \lim_{t \to \infty} \phio(t^{1/2}\bfv')_{[2r'q']} \wedge \Omega_{\vbE^\vee}^{r-r'} \\
&= \phio(\bfv)_{[2rq']} - \delta_{\mathbb{D}_\bfv} \wedge \Omega_{\vbE^\vee}^{r-r'},
\end{split}
\end{equation}
where the last equality follows from \eqref{eqn:BismutPhiEstimate}, proving $(vi)$.
\end{proof}

\begin{definition}
Let $\bfv=(v_1,\ldots,v_r) \in V^r$.  Define
\begin{equation*}
\archGreen(\bfv) = \mathrm{CT}_{\rho = 0} \ \archGreen(\bfv;\rho) \in D^*(\mathbb{D}).
\end{equation*}
\end{definition}

\begin{example}  \label{xi0WithZeroes}
Suppose that $\bfv \in V^r$ is degenerate and choose $k$ in $O(r)$ or $U(r)$
such that $\bfv \cdot k = ({\bf 0}_{r-r'},\bfv')$, where $\bfv' \in V^{r'}$ is non-degenerate. Then the proof of \Cref{prop:xiRegContinuation}.(iii) shows that
\begin{equation}
\archGreen(\bfv) = \archGreen(\bfv') \wedge \Omega_{\vbE^\vee}^{r-r'} + \mu(\bfv'),
\end{equation}
where 
\begin{equation}
\mu(\bfv') =  (r - r') \int_1^{+\infty} \left( \phio(\sqrt{t}\bfv')_{[2r'q']} - \delta_{\mathbb{D}_\bfv} \right) \frac{dt}{t} \cdot c_{\mathrm{rk}(\vbE)-1}(\vbE^\vee,\nabla)^* \wedge \Omega_{\vbE^\vee}^{r-r'-1}.
\end{equation}
\end{example}

\subsection{Star products} \label{subsection:star_products} 

Let $k,l$ be positive integers with $(k+l)\mathrm{rk}(\vbE) \leq \dim(\mathbb{D})+1$. 
Fix regular tuples ${\bf v}'=(v'_1,\ldots,v'_k)$ and ${\bf v}''=(v''_1,\ldots,v''_l) \in V^l$ such that the tuple 
\begin{equation}
{\bf v}= (v_1,\ldots,v_{k+l}):= (v'_1,\ldots,v'_k,v''_1,\ldots,v''_l) \in V^{k+l}
\end{equation}
is also regular.  
This implies that $\bbD_{\bfv}= \bbD_{\bfv'} \cap \bbD_{\bfv''}$, if non-empty, is a proper intersection, see \Cref{def:regular tuple}.  Define the star product of the Green forms $\archGreen(\bfv')$ and $\archGreen(\bfv'')$, which for the regular case are given by \eqref{eq:def_Green form g(v)}, by
\begin{equation}
\archGreen({\bf v}') * \archGreen({\bf v}'') = \archGreen({\bf v}') \wedge \delta_{\mathbb{D}_{{\bf v}''}} + \phio({\bf v}')_{[2\mathrm{rk}(\vbE)k]} \wedge \archGreen({\bf v}'') \in D^{2(k+l)\mathrm{rk}(\vbE)-2}(\mathbb{D}).
\end{equation}
Our next goal is to compare the currents $\archGreen(\bfv)$ and $\archGreen(\bfv') * \archGreen(\bfv'')$.  For $t_1,t_2 \in \mathbb{R}_{>0}$, define
	\begin{equation}
	\begin{split}
	\alpha(t_1,t_2,{\bf v}',{\bf v}'')  \ &= \  \frac{i}{2\pi} \   \nuo(t_1^{1/2}{\bf v}')_{[2k \cdot \rank(\vbE)-2]}    \ \wedge  \  \overline{\partial}(\nuo(t_2^{1/2}{\bf v}'')_{[2l \cdot \rank(\vbE)-2]})  \ \wedge \  \frac{dt_1 \, dt_2}{t_1 \, t_2}, \\
	\beta(t_1,t_2,{\bf v}',{\bf v}'') \  &= \  \frac{i}{2\pi} \ \partial (\nuo(t_1^{1/2}{\bf v}')_{[2k\cdot \rank(\vbE)-2]})  \ \wedge \ \nuo(t_2^{1/2}{\bf v}'')_{[2l\cdot \rank(\vbE)-2]}  \ \wedge  \ \frac{dt_1  \, dt_2}{t_1\, t_2}.
	\end{split}
	\end{equation}
	We set
	\begin{equation} \label{eq:def_currents_alpha_beta}
	\begin{split}
	\alpha({\bf v}',{\bf v}'')= \int\limits_{ 1 \leq t_1 \leq t_2 \leq +\infty}\alpha(t_1,t_2,{\bf v}',{\bf v}''), \\
	\beta({\bf v}',{\bf v}'')= \int\limits_{1 \leq t_1 \leq t_2 \leq +\infty}\beta(t_1,t_2,{\bf v}',{\bf v}'').
	\end{split}
	\end{equation}

The estimate \eqref{eq:algebra_norm_estimate} shows that the integrals converge to smooth forms on $\mathbb{D}-(\mathbb{D}_{{\bf v}'} \cup \mathbb{D}_{{\bf v}''})$. These forms are locally integrable on $\mathbb{D}$ by a straightforward adaptation of the proof of \Cref{prop:Green_current_general} 
and so they define currents $[\alpha({\bf v}',{\bf v}'')]$ and $[\beta({\bf v}',{\bf v}'')]$ on $\mathbb{D}$.

\begin{theorem} \label{theorem:star_products_arch}
The currents $[\alpha({\bf v}',{\bf v}'')]$ and $[\beta({\bf v}',{\bf v}'')]$ satisfy $g_*\alpha[({\bf v}',{\bf v}'')]=[\alpha(g{\bf v}',g{\bf v}'')]$ and $g_*[\beta({\bf v}',{\bf v}'')]=[\beta(g{\bf v}',g{\bf v}'')]$ for $g \in G$ and
\begin{equation*}
	\archGreen({\bf v}') * \archGreen({\bf v}'') - \archGreen({\bf v}) = \partial [\alpha({\bf v}',{\bf v}'')]+\overline{\partial}[\beta({\bf v}',{\bf v}'')].
\end{equation*}
\end{theorem}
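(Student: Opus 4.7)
The equivariance statements $g_*[\alpha(\bfv',\bfv'')]=[\alpha(g\bfv',g\bfv'')]$ and analogously for $\beta$ follow immediately from the $\mathrm{U}(V)$-equivariance of $\nuo$ in \Cref{prop:nu_properties}(d) together with the naturality of $\partial$ and $\overline{\partial}$ under pullback by holomorphic maps; the substantive content is therefore the second displayed identity.

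My plan is to reduce this to a pointwise Leibniz calculation combined with a Fubini-type argument. Writing $a(t) := \nuo(\sqrt{t}\bfv')_{[2k\,\mathrm{rk}(\vbE)-2]}$, $b(t) := \nuo(\sqrt{t}\bfv'')_{[2l\,\mathrm{rk}(\vbE)-2]}$, $A(t) := \phio(\sqrt{t}\bfv')_{[2k\,\mathrm{rk}(\vbE)]}$ and $B(t) := \phio(\sqrt{t}\bfv'')_{[2l\,\mathrm{rk}(\vbE)]}$, the multiplicativity of the Koszul complex $K(\bfv) \simeq K(\bfv')\otimes K(\bfv'')$ yields the identity
\begin{equation*}
\nuo(\sqrt{t}\bfv)_{[2(k+l)\mathrm{rk}(\vbE)-2]} \;=\; a(t)\wedge B(t) + A(t)\wedge b(t),
\end{equation*}
a version of \Cref{prop:nu_properties}(a) at the level of $\nuo$. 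The transgression formula \Cref{prop:nu_properties}(b) gives $\tfrac{d}{dt}A(t) = -\tfrac{1}{t}\ddc a(t)$ and similarly for $B$, while Bismut's estimate \eqref{eqn:BismutPhiEstimate} yields the currential limit $B(\infty) = \delta_{\bbD_{\bfv''}}$. Rewriting $\archGreen(\bfv')\wedge \delta_{\bbD_{\bfv''}}$ and $\phio(\bfv')_{[2k\,\mathrm{rk}(\vbE)]}\wedge \archGreen(\bfv'')$ as iterated integrals in the variables $t_1, t_2 \in [1,\infty)$, substituting $\delta_{\bbD_{\bfv''}} = B(t_1)-\int_{t_1}^\infty \tfrac{1}{s}\ddc b(s)ds$ and $\phio(\bfv')_{[\ldots]} = A(t_2)+\int_1^{t_2}\tfrac{1}{s}\ddc a(s)ds$, and applying Fubini, one obtains on $\bbD - (\bbD_{\bfv'}\cup\bbD_{\bfv''})$
\begin{equation*}
\archGreen(\bfv')*\archGreen(\bfv'')-\archGreen(\bfv) \;=\; \int_{\{1\leq t_1\leq t_2\leq\infty\}}\bigl[\ddc a(t_1)\wedge b(t_2)-a(t_1)\wedge\ddc b(t_2)\bigr]\frac{dt_1\,dt_2}{t_1\,t_2}.
\end{equation*}
The stated formula then reduces to the pointwise graded-Leibniz identity
\begin{equation*}
a \wedge \ddc b - \ddc a \wedge b \;=\; \pm\bigl[\partial\bigl(\tfrac{i}{2\pi}\,a \wedge \overline{\partial} b\bigr) + \overline{\partial}\bigl(\tfrac{i}{2\pi}\,\partial a \wedge b\bigr)\bigr],
\end{equation*}
a routine computation using $\partial^2=\overline{\partial}^2=0$, $\partial\overline{\partial}=-\overline{\partial}\partial$ and the evenness of $\deg a, \deg b$, after pulling $\partial$ and $\overline{\partial}$ outside the fiber integral (valid since they act only in the $\bbD$-directions).

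The main obstacle is upgrading these formal calculations from the open subset $\bbD-(\bbD_{\bfv'}\cup\bbD_{\bfv''})$ to an identity of currents on all of $\bbD$. Three points must be checked: (i) the integrals defining $[\alpha]$ and $[\beta]$ in \eqref{eq:def_currents_alpha_beta} converge to locally integrable currents on $\bbD$; (ii) the Fubini exchange and the currential passage to the boundary $B(\infty)=\delta_{\bbD_{\bfv''}}$ are legitimate in the presence of the singularities along $\bbD_{\bfv'}\cup\bbD_{\bfv''}$; and (iii) differentiating $[\alpha],[\beta]$ under the integral sign is justified. Each can be handled by combining the exponential-decay estimate \eqref{eq:algebra_norm_estimate}, Bismut's asymptotic \eqref{eqn:BismutPhiEstimate} (which furnishes the $O(t^{-1/2})$ control that was already used in the proof of \Cref{prop:xiRegContinuation}), and the local-integrability argument of \Cref{prop:Green_current_general}(2), using crucially that $\bbD_\bfv = \bbD_{\bfv'}\cap\bbD_{\bfv''}$ is a proper intersection of the expected codimension thanks to the regularity of $\bfv$.
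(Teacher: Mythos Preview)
Your proposal is correct and, at the level of ingredients, identical to the paper's proof: both use the multiplicative decomposition $\nuo(\sqrt{t}\bfv)_{[\cdots]} = a(t)\wedge B(t) + A(t)\wedge b(t)$, the transgression relation $t\tfrac{d}{dt}A(t)=-\ddc a(t)$, the Bismut limit $B(\infty)=\delta_{\bbD_{\bfv''}}$, and the graded Leibniz identity $a\wedge\ddc b-\ddc a\wedge b = \partial(\tfrac{i}{2\pi}a\wedge\overline{\partial}b)+\overline{\partial}(\tfrac{i}{2\pi}\partial a\wedge b)$.

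The organisation differs slightly. The paper introduces a single $1$-form $\tildenuo(\bfv)$ on $\bbD\times\bbR_{>0}^2$ whose integral along the diagonal path $(t,t)$ gives $\archGreen(\bfv)$ and whose integral along the broken path $(1,t)\cup(t,M)$ gives the star product; Stokes' theorem on the triangle $\{1\le t_1\le t_2\le M\}$ then yields the difference as $\int_{\Delta_M}d_2\tildenuo$, which one computes via transgression to be $\partial\alpha+\overline{\partial}\beta$. You instead substitute the fundamental-theorem-of-calculus identities $\delta_{\bbD_{\bfv''}}=B(t_1)-\int_{t_1}^\infty\tfrac{1}{s}\ddc b(s)\,ds$ and $A(1)=A(t_2)+\int_1^{t_2}\tfrac{1}{s}\ddc a(s)\,ds$ directly into the star product and regroup via Fubini. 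These are two packagings of the same computation. The paper's version is marginally cleaner for the currential analysis: by keeping the upper limit $M$ finite throughout and only invoking the limit $\phio(\sqrt{M}\bfv'')\to\delta_{\bbD_{\bfv''}}$ once at the end, it separates the smooth Stokes identity from the single currential passage, whereas your substitution of $\delta_{\bbD_{\bfv''}}$ at the outset front-loads the justification of point~(ii). Both are fine.
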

\begin{proof} 
Set $q'=\mathrm{rk}(\vbE)$. 
Denote by $t_1,t_2$ the coordinates on $\mathbb{R}^{2}_{>0}$ and consider the form $\tildenuo({\bf v}) \in A^*(\mathbb{D} \times \mathbb{R}_{>0}^{2})$ defined by
		\begin{align}
	\tildenuo({\bf v}) \ = \  \nuo(\sqrt{t_1} & \bfv ')_{[2 k q' - 2]}  \, \wedge \, \phio(\sqrt{t_2} \bfv'')_{[2lq']} \, \frac{dt_1}{t_1}   \notag \\
	\ &+ \ \phio(\sqrt{t_1} \bfv')_{[2kq']}  \, \wedge \, \nuo(\sqrt{t_2} \bfv'')_{[2lq'-2]} \, \frac{dt_2 }{t_2}.
	\end{align}
	For a piecewise smooth path $\gamma:I \to \mathbb{R}_{>0}^{2}$ ($I \subset \mathbb{R}$ a closed interval) and $\alpha \in A^*(\mathbb{D}\times \mathbb{R}_{>0}^{2})$, let 
	\begin{equation}
	\int_\gamma \alpha \in A^{*-1}(\mathbb{D})
	\end{equation}
	be the form obtained by integrating $(\mathrm{id} \times \gamma)^*\alpha$ along the fibers of the projection of $\mathbb{D} \times I \to \mathbb{D}$. Fix a real number $M>1$ and consider the paths
	\begin{equation}
		\gamma_{d,M} = (t,t),  \ \ \gamma'_M = (1, t), \ \text{ and } \gamma''_M=(t, M)
	\end{equation}
	with $t \in [1,M]$. By \Cref{prop:nu_properties}.(a), we have
	\begin{equation}
\lim_{M \to \infty}	\int_{\gamma_{d,M}} \tildenuo({\bf v}) \ = \  \lim_{M \to \infty} \int_1^M \nuo(t^{1/2}{\bf v})_{[2(k+l)q'-2]} \frac{dt}{t} \ = \ \archGreen(\bfv).
	\end{equation}
	Next, note that 
	\begin{equation}
	\begin{split}
	(\mathrm{id} \times \gamma_{M}')^* \tildenuo({\bf v}) &= \nuo(t^{1/2} {\bf v}'')_{[2lq'-2]} \wedge \phio({\bf v}')_{[2kq']} \wedge \frac{dt}{t}, \\
	(\mathrm{id} \times \gamma_{M}'')^* \tildenuo({\bf v})&=  \nuo(t^{1/2} {\bf v}')_{[2kq'-2]} \wedge \phio(M^{1/2}{\bf v}'')_{[2lq']} \wedge \frac{dt}{t}.
	\end{split}
	\end{equation} 

	By \cite[Thm. 3.2]{BismutInv90}, as $M \to \infty$ we have $\phio(M^{1/2}{\bf v}'')_{[2lq']} \rightarrow \delta_{\mathbb{D}_{{\bf v}''}}$
	as currents, and hence
	\begin{equation}
	\lim_{M \to \infty} \int\limits_{\gamma'_{M} + \gamma_M''} \tildenuo({\bf v}) = \archGreen({\bf v}') * \archGreen({\bf v}'').
	\end{equation}
	 Let 
	\begin{equation}
	\Delta_{M} \ =  \ \{  (t_1, t_2) \ | \ 1 \leq t_1 \leq t_2 \leq M\}  \ \subset  \  \mathbb{R}_{>0}^{2},
	\end{equation}
	oriented so that $\partial \Delta_{M}=\gamma_{d,M}-\gamma'_{M} - \gamma''_{M}$.
	Let $d=d_1+d_2$ be the differential on $\mathbb{D} \times \mathbb{R}_{>0}^{2}$, where $d_1=\partial+\overline{\partial}$ is the differential on $\mathbb{D}$ and $d_2$ is the differential on $\mathbb{R}_{>0}^{2}$. Then we have
	\begin{equation} \label{eq:identity_proof_star_product}
	\int_{\gamma_{d,M}} \tildenuo({\bf v})-\int_{\gamma'_{M}+\gamma''_M} \tildenuo({\bf v}) \ = \ \int_{\partial \Delta_M } \tildenuo(\bfv) \ = \ \int_{\Delta_{M}} d_2  \, \tildenuo({\bf v}).
	\end{equation}

		Applying \Cref{prop:nu_properties}.$(b)$ we obtain
		\begin{equation}
		\begin{aligned}
 	d_2 \, \tildenuo({\bf v})  &= \Big( t_1 \frac{d}{dt_1}     \phio(t_1^{1/2}{\bf v}')_{[2kq']} \wedge \nuo(t_2^{1/2}{\bf v}'')_{[2lq'-2]}  \\
 	& \qquad \qquad   - \nuo(t_1^{1/2}{\bf v}')_{[2kq'-2]}\wedge t_2\frac{d}{dt_2}\phio(t_2^{1/2}{\bf v}'')_{[2lq']} \Big) \wedge \frac{dt_1  dt_2}{t_1t_2} \\
		&= (-2\pi i)^{-1} \left( -(\partial \overline{\partial} \nuo(t_1^{1/2} {\bf v}'))_{[2kq']} \wedge \nuo(t_2^{1/2}{\bf v}'')_{[2lq'-2]} \right. \\ 
		& \hspace{8em} \left. + \nuo(t_1^{1/2}{\bf v}')_{[2kq'-2]}\wedge (\partial \overline{\partial} \nuo(t_2^{1/2}{\bf v}''))_{[2lq']} \right) \wedge \frac{dt_1 dt_2}{t_1t_2} \\
		&= \partial \alpha(t_1,t_2,{\bf v}',{\bf v}'') + \overline{\partial} \beta(t_1,t_2,{\bf v}',{\bf v}'').
	\end{aligned}
		\end{equation}

The statement follows by taking the limit as $M \to +\infty$ in \eqref{eq:identity_proof_star_product}, and the equivariance property under $g \in G$ follows from \Cref{prop:nu_properties}.(d).
\end{proof}

As a corollary, we obtain the following invariance property of star products. 

\begin{corollary} \label{corollary:rotation_invariance}
	Let $k \in \mathrm{O}(k+l)$ (case 1) or $k \in \mathrm{U}(k+l)$ (case 2) and suppose that ${\bf v}=({\bf v}',{\bf v}'') \in V^{k+l}$  is non-degenerate. Let ${\bf v}'_k$, ${\bf v}''_k$ be defined by ${\bf v} \cdot k = ({\bf v}'_k,{\bf v}''_k)$ and set
	\begin{equation*}
	[\alpha(k;{\bf v}',{\bf v}'')] =  [\alpha({\bf v}',{\bf v}'')]-[\alpha({\bf v}'_k,{\bf v}''_k)], \quad [\beta(k;{\bf v}',{\bf v}'')]=[\beta({\bf v}',{\bf v}'')]-[\beta({\bf v}'_k,{\bf v}''_k)],
	\end{equation*}
	with $\alpha$ and $\beta$ as in \eqref{eq:def_currents_alpha_beta}. Then
	\begin{equation*}
	\archGreen({\bf v}') * \archGreen({\bf v}'') - \archGreen({\bf v}'_k) * \archGreen({\bf v}''_k) = \partial [\alpha(k;{\bf v}',{\bf v}'')] + \overline{\partial}[\beta(k;{\bf v}',{\bf v}'')] \in D^{2(k+l)\mathrm{rk}(\vbE)-2}(\mathbb{D}).
	\end{equation*}
\end{corollary}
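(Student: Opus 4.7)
The plan is to apply \Cref{theorem:star_products_arch} to each of the two splittings of $\bfv$ and then subtract the resulting identities. First, applied directly to the splitting $(\bfv',\bfv'')$, the theorem gives
\begin{equation*}
\archGreen(\bfv') * \archGreen(\bfv'') - \archGreen(\bfv) = \partial [\alpha(\bfv',\bfv'')] + \overline{\partial}[\beta(\bfv',\bfv'')].
\end{equation*}
Applied to the rotated splitting $(\bfv'_k,\bfv''_k)$, whose concatenation is $\bfv \cdot k$, it yields
\begin{equation*}
\archGreen(\bfv'_k) * \archGreen(\bfv''_k) - \archGreen(\bfv \cdot k) = \partial [\alpha(\bfv'_k,\bfv''_k)] + \overline{\partial}[\beta(\bfv'_k,\bfv''_k)].
\end{equation*}

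Next I would invoke the right $\mathrm{O}(k+l)$- (resp.\ $\mathrm{U}(k+l)$-)invariance of $\nuo$ from \Cref{prop:nu_properties}.(f), which implies via the defining integral (and its regularization, cf.\ \Cref{prop:xiRegContinuation}.(ii)) that $\archGreen(\bfv \cdot k) = \archGreen(\bfv)$ as currents on $\bbD$. Subtracting the second identity from the first, the terms $\archGreen(\bfv)$ and $\archGreen(\bfv \cdot k)$ cancel, and after rearranging we obtain
\begin{equation*}
\archGreen(\bfv') * \archGreen(\bfv'') - \archGreen(\bfv'_k) * \archGreen(\bfv''_k) = \partial \bigl([\alpha(\bfv',\bfv'')] - [\alpha(\bfv'_k,\bfv''_k)]\bigr) + \overline{\partial} \bigl([\beta(\bfv',\bfv'')] - [\beta(\bfv'_k,\bfv''_k)]\bigr),
\end{equation*}
which by the definitions of $[\alpha(k;\bfv',\bfv'')]$ and $[\beta(k;\bfv',\bfv'')]$ is exactly the desired identity.

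The only potential obstacle is ensuring that \Cref{theorem:star_products_arch} applies to the rotated splitting $(\bfv'_k,\bfv''_k)$: the theorem was stated under regularity of each half. Since $\bfv$ is merely assumed non-degenerate, it may happen that one (or both) of $\bfv'_k,\bfv''_k$ fails to be regular, in which case the Green currents must be interpreted via the regularization of \Cref{subsection:regularized_green_current_on_D}. However, the integrals defining $\alpha$ and $\beta$ in \eqref{eq:def_currents_alpha_beta} and the identity $\ddc \nuo(\sqrt{t}\bfv'') = -t\tfrac{d}{dt}\phio(\sqrt{t}\bfv'')$ are insensitive to this distinction, so the proof of \Cref{theorem:star_products_arch} adapts verbatim (with $\archGreen$ replaced by its regularized definition from \Cref{prop:xiRegContinuation}) once one verifies that taking constant terms at $\rho=0$ commutes with the bounding operations used there; this is routine given the estimates already established.
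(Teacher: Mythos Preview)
Your proposal is correct and is essentially the paper's own proof: apply \Cref{theorem:star_products_arch} to both splittings and subtract, using the invariance $\archGreen(\bfv\cdot k)=\archGreen(\bfv)$ from \Cref{prop:nu_properties}.(f). Your concern in the final paragraph is unfounded, however: since $\bfv$ is non-degenerate its entries are linearly independent, and right-multiplying by an invertible matrix $k$ preserves this, so $\bfv\cdot k$, $\bfv'_k$, and $\bfv''_k$ are all non-degenerate and hence regular (any non-degenerate tuple is regular, since its span is either positive definite or $\mathbb{D}_{\bfv}=\emptyset$), and \Cref{theorem:star_products_arch} applies without modification.
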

\begin{proof}
	This is a consequence of the theorem and the invariance property $\archGreen(\bfv \cdot k) = \archGreen(\bfv)$, which follows from \Cref{prop:nu_properties}.(f). 
\end{proof}

When $G=\mathrm{SO}(1,2)^0$, this invariance property is one of the main results in \cite{KudlaCD}, where it is shown to hold by a long explicit computation (see also \cite{LiuYifengArithThetaI} for a similar proof in arbitrary dimension in case 2 when $q=1$ and $k+l=p+1$). Our corollary, and its global counterpart  (\Cref{corollary:archimedean_height_pairing_derivative_at_0}), generalizes these results  to arbitrary hermitian symmetric spaces of orthogonal or unitary type and gives a conceptual proof.

\section{Archimedean heights and derivatives of Whittaker functionals} \label{section:archimedean_arith_SW}

Let $r \leq p+1$ and $\bfv=(v_1,\ldots,v_r) \in V^r$ be non-degenerate (recall that by this we mean that $v_1,\ldots,v_r$ are linearly independent) and denote by $\mathrm{Stab}_G \langle v_1,\ldots,v_r \rangle$ the pointwise stabilizer of $\langle v_1,\ldots,v_r \rangle$ in $G$. Let $\archGreen(\bfv)$ be the form given in \Cref{subsection:Green forms}. Assuming that $\mathrm{rk}(\vbE)=1$, we will compute the integral
\begin{equation} \label{eq:integral_xi_0}
\int_{\Gamma_{\bf v} \backslash \mathbb{D}^+} \archGreen({\bf v}) \wedge \Omega^{p-r+1},
\end{equation}
where $\Gamma_{\bfv}$ is a discrete subgroup of finite covolume in $\mathrm{Stab}_G \langle v_1,\ldots,v_r \rangle$, under some additional conditions ensuring that the integral converges. The result is stated in \Cref{thm:height_pairing_and_derivative} and relates this integral to the derivative of a Whittaker functional defined on a degenerate principal series representation of $G_r'$. Despite the length of this section, its proof is conceptually simple and follows easily from \Cref{lemma:weight_lambda_0}, which determines the weight of $\nu(\bfv)_{[2r-2]}$, together with results in \cite{KudlaRallisDegenerate, Lee} concerning reducibility of these representations and multiplicity one for their $K$-types (we review these results in  \Cref{subsection:degenerate principal series of Mp2r,subsection:degenerate principal series of Urr}) and estimates of Shimura \cite{ShimuraConfluent} for Whittaker functionals that we review in \Cref{subsection:Whittaker_functionals}.

\subsection{Degenerate principal series of $\mathrm{Mp}_{2r}(\mathbb{R})$} \label{subsection:degenerate principal series of Mp2r} In this section we fix $r \geq 1$ and let $G'=\mathrm{Mp}_{2r}(\mathbb{R})$. We abbreviate $N=N_r$, $M=M_r$ and $K'=K'_r$ (see \ref{subsubsection:symplectic_group_definitions}).  

\subsubsection{} We denote by $\mathfrak{g}'$ the complexified Lie algebra of $G'$. We have the Harish-Chandra decomposition
\begin{equation}
\mathfrak{g}' = \mathfrak{p}_+ \oplus \mathfrak{p}_- \oplus \mathfrak{k}',
\end{equation}
with
\begin{equation}
\begin{split}
\mathfrak{k}' &= \left\{ \left. \begin{pmatrix} X_1 & X_2 \\ -X_2 & X_1 \end{pmatrix} \right| X_1^t=-X_1, X_2^t = X_2 \right\}, \\
\mathfrak{p_+} &= \left\{ \left. p_+(X)=\frac{1}{2}\begin{pmatrix} X & iX \\ iX & -X \end{pmatrix} \right| X^t=X \right\}, \quad \mathfrak{p}_-=\overline{\mathfrak{p}_+}.
\end{split}
\end{equation}
Note that $\mathfrak{k}'=\mathrm{Lie}(K')_\mathbb{C}$, where $K'$ is the maximal compact subgroup of $G'$ in \Cref{subsubsection:symplectic_group_definitions}.
For $x=(x_1,\ldots,x_r) \in \mathbb{C}^r$, let $d(x)$ denote the diagonal matrix $\mathrm{diag}(x_1,\dots,x_r)$, write
\begin{equation}
h(x) = \begin{pmatrix} & -i\cdot d(x) \\ i\cdot d(x) & \end{pmatrix}
\end{equation}
and define $e_j(h(x)) = x_j$. Then $\mathfrak{h}' = \{h(x)|x \in \mathbb{C}^r\}$ is a Cartan subalgebra of $\mathfrak{k}'$, and we choose the set of positive roots $\Delta^+=\Delta^+_c \sqcup \Delta^+_{nc}$ given by
\begin{equation} \label{eq:roots_of_g}
\begin{split}
\Delta^+_c &= \{e_i-e_j | 1\leq i < j \leq r\}, \\
\Delta^+_{nc} &= \{e_i+e_j | 1 \leq i \leq j \leq r\}, 
\end{split}
\end{equation}
where $\Delta^+_c$ and $\Delta^+_{nc}$ denote the compact and non-compact roots respectively.

\subsubsection{} The group $P=MN$ is a maximal parabolic subgroup of $G'$, the inverse image under the covering map of the standard Siegel parabolic of $\mathrm{Sp}_{2r}(\mathbb{R})$. The group $M$ has a character of order four given by
\begin{equation}
\chi(m(a),\epsilon) = \epsilon \cdot \begin{cases} i, & \text{ if } \det a  <0, \\ 1, & \text{ if } \det a  >0. \end{cases}
\end{equation}
For $\alpha \in \mathbb{Z}/4\mathbb{Z}$ and $s \in \mathbb{C}$, consider the character 
\begin{equation}
\chi^\alpha |\cdot|^s: M \to \mathbb{R}, \quad (m(a),\epsilon) \mapsto \chi(m(a),\epsilon)^\alpha |\det a|^s
\end{equation}
and the smooth induced representation
\begin{equation}
I^\alpha(s) = \mathrm{Ind}_{P}^{G'} \chi^\alpha |\cdot|^s
\end{equation}
with its $\mathcal{C}^\infty$ topology, where the induction is normalized so that $I^\alpha(s)$ is unitary when $\mathrm{Re}(s)=0$. In concrete terms, $I^\alpha(s)$ consists of smooth functions $\Phi\colon G' \to \mathbb{C}$ satisfying 
\begin{equation} \label{eq:principal_series_case_1}
\Phi\left((m(a),\epsilon)\, \underline{n}(b) \, g'\right) = \chi(m(a),\epsilon)^\alpha \, |\det a|^{s+\rho_r}  \, \Phi(g'), \quad \rho_r:=\tfrac{r+1}{2},
\end{equation}
with the action of $G'$ defined by $r(g')\Phi(x)=\Phi(xg')$. Note that, by the Cartan decomposition $G'=PK'$, any such function is determined by its restriction to $K'$; in particular, given $\Phi(s_0) \in I^\alpha(s_0)$, there is a unique family $(\Phi(s) \in I^\alpha(s))_{s \in \mathbb{C}}$ such that $\Phi(s)|_{K'}=\Phi(s_0)|_{K'}$ for all $s$. Such a family is called a standard section of $I^\alpha(s)$.

\subsubsection{} We denote by $\chi^\alpha$ the character of $K'$ whose differential restricted to $\mathfrak{h}'$ has weight $\tfrac{\alpha}{2}(1,\ldots,1)$. 
The $K'$-types appearing in $I^{\alpha}(s)$ were determined by Kudla and Rallis \cite{KudlaRallisDegenerate} to be precisely those irreducible representations $\pi_{\lambda}$ of $K'$ with highest weight $\lambda=(l_1,\ldots,l_r)$ (here $l_1 \geq \cdots \geq l_r$) such that $\pi_\lambda \otimes (\chi^{\alpha})^{-1}$ descends to an irreducible representation of $\mathrm{U}(r)$ and satisfies
\begin{equation} \label{eq:restriction_lambda_K_type}
l_i \in \frac{\alpha}{2}+2\mathbb{Z}, \quad 1 \leq i \leq r.
\end{equation} 

Moreover, these $K'$-types appear with multiplicity one in $I^\alpha(s)$ (op.\ cit., p.31). If $\Phi^\lambda(\cdot,s) \in I^\alpha(s)$ is a non-zero highest weight vector of weight $\lambda$, then $\Phi^\lambda(e,s) \neq 0$ (op.\ cit., Prop. 1.1), hence from now on we normalize all such highest weight vectors so that $\Phi^\lambda(e,s)=1$. Note that the restriction of $\Phi^\lambda$ to $K'$ is independent of $s$. For scalar weights $\lambda=l(1,\ldots,1)$ with $l \in \tfrac{\alpha}{2}+2\mathbb{Z}$, we have
\begin{equation} \label{eq:def_scalar_weight_vector_case_1}
\Phi^{l}(k',s):= \Phi^{l(1,\ldots,1)}(k',s) = (\det k')^l, \quad k' \in K'.
\end{equation}

\subsubsection{} \label{subsection:lowering_ops_case_1} Suppose that $X \in \mathfrak{g}'$ is a highest weight vector for $K'$ of weight $\lambda$. Then $X\Phi^l(s)$ is a highest weight vector of weight $\lambda+l$ and hence multiplicity one of $K'$-types implies that
\begin{equation}
X\Phi^l(s) = c(X,l,s) \Phi^{\lambda+l}(s)
\end{equation}
for some constant $c(X,l,s) \in \mathbb{C}$. We will need to determine this constant explicitly for certain choices of $X$; let 
\begin{equation}
e_i = ( 0, \ldots, 0, \underset{i-\mathrm{th}}{1}, 0, \ldots, 0 )
\end{equation}
and define $X_i^-=p_-(d(e_i))$. When $i=r$, the vector $X_r^-$ is a highest weight vector for $K'$ of weight $-2e_r$. 
Let $\iota_r \colon \mathrm{Mp}_2(\mathbb{R}) \to G'$ be the embedding defined by
\begin{equation} \label{eq:embedding_iota_r}
\left( \begin{pmatrix} a & b \\ c & d \end{pmatrix}, \,  \epsilon \right) \mapsto \left( \left( \begin{smallmatrix} 1_{r-1} & & 0_{r-1} & \\ & a & & b \\ 0_{r-1} & & 1_{r-1} & \\ & c & & d \end{smallmatrix} \right), \, \epsilon \right).
\end{equation}
Then, for any $\Phi \in I^\alpha(s)$, the value of $X_r^-\Phi(e,s)$ only depends on the pullback function $\iota_r^*\Phi(s):\mathrm{Mp}_2(\mathbb{R}) \to \mathbb{C}$. Note 
that every element $g'$ of $\mathrm{Mp}_2(\mathbb{R})$ can be written uniquely in the form
\begin{equation}
g' \ = \  \left(n(x)m(y^{1/2}),\, 1 \right) \tilde{k}_{\theta},
\end{equation}
where $x \in \mathbb{R}$, $y \in \mathbb{R}_{>0}$, $\theta \in \mathbb{R}/4\pi\mathbb{Z}$ and we define
\begin{equation}
\tilde{k}_\theta = \begin{cases} (k_\theta,1), & \text{ if } -\pi < \theta \leq \pi, \\ (k_\theta,-1), & \text{ if } \pi < \theta \leq 3\pi, \end{cases} \qquad \text{where } k_\theta =  \begin{pmatrix} \cos \theta & \sin \theta \\ -\sin \theta & \cos \theta \end{pmatrix}.
\end{equation}
We think of $x$, $y$ and $\theta$ as coordinates on $\mathrm{Mp}_2(\mathbb{R})$. In terms of these coordinates, we have
\begin{equation} \label{eq:formula_lowering_explicit}
X_r^-\Phi \  = \  \left(-2i y \frac{d}{d\overline{\tau}}+\frac{i}{2}\frac{d}{d \theta}\right)\iota_r^*\Phi,
\end{equation}
for any $\Phi \in I^\alpha(s)$, where $\tfrac{d}{d\overline{\tau}} = \tfrac{1}{2} \left( \tfrac{d}{dx}+i\tfrac{d}{dy} \right)$. 
Taking $\Phi=\Phi^l$, the following lemma is a straightforward computation using the explicit expression
\begin{equation}
\iota_r^*\Phi^l(x,y,\theta,s)=y^{\tfrac{1}{2}\left(s+\tfrac{r+1}{2}\right)}e^{i\theta l}.
\end{equation}
\begin{lemma} \label{lemma:lowering_explicit}
Let $l \in \tfrac{\alpha}{2} + 2\mathbb{Z}$. Then
\begin{equation*}
X_r^-\Phi^{l(1,\ldots,1)}(s)=\tfrac{1}{2}\left(s+\tfrac{r+1}{2}-l\right) \Phi^{l(1,\ldots,1)-2e_r}(s).  \qed
\end{equation*}
\end{lemma}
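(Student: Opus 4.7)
The plan is to combine the multiplicity-one property of $K'$-types in $I^\alpha(s)$ recalled in \Cref{subsection:degenerate principal series of Mp2r} with the explicit formula \eqref{eq:formula_lowering_explicit}. Since $X_r^- = p_-(d(e_r))$ has weight $-2e_r$ under the adjoint action of $\mathfrak{h}'$, the vector $X_r^- \Phi^{l(1,\ldots,1)}(s)$ lies in the $(l(1,\ldots,1) - 2e_r)$-weight space of $I^\alpha(s)$. A direct check using the commutation relations (every compact positive root $e_i - e_j$ with $i<j \leq r$ satisfies $e_i - e_j - 2e_r \notin \Delta$, so the corresponding raising operator in $\mathfrak{k}'$ commutes with $X_r^-$) shows that $X_r^-\Phi^{l(1,\ldots,1)}(s)$ is again a highest weight vector, necessarily of weight $l(1,\ldots,1)-2e_r$, which is a $K'$-type of $I^\alpha(s)$ since $l-2 \in \tfrac{\alpha}{2}+2\mathbb{Z}$. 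By the Kudla--Rallis multiplicity-one statement and the normalization $\Phi^\mu(e,s)=1$ for highest weight vectors, this forces
\[
X_r^-\Phi^{l(1,\ldots,1)}(s) \;=\; c(l,s)\, \Phi^{l(1,\ldots,1)-2e_r}(s), \qquad c(l,s) = \bigl(X_r^-\Phi^{l(1,\ldots,1)}\bigr)(e,s).
\]

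To evaluate $c(l,s)$, I would invoke the remark preceding \eqref{eq:formula_lowering_explicit} that $(X_r^-\Phi)(e,s)$ depends only on the pullback $\iota_r^*\Phi(s)$ to $\mathrm{Mp}_2(\mathbb{R})$, which by \eqref{eq:def_scalar_weight_vector_case_1} together with the transformation rule \eqref{eq:principal_series_case_1} and the Iwasawa decomposition is given by $\iota_r^*\Phi^{l(1,\ldots,1)}(x,y,\theta,s) = y^{a} e^{i\theta l}$ with $a = \tfrac12(s + \tfrac{r+1}{2})$. Expanding $-2iy\,\tfrac{d}{d\overline{\tau}} = -iy\,\partial_x + y\,\partial_y$, the $\partial_x$ piece kills $y^a e^{i\theta l}$, the $\partial_y$ piece produces $a\, y^a e^{i\theta l}$, and $\tfrac{i}{2}\,\partial_\theta$ produces $-\tfrac{l}{2}\, y^a e^{i\theta l}$. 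Summing and evaluating at $(x,y,\theta) = (0,1,0)$ gives
\[
c(l,s) \;=\; a - \tfrac{l}{2} \;=\; \tfrac{1}{2}\!\left(s + \tfrac{r+1}{2} - l\right),
\]
which is precisely the claimed formula.

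There is no real obstacle: the representation-theoretic work (multiplicity one in $I^\alpha(s)$ and the identification of the scalar $K'$-type with $(\det k')^l$) is already packaged in the cited results of Kudla--Rallis, and the verification of the highest-weight property of $X_r^-\Phi^{l(1,\ldots,1)}$ is immediate from the root data. The only point requiring modest care is the one-variable calculation: one must track signs carefully so that the $\partial_y$-contribution of $-2iy\,\partial_{\overline{\tau}}$ enters with the correct positive sign, and that the normalization $\Phi^\mu(e,s)=1$ is used consistently.
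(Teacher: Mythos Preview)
Your proof is correct and follows essentially the same approach as the paper: the proportionality $X_r^-\Phi^{l(1,\ldots,1)}(s)=c(l,s)\Phi^{l(1,\ldots,1)-2e_r}(s)$ via multiplicity one is already set up in the text preceding the lemma (where the paper notes $X_r^-$ is a highest weight vector of weight $-2e_r$), and the paper's proof consists precisely of the one-variable computation you carry out, using the explicit pullback $\iota_r^*\Phi^l(x,y,\theta,s)=y^{\frac12(s+\frac{r+1}{2})}e^{i\theta l}$ together with \eqref{eq:formula_lowering_explicit}.
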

In other words, this shows that $c(X_r^-,l,s) = \tfrac{1}{2}\left(s+\tfrac{r+1}{2}-l\right)$.
\subsubsection{} \label{subsubsection:rep_R_r_case_1} We now return to the quadratic space $V$ over $\mathbb{R}$ of signature $(p,2)$. Define
\begin{equation} \label{eq:def_induced_rep_and_s0_case_1}
I_r(V,s) = I^{ \, \dim V}(s) \qquad  \text{ and }  \qquad 
s_0 = \frac{p-r+1}{2};
\end{equation}
we assume that $r \leq p+1$, so that
\begin{equation}
s_0 \geq 0.
\end{equation} 

Consider the Weil representation $\omega=\omega_\psi$ of $G' \times \mathrm{O}(V)$ on $\mathcal{S}(V^r)$, as in \Cref{section: Arch Weil representation}, and let $R_r(V)$ be its maximal quotient on which  $(\mathfrak{so}(V),\mathrm{O}(V^+) \times \mathrm{O}(V^-))$ acts trivially. The map
\begin{equation}
\lambda \colon \mathcal S(V^r ) \ \to \ I_r(V,s_0), \qquad \lambda(\varphi)(g') :=   \left(\omega(g')\varphi \right)(0)
\end{equation}
is $G'$-intertwining and factors through $R_r(V)$.
By \cite{KudlaRallisDegenerate}, it defines an embedding
\begin{equation}
R_r(V) \hookrightarrow I_r(V,s_0).
\end{equation}

\subsection{Degenerate principal series of $\mathrm{U}(r,r)$} \label{subsection:degenerate principal series of Urr} In this section we fix $r \geq 1$ and let $G'=\mathrm{U}(r,r)$. We abbreviate $N=N_r$, $M=M_r$ and $K'=K'_r$ (see \Cref{subsubsection:symplectic_group_definitions}). Our setup follows \cite{Ichino2007}.

\subsubsection{} \label{subsection:Lie_algebra_u_rr} We denote by $\mathfrak{g}'$ the complexified Lie algebra of $G'$ and let $\mathfrak{g}'_{ss} = \{X \in \mathfrak{g}'|\mathrm{tr}X=0\}$. Let 
\begin{equation}
u = \frac{1}{\sqrt{2}} \begin{pmatrix} 1_r & 1_r \\ i1_r & -i1_r \end{pmatrix} \in \mathrm{U}(2r),
\end{equation}
and note that $u^{-1} G' u$ is the isometry group of the Hermitian form determined by $\left(\begin{smallmatrix} 1_r & 0 \\ 0 & -1_r \end{smallmatrix}\right)$.
In addition, we have
\begin{equation}
K'_r = \left\{  [k_1, k_2] =  u   \begin{pmatrix} k_1 & 0 \\ 0 & k_2 \end{pmatrix}  u^{-1} \ |  \ k_1,k_2 \in \mathrm{U}(r) \right\},
\end{equation}
see \eqref{eq:isom_Kr_case2}, and the Harish-Chandra decomposition
\begin{equation}
\mathfrak{g}' = \mathfrak{p}_+ \oplus \mathfrak{p}_- \oplus \mathfrak{k}',
\end{equation}
with $\mathfrak{g}'=\mathrm{Mat}_{2r}(\mathbb{C})$ and 
\begin{equation}
\begin{split}
\mathfrak{k}' &= \mathrm{Lie}(K'_r)_\mathbb{C} =  \left\{ \left. u   \begin{pmatrix} X_1 & 0 \\ 0 & X_2 \end{pmatrix} u^{-1} \right| X_1, X_2 \in \mathrm{Mat}_r(\mathbb{C}) \right\}, \\
\mathfrak{p_+} &= \left\{ \left. p_+(X):=u \begin{pmatrix} 0 & X \\ 0 & 0 \end{pmatrix} u^{-1}\right| X \in \mathrm{Mat}_r(\mathbb{C}) \right\}, \\ 
\mathfrak{p}_-&=  \left\{ \left. p_-(X):= u \begin{pmatrix} 0 & 0 \\ X & 0 \end{pmatrix} u^{-1} \right| X \in \mathrm{Mat}_r(\mathbb{C}) \right\}.
\end{split}
\end{equation}
Let $\mathfrak{k}'_{ss} = \mathfrak{k'} \cap \mathfrak{g}'_{ss}$ and 
\begin{equation}
\mathfrak{h} = \left\{ u \cdot d(x) \cdot u^{-1} \ | \  x=(x_1,\ldots,x_{2r}) \in \mathbb{C}^{2r}, \ x_1+\ldots+x_{2r} = 0 \right\}.
\end{equation}
Then $\mathfrak{h}$ is a Cartan subalgebra of $\mathfrak{k}'_{ss}$. For $1 \leq i \leq {2r}$, the assignment $u \cdot d(x) \cdot u^{-1} \mapsto x_i$ defines a functional $e_i \colon \mathfrak{h} \to \mathbb{C}$. We write $\Delta$ for the set of roots of $(\mathfrak{g}_{ss},\mathfrak{h})$ and fix the set of positive roots $\Delta^+=\Delta_c^+ \sqcup \Delta_{nc}^+$ given by
\begin{equation} \label{eq:roots_of_g_2}
\begin{split}
\Delta_{c}^+ &= \{e_i-e_j | 1 \leq i < j \leq r\} \cup \{ -e_i+e_j | r<i<j \leq 2r\},\\
\Delta_{nc}^+ &= \{e_i-e_j|1 \leq i \leq r< j \leq 2r\} ,
\end{split}
\end{equation}
where $\Delta_{c}^+$ and $\Delta_{nc}^+$ denote the compact and non-compact roots respectively.

\subsubsection{} The group $P=MN$ is the Siegel parabolic of $G'$. For a character $\chi$ of $\mathbb{C}^\times$ and $s \in \mathbb{C}$, define a character $\chi |\cdot|_\mathbb{C}^s:P \to \mathbb{C}^\times$ by
\begin{equation}
\chi |\cdot|_\mathbb{C}^s(m(a)n(b))=\chi(\det a) |\det a|_\mathbb{C}^s
\end{equation}
and let 
\begin{equation}
I(\chi,s)=\mathrm{Ind}_P^{G'}(\chi |\cdot|_\mathbb{C}^s)
\end{equation}
be the degenerate principal series representation of $G'$. Thus $I(\chi,s)$ is the space of smooth functions $\Phi:G' \to \mathbb{C}$ satisfying
\begin{equation} \label{eq:principal_series_case_2}
\Phi(m(a)n(b)g') = \chi(\det a) |\det a|_\mathbb{C}^{s+\rho_r} \Phi(g'), \quad \rho_r := \tfrac{r}{2},
\end{equation}
and the action of $G'$ is via right translation: $r(g')\Phi(x)=\Phi(xg')$. Here the induction is normalized so that the $I(\chi,s)$ is unitary when $\chi$ is a unitary character and $s=0$. Note that, by the Cartan decomposition $G'=PK'$, any such function is determined by its restriction to $K'$; in particular, given $\Phi(s_0) \in I(\chi,s_0)$, there is a unique family $(\Phi(s) \in I(\chi,s))_{s \in \mathbb{C}}$ such that $\Phi(s)|_{K'}=\Phi(s_0)|_{K'}$ for all $s$. Such a family is called a standard section of $I(\chi,s)$.

\subsubsection{} 
Some useful facts on $K'$-types of $I(\chi,s)$ were proved by Lee \cite{Lee}, namely that
$I(\chi,s)$ is multiplicity free as a representation of $K'$, and if $\Phi^{(\lambda_1,\lambda_2)}(\cdot,s) \in I(\chi,s)$ is a highest weight vector of weight $(\lambda_1,\lambda_2)$, then $\Phi^{(\lambda_1,\lambda_2)}(e,s) \neq 0$. Hence from now on we normalize all such highest weight vectors so that $\Phi^{(\lambda_1,\lambda_2)}(e,s)=1$. Note that the restriction of $\Phi^{(\lambda_1,\lambda_2)}$ to $K'$ is independent of $s$, and for scalar weights
\begin{equation} \label{eq:def_scalar_weight_case_2}
l=(l_1(1,\ldots,1),l_2(1,\ldots,1)), \quad l_1,l_2 \in \mathbb{Z},
\end{equation}
we have
\begin{equation} \label{eq:def_scalar_weight_vector_case_2}
\Phi^{(l_1,l_2)}([k_1,k_2],s):= \Phi^{(l_1(1,\ldots,1),l_2(1,\ldots,1))}([k_1,k_2],s) = (\det k_1)^{l_1} (\det k_2)^{l_2}, \quad k_1,k_2 \in \mathrm{U}(r).
\end{equation}

\subsubsection{} 
Let $X_r^- \in \mathfrak{p}_-$ be as in \Cref{subsection:lowering_ops_case_1}; that is, $X_r^-=p_-(d(e_r))$ with $e_r=(0,\ldots,0,1) \in \mathbb{C}^r$. Then $X_r^-$ is a highest weight vector for $K'$ of weight $(-e_r,e_r)$
and so
\begin{equation} \label{eq:def_c_s_case_2}
X_r^-\Phi^l(s) \  = \  c(X_r^-,l,s) \, \Phi^{l+(-e_r,e_r)}(s)
\end{equation}
for some constant $c(X_r^-,l,s) \in \mathbb{C}$. To compute this constant, note that $\mathrm{U}(r,r)\cap \mathrm{GL}_{2r}(\mathbb{R}) = \mathrm{Sp}_{2r}(\mathbb{R})$ and $X_r^- \in \mathfrak{sp}_{2r,\mathbb{C}}$. Moreover, if $\Phi \in I(\chi,s)$, then the restriction $\Phi|_{\mathrm{Sp}_{2r}(\mathbb{R})}$ belongs to $I^\alpha(s')$, where $s'=2s+(r-1)/2$ and $\alpha=0$ if $\chi|_{\mathbb{R}^\times}$ is trivial and $\alpha=2$ otherwise (this follows directly from a comparison of \eqref{eq:principal_series_case_1} and \eqref{eq:principal_series_case_2}). If $l=(l_1(1,\ldots,1),l_2(1,\ldots,1))$ is a scalar weight, then
\begin{equation}
\Phi^l|_{\mathrm{Sp}_{2r}(\mathbb{R})}(s) = \Phi^{(l_1-l_2)\cdot (1,\ldots,1)}(s') \in I^{\alpha}(s') 
\end{equation}
and so \Cref{lemma:lowering_explicit} and the above remarks show that the constant in \eqref{eq:def_c_s_case_2} is given by
\begin{equation} \label{eq:formula_c_s_case_2}
c(X_r^-,l,s) = s+\rho_r-\frac{l_1-l_2}{2}.
\end{equation}

\subsubsection{} \label{subsubsection:rep_R_r_case_2} We now return to the $m$-dimensional hermitian space $V$ over $\mathbb{C}$ of signature $(p,q)$ with $pq \neq 0$. From now on we fix a character $\chi=\chi_V$ of $\mathbb{C}^\times$ such that $\chi|_{\mathbb{R}^\times} = \mathrm{sgn}(\cdot)^m$ and define
\begin{equation} \label{eq:def_induced_rep_and_s0_case_2}
\begin{split}
I_r(V,s) &= I(\chi,s), \\
s_0 &= \frac{m-r}{2}.
\end{split}
\end{equation}
We assume that $r \leq p+1$, so that
\begin{equation}
s_0 \geq \frac{q-1}{2} \geq 0,
\end{equation} 
with equality only when $q=1$ and $r=p+1$.

Consider the Weil representation $\omega=\omega_{\psi,\chi}$ of $G' \times \mathrm{U}(V)$ on $\mathcal{S}(V^r)$,  as in \Cref{section: Arch Weil representation}, and let $R_r(V)$ be its maximal quotient on which $(\mathfrak{u}(V),K)$ acts trivially. The map
	\begin{equation}
	\lambda \colon \mathcal S(V^r ) \ \to \ I_r(V,s_0), \qquad \lambda(\varphi)(g') :=   \left(\omega(g')\varphi \right)(0)
	\end{equation}
	is $G'$-intertwining and factors through $R_r(V)$; by \cite[Thm. 4.1]{LeeZhu1}, it defines an embedding
\begin{equation}
R_r(V) \hookrightarrow I_r(V,s_0).
\end{equation}
%

\subsection{Whittaker functionals} \label{subsection:Whittaker_functionals}

\subsubsection{} \label{subsubsection:archimedean_Whittaker_functionals} Let 
\begin{equation}
T \in  \begin{cases} \mathrm{Sym}_r(\mathbb{R}), & \text{ case 1} \\ \mathrm{Her}_r, & \text{ case 2 } \end{cases} 
\end{equation}
be a non-singular matrix and let $\psi_T \colon N_r \to \mathbb{C}^\times$ denote the character defined by 
\begin{equation}
\psi_T(\underline{n}(b))=\psi(\mathrm{tr}(Tb)) = e^{2 \pi i\, \tr(Tb)}.
\end{equation}
 Let $I_r(V,s)$ and $s_0$ be as in \eqref{eq:def_induced_rep_and_s0_case_1} (resp. \eqref{eq:def_induced_rep_and_s0_case_2}) in case 1 (resp. in case 2). A continuous functional $l  \colon I_r(V,s_0) \to \mathbb{C}$ is a called a Whittaker functional if it satisfies 
\begin{equation}
l(r(n)\Phi)=\psi_T(n) \cdot l(\Phi) \quad \text{ for all } n \in N_r, \ \Phi \in I_r(V,s_0).
\end{equation}
Such a functional can be constructed as follows. Let $dn$ be the Haar measure on $N_r$ that is self-dual with respect to the pairing $(\ul{n}(b), \ul{n}(b')) \mapsto \psi(\mathrm{tr}(bb'))$. Embed $\Phi \in I_r(V,s_0)$ in a (unique) standard section $(\Phi(s))_{s \in \mathbb{C}}$ and define
\begin{equation} \label{eq:W_T_local_field}
W_T(\Phi,s)=\int_{N_r} \Phi(\underline{w}_r^{-1}n,s) \, \psi_T(-n)  \, dn
\end{equation}
The integral converges for $\mathrm{Re}(s) \gg 0$ and admits holomorphic continuation to all $s$ \cite{Wallach06}; its value at $s_0$ defines a Whittaker functional $W_T(s_0)$. It is shown in op. cit. that $W_T(s_0)$ spans the space of Whittaker functionals. 
For $g' \in G'_r$ and $\Phi \in I_r(V,s_0)$, define
\begin{equation} \label{eq:def_W_T_coefficient}
W_T(g',\Phi,s) = \int_{N_r} \Phi(\underline{w}_r^{-1}n  g',s) \, \psi_T(-n) \, dn.
\end{equation}
and
\begin{equation} \label{eq:def_W_T_derivative}
W_T'(g',\Phi,s_0) = \left. \frac{d}{ds}W_T(g',\Phi,s)\right|_{s=s_0}.
\end{equation}

\subsubsection{}
In this section, we apply results of \cite{ShimuraConfluent} to extract some necessary asymptotic estimates for Whittaker functionals. Assume throughout this section that $T$ is non-degenerate.  As in \Cref{subsubsection:hermitian_domain_definitions}, let
\begin{equation}
	\bbK = \begin{cases} \bbR, & \text{ orthogonal case} \\ \bbC, & \text{ unitary case,} \end{cases} \qquad \text{and} \qquad \iota := [ \bbK : \bbR].
\end{equation}

It will be useful for us to work in symmetric space coordinates, as follows. Let $\bbH_r$ denote the Siegel (resp.\ Hermitian) upper half space of genus $r$, so that in the orthogonal case,
\begin{equation}
\bbH_r \ =\  \left\{ \tau = x + i y \in \Sym_r(\bbC) \ | \ y > 0  \right\}  
\end{equation}
and in the unitary case
\begin{equation}
 \bbH_r \ = \ \left\{  \tau  \in \Mat_{r}(\bbC) \ | \ \frac{1}{2i} (\tau -  \transpose{\overline{\tau}}) > 0  \right\} ;
\end{equation}
in the latter case, write $\tau = x + iy$ with $y = \frac1{2i} (\tau - \transpose{\overline \tau}) \in \Herm_r(\bbC)_{>0}$ and $x = \tau - iy$.

For a point $\tau  = x + iy \in \bbH_r$, fix a matrix $\alpha \in \GL_r(\bbK)$ with $\det \alpha \in \mathbb{R}_{>0}$ and such that $y = \alpha \cdot \transpose{\overline \alpha}$ and let 
\begin{equation} \label{eq:def_g_z'}
g_\tau' := \underline{n}(x) \underline{m}(\alpha) \in G_r'.
\end{equation}
Let $\Phi^{l}(s)$ be the normalized highest weight vector of $I_r(V,s)$ of scalar weight 
\begin{equation} \label{eq:def_scalar_weight_both_cases}
l := \begin{dcases} \frac{m}{2}, & \text{orthogonal case} \\ \left( \frac{m+k(\chi)}{2}, \frac{-m+k(\chi)}{2} \right), & \text{unitary case} \end{dcases}
\end{equation}
(see \eqref{eq:def_scalar_weight_vector_case_1} and \eqref{eq:def_scalar_weight_vector_case_2}) and define
\begin{equation} \label{eqn:ArchWhittakerClassicalNormalization}
	\archW_{T}(y, s) \  := \  (\det y)^{-\iota m/4}  \, W_T(g'_{iy} , \Phi^l,s) \, e^{ 2 \pi \tr(T y)};
\end{equation}
note that this is independent of the choice of $\alpha$ in \eqref{eq:def_g_z'}.

\begin{proposition} \label{prop:NonDegenWhittakerEstimates}
	Suppose $T$ is positive definite of rank $r$. 
		\begin{enumerate}[(i)] 
			\item For any integer $k \geq 0$ and any fixed  $s \in \bbC$, 
				\begin{equation*} 
						\lim_{\lambda \to \infty}  \frac{\partial^k}{\partial s^k} \archW_T(\lambda y, s)    <  \infty. 
				\end{equation*} 
				\item There exists a constant $C>0$, depending on $s, y, T$ and $k$, such that
\begin{equation*}
			 \frac{\partial}{\partial \lambda}  \left[	\frac{\partial^k}{\partial s^k} \archW_T(\lambda y, s)  \right] =  O(\lambda^{-1-C})
\end{equation*}
				 as $\lambda \to \infty$.

\suspend{enumerate}
 Let  $\kappa = 1 + \frac{\iota}2 (r -1)$ and 
 \[  s_0(r) = \begin{cases} \frac{m- (r+1)}2, & \text{orthogonal case} \\ \frac{m-r}2, & \text{unitary case.} \end{cases} \]  
 Then we have the following more precise results for the value and derivative at $s_0(r)$:
\resume{enumerate}[{[(i)]}]
	 	\item   
			\begin{equation*} 
				\archW_T(y,s_0(r)) \ = \ \frac{  (- 2\pi i )^{\iota rm/2}  }{ 2^{r (\kappa-1)/2} \,   \Gamma_r(\iota m/2)}  (\det T)^{\iota  s_0(r)} 
			\end{equation*}
		
		\item  There is an asymptotic formula
			\begin{align*} 
					\archW'_T(\lambda   y, s_0(r)) =  \left(\frac{\iota}{2} \right)   \cdot  \frac{  (- 2\pi i )^{\iota rm/2} \, (\det T)^{\iota \, s_0(r)}  }{ 2^{r (\kappa-1)/2}  \,  \Gamma_r(\iota m/2)}&    \left[ \log \det \pi T   - \frac{ \Gamma_r'(\iota m/2)}{\Gamma_r(\iota m/2)} \right]  \\
					& \ \ \ +   O(\lambda^{-1})  
			\end{align*}
		as $\lambda \to \infty$, where the implied constant depends on $y$ and $T$. 

\end{enumerate}

\end{proposition}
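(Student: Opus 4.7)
The plan is to recognize $\archW_T(y,s)$ as a classical confluent hypergeometric function of matrix argument and then invoke the results of Shimura \cite{ShimuraConfluent}. Concretely, starting from the integral \eqref{eq:def_W_T_coefficient}, evaluate $\Phi^l(\underline{w}_r^{-1}\underline{n}(b) g'_{iy},s)$ via the Iwasawa decomposition of $\underline{w}_r^{-1}\underline{n}(b)\underline{m}(\alpha) \in G'_r$ (with $y = \alpha \transpose{\bar{\alpha}}$), and then change variables $b \mapsto \alpha b \transpose{\bar{\alpha}}$ to obtain an expression of the form
\[ \archW_T(y,s) = c \cdot (\det y)^{a(s)} \int_{B_r} e^{-2\pi i \tr(Tb)} \det(b+iy)^{-\alpha(s)} \det(b-iy)^{-\beta(s)}\, db, \]
where $B_r = \Sym_r(\bbR)$ or $\Herm_r(\bbC)$ according to the case, and $\alpha(s),\beta(s)$ are explicit affine linear functions of $s$ satisfying $\alpha(s)+\beta(s) = \iota s + \iota m/2 + \text{const}$. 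Up to the explicit prefactor, this is Shimura's $\xi(y;T,\alpha(s),\beta(s))$ from \cite{ShimuraConfluent}.

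\textbf{Proof of (i) and (ii).} Shimura's main asymptotic theorem for $\xi(y;T,\alpha,\beta)$ yields an expansion $\xi(\lambda y;T,\alpha,\beta) = \xi^{\infty}(y;T,\alpha,\beta) + O(\lambda^{-C})$ for some $C>0$, locally uniformly in $(\alpha,\beta)$, with $\partial_\lambda$ of the error being $O(\lambda^{-1-C})$. Since the integrand in the displayed formula is smooth in $s$ and rapidly decreasing in $b$, we may differentiate under the integral sign and apply the same asymptotic analysis to each $s$-derivative, obtaining (i) and (ii) at once.

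\textbf{Proof of (iii) and (iv).} The value $s = s_0(r)$ is distinguished because $s_0(r)+\rho_r$ matches the scalar weight $l$, which causes one of the Shimura parameters $\alpha(s_0(r)),\beta(s_0(r))$ to become zero (or a non-positive integer). At that point the confluent integral degenerates to a genuine gamma integral on the cone of positive $B_r$-matrices, which evaluates explicitly to $\Gamma_r(\iota m/2)^{-1}(\det T)^{\iota s_0(r)}$ up to an overall $(-2\pi i)^{\iota rm/2}$ coming from the underlying Fourier transform and a $2^{r(\kappa-1)/2}$ from the passage between the symmetric space coordinates on $B_r$ and Shimura's normalization. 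Tracking the prefactor $c \cdot (\det y)^{a(s_0(r))}$, the factor $(\det y)^{-\iota m/4}$, and the exponential $e^{2\pi \tr(Ty)}$ in the definition \eqref{eqn:ArchWhittakerClassicalNormalization} of $\archW_T$ then produces (iii). For (iv), differentiate the asymptotic expansion at $s_0(r)$: the $s$-derivative of the leading constant contributes one piece from differentiating $\det(b-iy)^{-\beta(s)}$ on-shell (yielding $\log\det \pi T$) and one piece from differentiating the Shimura gamma factor (yielding $-\Gamma_r'(\iota m/2)/\Gamma_r(\iota m/2)$); the overall factor of $\iota/2$ arises from $d\alpha/ds, d\beta/ds$. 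The error term $O(\lambda^{-1})$ is inherited from (ii) with $k=1$.

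\textbf{Main obstacle.} The principal challenge is bookkeeping rather than analysis: the Gaussian normalization built into $\archW_T$, the self-dual Haar measure on $N_r$, the exact parametrization $(\alpha(s),\beta(s))$ emerging from the Iwasawa decomposition, and Shimura's normalization of $\xi$ must be combined so that the constants $(-2\pi i)^{\iota rm/2}$, $2^{r(\kappa-1)/2}$, and $\Gamma_r(\iota m/2)^{-1}$ all come out exactly as stated. Once this is done, (i)–(iv) are essentially direct consequences of Shimura's asymptotic expansion and his explicit evaluation of $\xi$ at the relevant reducibility point.
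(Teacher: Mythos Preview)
Your overall strategy matches the paper's: both express $\archW_T$ via Shimura's confluent hypergeometric functions and exploit the degeneration at $s=s_0(r)$. The paper works with Shimura's $\omega(g;\alpha,\beta)$ rather than $\xi$, writing $\archW_T(y,s)$ as an explicit multiple of $\omega(g;\beta+\iota m/2,\beta)$ with $g=4\pi\,\transpose{\overline{T^{1/2}}}\,y\,T^{1/2}$ and $\beta=\tfrac{\iota}{2}(s-s_0(r))$; part (iii) is then immediate from Shimura's identity $\omega(g;\iota m/2,0)\equiv 1$.

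Two points where your sketch is thinner than what is actually required. First, there is no ready-made asymptotic theorem in \cite{ShimuraConfluent} that controls all $s$-derivatives of $\xi$ together with $\partial_\lambda$ of the error; the paper derives (i)--(ii) by hand, using Shimura's differential operator $\Delta$ (with $\Delta e^{-\tr g}=(-1)^r e^{-\tr g}$) to shift the parameters of $\omega$ into the range where the defining integral converges absolutely, then writing $\partial_\alpha^k\partial_\beta^{k'}\omega(\lambda g;\alpha,\beta)$ as a finite sum of explicit terms to which dominated convergence applies. Second, your claim that the $O(\lambda^{-1})$ remainder in (iv) is ``inherited from (ii) with $k=1$'' is not right: integrating (ii) only yields $O(\lambda^{-C})$ for some unspecified $C>0$. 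The paper obtains the sharper $O(\lambda^{-1})$ by a separate argument, Taylor-expanding $\det(\lambda^{-1}x+1)^{-\beta}$ about $\beta=0$ inside the (shifted) integral representation and using the elementary bound $\log\det(\lambda^{-1}x+1)<\lambda^{-1}\tr(x)$ on the positive cone.
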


\begin{proof} 
Write $T = (T^{\frac12}) \cdot \transpose{ (\overline{T^{\frac12}})}$, for some matrix $T^{\frac12} \in \GL_r(\bbK)$. If we define
\begin{equation}
g \ := \ 4 \pi  \ \transpose{ \overline{(T^{\frac12})}} \cdot y \cdot (T^{\frac12}),
\end{equation}
then applying \cite[(1.29), (3.3), (3.6)]{ShimuraConfluent} gives
\begin{align} \label{eqn:WTPosDefShimura}
	\archW_{T} (y, s)  \ = \ \frac{  (- 2\pi i )^{\iota rm/2} \, \pi^{r \beta} }{ 2^{r (\kappa-1)/2}  \, \Gamma_r(\beta + \iota m/2)} \, (\det T)^{\beta +\iota m/2 - \kappa} \, \omega(g; \beta + \iota m/2, \beta)
\end{align}
where 
\begin{itemize}
	\item $\beta = \frac{\iota}{2} ( s- s_0(r))$,
	\item $\Gamma_r(\beta) = \pi^{\iota r(r-1)/4}  \prod_{k=0}^{r-1} \Gamma(\beta - \iota k/2)$, and
	\item  $\omega(g; \alpha, \beta)$ is an entire function in $(\alpha, \beta) \in \bbC^2$, initially defined by the formula %
	\begin{align}
	\omega(g; \alpha, \beta)  = \Gamma_r(\beta)^{-1} \,  \det(g)^{\beta}   \, \int\limits_{N^+}e^{- \tr (g x)} \det(x + 1)^{\alpha - \kappa} \det(x)^{\beta- \kappa} dx 
	\label{eqn:ShimuraOmegaConvergent}
	\end{align}
	on the region $Re(\beta) > \kappa - 1$, where the integral is absolutely convergent; here we abuse notation and write
	\begin{equation}
		 	N^+  = \begin{cases} \Sym_{r}(\bbR)_{>0}, & \text{orthogonal case} \\ \Herm_r(\bbC)_{>0}, & \text{unitary case}. \end{cases}
	\end{equation}
	and take the measure $dx$ to be the standard Euclidean measure, following \cite[Section 1]{ShimuraConfluent}. The analytic continuation of $\omega(g;\alpha, \beta)$ to $(\alpha, \beta) \in \bbC^2$ is proven in \cite[Theorem 3.1]{ShimuraConfluent}. 
\end{itemize}

Replacing $y$ by $\lambda y$ corresponds to replacing $g$ by $\lambda g$; thus, in light of \eqref{eqn:WTPosDefShimura}, in order to prove parts $(i)$ and $(ii)$ of the proposition, it suffices to prove the corresponding estimates for $\omega(g;\alpha,\beta)$. 
More precisely, we shall show that for  fixed $g$, integers $k,k' \geq 0$, and $(\alpha, \beta) \in \bbC^2$,
\begin{equation} \label{eqn:OmegaEstimate1}
		\lim_{\lambda \to \infty} \left[ \frac{\partial^{k+ k'}}{ \partial \alpha^{k} \, \partial \beta^{k'} } \omega(\lambda g; \alpha, \beta) \right] < \infty  
\end{equation}
and
\begin{equation} \label{eqn:OmegaEstimate2}
\frac{\partial}{\partial \lambda} \left[ \frac{\partial^{k+ k'}}{ \partial \alpha^{k} \, \partial \beta^{k'} } \omega(\lambda g; \alpha, \beta) \right] = O(\lambda^{-2});
\end{equation}
the implied constants  may depend on $k, k', \alpha, \beta$ and $g$.

To prove these estimates, we would like to use \eqref{eqn:ShimuraOmegaConvergent}, but our choice of parameters $(\alpha, \beta)$ may place us outside the range of absolute convergence of the integral; to circumvent this, consider the differential operator 
\begin{equation}
\Delta = \begin{cases} \det \left(\tfrac12 (1 + \delta_{ij}) \tfrac{\partial}{\partial g_{ij}} \right) , & \text{orthogonal case} \\ \det( \tfrac{\partial}{\partial g_{ij}}), & \text{unitary case}\end{cases}
\end{equation}
as in \cite[(3.10.I-II)]{ShimuraConfluent}, where $g_{ij}$ are the coordinates for the entries of $g \in N^+$; it satisfies the identity 
\begin{equation}
\Delta \, e^{- \mathrm{tr}{(g)}} \ = \ (-1)^r \, e^{- \mathrm{tr}{(g)}}.
\end{equation}

Then (3.12) and (3.7) of \cite{ShimuraConfluent} together imply that for $N \in \bbZ_{>0}$, 
\begin{align}
\notag		\omega(g; \alpha, \beta) \ =& \ (-1)^{rN} e^{ \tr (g)} \det(g)^\beta \cdot  \Delta^{N}  \left( e^{- \tr (g)} \det(g)^{- \beta } \omega(z; \alpha-N, \beta) \right) \\
=& (-1)^{rN} e^{ \tr (g)} \det(g)^\beta \cdot    \Delta^{N}  \left[ e^{- \tr (g)} \det(g)^{- \beta } \omega \left(g;  \kappa- \beta, \kappa - \alpha + N \right) \right].
\label{eqn:ShimuraOmegaRecurrence}
\end{align}

Fixing $N> \mathrm{Re}(\alpha) -1$, the final incarnation of $\omega(...)$ in \eqref{eqn:ShimuraOmegaRecurrence} is now in the range of convergence of \eqref{eqn:ShimuraOmegaConvergent}; for convenience, rewrite \eqref{eqn:ShimuraOmegaConvergent} for these parameters as
\begin{equation}
\widetilde \omega( g ; \alpha, \beta) \ := \ \omega \left(g;  \kappa- \beta, \kappa - \alpha + N \right).
\end{equation}
By passing partial derivatives in the coordinates of $g$ under the integral \eqref{eqn:ShimuraOmegaConvergent} defining $\widetilde \omega(g; \alpha, \beta)$, it follows that $\omega(g;\alpha, \beta)$ can be written as a finite sum  of terms of the form 
\begin{equation}
		f(\alpha, \beta) \, (\det g)^{\kappa - \alpha + N - m} \cdot F_1(g) \cdot \int\limits_{N^+} e^{-\tr(gx)} \, F_2(x) \, \det(x+1)^{-\beta} \det(x)^{-\alpha + N} dx
\end{equation}
where $f (\alpha, \beta)$ is a holomorphic function independent of $g$,  $m \geq 0$ is an integer, and $F_1(g)$ and $F_2(x)$ are homogeneous polynomials with $ \deg F_1 < m r$; here $F_1$ and $F_2$ arise as products of iterated partial derivatives, of $\det(g)$ and $e^{-\tr(gx)}$ respectively, with respect to the entries of $g$. 

For a parameter $\lambda>0$ and fixed $g$, replacing $g$ by $\lambda g$ and applying a change of variables in the previous display implies that $\omega( \lambda g; \alpha, \beta)$ can be written as a sum of terms of the form
\begin{equation}
  \lambda^{-M}	f(\alpha, \beta) \, (\det g)^{\kappa - \alpha + N - m} \cdot F_1(g) \cdot \int\limits_{N^+} e^{-\tr(gx)} \, F_2(x) \, \det( \lambda^{-1}x+1)^{-\beta} \det(x)^{-\alpha + N} dx
\end{equation}
where $M = mr - \deg F_1 + \deg F_2 \geq 0$.  It follows (again, for fixed $g$, etc.) that $\frac{\partial^{k+k'}}{\partial \alpha^{k} \partial \beta^{k'}} \omega(\lambda g; \alpha, \beta)$ can be written as a finite sum of terms of the form
\begin{align} \label{eqn:omegaDerivsSumOfTerms}
	\lambda^{-M}  & f( \alpha, \beta) \,  ( \log \det g )^A (\det g)^{\kappa - \alpha + N - m}  \,  F_1(g)   \notag \\
	& \times \int\limits_{N^+} e^{-\tr(gx)} \, F_2(x) \,  \log(\det(\lambda^{-1} x + 1))^B \, \det( \lambda^{-1}x+1)^{-\beta} \log(\det x)^C \det(x)^{-\alpha + N} dx
\end{align}
where $f(\alpha, \beta)$ is independent of $\lambda$, and $A,B,C$ are integers. 

 For $\lambda >1$, we have the (rather crude) estimates
\begin{equation}
\begin{split}
	 |\log(\det(\lambda^{-1} x + 1))^B \, \det( \lambda^{-1}x+1)^{-\beta}|  &\leq |\det( \lambda^{-1}x+1)^{-\beta + B }| \\
	  &\leq \begin{cases}  \det( x+1)^{-Re(\beta) + B } , &  \text{if }  B \geq  - Re(\beta) \\ 1, & \text{otherwise.}	 \end{cases}
	 \end{split}
\end{equation}
Hence, by  dominated convergence, we may pass the limit $\lambda \to \infty$ inside the integral in \eqref{eqn:omegaDerivsSumOfTerms}, and, by comparison with \eqref{eqn:ShimuraOmegaConvergent}, conclude that the limit of this integral exists  as $\lambda \to \infty$. Since  $\frac{\partial^{k+k'}}{\partial \alpha^{k} \partial \beta^{k'}} \omega(\lambda g; \alpha, \beta)$ is a sum of  terms as in \eqref{eqn:omegaDerivsSumOfTerms}, this shows the desired existence of the limit   \eqref{eqn:OmegaEstimate1}. To prove the second estimate \eqref{eqn:OmegaEstimate2}, differentiate \eqref{eqn:omegaDerivsSumOfTerms} with respect to $\lambda$; dominated convergence again allows us to pass the derivative under the integral sign, and the estimate in the same way.  These two estimates in turn imply statements $(i)$ and $(ii)$ of the proposition.

We now turn to the more precise versions at $s = s_0(r)$. To prove $(iii)$, recall that $\beta= 0$ when $s=s_0(r)$; since $\omega(g;\iota m/2,0) \equiv 1$ by \cite[(3.15)]{ShimuraConfluent}, evaluating \eqref{eqn:WTPosDefShimura} at $s=s_0(r)$ yields the desired formula. 

Finally to prove $(iv)$, take the logarithmic derivative with respect to $s$ in \eqref{eqn:WTPosDefShimura} to obtain
\begin{align}
&  \frac{ 2^{r (\kappa-1)/2}  \, \Gamma_r(\iota m/2)} {  (- 2\pi i )^{\iota rm/2}  (\det T)^{\iota  s_0(r)}  } \cdot \archW'_T(\lambda  y, s_0(r))  \\
& \ \ \ \ \ = \  \frac{\archW'_T( \lambda  y,s_0(r))}{\archW_T(\lambda  y,s_0(r))}  = \frac{\iota}{2} \left(  \log \det \pi T  - \frac{ \Gamma_r'(\iota m/2)}{\Gamma_r(\iota m/2)} +  \left. \frac{\partial}{\partial \beta} \omega( \lambda \cdot g; \beta+\iota m/2, \beta) \right|_{\beta = 0}\right). \notag
\end{align}
It will therefore suffice to show that $\frac{\partial}{\partial \beta} \omega( \lambda \cdot g; \beta+\iota m/2, \beta)|_{\beta = 0} = O(\lambda^{-1})$. 

To this end, for sufficiently large integer $N$, we have
\begin{equation}
\omega(g; \beta+\iota m/2, \beta) \ = \ (-1)^{rN} e^{ \tr (g)} \det(g)^\beta \cdot    \Delta^{N}  \left[ e^{- \tr (g)} \det(g)^{- \beta } \widetilde \omega \left(g; \beta + \iota m/2, \beta \right) \right]
\end{equation}
with
\begin{equation}
\widetilde \omega( g ; \beta+\iota m/2, \beta) \ := \ \omega \left(g;  \kappa- \beta, \kappa - \beta - \iota m/2 + N \right).
\end{equation}

Consider expanding $\frac{\partial}{\partial \beta} \omega(\lambda g, \beta+\iota m/2, \beta)$ as a sum of terms as in \eqref{eqn:omegaDerivsSumOfTerms}, and note that any terms with either $F_1$ or $F_2$ non-constant are $O(\lambda^{-1})$. As these terms arise from the partial derivatives of $\det g$ or $\widetilde \omega(g, \beta)$ respectively, it follows that
\begin{equation}
\frac{\partial}{\partial \beta} \omega(\lambda g; \beta+\iota m/2, \beta) = \frac{\partial}{\partial \beta} \widetilde	\omega(\lambda g; \beta+\iota m/2,\beta) + O(\lambda^{-1}).
\end{equation}
We are reduced to proving that $ \frac{\partial}{\partial \beta} \widetilde \omega(\lambda g; \beta+\iota m/2, \beta)|_{\beta = 0}$ is itself $O(\lambda^{-1})$; taking $N$ large enough to ensure the convergence of \eqref{eqn:ShimuraOmegaConvergent}, and applying a change of variables in the integral, gives  
\begin{equation} \label{eqn:OmegaTildeLambda}
\widetilde \omega(\lambda \, g; \beta+\iota m/2,\beta) =   \frac{ (\det g)^{\kappa - \beta - \iota m/2+ N}}{ \Gamma_r(\kappa - \beta - \iota m/2+ N)} \int\limits_{N^+}e^{- \tr (g x)} \det(\lambda^{-1}x + 1)^{- \beta } \det(x)^{-\beta- \iota m/2 + N} dx .
\end{equation}
Now  substitute the Taylor expansion
\begin{equation}
\det(\lambda^{-1} x + 1)^{- \beta}   \ = \  1  \ - \ \log(\det( \lambda^{-1}x + 1)) \, \beta \ + \ \dots
\end{equation}
at $\beta = 0$ into the previous expression,  and note  that the integral in \eqref{eqn:OmegaTildeLambda} is uniformly convergent for $\beta$ in a neighbourhood of $\beta = 0$. Moreover, there is a useful integral representation \cite[(1.16)]{ShimuraConfluent}:
\begin{equation}  
\int\limits_{N^+}e^{- \tr (g x)}  \,  \det(x)^{- \beta - \iota m/2 + N } \, dx \ = \  \Gamma_r(\kappa - \beta - \iota m/2 + N) \, \det(g)^{-\kappa + \beta + \iota m/2 - N}
\end{equation}
Combining these observations, the Taylor expansion of $\widetilde \omega(\lambda g; \beta + \iota m/2, \beta)$ around $\beta = 0$ is of the form
\begin{align}
\widetilde \omega(\lambda g; \beta + \iota m/2, \beta) =& 1  -  \left[   \frac{ (\det g)^{\kappa - \iota m/2 + N}}{ \Gamma_r(\kappa  - \iota m/2 + N)} \int\limits_{N^+}e^{- \tr (g x)} \log (\det(\lambda^{-1}x + 1))   \det(x)^{- \iota m/2 + N} dx \right] \beta \notag \\ 
&  \qquad \qquad \qquad  + \text{ higher order terms in } \beta.
\end{align}
For fixed $g$, the coefficient  of $\beta$   is $O(\lambda^{-1})$ as $\lambda \to \infty$, as can be easily deduced from the estimate
\begin{equation}
\log ( \det (\lambda^{-1} x + 1)) < \lambda^{-1} \tr(x) \ \qquad \ \text{ for } x \in N^+.
\end{equation}
This proves $  \frac{\partial}{\partial \beta} \widetilde \omega(\lambda g; \beta+\iota m/2, \beta)|_{\beta = 0} = O(\lambda^{-1})$ as required.
\end{proof}

%

\begin{proposition} \label{prop:NonDegArchWhittakerNonPos}
	Suppose $T$ is non-degenerate of signature $(p,q)$ with $q>0$. 
	\begin{enumerate}[(i)]	
	\item Let $s_0 (r) $ be as in \Cref{prop:NonDegenWhittakerEstimates}. Then $\archW_T(y,s_0(r)) = 0$ for all $y$.
	\item For any fixed $s_0 \in \bbC$, $k \in \mathbb N$ and $y >0$, there are positive constants $C$ and $C'$ such that
	\begin{equation*} \label{eqn:nonpositive Arch Whittaker estimate}
	\left[	\frac{\partial^k}{\partial s^k} \archW_T(\lambda y,s) \right]_{s = s_0} = O(e^{-C\lambda}) 
	\end{equation*}
	and
	\begin{equation*}
	\frac{\partial}{\partial \lambda} \left( \left[	\frac{\partial^k}{\partial s^k} \archW_T(\lambda y,s) \right]_{s = s_0}   \right) = O(e^{-C' \lambda});	
	\end{equation*} 
	here $C, C'$ and the implied constants also depend on $T, s_0, k$ and $y$.
	\end{enumerate}

\begin{proof}
		Following the notation of \cite{ShimuraConfluent}, choose a symmetric positive matrix $y^{\frac12}$ such that $y = (y^{\frac12})^2$, consider the collection
		\begin{equation}
		(\mu_1,\dots \mu_r ) 
		\end{equation}
		of eigenvalues (repeated with multiplicity) of the matrix $y^{\frac12} T y^{\frac12}$, and define
		\begin{equation}
		\delta_+(y, T) \ := \ \prod_{ \substack{\mu_i > 0 }} \mu_i , \qquad \text{ and } \qquad \delta_-(y,T)  \ := \   \prod_{ \substack{\mu_i < 0 }} |\mu_i|. 
		\end{equation}
		Then, by \cite[(4.34K)]{ShimuraConfluent} there is a function $\omega(y, T; \alpha,\beta)$ that is holomorphic in $(\alpha,\beta) \in \bbC^2$ such that 
		\begin{align} \label{eqn:WTNonPosOmega}
		\archW_T(y,s)  \ &= \ C_{p,q}(\beta) \, \left( \frac{ \det(y)^{- \beta - \iota m/2 + \kappa}}{\Gamma_{p}(\beta + \iota m/2) \cdot  \Gamma_{q}(\beta)}  \right) \delta_{+}(y,T)^{- \kappa + \beta + \iota m/2 + \iota q/4} \delta_-(y,T)^{- \kappa + \beta   - \iota p/4 } \notag   \\
	    & \qquad {}	\times \omega(2 \pi y, T; \beta + \iota m/2, \beta) \cdot  e^{2 \pi \, \tr(yT)} 
		\end{align}
		where $C_{p,q}(\beta)$ is an entire, nowhere vanishing, function depending only on $p$ and $q$; here $\beta = \frac{\iota}{2}(s - s_0(r))$. Note that $\Gamma_q(\beta)^{-1}$ vanishes at $\beta = 0$, while the remaining terms are holomorphic; this proves $(i)$. 
		
		Moreover, for fixed $y$ and $\lambda \in \bbR_{>0}$, we may write 
	\begin{equation}
		\archW_T(\lambda y,s) =  f(y, \beta) \, \frac{  \lambda^{-\iota mq/2}}{\Gamma_p(\beta + \iota m/2) \Gamma_q(\beta)} \, \omega(2 \pi \lambda y, T; \beta + \iota m/2, \beta) \, e^{2 \pi \lambda \, \tr(yT)}
	\end{equation}
		for some function $f(y, \beta)$ that is entire and nowhere-vanishing in $\beta$. 
		
		By \cite[Theorem 4.2]{ShimuraConfluent}, for any compact subset $U$ of $\bbC$, there are positive constants $A$ and $B$ (depending only on $T$ and $U$) such that the estimate 
		\begin{equation}
		|	\omega(2 \pi y, T; \beta + \iota m/2, \beta) \, e^{ 2 \pi \, \tr(yT)} | \ < \ A e^{ - (\tau_{-}(y, T))   } \left(1 +\mu( y, T)^{-B} \right)
		\end{equation}
		holds for all $y \in $ and $\beta \in U$; here $\tau_-( y,T) = \sum_{\mu_i < 0} |\mu_i|$ and $\mu( y, T) = \min(|\mu_i|)$. Since $T$ is not positive definite, at least one eigenvalue $\mu_i$ is negative, so $\tau_-(y, T) > 0$. Replacing $y$ by $\lambda y$ and noting that 
		\begin{equation} \tau_{-}(\lambda y, T) \ = \ \lambda  \, \tau_{-}(y,T) \qquad \text{ and } \qquad  \mu(\lambda y, T) \ = \  \lambda \, \mu(y,T)
		\end{equation}
		it follows easily that for fixed $y$, there is a constant $C$ such that
		\begin{equation}
		\archW_T(\lambda y,s)   \ = \ O(e^{-C \lambda})
		\end{equation}
		uniformly for $s$ in some neighbourhood of $s= s_0$, say. This proves \eqref{eqn:nonpositive Arch Whittaker estimate} for the case $k=0$. The estimates for $k \geq 1$ follow immediately from Cauchy's integral formula
		\begin{equation}
		\left[\frac{\partial^k}{\partial s^k}  	\archW_T(\lambda y,s)  \right]_{s = s_0} \ = \ \frac{k!}{2 \pi i} \ \oint  \frac{ \archW_T(\lambda y,s)}{(s -s_0)^{k+1}} \, ds.
		\end{equation}
		Finally, we turn to the derivative with respect to $\lambda$. The results of \cite[Section 5]{ShimuraConfluent}, see especially Lemma 5.7, imply that
		for fixed $y$, the function
		\begin{equation}
		F(\lambda, s) \ := \ \archW_T(\lambda y,s)
		\end{equation}
		is entire in $s$ and extends to a complex function in $\lambda$ that is holomorphic on $Re(\lambda)>0$ and satisfies, in the same manner as before, the asymptotic $\frac{\partial^k}{\partial s^k}   F(\lambda,s)|_{s=s_0} = O(e^{-C'\, Re(\lambda)})$ for some constant $C'$.
		The proposition follows from another application of Cauchy's integral formula: for a point $\lambda_0 >1$,
		\begin{equation}
		\frac{\partial}{\partial \lambda} \left( \left[	\frac{\partial^k}{\partial s^k} \archW_T(\lambda y, s) \right]_{s = s_0}   \right)_{\lambda = \lambda_0} = \frac{1}{2 \pi i} \oint \frac{1}{(\lambda- \lambda_0)^2} \frac{\partial^k}{\partial s^k} F(\lambda,s)|_{s=s_0} \, d \lambda \ =  O(e^{-C' \lambda_0}),
		\end{equation}
		where the integral is taken around a circle of radius one (say) around $\lambda_0$.
\end{proof}

\end{proposition}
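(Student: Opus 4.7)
My plan is to derive both parts as direct consequences of Shimura's results in \cite{ShimuraConfluent}, exploiting a factorization formula for $\archW_T(y,s)$ together with the exponential decay estimates for his confluent hypergeometric function $\omega$.

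The starting point is the analogue of formula \eqref{eqn:WTPosDefShimura} in the non-definite case, which I would extract from \cite[(4.34K)]{ShimuraConfluent}. After introducing the eigenvalues $\mu_1,\dots,\mu_r$ of $y^{1/2}Ty^{1/2}$ and the quantities $\delta_\pm(y,T)$ and $\tau_-(y,T)=\sum_{\mu_i<0}|\mu_i|$, I would write
\begin{align*}
\archW_T(y,s) &= C_{p,q}(\beta)\,\frac{\det(y)^{-\beta-\iota m/2+\kappa}}{\Gamma_p(\beta+\iota m/2)\Gamma_q(\beta)}\,\delta_+(y,T)^{-\kappa+\beta+\iota m/2+\iota q/4}\delta_-(y,T)^{-\kappa+\beta-\iota p/4}\\
&\qquad\times \omega(2\pi y,T;\beta+\iota m/2,\beta)\,e^{2\pi\,\mathrm{tr}(yT)},
\end{align*}
where $\beta=\frac{\iota}{2}(s-s_0(r))$, $C_{p,q}(\beta)$ is entire and nowhere vanishing, and $\omega$ is entire in its last two parameters. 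For part (i), at $s=s_0(r)$ we have $\beta=0$; then $\Gamma_q(\beta)^{-1}$ vanishes (since $\Gamma$ has a pole at $0$) while all other factors are holomorphic, so the whole expression is $0$.

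For part (ii), I would invoke \cite[Theorem 4.2]{ShimuraConfluent}, which yields an estimate of the form
\begin{equation*}
|\omega(2\pi y,T;\beta+\iota m/2,\beta)\,e^{2\pi\,\mathrm{tr}(yT)}|\ \le\ A\,e^{-\tau_-(y,T)}\bigl(1+\mu(y,T)^{-B}\bigr)
\end{equation*}
uniformly on compact subsets in $\beta$, with $\mu(y,T)=\min_i|\mu_i|$. Replacing $y$ by $\lambda y$ scales $\tau_-(y,T)$ and $\mu(y,T)$ by $\lambda$; since $q>0$ forces $\tau_-(y,T)>0$, this gives exponential decay $\archW_T(\lambda y,s)=O(e^{-C\lambda})$, uniformly for $s$ in a neighbourhood of $s_0$. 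The case $k\ge 1$ follows from Cauchy's integral formula applied on a small circle around $s_0$, preserving exponential decay with the same $C$.

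Finally, for the $\lambda$-derivative, I would observe (using the analytic structure of $\omega$ described in \cite[\S 5]{ShimuraConfluent}, especially Lemma~5.7 there) that $F(\lambda,s):=\archW_T(\lambda y,s)$ extends to a holomorphic function of $\lambda$ on $\mathrm{Re}(\lambda)>0$, with the same exponential decay in $\mathrm{Re}(\lambda)$. Then for $\lambda_0>1$, another application of Cauchy's integral formula on a circle of fixed radius (say $1/2$) around $\lambda_0$ yields
\begin{equation*}
\frac{\partial}{\partial\lambda}\Bigl[\tfrac{\partial^k}{\partial s^k}\archW_T(\lambda y,s)\Bigr]_{s=s_0,\,\lambda=\lambda_0}=O(e^{-C'\lambda_0})
\end{equation*}
with any $C'<C$. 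The main obstacle is purely bookkeeping: checking that Shimura's estimates apply uniformly in the parameter $\beta$ on the relevant compact neighbourhoods, and that the analytic continuation in $\lambda$ from \cite[\S 5]{ShimuraConfluent} gives genuine holomorphy with uniform decay — everything else is a straightforward application of the factorization and Cauchy's formula.
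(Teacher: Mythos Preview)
Your proposal is correct and follows essentially the same approach as the paper: the same factorization from \cite[(4.34K)]{ShimuraConfluent}, the same vanishing argument via $\Gamma_q(\beta)^{-1}$ for part~(i), the same use of \cite[Theorem~4.2]{ShimuraConfluent} and scaling for the exponential decay, and the same two applications of Cauchy's integral formula (in $s$, then in $\lambda$ via \cite[\S5]{ShimuraConfluent}) for the higher derivatives. The only differences are cosmetic (e.g.\ radius $1/2$ versus $1$ for the contour in $\lambda$).
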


\subsection{The integral of $\archGreen$} For the rest of \Cref{section:archimedean_arith_SW} we assume that $\mathrm{rk}(\vbE)=1$. Recall that we had constructed a Schwartz form
	\begin{equation}
		\nu(\bfv) \ =\ \sum_{ i=1}^r \, \nu_i(\bfv)
	\end{equation}
for $\bfv \in V^r$, as in  \Cref{prop:nu_properties}; note that for the permutation matrices $\epsilon_i$ ($1 \leq i \leq r$) as in \eqref{eq:def_epsilon_i}, we have $\nu_i({\bf v}) = \omega(\underline{m}(\epsilon_i))\nu_r({\bf v})$.


\subsubsection{} \label{subsection:lemma_lowering_W_T_derivative} 
Let $z_0 \in \bbD$ be the base point fixed in \Cref{subsubsection:hermitian_domain_definitions}, and consider the Schwartz functions $\tilde{\nu}_i, \tilde{\nu} \in \mathcal{S}(V^r)$ defined by
\begin{equation} \label{eq:def_tilde_nu_i}
\nu_i({\bf v},z_0) \wedge \Omega^{p-r+1}(z_0) = \tilde{\nu}_i({\bf v})  \, \Omega^p(z_0), \qquad \qquad  1 \leq i \leq r
\end{equation}
and
\begin{equation}
  \tilde{\nu} := \tilde{\nu}_1+\ldots,+\tilde{\nu}_r;
\end{equation}
here $\Omega = \Omega_{\vbE} = \frac{i}{2 \pi} c_1(\vbE, \nabla)$. 

The next lemma computes the images of $\tilde{\nu}_1,\ldots, \tilde{\nu}_r$ under the map $\lambda\colon \mathcal{S}(V^r) \to I_r(V,s_0)$ described in \Cref{subsubsection:rep_R_r_case_1,subsubsection:rep_R_r_case_2}. 

\begin{lemma} \label{lemma:nu_and_Phi}
Let $\lambda_0$ be the weight of $K'$ given by \eqref{eq:def_lambda_0} and  $\Phi^{\lambda_0}(s)$ be the unique vector in $I_r(V,s)$ of weight $\lambda_0$ with $\Phi^{\lambda_0}(e,s)=1$. For $1 \leq i \leq r$, let $\tilde{\Phi}_i(s) = r(\underline{m}(\epsilon_i))\Phi^{\lambda_0}(s)$. Then
\begin{equation*}
	\lambda \left( \tilde{\nu}_i \right)(g') \  = \ \omega(g')( \tilde \nu_i)(0) = \ (-1)^{r-1} \tilde{\Phi}_i(g', s_0), \quad g' \in G'_r.
	\end{equation*}
\end{lemma}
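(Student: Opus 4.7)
The plan is to reduce to the case $i=r$, identify $\tilde\nu_r$ as a highest weight vector of weight $\lambda_0$, invoke multiplicity one of $K'_r$-types in $I_r(V,s_0)$, and then determine the constant of proportionality by evaluating at the identity.

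First I would reduce to $i=r$. Since the Weil representation acts only on the Schwartz function argument and leaves the form-degree part untouched, the identity $\nu_i(\bfv)=\omega(\underline m(\epsilon_i))\nu_r(\bfv)$ from \Cref{subsubsection:varphi_and_nu_under_Weil_rep} commutes with wedging against the fixed form $\Omega^{p-r+1}(z_0)$ and extracting the coefficient of $\Omega^p(z_0)$. This gives $\tilde\nu_i=\omega(\underline m(\epsilon_i))\tilde\nu_r$ in $\mathcal S(V^r)$, and $G'_r$-equivariance of $\lambda$ then reduces the claim to
\[
\lambda(\tilde\nu_r)(g') \;=\; (-1)^{r-1}\,\Phi^{\lambda_0}(g',s_0).
\]

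Next I would identify $\tilde\nu_r$ as a highest weight vector for $K'_r$ of weight $\lambda_0$. By \Cref{lemma:weight_lambda_0}, $\nu_r(\bfv,z_0)_{[2r-2]}$ is a highest weight vector of weight $\lambda_0$ inside $\mathcal S(V^r)\otimes \wedge^{2r-2}T^*_{z_0}\mathbb D$. Because the $K'_r$-action is entirely on the Schwartz argument and the map extracting $\tilde\nu_r$ from $\nu_r(\bfv,z_0)_{[2r-2]}\wedge\Omega^{p-r+1}(z_0)$ is $K'_r$-equivariant, $\tilde\nu_r \in \mathcal S(V^r)$ is itself a highest weight vector of weight $\lambda_0$. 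Applying the $G'_r$-intertwining map $\lambda$, the image $\lambda(\tilde\nu_r)\in I_r(V,s_0)$ is a highest weight vector of the same weight. The multiplicity-one statements for $K'_r$-types in $I_r(V,s_0)$ recalled in \Cref{subsection:degenerate principal series of Mp2r} and \Cref{subsection:degenerate principal series of Urr} then force
\[
\lambda(\tilde\nu_r) \;=\; c\cdot \Phi^{\lambda_0}(s_0)
\]
for some scalar $c\in\bbC$.

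Finally I would compute $c$ by specializing to $g'=e$: the right-hand side gives $\Phi^{\lambda_0}(e,s_0)=1$ by the normalization of highest weight vectors, while $\lambda(\tilde\nu_r)(e)=\tilde\nu_r(0)$. Using $\nu_r(\bfv)=\nu(v_r)\wedge\varphi(v_1,\ldots,v_{r-1})$ together with the vanishing ranges of \Cref{prop:nu_properties}(c) and \Cref{prop:phi_basic_properties}(c), one has $\nu_r(\mathbf 0)_{[2r-2]}=\nu(0)_{[0]}\wedge\varphi(0,\ldots,0)_{[2r-2]}$. Inserting $\nu(0)_{[0]}=1$ from \eqref{eq:nu0_and_Siegel_gaussian} and, since $\mathrm{rk}(\vbE)=1$, $\varphi(0)_{[2]}=c_1(\vbE^\vee,\nabla)^*=-\Omega_\vbE$ from \Cref{prop:phi_basic_properties}(e), one obtains
\[
\nu_r(\mathbf 0,z_0)_{[2r-2]}\wedge\Omega_\vbE^{p-r+1}(z_0) \;=\; (-1)^{r-1}\,\Omega_\vbE^p(z_0),
\]
so $c=(-1)^{r-1}$, completing the proof.

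The main obstacle is conceptual rather than technical: one must be confident that the extraction $\nu_r(\bfv,z_0)_{[2r-2]}\mapsto \tilde\nu_r(\bfv)$ preserves the highest-weight property, which in turn rests on the fact that the Weil representation acts only on the Schwartz variable and that multiplicity one applies to the full representation $I_r(V,s_0)$, not merely to a quotient of $\mathcal S(V^r)$. Once this is granted, the remaining computation of the scalar is immediate, with the only delicate point being the sign coming from $c_1(\vbE^\vee,\nabla)^*=-\Omega_\vbE$ in rank one.
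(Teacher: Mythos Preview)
Your proof is correct and follows essentially the same approach as the paper's: reduce to $i=r$ via the intertwining property of $\lambda$, invoke \Cref{lemma:weight_lambda_0} and multiplicity one of $K'_r$-types in $I_r(V,s_0)$ to get proportionality with $\Phi^{\lambda_0}(s_0)$, and evaluate at the identity to determine the constant $(-1)^{r-1}$. The paper is slightly more terse, citing \Cref{prop:nu_properties} directly for $\nu_r(\mathbf 0)_{[2r-2]}=(-\Omega)^{r-1}$, whereas you unpack this computation explicitly via $\nu(0)_{[0]}=1$ and $\varphi(0)_{[2]}=-\Omega_\vbE$; but the substance is identical.
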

\begin{proof}
Since the map $\lambda \colon \mathcal{S}(V^r) \to I_r(V,s_0)$ 
is $G'_r$-intertwining, it suffices to consider the case $i=r$. By \Cref{prop:nu_properties} we have $\nu_r(0)_{[2r-2]}=(-\Omega)^{r-1}$, and hence
\begin{equation} \label{eq:nu_and_Phi}
\omega(g')\tilde{\nu}_r(0) = (-1)^{r-1}\Phi^{\lambda_0}(g',s_0), \quad g' \in G'_r,
\end{equation}
since both sides define highest weight vectors of weight $\lambda_0$ in $I_r(V,s_0)$ (see \Cref{lemma:weight_lambda_0}) that moreover  agree for $g'=1$, and the $K'$-types in this representation appear with multiplicity one.
\end{proof}

Thus, setting
\begin{equation} \label{eq:def_tilde_Phi}
\tilde{\Phi}(s)=\sum_{1 \leq i \leq r} r(\underline{m}(\epsilon_i)) \Phi^{\lambda_0}(s) \in I_r(V,s),
\end{equation}
we have
\begin{equation}\label{eq:relation_nu_tilde{Phi}}
\omega(g')\tilde{\nu}(0)=(-1)^{r-1}\tilde{\Phi}(g',s_0).
\end{equation}

The following lemma relating the Whittaker functional $W_T(\cdot,s_0)$ evaluated at $\Phi^{\lambda_0}$ with the derivative $W_T'(\Phi^{l},s_0)$ is the main ingredient in the proof of \Cref{thm:height_pairing_and_derivative}. It will be convenient to work in classical coordinates: for any $\Phi \in I_r(V,s)$ and  $y \in \Sym_r(\bbR)_{>0}$ (resp.\ $y \in \Herm_r(\bbC)_{>0}$) in the orthogonal (resp.\ unitary) case, let
	\begin{equation}
		\archW_T(y, \Phi, s) \ := \ (\det y)^{-\iota m/4} \, W_T(g'_{iy},\tilde{\Phi},s) \, e^{2 \pi  \tr(Ty)}
	\end{equation}
	where $g'_{iy} = \ul m(\alpha)$ for any matrix $\alpha \in \GL_r(\bbK)$ with $\det \alpha \in \bbR_{>0}$ and $y = \alpha \cdot \transpose{\overline \alpha}$.
%

\begin{lemma} \label{lemma:lowering_W_T_derivative}
 Let $\archW_T(y,s) = \archW_T(y, \Phi^l,s)$ be given by \eqref{eqn:ArchWhittakerClassicalNormalization} and write $\archW_T'(y,s) = \tfrac{d}{ds}\archW_T(y,s)$. For any $t \in \mathbb{R}_{>0}$ we have
\begin{equation*}
\archW_T(ty,\tilde{\Phi},s_0) = \frac{2}{\iota} \cdot t \frac{d}{dt} \archW_T'(ty,s_0).
\end{equation*}
\end{lemma}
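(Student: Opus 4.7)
The proof combines a representation-theoretic identity with a classical differential-operator computation.

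Since $\Phi^l$ has scalar $K'$-weight $l$, \Cref{lemma:lowering_explicit} (orthogonal) and \eqref{eq:formula_c_s_case_2} (unitary) give
\[
R_{X_r^-}\Phi^l(s) = \tfrac{\iota}{2}(s - s_0)\, \Phi^{\lambda_0}(s),
\]
which vanishes at $s_0$. Differentiating in $s$ yields $R_{X_r^-}\Phi^l{}'(s_0) = \tfrac{\iota}{2}\Phi^{\lambda_0}(s_0)$. Since $\det \epsilon_i = 1$ makes $\Phi^l$ invariant under $r(\underline m(\epsilon_i))$, while $\mathrm{Ad}(\underline m(\epsilon_i))\, X_r^- = X_i^-$, summing over $i$ and applying the Whittaker integral gives the key identity
\begin{equation} \label{eq:plan_key}
R_{\sum_i X_i^-}\, W_T'(g', \Phi^l, s_0) = \tfrac{\iota}{2}\, W_T(g', \tilde\Phi, s_0) \qquad (g' \in G_r').
\end{equation}

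The scaling $y \mapsto ty$ corresponds to right-translation by $\underline m(\sqrt t \cdot 1_r)$, so $t\tfrac{d}{dt}$ on functions of $g'_{ity}$ agrees with $\tfrac{1}{2}R_{H_0}$ where $H_0 = \bigl(\begin{smallmatrix} 1_r & 0 \\ 0 & -1_r\end{smallmatrix}\bigr)$, and a direct matrix calculation gives the decomposition $\tfrac{1}{2}H_0 = \sum_i X_i^- + \xi$ with $\xi := \tfrac{1}{2}\sum_i(X_i^+ - X_i^-)$. Using \eqref{eq:formula_lowering_explicit} together with its raising analogue (applied through the embeddings $\Mp_2 \hookrightarrow G_r'$ corresponding to each coordinate $i$), the formulas for $R_{X_i^\pm}$ on the weight-$l$ Whittaker function can be computed explicitly; taking the difference and combining, one verifies that $R_\xi$ acts on $W_T(\cdot, \Phi^l, s)$ evaluated at $g'_{ity}$ by multiplication by the scalar $\tfrac{\iota m r}{4} - 2\pi t \tr(Ty)$ at $s = s_0$, the first contribution coming from the weight-$l$ action of $\Phi^l$ under $K'$ and the second from the left $N$-character $\psi_T$ (after passing $n$ through $g'_{iy} \in M_r$).

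Expanding $t\tfrac{d}{dt}\archW_T'(ty, s_0)$ via the product rule applied to $\archW_T'(ty, s_0) = (\det ty)^{-\iota m/4}\, W_T'(g'_{ity},\Phi^l,s_0)\, e^{2\pi t \tr(Ty)}$: the derivative of $(\det ty)^{-\iota m/4}$ contributes $-\tfrac{\iota m r}{4}$ and that of $e^{2\pi t \tr(Ty)}$ contributes $+2\pi t \tr(Ty)$, precisely cancelling the two terms of the $R_\xi$-multiplier from the previous paragraph. The Whittaker-function contribution $\tfrac{1}{2} R_{H_0}W_T' = R_{\sum_i X_i^-}W_T' + R_\xi W_T'$ therefore reduces to $R_{\sum_i X_i^-} W_T'$, which by \eqref{eq:plan_key} equals $\tfrac{\iota}{2}W_T(g'_{ity},\tilde\Phi,s_0)$. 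Reassembling the normalization factors produces $\tfrac{\iota}{2}\archW_T(ty, \tilde\Phi, s_0)$, and dividing by $\iota/2$ establishes the lemma.

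\textbf{Main obstacle.} The principal subtlety is verifying that $R_\xi$ acts by a scalar on the weight-$l$ Whittaker function at $g'_{ity}$. For $r = 1$ this follows directly from \eqref{eq:formula_lowering_explicit} applied to $X^\pm$, but the higher-rank case requires either a case-by-case differential-operator treatment of each direction $i$ (transporting the formula through $\Mp_2 \hookrightarrow G_r'$ and summing) or, more conceptually, an argument exploiting the uniqueness of the Whittaker model of $I_r(V,s)$ to force the scalar eigenvalue on the specific combination $\xi$.
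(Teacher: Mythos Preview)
Your argument is correct and rests on the same ingredients as the paper's proof: the identity $X_r^-\Phi^l(s)=\tfrac{\iota}{2}(s-s_0)\Phi^{\lambda_0}(s)$ from \Cref{lemma:lowering_explicit} and \eqref{eq:formula_c_s_case_2}, together with the $N_r$- and $K_r'$-equivariance of $W_T$. The organization differs. The paper first reduces to diagonal $y$ via the change of variables $W_T(\underline m(k)g,\Phi,s)=W_{{}^t\bar kTk}(g,\Phi,s)$, then uses the coordinate embeddings $\iota_j\colon\Mp_2(\bbR)\hookrightarrow G_r'$ to compute directly
\[
\tfrac{\iota}{2}(s-s_0)\,\archW_T(y,\tilde\Phi_j,s)=y_j\,\tfrac{\partial}{\partial y_j}\archW_T(y,s),
\]
working with the normalized Whittaker function from the outset; summing over $j$ and Taylor-expanding gives the result. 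Your approach instead works with the unnormalized $W_T$, decomposes $\tfrac12 H_0=\sum_i X_i^-+\xi$, and lets the product rule on the normalization factors cancel the scalar action of $\xi$. These are two bookkeepings of the same calculation.

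Your ``main obstacle'' is overstated, and your proposed resolution (via the raising analogue of \eqref{eq:formula_lowering_explicit} applied through each $\iota_j$) is both unnecessary and, as written, only valid after reducing to diagonal $y$ --- which you do not do. The clean route is the Iwasawa decomposition of $\xi$ itself: a direct matrix check gives
\[
\xi=\tfrac12\sum_i(X_i^+-X_i^-)=\underbrace{n(i\cdot 1_r)}_{\in\,\lie n_r}\ +\ \underbrace{h\bigl(\tfrac12,\ldots,\tfrac12\bigr)}_{\in\,\lie h'}\,.
\]
Since $m(\alpha)\,n(b)=n(\alpha b\,{}^t\bar\alpha)\,m(\alpha)$, the $\lie n_r$-part contributes $-2\pi\,\mathrm{tr}(Ty)$ via the character $\psi_T$, while the Cartan part acts on $\Phi^l$ by the weight $l\cdot\tfrac r2=\tfrac{\iota mr}{4}$. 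This holds for arbitrary $y$, so no diagonalization is needed. With this observation your argument goes through for general $y$ without invoking $X_i^+$ at all.
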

\begin{proof}
We begin with the orthogonal case.
 If $k \in \mathrm{O}(r)$, a change of variables in \eqref{eq:def_W_T_coefficient} shows that
\begin{equation}
W_T(\underline{m}(k)g,\Phi,s) = W_{{^t}\overline{k}T k}(g,\Phi,s), \quad \Phi \in I_r(V,s).
\end{equation}
Hence it suffices to consider the case where $y$ is diagonal, i.e.\, we may assume  \begin{equation}
y=d(y_1,\ldots,y_r).
\end{equation}

 The statement is now a direct computation using \eqref{eq:formula_lowering_explicit}, as follows. For $1 \leq j \leq r$, let $\tilde{\Phi}_j(s)=r(\underline{m}(\epsilon_j))\Phi^{\lambda_0}(s)$. By \Cref{lemma:lowering_explicit},
we can write 
\begin{equation}
\tilde{\Phi}_j(s) = 2 \, (s-s_0)^{-1}  \, \mathrm{Ad}(\underline{m}(\epsilon_j))X_r^- \Phi^{l}(s).
\end{equation}
Let $F(g)=W_T(g,\Phi^{l},s)$ and $F_j(g)=W_T(g,\tilde{\Phi}_j,s)$ and set $y^{1/2}:=d(y_1^{1/2},\ldots,y_r^{1/2})$. Using coordinates $x_j$, $y_j$ and $\theta_j$ for the embedding 
\begin{equation}
\iota_j := \underline{m}(\epsilon_j)\iota_r \underline{m}(\epsilon_j)^{-1} \colon \Mp_2(\bbR) \to G'_r = \Mp_{2r}(\bbR)
\end{equation} 
as in \eqref{eq:embedding_iota_r}-\eqref{eq:formula_lowering_explicit}, we compute
\begin{equation} \label{eq:main_lemma_W_T_derivative_eq1}
\begin{split}
 2^{-1}(s-s_0)F_j(\underline{m}(y^{1/2})) &= \mathrm{Ad}(\underline{m}(\epsilon_j)) X_r^-F(\underline{m}({y}^{1/2})) \\
&= \left(-iy_j\frac{d}{dx_j}+y_j \frac{d}{dy_j}+\frac{i}{2}\frac{d}{d\theta_j}\right) F(\underline{m}({y}^{1/2})).
\end{split}
\end{equation}
Note that for $\ul n(x) \in N_r$ and $k' \in K'_r$ we have
\begin{equation}
W_{T} \left( \ul n(x) \ul m(y^{\frac12 }) k', \Phi^l,s \right)  \ = \ e^{2 \pi i \, \tr \left(T x\right)} \, (\det k')^l \, W_T \left(\ul m(y^{\frac12}), \Phi^l,s \right)
\end{equation}
and hence, in coordinates $(x_j,y_j, \theta_j)$ as above, we find
\begin{equation}
 \left(-iy_j\frac{d}{dx_j}+y_j \frac{d}{dy_j}+\frac{i}{2}\frac{d}{d\theta_j}\right) F (\underline{m}({y}^{1/2})) \ = \ \left( 2 \pi y_j T_{jj}  +  y_j \frac{d}{d y_j}  - \frac{m}4 \right) F (\underline{m}({y}^{1/2})).
\end{equation}
Substituting this expression in \eqref{eq:main_lemma_W_T_derivative_eq1} we conclude that
\begin{align}
 2^{-1} \, (s-s_0) \, \archW_T(y,\tilde{\Phi}_j,s)  &= \prod_{i=1}^r y_i^{- m/4} \cdot \left[(2\pi y_j T_{jj}+y_j \frac{d}{dy_j}-\frac{ m}{4})F(\underline{m}({\bf y}^{1/2})) \right]\cdot e^{2\pi  \tr(T y))} \notag  \\
 &=  y_j \frac{d}{dy_j} \archW_T(y,s) .
\end{align}
Adding these equations for $j=1, \dots, r$, the lemma follows in the orthogonal case by comparing the Taylor expansions around $s=s_0$. The unitary case follows from analogous considerations, using \eqref{eq:formula_c_s_case_2} instead of \Cref{lemma:lowering_explicit}.
%
\end{proof}

\subsubsection{} 

Supppose $\det T \neq 0$ and let
\begin{equation}
\Omega_T(V) = \{(v_1,\ldots,v_r) \in V^r \ |  \ (Q(v_i,v_j))_{i,j}=2T\}.
\end{equation}
Thus $\Omega_T(V) \neq \emptyset$ if and only if $(V,Q)$ represents $T$, and in this case $\mathrm{U}(V) = \mathrm{Aut}(V,Q)$ acts transitively on $\Omega_T(V)$; assuming this, let $d\mu({\bf v})$ be an $\mathrm{U}(V)$-invariant measure on $\Omega_T(V)$ and consider the functional $\mathcal{S}(V^r) \to \mathbb{C}$ defined by 
\begin{equation} 
\phi  \ \mapsto \  \int_{\Omega_T(V)} \phi({\bf v}) \, d\mu({\bf v}).
\end{equation}
This functional is obviously $\mathrm{U}(V)$-invariant and non-zero, and hence defines a Whittaker functional on $R_r(V)$. We denote by $d\mu(\bf v)^{\mathrm{SW}}$ the unique $\mathrm{U}(V)$-invariant measure on $\Omega_T(V)$ such that, for any $\phi \in \mathcal{S}(V^r)$, 
\begin{equation} \label{eq:def_SW_measure}
\int_{\Omega_T(V)} \phi({\bf v}) \, d\mu({\bf v})^{\mathrm{SW}} = \gamma_{V^r}^{-1} \cdot W_T(e,\Phi,s_0), \quad \text{ where } \Phi(g):=\omega(g)\phi(0).
\end{equation}

We denote by $dg^{(2)}$ the invariant measure on $\mathrm{U}(V)$ defined as $dg^{(2)}=dp \,  dk$, where $dk$ is the unique Haar measure on $\mathrm{O}(V^+) \times \mathrm{O}(V^-)$ (resp. $\mathrm{U}(V^+) \times \mathrm{U}(V^-)$) 
with total volume one in case 1 (resp. case 2), and $dp$ is the left Haar measure on $P$ induced by the invariant volume form $\Omega^p$ on $\mathbb{D}^+ \cong G/K$.

\begin{lemma} \label{lemma:local_SW}
Let $\lambda_0$ be given by \eqref{eq:def_lambda_0} and let ${\bf v}=(v_1,\ldots,v_r) \in \Omega_T(V)$, where $\det T \neq 0$. Let $G_{\bf v} \subset \mathrm{U}(V)$ be the pointwise stabilizer of $\langle v_1,\ldots,v_r \rangle$ and $\Gamma_{\bf v} \subset G_{\mathbf{v}}^0$ be a torsion free subgroup of finite covolume. Then, for any $g' \in G_r'$, we have
\begin{equation*}
\int_{\Gamma_{\bf v} \backslash \mathbb{D}^+} \omega(g')\nu({\bf v}) \wedge \Omega^{p-r+1} = -\frac{\iota}{2}C_{T,\Gamma_{\bf v}} W_T(g',\tilde{\Phi},s_0),
\end{equation*}
where $\tilde{\Phi}$ is as in \eqref{eq:def_tilde_Phi} and $C_{T,\Gamma_{\bf v}}$ is the non-zero constant given by
\begin{equation*} 
C_{T,\Gamma_{\bf v}} \ =  \ \frac{2}{\iota} \ (-1)^r \  \mathrm{Vol}(\Gamma_{\bf v} \backslash G_{\bf v}, dg_{\bf v}) \  \frac{dg^{(2)}/dg_{\bf v}}{d\mu({\bf v})^{\mathrm{SW}}}  \ \gamma_{V^r}^{-1}.
\end{equation*}
Here $dg_{\bf v}$ is an arbitrary Haar measure on $G_{\bf v}$ and $dg^{(2)}/dg_{\bf v}$ denotes the quotient measure on $\Omega_{T}(V) \cong \mathrm{U}(V)/G_{\bf v}$ induced by $dg^{(2)}$ and $dg_{\bf v}$.
\end{lemma}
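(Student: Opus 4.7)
The strategy is a standard Siegel--Weil unfolding whose substantive content is encoded in \Cref{lemma:nu_and_Phi}.

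First, I would use the $G$-equivariance $g^*\nu(g\bfv) = \nu(\bfv)$ from \Cref{prop:nu_properties}(d), together with the $G$-invariance of $\Omega$, to propagate the defining identity $\nu(\bfv,z_0) \wedge \Omega^{p-r+1}(z_0) = \tilde\nu(\bfv)\,\Omega^p(z_0)$ from the base point to every $z = g z_0 \in \bbD^+$, obtaining
\begin{equation*}
\nu(\bfv,z) \wedge \Omega^{p-r+1}(z) \;=\; \tilde{\nu}(g^{-1}\bfv)\,\Omega^p(z).
\end{equation*}
Since $\omega(g')$ acts only on the Schwartz variable, the analogous identity holds verbatim with $\tilde\nu$ replaced by $\omega(g')\tilde\nu$. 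Specializing the same equivariance to $k \in K$ shows that $\tilde\nu$ is $K$-invariant, so the integrand is right $K$-invariant on $G$ and the integral lifts from $\Gamma_{\bfv}\backslash \bbD^+$ to $\Gamma_{\bfv}\backslash G$ against $dg^{(2)}$.

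Next I would unfold: the substitution $g \mapsto g^{-1}$ (valid by unimodularity of $G$) converts the left quotient into a right quotient, and factoring through the intermediate subgroup $G_{\bfv} \supset \Gamma_{\bfv}$ and parametrizing $G/G_{\bfv} \xrightarrow{\sim} \Omega_T(V)$ by $gG_{\bfv} \mapsto g\bfv$ yields
\begin{equation*}
\int_{\Gamma_{\bfv}\backslash \bbD^+} \omega(g')\nu(\bfv) \wedge \Omega^{p-r+1} \;=\; \mathrm{Vol}(\Gamma_{\bfv}\backslash G_{\bfv}, dg_{\bfv}) \int_{\Omega_T(V)} [\omega(g')\tilde\nu](\mathbf{w})\,d\mu(\mathbf{w}),
\end{equation*}
where $d\mu$ is the $\mathrm{U}(V)$-invariant measure on $\Omega_T(V)$ pushed forward from $dg^{(2)}/dg_{\bfv}$. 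Note that here $\bfv$ is fixed by every element of $G_{\bfv}$, so $[\omega(g')\tilde\nu](g\bfv)$ is well-defined as a function on $G/G_{\bfv}$.

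To finish, observe that $d\mu$ and $d\mu^{\mathrm{SW}}$ are both $\mathrm{U}(V)$-invariant measures on the transitive $\mathrm{U}(V)$-space $\Omega_T(V)$, so they are proportional with ratio $(dg^{(2)}/dg_{\bfv})/d\mu^{\mathrm{SW}}$. Applying the defining identity \eqref{eq:def_SW_measure} with $\phi = \omega(g')\tilde\nu$ and invoking \Cref{lemma:nu_and_Phi} to identify the associated standard section as $(-1)^{r-1} r(g')\tilde\Phi$ converts the inner integral into $(-1)^{r-1}\gamma_{V^r}^{-1}W_T(g',\tilde\Phi,s_0)$. Assembling the constants and matching against the definition of $C_{T,\Gamma_{\bfv}}$ (using $(-1)^{r-1} = -\tfrac{\iota}{2}\cdot\tfrac{2}{\iota}(-1)^r$) yields the claim. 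The substantive analytic input is entirely \Cref{lemma:nu_and_Phi}; the remaining, and only non-routine, work is the careful bookkeeping of Haar measures through the unfolding, especially the identification of the induced measure on $\Omega_T(V)$ relative to the Siegel--Weil normalization.
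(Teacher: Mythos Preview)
Your proof is correct and follows essentially the same route as the paper's: lift the integral from $\Gamma_{\bfv}\backslash\bbD^+$ to $\Gamma_{\bfv}\backslash\mathrm{U}(V)$ using the equivariance of $\nu$ and the definition of $\tilde\nu$, unfold through $G_{\bfv}$ to an integral over $\Omega_T(V)$, compare the resulting invariant measure with $d\mu^{\mathrm{SW}}$, and then apply \eqref{eq:def_SW_measure} together with \eqref{eq:relation_nu_tilde{Phi}} (which is the content of \Cref{lemma:nu_and_Phi}) to obtain $(-1)^{r-1}\gamma_{V^r}^{-1}W_T(g',\tilde\Phi,s_0)$. The only cosmetic difference is that the paper parametrizes $G_{\bfv}\backslash\mathrm{U}(V)\to\Omega_T(V)$ directly via $g\mapsto g^{-1}\bfv$ rather than first substituting $g\mapsto g^{-1}$.
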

\begin{proof}
The hypothesis that $\det T \neq 0$ 
 implies that $G_{\bf v}^0$ is reductive, hence the invariant measure $dg^{(2)}/dg_{\bf v}$ exists. By \eqref{eq:def_tilde_nu_i} and \Cref{prop:nu_properties}.(d), we have $\tilde{\nu}(gv)=\tilde{\nu}(v)$ for any 
 $g \in \mathrm{U}(V)$ stabilizing $V^-$. 
We compute 
\begin{equation}
\begin{split}
\int_{\Gamma_{\bf v} \backslash \mathbb{D}^+} \omega(g')\nu({\bf v}) \wedge \Omega^{p-r+1} &= \int_{\Gamma_{\bf v} \backslash \mathrm{U}(V)} \omega(g')\tilde{\nu}(g^{-1}{\bf v}) \, dg^{(2)} \\
&= \mathrm{Vol}(\Gamma_{\bf v} \backslash G_{\bf v}, dg_{\bf v}) \int\limits_{G_{\bf v} \backslash \mathrm{U}(V)} \omega(g')\tilde{\nu}(g^{-1}{\bf v}) \,  \frac{dg^{(2)}}{dg_{\bf v}} \\
&= \mathrm{Vol}(\Gamma_{\bf v} \backslash G_{\bf v}, dg_{\bf v}) \, \frac{dg^{(2)}/dg_{\bf v}}{d\mu({\bf v})^{\mathrm{SW}}}  \int_{\Omega_T(V)} \omega(g')\tilde{\nu}({\bf v}) \, d\mu({\bf v})^{\mathrm{SW}} \\
&= (-1)^{r-1} \ \mathrm{Vol}(\Gamma_{\bf v} \backslash G_{\bf v}, dg_{\bf v}) \  \frac{dg^{(2)}/dg_{\bf v}}{d\mu({\bf v})^{\mathrm{SW}}} \  \gamma_{V^r}^{-1}  \  W_T(g',\tilde{\Phi},s_0),
\end{split}
\end{equation}
where the last equality follows from \eqref{eq:def_SW_measure} and \eqref{eq:relation_nu_tilde{Phi}}. 
\end{proof}

\subsubsection{} We will now apply the results obtained in this section to compute the integral \eqref{eq:integral_xi_0} under our standing assumption that $\mathrm{rk}(\vbE)=1$. Rather than aiming for the most general result, we restrict to cocompact arithmetic subgroups $\Gamma$ and integral vectors $\bfv$ (see below); this suffices for the application to compact Shimura varieties in \Cref{section:arith_Siegel_Weil}.

Fix a lattice $L \subset V$ and a torsion free cocompact arithmetic subgroup $\Gamma \subset G$ stabilizing $L$, and let $\Gamma_{\bfv}=\mathrm{Stab}_\Gamma \langle v_1,\ldots,v_r \rangle$ and $X_\Gamma=\Gamma \backslash \mathbb{D}^+$.

\begin{lemma}\label{lemma:convergence_arith_quotient_1}
Assume that $\bfv \in L^r$ is non-degenerate and let $Z(\bfv)_\Gamma$ be the image of the natural map $\Gamma_{\bfv} \backslash \mathbb{D}_{\bfv}^+ \to X_\Gamma$. 
\begin{enumerate}[(a)]
\item The sum
\begin{equation*}
\archGreen(\bfv)_\Gamma = \sum_{\gamma \in \Gamma_{\bfv} \backslash \Gamma} \archGreen(\gamma^{-1}\bfv)
\end{equation*}
converges absolutely to a smooth form on $X_\Gamma-Z(\bfv)_\Gamma$ that is locally integrable on $X_\Gamma$. 
\item As currents on $X_\Gamma$ we have
\begin{equation*}
\sum_{\gamma \in \Gamma_{\bfv} \backslash \Gamma} \int_{1}^M \nuo(t^{1/2}\gamma^{-1}\bfv)_{[2r-2]} \frac{dt}{t} \xrightarrow{M \to +\infty} \archGreen(\bfv)_\Gamma.
\end{equation*}
\end{enumerate}
\begin{proof}
Let us first show that $\archGreen(\bfv)_\Gamma$ converges as claimed. Let $z \in \mathbb{D}^+$ and pick a relatively compact neighborhood $U$ of $z$. Recall that there is a positive definite form $Q_z$ defined by \eqref{eq:def_Siegel_majorant} that varies continuously with $z$. Write $\Gamma\bfv = S_1 \sqcup S_2$ (disjoint union), with
\begin{equation}
S_1 = \{ \bfv' \in \Gamma\bfv|\min_{z \in U}h_z(s_{\bfv'})<1 \};
\end{equation}
then $S_1$ is finite since $\Gamma\bfv \subset L^r$. We can split the sum defining $\archGreen(\bfv)_\Gamma$ accordingly; the sum over $S_1$ converges to a locally integrable form on $\mathbb{D}^+$ that is smooth outside $\cup_{\bfv' \in S_1} \mathbb{D}_{\bfv'}^+$ by \Cref{prop:Green_current_general}. The sum over $S_2$ converges to a smooth form on $U$ by the estimate \eqref{eq:algebra_norm_estimate} and the standard argument of convergence of theta series. By \Cref{prop:nu_properties}.(d), the current defined by $\archGreen(\bfv)_\Gamma$ is invariant under $\Gamma$, and this shows $(a)$. For part (b), note that for each $\bfv' \in S_1$ we have $\smallint_1^M \nuo(t^{1/2}\bfv')_{[2r-2]}\tfrac{dt}{t} \to \archGreen(\bfv')$ as $M\to +\infty$ (as currents on $\mathbb{D}^+$) by dominated convergence, as remarked in the proof of \Cref{prop:Green_current_general}. For the sum over $S_2$ we apply the bound \eqref{eq:bound_Chern_2}, and (b) follows.
\end{proof}
\end{lemma}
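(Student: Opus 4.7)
The plan is to localize on $\mathbb{D}^+$ and split the orbit sum into a finite ``bad'' part where terms are singular and an infinite tail controlled by Gaussian decay. Fix $z_0 \in \mathbb{D}^+$ and a relatively compact open neighborhood $U$ of $z_0$. The key geometric quantity is the Siegel majorant $Q_z$ from \eqref{eq:def_Siegel_majorant}, which varies continuously in $z$ and is positive definite for every $z$. Since $L$ is a lattice and $Q_z$ depends continuously on $z \in \overline U$, the set
\[
S_1 = \{ \bfv' \in \Gamma\bfv \mid \min_{z \in \overline U} h_z(s_{\bfv'}) < 1 \}
\]
is finite, while the complement $S_2 = \Gamma\bfv \setminus S_1$ consists of vectors on which $h_z(s_{\bfv'})$ is bounded away from $0$ uniformly on $\overline U$.

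For the finite piece $\sum_{\bfv' \in S_1} \archGreen(\bfv')$ I would simply quote \Cref{prop:Green_current_general} term by term: each summand is smooth on $\mathbb{D}^+ - \mathbb{D}^+_{\bfv'}$ and locally integrable on $\mathbb{D}^+$, and these properties are preserved by finite sums. For the tail $\sum_{\bfv' \in S_2} \archGreen(\bfv')$, the plan is to reintroduce the Gaussian prefactor and invoke \Cref{lem:Phi and Nu are Schwartz}: uniformly for $z \in \overline U$ and for any partial derivative of order at most $k$,
\[
\bigl\| \nuo(t^{1/2}\bfv')_{[2r-2]} \bigr\|_{\overline U,k} \leq C \, e^{-2\pi t\, Q'_{\overline U}(\bfv')},
\]
for some fixed positive definite quadratic form $Q'_{\overline U}$ on $V^r$. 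Integrating against $dt/t$ over $[1,\infty)$ and summing over the lattice orbit $S_2 \subset L^r$ yields a theta-type series that converges absolutely together with its derivatives, by the classical argument for convergence of theta series. Combined with the finite piece, this shows $\archGreen(\bfv)_\Gamma$ is smooth off $Z(\bfv)_\Gamma$ and locally integrable; $\Gamma$-invariance is automatic from \Cref{prop:nu_properties}(d), which descends the form to $X_\Gamma$.

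For part (b), the same dichotomy applies. For $\bfv' \in S_2$ the above Gaussian bound is uniform in $M$, so dominated convergence swaps the $M \to \infty$ limit with the sum over $S_2$. For each of the finitely many $\bfv' \in S_1$, the truncated integrals $\smallint_1^M \nuo(t^{1/2}\bfv')_{[2r-2]} \tfrac{dt}{t}$ converge to $\archGreen(\bfv')$ as currents on $\mathbb{D}^+$ by the uniform local bound \eqref{eq:int_nuo_uniformly_bounded} used in the proof of \Cref{prop:Green_current_general}. Summing these two contributions yields (b).

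The main obstacle is reconciling the two genuinely different types of convergence at play: the local integrability of each $\archGreen(\bfv')$ near its singular locus $\mathbb{D}^+_{\bfv'}$ (a Koszul/Bismut-Gillet-Soul\'e statement) versus the absolute convergence of a theta-type sum over the infinite orbit $\Gamma\bfv$ (a Schwartz/Gaussian statement). The decomposition $\Gamma\bfv = S_1 \sqcup S_2$ is precisely what lets these two mechanisms operate independently: $S_1$ is finite so only finitely many singular loci contribute near any compact set, while on $S_2$ the function $h_z(s_{\bfv'})$ is uniformly bounded below, so no singularity is encountered and only the Gaussian tail matters.
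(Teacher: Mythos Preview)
Your proposal is correct and follows essentially the same approach as the paper: the identical decomposition $\Gamma\bfv = S_1 \sqcup S_2$, finiteness of $S_1$ via the lattice condition, \Cref{prop:Green_current_general} for the finite singular part, Gaussian decay plus theta-series convergence for the tail, and $\Gamma$-invariance via \Cref{prop:nu_properties}(d). The only cosmetic difference is that the paper cites \eqref{eq:algebra_norm_estimate} and \eqref{eq:bound_Chern_2} directly for the decay of $\nuo$ on $S_2$, whereas you route through \Cref{lem:Phi and Nu are Schwartz} after reinserting the Gaussian factor; these are equivalent estimates.
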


\begin{lemma}\label{lemma:exchange_integrals}
Assume that $\bfv \in L^r$ is non-degenerate. Then the integral \eqref{eq:integral_xi_0} converges and
\begin{equation*}
\int_{\Gamma_{\bf v} \backslash \mathbb{D}^+} \archGreen({\bf v}) \wedge \Omega^{p-r+1} = \lim_{M \to +\infty} \int_1^M \int_{\Gamma_{\bf v} \backslash \mathbb{D}^+} \nuo(t^{1/2}{\bf v}) \wedge \Omega^{p-r+1} \frac{dt}{t}.
\end{equation*}
\end{lemma}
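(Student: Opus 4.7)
My approach is to unfold integrals over the non-compact quotient $\Gamma_{\bfv} \backslash \bbD^+$ into integrals over the compact quotient $X_\Gamma = \Gamma \backslash \bbD^+$, where the convergence statements of \Cref{lemma:convergence_arith_quotient_1} apply directly. I will fix a fundamental domain $\mathcal{F}$ for $\Gamma$ in $\bbD^+$ and use $\bigsqcup_{\gamma \in \Gamma_\bfv \backslash \Gamma} \gamma \mathcal{F}$ as a fundamental domain for $\Gamma_\bfv$. For any $\Gamma_\bfv$-invariant form $\alpha$, the $G$-invariance of $\Omega$ together with \Cref{prop:nu_properties}.(d) gives the unfolding identity
\begin{equation*}
\int_{\Gamma_{\bfv} \backslash \bbD^+} \alpha \ = \ \int_{X_\Gamma} \sum_{\gamma \in \Gamma_\bfv \backslash \Gamma}\, (\gamma^{-1})^* \alpha,
\end{equation*}
valid whenever the series converges absolutely. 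Applied to $\alpha = \archGreen(\bfv) \wedge \Omega^{p-r+1}$ and using $\gamma^* \archGreen(\bfv) = \archGreen(\gamma^{-1}\bfv)$, the right hand side becomes $\int_{X_\Gamma} \archGreen(\bfv)_\Gamma \wedge \Omega^{p-r+1}$, which is finite because $X_\Gamma$ is compact and $\archGreen(\bfv)_\Gamma$ is locally integrable by \Cref{lemma:convergence_arith_quotient_1}(a). This simultaneously establishes convergence of the LHS of the lemma and identifies its value.

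Applying the same unfolding to the truncated $\Gamma_\bfv$-invariant form $\alpha_M := \bigl(\int_1^M \nuo(\sqrt t \bfv)_{[2r-2]}\tfrac{dt}{t}\bigr) \wedge \Omega^{p-r+1}$ yields
\begin{equation*}
\int_{\Gamma_\bfv \backslash \bbD^+} \alpha_M \ = \ \int_{X_\Gamma} \sum_\gamma \int_1^M \nuo(\sqrt t \gamma^{-1}\bfv)_{[2r-2]}\tfrac{dt}{t} \wedge \Omega^{p-r+1}.
\end{equation*}
Here absolute convergence of the series is in fact easier than in \Cref{lemma:convergence_arith_quotient_1}(a) because the truncated integral is smooth on all of $\bbD$, so the argument using the bound \eqref{eq:algebra_norm_estimate} goes through without the logarithmic singularity issue. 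I will then invoke \Cref{lemma:convergence_arith_quotient_1}(b), which says that these truncated sums converge as currents on $X_\Gamma$ to $\archGreen(\bfv)_\Gamma$ as $M \to \infty$; pairing with the smooth test form $\Omega^{p-r+1}$ and combining with the first paragraph gives
\begin{equation*}
\lim_{M \to \infty} \int_{\Gamma_\bfv \backslash \bbD^+} \alpha_M \ = \ \int_{X_\Gamma} \archGreen(\bfv)_\Gamma \wedge \Omega^{p-r+1} \ = \ \int_{\Gamma_\bfv \backslash \bbD^+} \archGreen(\bfv) \wedge \Omega^{p-r+1}.
\end{equation*}

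Finally, for each finite $M$ I will invoke Fubini's theorem to rewrite $\int_{\Gamma_\bfv \backslash \bbD^+} \alpha_M = \int_1^M \int_{\Gamma_\bfv \backslash \bbD^+} \nuo(\sqrt t \bfv) \wedge \Omega^{p-r+1} \tfrac{dt}{t}$. Its hypotheses are verified as follows: for each fixed $t \geq 1$, unfolding once more expresses the inner integral as $\int_{X_\Gamma}\sum_\gamma \nuo(\sqrt t \gamma^{-1}\bfv) \wedge \Omega^{p-r+1}$, and the estimate \eqref{eq:algebra_norm_estimate} shows the series converges absolutely uniformly for $t$ in compact subsets of $[1,\infty)$; the resulting function of $t$ is therefore continuous on $[1,M]$, so the integrand is jointly integrable on $[1,M] \times (\Gamma_\bfv \backslash \bbD^+)$. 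Letting $M \to \infty$ concludes the proof. The only genuine obstacle is the non-compactness of $\Gamma_\bfv \backslash \bbD^+$; the unfolding trick circumvents it completely, reducing every convergence question to \Cref{lemma:convergence_arith_quotient_1} on the compact quotient $X_\Gamma$.
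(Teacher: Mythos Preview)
Your proof is correct and follows essentially the same approach as the paper: unfold to the compact quotient $X_\Gamma$, invoke \Cref{lemma:convergence_arith_quotient_1}(a) for convergence and (b) for the limit, then unfold back. The paper's proof is a two-line sketch of exactly this; you have simply spelled out the Fubini justification and the unfolding more carefully.
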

\begin{proof}
By \Cref{lemma:convergence_arith_quotient_1}.(a), the integral \eqref{eq:integral_xi_0} equals $\smallint_{\Gamma \backslash \mathbb{D}^+} \archGreen(\bfv)_\Gamma \wedge \Omega^{p-r+1}$ and so it converges since $X_\Gamma$ is compact. Applying \Cref{lemma:convergence_arith_quotient_1}.(b) and unfolding the sum and the integral proves the claim. 
\end{proof}

\begin{theorem} \label{thm:height_pairing_and_derivative}
Suppose $\det T \neq 0$ and 
${\bf v} \in \Omega_T(V) \cap L^r$. Let $C_{T,\Gamma_{\bf v}}$ be as in \Cref{lemma:local_SW} and $l$ be the weight in \eqref{eq:def_scalar_weight_both_cases}. Recall that we assume that $\mathrm{rk}(\vbE)=1$.
\begin{enumerate}[(a)]
\item If $T$ is not positive definite (so that $\mathbb{D}_{\mathbf{v}} =\emptyset$), then 
\begin{equation*}
e^{-2\pi \tr(T)} \int_{\Gamma_{\bf v} \backslash \mathbb{D}^+} \archGreen({\bf v}) \wedge \Omega^{p-r+1} = C_{T,\Gamma_{\bf v}} \, W_T'(e,\Phi^{l},s_0).
\end{equation*}
\item If $T$ is positive definite (so that $\mathbb{D}_{\mathbf{v}} \neq \emptyset$), then
\begin{equation*}
\begin{split}
e^{-2\pi \tr(T)} \int_{\Gamma_{\bf v} \backslash \mathbb{D}^+} \archGreen({\bf v}) \wedge \Omega^{p-r+1} &= C_{T,\Gamma_{\bf v}} \, W_T'(e,\Phi^{l},s_0)  \\ 
& \quad - C_{T,\Gamma_{\bf v}}\,  W_T(e,\Phi^l,s_0) \, \frac{\iota}{2} \, \left(\log \det (\pi T) - \frac{\Gamma_r'(\iota m/2)}{\Gamma_r(\iota m/2)} \right).
\end{split}
\end{equation*}
\end{enumerate}
\end{theorem}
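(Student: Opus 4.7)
The strategy is to apply \Cref{lemma:exchange_integrals} to interchange the integral over $\Gamma_{\bfv} \backslash \bbD^+$ with the integral over $t \in [1,\infty)$ defining $\archGreen(\bfv)$, and then to evaluate the inner integral by combining the Weil representation action with \Cref{lemma:local_SW} and \Cref{lemma:lowering_W_T_derivative}. Concretely, the point $g'_{it \cdot 1_r} \in G'_r$ equals $\ul m(\sqrt{t}\cdot 1_r)$, and the formulas \eqref{eq:Weil_rep_formulas} give
\[
\omega(g'_{it \cdot 1_r})\, \nu(\bfv) \ = \ t^{\iota r m/4}\, \nu(\sqrt{t}\, \bfv),
\]
since the characters $\chi_\psi$ and $\chi$ are trivial on positive real scalars. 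Combining this with $\nuo(\sqrt{t}\bfv) = e^{2\pi t\, \tr T}\, \nu(\sqrt{t}\bfv)$ and applying \Cref{lemma:local_SW} yields
\[
\int_{\Gamma_{\bfv} \backslash \bbD^+} \nuo(\sqrt{t}\, \bfv) \wedge \Omega^{p-r+1} \ = \ -\frac{\iota}{2}\, C_{T,\Gamma_{\bfv}}\, \archW_T(t \cdot 1_r, \tilde{\Phi}, s_0).
\]

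Next, \Cref{lemma:lowering_W_T_derivative} identifies the right-hand side with $- C_{T,\Gamma_{\bfv}}\, t\, \tfrac{d}{dt}\, \archW'_T(t \cdot 1_r, s_0)$, which is a total $t$-derivative. Hence integration against $dt/t$ from $1$ to $M$ collapses by the fundamental theorem of calculus, and passage to the limit $M \to \infty$ (justified by the estimates below) gives
\[
\int_{\Gamma_{\bfv} \backslash \bbD^+} \archGreen(\bfv) \wedge \Omega^{p-r+1} \ = \ C_{T,\Gamma_{\bfv}}\, \archW'_T(1_r, s_0) \ - \ C_{T,\Gamma_{\bfv}}\, \lim_{M \to \infty}\, \archW'_T(M \cdot 1_r, s_0).
\]
Since $\archW_T(1_r, \Phi, s) = e^{2\pi \tr T}\, W_T(e, \Phi, s)$, the first term produces the main term $C_{T,\Gamma_{\bfv}}\, e^{2\pi \tr T}\, W'_T(e, \Phi^l, s_0)$ of the theorem.

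The boundary contribution at infinity is then evaluated using the asymptotic results of \Cref{prop:NonDegenWhittakerEstimates} and \Cref{prop:NonDegArchWhittakerNonPos}; here the hypothesis $\rk(\vbE) = 1$ (which in the unitary case forces $q = 1$) ensures that our $s_0$ coincides with the shifted parameter $s_0(r)$ of \Cref{prop:NonDegenWhittakerEstimates}. In case (a), $T$ is not totally positive definite, so \Cref{prop:NonDegArchWhittakerNonPos}.(ii) gives exponential decay $\archW'_T(M \cdot 1_r, s_0) = O(e^{-CM})$, whence the boundary term vanishes and (a) follows. In case (b), $T$ is positive definite, and \Cref{prop:NonDegenWhittakerEstimates}.(iii)--(iv) provide the explicit nonzero limit
\[
\lim_{M \to \infty} \archW'_T(M \cdot 1_r, s_0) \ = \ \frac{\iota}{2}\, \archW_T(1_r, s_0)\, \Bigl(\log \det \pi T - \frac{\Gamma'_r(\iota m/2)}{\Gamma_r(\iota m/2)}\Bigr),
\]
after using (iii) to recognize the prefactor. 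Substituting and multiplying by $e^{-2\pi \tr T}$ then produces the extra term in (b).

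The main obstacle is not conceptual but bookkeeping: one must carefully track the signs, Weil representation characters, and normalization conventions connecting $\nu$, $\nuo$, the translated action $\omega(g'_{it \cdot 1_r})\nu$, and the classical Whittaker function $\archW_T$. In particular, verifying that $\chi_\psi$ (orthogonal case) and $\chi$ (unitary case) are trivial on the positive scalar matrices $\sqrt{t} \cdot 1_r$ is what allows the clean identification $\omega(g'_{it \cdot 1_r}) \nu(\bfv) = t^{\iota rm/4} \nu(\sqrt t \bfv)$ with no spurious character factors, and hence the eventual clean expression in terms of a derivative of a Whittaker function.
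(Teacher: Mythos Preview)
Your proof is correct and follows essentially the same approach as the paper: swap the integrals via \Cref{lemma:exchange_integrals}, recognize the inner integral as $-\tfrac{\iota}{2}C_{T,\Gamma_{\bfv}}\archW_T(t\cdot 1_r,\tilde\Phi,s_0)$ via the Weil representation and \Cref{lemma:local_SW}, convert to a total $t$-derivative via \Cref{lemma:lowering_W_T_derivative}, and evaluate the boundary term at infinity using Propositions~\ref{prop:NonDegenWhittakerEstimates} and~\ref{prop:NonDegArchWhittakerNonPos}. Your explicit remark that $\mathrm{rk}(\vbE)=1$ is what aligns $s_0$ with the $s_0(r)$ of \Cref{prop:NonDegenWhittakerEstimates} is a useful clarification.
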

\begin{proof} The integral converges by \Cref{lemma:exchange_integrals}. We compute
\begin{equation}
\begin{split}
\int_{\Gamma_{\bf v} \backslash \mathbb{D}^+} \archGreen({\bf v}) \wedge \Omega^{p-r+1} &= \int_1^{\infty} \left( \int_{\Gamma_{\bf v} \backslash \mathbb{D}^+} \nuo(t^{1/2}{\bf v}) \wedge \Omega^{p-r+1} \right) \frac{dt}{t} \\
&= \int_1^{\infty} e^{2\pi \mathrm{tr}(tT)} \left(\int_{\Gamma_{\bf v} \backslash \mathbb{D}^+} \nu(t^{1/2}{\bf v}) \wedge \Omega^{p-r+1} \right) \frac{dt}{t} \\
&=\int_1^{\infty} e^{2\pi \mathrm{tr}(tT)} \left(\int_{\Gamma_{\bf v} \backslash \mathbb{D}^+} \omega(\underline{m}(t^{1/2}\cdot1_{r}))\nu({\bf v}) \wedge \Omega^{p-r+1} \right) t^{-\tfrac{\iota rm}{4}} \, \frac{dt}{t} \\
&=-\frac{\iota}{2}C_{T,\Gamma_{\bf v}} \int_1^{\infty} e^{2\pi \mathrm{tr}(tT)} \, W_T(\underline{m}(t^{1/2} \cdot 1_r),\tilde{\Phi},s_0) \, t^{-\tfrac{\iota rm}{4}} \, \frac{dt}{t},
\end{split}
\end{equation}
where we have used \Cref{lemma:exchange_integrals} and \Cref{lemma:local_SW} for the first and last equality respectively. The last integrand equals $\archW_T(t \cdot 1_r,\tilde{\Phi},s_0)$, and so applying \Cref{lemma:lowering_W_T_derivative} we conclude that
\begin{equation}
\int_{\Gamma_{\bf v} \backslash \mathbb{D}^+} \archGreen({\bf v}) \wedge \Omega^{p-r+1} = C_{T,\Gamma_\mathbf{v}} (\archW_T'(e,s_0)- \lim_{t \to \infty} \archW_T'(t \cdot 1_r,s_0)).
\end{equation}
If $T$ is not positive definite, then the limit in the above expression vanishes by part (ii) of \Cref{prop:NonDegArchWhittakerNonPos}; this proves $(a)$. If $T$ is positive definite, then $(b)$ follows from \Cref{prop:NonDegenWhittakerEstimates}.(iii)-(iv).
\end{proof}

\section{Green forms for special cycles on Shimura varieties} \label{section:Green_currents_global}

We now shift focus from the Hermitian symmetric domain $\mathbb{D}$ to its quotients $\Gamma \backslash \mathbb{D}$ by arithmetic subgroups, and apply the results of the previous sections to construct Green forms for the special cycles $Z(T,\varphi_f)$ on orthogonal and unitary Shimura varieties introduced by Kudla in \cite{KudlaOrthogonal}. 

%

\subsection{Orthogonal Shimura varieties} \label{subsection:orthogonal_Shim_vars}
Let us briefly recall the definition and basic properties of orthogonal Shimura varieties attached to quadratic spaces over $F$; see \cite{KudlaOrthogonal} for more detail.

\subsubsection{}  Let $F$ be a totally real field with real embeddings $\sigma_1, \dots, \sigma_d$. 

Let $p \geq 1$ and $\bbV$ be a quadratic space over $F$ of dimension $p+2$, with corresponding bilinear form $Q(\cdot,\cdot)$. Assume that 
\begin{equation} \label{eqn:VSigCondition}
\mathrm{signature}(\bbV_{\sigma_i}) \ = \ \begin{cases} (p,2) & \text{ if } i = 1 \\ (p+2, 0 ) & \text{ if } i \neq 1 ,\end{cases}
\end{equation}
where we abbreviate $\bbV_{\sigma_i} := \bbV \otimes_{F, \sigma_i}  \bbR$ for the real quadratic spaces at each place. 

Attached to $\bbV_{\sigma_1}$ is the symmetric space $\mathbb D(\bbV) = \mathbb{D}(\bbV_{\sigma_1})$ defined in \eqref{eq:isomorphism_modelsofD_orthogonal}; recall that it is defined to be
\begin{equation}
\bbD(\bbV)  = \bbD(\bbV_{\sigma_1}) \ = \ \{ [v] \in \bbP(\bbV_{\sigma_1}(\bbC))  | Q(v, v) = 0 , \, Q(v, \overline{v}) \rangle < 0  \}.
\end{equation}
Here $Q(\cdot,\cdot)$ is the $\bbC-$bilinear extension of the bilinear form on $\bbV_{\sigma_1}$. 
Let
\begin{equation}
\bfH_{\bbV} := \Res_{F/\bbQ} \, \GSpin(\bbV).
\end{equation}
Then 
\begin{equation}
\bfH_{\bbV}(\bbR) \simeq \GSpin(\bbV_{\sigma_1}) \times \cdots \times \GSpin(\bbV_{\sigma_d})
\end{equation}
acts transitively on $\bbD(\bbV)$ via the first factor.

\begin{definition}
	For a compact open subgroup $K \subset \bfH(\bbA_f)$, consider the \emph{Shimura variety}
	\begin{equation*} 
	 X_{\bbV, K} := \bfH_{\bbV}(\bbQ) \backslash \bbD(\bbV) \times \bfH_{\bbV}(\bbA_f) / K
	\end{equation*}
	If $K$ is neat, then $X_{\bbV, K}$ is a complex quasi-projective algebraic variety. If $\bbV$ is moreover anisotropic, then $X_{\bbV, K}$ is projective.\footnote{Our assumptions on the signature of $\bbV$ imply that this is always the case when $F \neq \bbQ$.}

\end{definition}

The space $X_{\bbV, K}$ may be written in a perhaps more familiar fashion: fix a connected component $\bbD^+ \subset \bbD(\bbV)$
and let $\bfH_{\bbV}(\bbR)^+$ denote its stabilizer in $\bfH_{\bbV}(\bbR)$. Setting $\bfH_{\bbV}(\bbQ)^+ = \bfH_{\bbV}(\bbR)^+ \cap \bfH_{\bbV}(\bbQ)$, 
there exist finitely many elements $h_1, \dots h_t \in \bfH_{\bbV}(\bbA_f)$ such that 
\begin{equation}
\bfH_{\bbV}(\bbA_f) = \coprod_j \bfH_{\bbV}(\bbQ)^+ h_j K;
\end{equation}
then we may write 
\begin{equation} \label{eqn:ShVarConnComponents}
X_{\bbV,K} \simeq \coprod_{j} \Gamma_j \big\backslash \bbD^+ = : \coprod_j \ X_{\bbV,j}
\end{equation}
as a disjoint union of quotients  of $\bbD^+$ by discrete subgroups 
\begin{equation} \label{eq:Gamma_j_definition}
\Gamma_j =  \bfH_\bbV(\bbQ)^+ \cap \left( h_j K h_j^{-1} \right).
\end{equation}

In general, we regard the quotients $X_{\bbV,K}$ and $\Gamma_j \backslash \mathbb{D}^+$ as orbifolds. In particular, for a $\Gamma_j$-invariant differential form $\eta$ of top degree on $\mathbb{D}^+$, we define 
\begin{equation}
\int\limits_{[\Gamma_j \backslash \mathbb{D}^+]} \eta = [\Gamma_j:\Gamma']^{-1} \int\limits_{\Gamma' \backslash \mathbb{D}^+} \eta,
\end{equation}
where $\Gamma' \subset \Gamma_j$ is any neat subgroup of finite index, and set $\smallint_{[X_K]} = \sum_j \int_{[\Gamma_j \backslash \mathbb{D}^+]}$.

\subsubsection{}  The theory of canonical models of Shimura varieties (see \cite{Shih}) implies the existence of a quasi-projective model $\mathcal X_K$ over $\mathrm{Spec} (F)$, which is projective when $\bbV$ is anisotropic, such that 
\[ \calX_K \otimes_{F, \sigma_1} \bbC \simeq X_{\bbV, K}. \] 
From the point of view of arithmetic intersection theory, it will be important to work with all the complex fibres of $\mathcal X_K$ simultaneously; the remaining fibres have the following concrete description. 

For each $k = 2, \dots, d$, let $\bbV[k]$ denote a quadratic space over $F$ such that 
	\begin{enumerate}[(i)]
		\item $\bbV[k]_{\sigma_k} \simeq \bbV_{\sigma_1}$, i.e.\ the signature of $\bbV[k]_{\sigma_k}$ is $(p,2)$;
		\item  $\bbV[k]_{\sigma_1}\simeq \bbV_{\sigma_k}$; 
		\item   and $(\bbV[k])_w \simeq \bbV_w$ at all other places. 
	\end{enumerate} 
The space $\bbV[k]$ is unique up to isometry, and we have $\bbV[1] \simeq \bbV$.

 Fix, once and for all, identifications 
	 \begin{equation} \label{eqn:VkFiniteAdeleIdentification}
		 \bbV[k] \otimes_F \bbA_f \simeq  \bbV \otimes_F \bbA_f 
	 \end{equation}
	  inducing identifications \[ \bfH_{\bbV[k]}(\bbA_f) \ \simeq \ \bfH_{\bbV}(\bbA_f) \] 
 for all $k$, and so in particular we may view $K \subset \bfH_{\bbV[k]}(\bbA_f)$. Then, setting $\bbD(\bbV[k]) = \bbD(\bbV[k]_{\sigma_k}) \simeq \bbD(\bbV)$, the theory of conjugation of Shimura varieties (see \cite{MilneCanonical, MilneSuh}, as well as \cite[Section 7]{BruinierYangCMTotReal} for our particular situation) gives identifications
 \begin{equation}
	 \calX_K \times_{F, \sigma_k} \bbC \ \simeq \ X_{\bbV[k], K} \ = \ \bfH_{\bbV[k]}(\bbQ) \Big\backslash \bbD(\bbV[k]_{\sigma_k}) \times  \bfH_{\bbV[k]}(\bbA_f) \Big/ K.
 \end{equation}
In particular, viewing $\calX_K$ as a scheme over $\bbQ$ via the map $\mathrm{Spec}(F) \to \mathrm{Spec}(\bbQ)$, we have
\begin{equation}
	\calX_K(\bbC) \ = \ \coprod_{k=1}^d \, X_{\bbV[k],K}.
\end{equation}

\subsection{Special cycles} \label{sec:SpecialCyclesOrthogonal}
Recall that in \Cref{subsubsection:taut_bundle_definitions} we have defined the tautological bundle $\vbE$ over $\bbD(\bbV[k])$ and a global section $s_{\bfv}$ of $(\vbE^r)^\vee$ for any $\bfv \in (\bbV[k]_{\sigma_k})^r$, whose zero locus $Z(s_{\bfv})$ we denote by $\mathbb{D}_{\bfv}$. Given a rational vector $\bfv=(v_1,\ldots,v_r) \in \bbV[k]^r$, we set
\begin{equation}
\mathbb{D}_{\bfv} = Z(s_{\sigma_k(\bfv)})
\end{equation}
and $\mathbb{D}_{\bfv}^+=\mathbb{D}_{\bfv} \cap \mathbb{D}(\bbV[k])^+$.

Let $\bfH_{\bfv}(\mathbb{Q})$ be the pointwise stabilizer of $\mathrm{span}_F \left\{  v_1,\ldots,v_r \right\} $ in $\bfH_{\bbV[k]}(\mathbb{Q})$. Given a component $X_j = \Gamma_j \backslash \bbD^+ \subset X_{\bbV[k],K}$ associated to $h_j$ as in \eqref{eqn:ShVarConnComponents}, 
let $\Gamma_j(\bfv) = \Gamma_j \cap H_{\bfv}(\mathbb{Q})$; then the natural map
\begin{equation}
\Gamma_j(\bfv) \backslash  \bbD_{\bfv}^+ \ \to \ \Gamma_j \backslash \bbD^+  = X_j
\end{equation}
defines a (complex algebraic) cycle on $X_{j}$ that we denote by $c(\bfv,X_j)$. 
In addition, recall that for a matrix $T \in \Sym_r(F)$, we had defined
\begin{equation}
\Omega_T(\bbV[k]) \  = \  \{\bfv \in \bbV[k]^r  \, |  \ T(\bfv) =  T \};
\end{equation}
where $T(\bfv) = (\frac12 Q(v_i,v_j))_{i,j} \in \Sym_r(F)$ is the moment matrix of $\bfv$. 

\begin{definition}[\cite{KudlaOrthogonal}]  \label{def:GlobalSpecialCycle}
Let $\varphi_f \in \mathcal{S}(\bbV(\bbA_f)^r)^{K}$ be  a $K$-invariant Schwartz function  which, for each $k = 1, \dots, d$, may be viewed as a Schwartz function on $\bbV[k](\bbA_f)^r$ via \eqref{eqn:VkFiniteAdeleIdentification}.  For $T \in \Sym_r(F)$, define the weighted special cycle $Z(T, \varphi_f,K)$ on $\calX_K(\bbC) = \coprod X_{\bbV[k],K}$ by
	\begin{equation*}
	Z(T, \varphi_f,K) \ = \ \sum_{k=1}^d \  \sum_{X_j \subset X_{\bbV[k],K}} \ \sum_{\substack{ \bfv \in \Omega_T(\bbV[k]) \\ \mathrm{mod } \ \Gamma_j }} \varphi_f( h_j^{-1} \bfv) \, 	c(\bfv,X_j).
	\end{equation*} 
	This is a complex algebraic cycle on $\calX_K(\bbC)$ that is in fact defined over $F$. 
\end{definition}

It follows from the discussion after \eqref{eq:def_D_bfv} that if $Z(T, \varphi_f, K)$ is non-empty, then $T$ is totally positive semi-definite and the codimension of $Z(T, \varphi_f, K)$ is equal to the rank of $T$. 

Note that this definition is independent of all choices. Moreover, if $K' \subset K$ is an open subgroup of finite index and $\pi \colon \calX_{K'} \to \calX_K$ is the natural covering map, then $\pi^*Z(T,\varphi_f,K)=Z(T,\varphi_f,K')$.  See \cite[\textsection 5]{KudlaOrthogonal} for a proof of this and further properties of these cycles.
{\ \\}

To lighten notation, we will once and for all fix a compact open subgroup $K \subset \bfH_{\bbV}(\bbA_f)$, and write, for example, $Z(T, \varphi_f) = Z(T,\varphi_f, K)$, $X_{\bbV[k]} = X_{\bbV[k],K}$ and $\calX = \calX_K$, etc.

\subsection{Green forms for special cycles}  \label{subsection:Green forms for special cycles_orthogonal case}

\subsubsection{} For the moment, fix a real embedding $\sigma_k \colon F \to \bbR$ and a component 
\[X_j = \Gamma_j \backslash \bbD^+ \subset X_{\bbV[k]} \simeq \calX_{\sigma_k}(\bbC) \]
with $\bbD^+ = \bbD(\bbV[k]_{\sigma_k})^+$; here $\Gamma_j$ is attached to $h_j \in \bfH_\bbV(\mathbb{A}_f)$ as in \eqref{eq:Gamma_j_definition}.

Let $T \in \Sym_r(F)$ with $\det T \neq 0$. Any collection of vectors $\bfv =(v_1, \dots, v_r)\in \bbV[k]^r$ with $T(\bfv) = T$ is necessarily linearly independent. 
For such $\bfv$, we defined in \Cref{subsection:green_form_on_D} 
a form  satisfying the  equation 
\begin{equation} \label{eqn:nondegGreenEquationD}
\ddc \archGreen(\sigma_k(\bfv)) + \delta_{\mathbb{D}_\bfv} \ = \phio(\sigma_k(\bfv))_{[2r]}
\end{equation}
of currents on $\bbD^+$.

We introduce Green forms that depend on an auxiliary parameter $\bfy \in \Sym_r(F \otimes_{\bbQ}\bbR)_{\gg 0}$
as follows: fix some element $\bm \alpha \in \GL_r(F_{\bbR})$ with totally positive determinant such that $\bfy  = \bm \alpha \cdot  \transpose{\bm \alpha}$
and, for a Schwartz function $\varphi_f \in S(\bbV(\bbA_f)^r)^{K}$, define a form $\lie g_j(T, \bfy, \varphi_f)_{\sigma_k}$ on $\bbD^+  $ by setting
\begin{equation} \label{eq:def_global_Green_form_connected_cycle}
\lie g_j(T, \bfy, \varphi_f)_{\sigma_k}  \ := \ \sum_{\bfv \in \Omega_T(\bbV[k])} \, \varphi_f( h_j^{-1} \bfv ) \ \archGreen(\sigma_k(\bfv) \cdot \sigma_k(\bm \alpha));
\end{equation}
here we view  $\sigma_k(\bm{\alpha}) \in \GL_r(\bbR)$ via the $\bbR$-linear map $\sigma_{k} \colon F\otimes_{\bbQ} \bbR \to \bbR$ induced by $\sigma_k$. 
The convergence of this sum to a locally integrable form on $\mathbb{D}^+$ follows from \Cref{lemma:convergence_arith_quotient_1}.(a) and the fact that the number of orbits of $\Gamma_j$ on $\mathrm{Supp}(\varphi_f) \cap \Omega_T(\bbV[k])$ is finite.

Note that the definition is independent of the choice of $\bm \alpha$, by \Cref{prop:nu_properties}.$(f)$. Moreover, the form $\lie g_j(T, \bfy, \varphi_f)_{\sigma_k}$ is invariant under the action of $\Gamma_j$ by \Cref{prop:nu_properties}.$(d)$, and so 
it descends to a form on the connected component $X_j$ that, abusing notation, we also denote by $\lie g_j(T, y, \varphi_f)_{\sigma_k}$.

Finally, let
\begin{equation}
	\lie g(T, \bfy, \varphi_f)_{\sigma_k}
\end{equation}
denote the form on $X_{\bbV[k]}\simeq \calX_{\sigma_k}(\bbC)$ whose restriction to $X_j$ is $\lie g_j(T, \bfy , \varphi_f)_{\sigma_k}$. Essentially by construction, it is a Green form for the cycle $Z(T, \varphi_f)_{\sigma_k}$; more precisely, let $\omega(T, \bfy, \varphi_f)_{\sigma_k}$ be the differential form on $X_{\bbV[k]}$ whose restriction to the component $X_j$ is
\begin{equation}
\omega(T, \bfy, \varphi_f)_{\sigma_k} \big|_{X_j} \ = \ \sum_{ \bfv \in \Omega_T(\bbV[k])}  \varphi_f( h_j^{-1} \bfv ) \ \phio(\sigma_k(\bfv) \cdot \sigma_k(\bm{\alpha}))_{[2r]} ,
\end{equation}
Then the form $\omega(T, \bfy, \varphi_f)_{\sigma_k}$ is the $T$'th coefficient of the theta function
\begin{equation} \label{eq:def_Theta_KM}
\Theta_{\KM}(\bftau; \varphi_f)_{\sigma_k} \ := \ \sum_{T \in \Sym_r(F)} \omega(T, \bfy, \varphi_f)_{\sigma_k} \, q^T, 
\end{equation} 
where $\bftau \in (\bbH_r)^d$ and $ q^T := \prod_{i=1}^d e^{2\pi i \mathrm{tr}(\tau_i \sigma_i(T))}$. 
This theta function was considered (in much greater generality) by Kudla and Millson \cite{KudlaMillson3}.


Applying the identity \eqref{eqn:nondegGreenEquationD} of currents on $\bbD$, summing over $\bfx$ with $T(\bfx) = T$ and descending to the Shimura variety $X_{\bbV[k]}$ yields the equation 
\begin{equation} \label{eq:global_current_equation}
\ddc \lie g(T, \bfy, \varphi_f)_{\sigma_k} + \delta_{Z(T, \varphi_f)_{\sigma_k}}  =  \omega(T, \bfy, \varphi_f)_{\sigma_k}
\end{equation}
of currents on $X_{\bbV[k]} \simeq \calX_{\sigma_k}(\bbC)$. The collection
\begin{equation} \label{def:global_g_T_Y_phi_case_1_nondeg_T}
		\{ \lie g(T,\bfy,\varphi_f)_{\sigma_k} \ | \ k = 1, \dots, d  \}
\end{equation}
defines a Green form $\lie g(T,\bfy, \varphi_f)$ for the cycle $Z(T,\varphi_f)$ on $\calX$, in the sense of \cite[Chap. II]{SouleBook}. 

\subsubsection{} We will next construct a current $\lie g(T, \bfy; \varphi_f) = \left\{ \lie g(T, \bfy, \varphi_f)_{\sigma_k} \right\}$ for an arbitrary matrix $T \in \Sym_r(F)$ and $\bfy \in \Sym_{r}(F_{\bbR})_{\gg0}$.
For the moment, choose an embedding $\sigma_k$ and a component $X_j = \Gamma_j \backslash \bbD \subset \calX_{\sigma_k}(\bbC)$.  Recall that for any $\bfv = (v_1, \dots, v_r) \in \bbV[k]^r$, we had defined the current
\begin{equation}
		\archGreen (\sigma_k(\bfv);\rho) \ := \ \int_1^{\infty} \nuo(\sqrt{t} \bfv)_{[2r \cdot \mathrm{rk}(\vbE)-2]} \, \frac{dt}{t^{\rho+1}}
\end{equation}
on $\bbD$, see \eqref{eq:def_g_reg_local}.
here $\rho$ is a complex parameter.

Let $\bfy \in \mathrm{Sym}_r(F_\mathbb{R})_{\gg 0}$ and let $\bfalpha \in \mathrm{GL}_r(F_\mathbb{R})_{\gg 0}$ such that $\bfy = \bfalpha \cdot {^t}\bfalpha$. Given a Schwartz function $\varphi_f \in S(\bbV(\bbA_f)^r)^{K }$, consider the sum 
\begin{equation} \label{eqn:DegCurrentDef}
\lie g^{\mathtt{o}}_j(T, \bfy, \varphi_f;  \, \rho )_{\sigma_k}  := \sum_{\Omega_T(\bbV[k])}\varphi_f( h_j^{-1} \bfv) \ \archGreen \left( \sigma_k(\bfv) \cdot \sigma_k({\bfalpha});\rho \right),
\end{equation}
viewed as a current on $\bbD^+ = \bbD(\bbV[k]_{\sigma_k})^+$. Note that the right hand side is independent of the choice of $\bfalpha$ by \Cref{prop:xiRegContinuation}.(ii).

\begin{proposition} \label{prop:DegCurrentDef}
The sum \eqref{eqn:DegCurrentDef} converges for $\mathrm{Re}(\rho) \gg 0$ to a $\Gamma_j$-invariant  current on $\bbD^+$ that has a meromorphic continuation to $\mathrm{Re}(\rho)> -1/2$. In particular, the constant term in the Laurent expansion
\begin{equation*}
		\lie g^{\mathtt{o}}_j(T, \bfy, \varphi_f)_{\sigma_k}  :=  \mathrm{CT}_{\rho=0}   \   \lie g_{j}^{\mathtt{o}}(T,\bfy, \varphi_f;\, \rho)_{\sigma_k}
\end{equation*}
		descends to a current on $X_j = \Gamma_j \backslash \bbD^+$.
	\begin{proof}
		For convenience,  we take $\sigma_k=\sigma_1$ and suppress this index  from the notation, writing $y = y_1$ etc. The proof for the other embeddings $\sigma_k$ is identical.
		 
		 Let $\eta \in A_c^*(\mathbb{D}^+)$ with compact support. Choose an open set $U \subset \bbD^+$ with compact closure such that $\mathrm{supp}(\eta)\subset U$.
		 Let
		 \begin{equation}
			 S_1 \ := \ \{\bfv \in \bbV^r \ | \ T(\bfv) = T, \ \varphi_f(h_j^{-1} \bfv) \neq 0 , \text{ and } \bbD_{\bfv} \cap U \neq \emptyset \},
		 \end{equation}
		 and
		 \begin{equation}
		 	S_2 \ := \ \{\bfv \in \bbV^r \ | \ T(\bfv) = T, \ \varphi_f(h_j^{-1} \bfv) \neq 0 , \text{ and } \bbD_{\bfv} \cap U = \emptyset \},
		 \end{equation}
		 so that $S_1 \sqcup S_2$ indexes the non-zero terms appearing on the right hand side of \eqref{eqn:DegCurrentDef}. Note also that $S_1$ is finite, 
while there exists a bound $C>0$ such that
		 \begin{equation}
			 \min_{z \in \overline U} \sum_{i=1}^r h_{z} (\sigma(v_i), \sigma(v_i)) > C
		 \end{equation}
		 for all $\bfv = (v_1, \dots, v_r) \in S_2$. 		 

By \Cref{prop:xiRegContinuation}, the finite sum 
		\begin{equation}
		\sum_{\bfx \in S_1} \varphi_f(h_j^{-1}\bfv) \int_\mathbb{D} \archGreen \left(\sigma(\bfv) \cdot \sigma
		(\bfalpha); \rho\right) \wedge  \eta
		\end{equation} 
		converges for $\rho$ large, and has meromorphic continuation to $\mathrm{Re}(\rho)>-1/2$. For the same sum where $\bfv$ now runs over $S_2$, the exponential decay estimate \eqref{eq:algebra_norm_estimate}  and standard arguments for the convergence of theta series  imply that the sum
\begin{equation}
\begin{split}
			\sum_{\bfv \in S_2} \varphi_f(h_j^{-1} \bfv) \ & \int_{\bbD^+} \archGreen \left(\sigma(\bfv) \cdot \sigma(\bfalpha) ; \rho\right)\wedge \eta  \\
			& = \sum_{\bfv \in S_2} \varphi_f(h_j^{-1} \bfv) \int_1^{\infty} \int_{\bbD^+} \nuo \left( \sqrt{t}\sigma(\bfv) \cdot \sigma(\bfalpha)\right) \wedge \eta  \, \frac{dt}{t^{\rho+1}}
\end{split}
\end{equation}
		converges absolutely and locally uniformly in $\rho$, and hence defines an entire function of $\rho$.

\end{proof}
\end{proposition}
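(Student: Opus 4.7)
The plan is to pair the sum \eqref{eqn:DegCurrentDef} against an arbitrary compactly supported test form $\eta \in A^*_c(\bbD^+)$ and split the resulting index set into two pieces according to whether the singular locus $\bbD_{\bfv}$ meets $\mathrm{supp}(\eta)$. The finite piece inherits meromorphic continuation from the pointwise result \Cref{prop:xiRegContinuation}, and the tail converges absolutely and locally uniformly in $\rho$ via the Gaussian decay of $\nuo$ combined with the standard convergence of theta series.

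Concretely, fix $\eta \in A^*_c(\bbD^+)$ and choose a relatively compact open $U$ with $\mathrm{supp}(\eta) \subset U$. Partition the set of indices contributing to \eqref{eqn:DegCurrentDef} as
\[ \{\bfv \in \Omega_T(\bbV[k]) \mid \varphi_f(h_j^{-1}\bfv) \neq 0\} \ = \ S_1 \sqcup S_2, \]
with $S_1 = \{\bfv : \bbD_{\bfv} \cap \overline U \neq \emptyset\}$. The set $S_1$ is finite, because $\bbD_{\bfv}$ depends only on $\mathrm{span}_F(v_1,\ldots,v_r)$ and the support condition reduces the relevant spans to finitely many $\Gamma_j$-orbits, each contributing only finitely many translates meeting $\overline U$. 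For $\bfv \in S_1$, \Cref{prop:xiRegContinuation}.(iii) supplies the meromorphic continuation of the corresponding summand paired with $\eta$ to $\mathrm{Re}(\rho) > -1/2$.

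For $\bfv \in S_2$, the exponential decay \eqref{eq:algebra_norm_estimate} applied to the section $s_{\sigma_k(\bfv)\sigma_k(\bfalpha)}$ on $\overline U$, combined with a Siegel-majorant estimate based on the positive definite form $Q_z(v) = \tfrac12 Q(v,v) + h_z(s_v)$ and the compactness of $\overline U$, yields a Gaussian bound
\[ \Bigl|\int_{\bbD^+} \archGreen(\sigma_k(\bfv)\sigma_k(\bfalpha);\rho) \wedge \eta \Bigr| \ll_{\rho,\eta} e^{-\pi Q_{U,\bfalpha}(\sigma_k(\bfv))} \]
for some positive definite quadratic form $Q_{U,\bfalpha}$ on $\bbV[k]_{\sigma_k}^r$, uniformly in $\rho$ on compact subsets of $\bbC$. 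Since $\bfv$ ranges over a discrete lattice-like subset of $\bbV[k]^r$, the standard convergence of theta series then shows that the $S_2$-contribution defines an entire function of $\rho$.

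Combining the two pieces gives the meromorphic continuation to $\mathrm{Re}(\rho) > -1/2$; $\Gamma_j$-invariance of the resulting current follows from \Cref{prop:nu_properties}.(d) and the $\Gamma_j$-invariance of $\bfv \mapsto \varphi_f(h_j^{-1}\bfv)$. The main technical point is the Siegel-majorant estimate uniform in $z \in \overline U$, which is a routine compactness argument; once that is in place, the rest of the proof amounts to bookkeeping.
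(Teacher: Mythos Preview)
Your proposal is correct and follows essentially the same approach as the paper: pair against a compactly supported test form, split the index set according to whether $\bbD_{\bfv}$ meets a relatively compact neighbourhood of the support, handle the finite piece $S_1$ via \Cref{prop:xiRegContinuation}.(iii), and handle the tail $S_2$ via the exponential decay estimate \eqref{eq:algebra_norm_estimate} together with standard theta-series convergence. Your write-up is slightly more explicit than the paper's in justifying the finiteness of $S_1$ and the $\Gamma_j$-invariance, but the argument is the same.
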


We next show that by patching together the $\lie g_j^{\mathtt{o}}(T,\bfy,\varphi_f)_{\sigma_k}$ on $X_{\bbV[k]}$, we obtain a current satisfying an analogue of Green's equation \eqref{eq:global_current_equation}.
\begin{proposition} \label{prop:GlobalGreenOrthogonalDegenerateCase}
	
	Let $\lie g^{\mathtt{o}}(T, \bfy, \varphi_f)$ denote the  current on $\calX(\bbC)= \coprod_{k} \calX_{\sigma_k}(\bbC)$ whose restriction to a connected component $X_j\subset \calX_{\sigma_k}(\bbC)$ is  $\lie g_j^{\mathtt{o}}(T, \bfy, \varphi_f)_{\sigma_k}$. 
	Then there is an identity of currents
	\begin{equation} \label{eqn:general global current Green}
	\ddc  \lie g^{\mathtt{o}}(T,\bfy,\varphi_f) + \delta_{Z(T, \varphi_f)} \wedge \Omega_{\vbE^{\vee}}^{r - \rank(T)} =  \omega(T, \bfy, \varphi_f)
	\end{equation}
	where $\Omega_{\vbE^{\vee}} = c_1(\vbE^\vee,\nabla)^*$ and $\omega(T,\bfy,\varphi_f)$ is the differential form on $\mathcal{X}(\mathbb{C})$ whose restriction to $\mathcal{X}_{\sigma_k}(\mathbb{C})$ is $\omega(T,\bfy,\varphi_f)_{\sigma_k}$.
	
	\begin{proof}
		It suffices to prove the given identity, for each component $X_j$, at the level of $\Gamma_j$-invariant currents on $\bbD$. The estimates in the proof of \Cref{prop:DegCurrentDef} allow us to write
		\begin{align}
		\ddc  \lie g_j^{\mathtt{o}}(T, \bfy, \varphi_f)_{\sigma_k} \Big|_{X_j} \ =& \ \ddc \left[ \mathrm{CT}_{\rho=0} \ \sum_{\bfv} \varphi_f(h_j^{-1} \bfv) \, \archGreen \left( \sigma_k(\bfv) \cdot \sigma_k(\bfalpha);   \rho \right) \right] \notag \\ 
		=& \  \sum_{\bfv} \ \varphi_f(h_j^{-1} \bfv) \ \ddc \, \mathrm{CT}_{\rho=0} \, \archGreen \left( \sigma_k(\bfv) \cdot \sigma_k(\bfalpha);   \rho \right). \label{eqn:RegGreenSumonX}
		\end{align}
		The proposition follows immediately from \Cref{prop:xiRegContinuation}.(vi).

	\end{proof}
\end{proposition}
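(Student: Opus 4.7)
The plan is to establish the identity separately on each connected component, reducing everything to the local result Proposition \ref{prop:xiRegContinuation}.(vi). Fix an archimedean place $\sigma_k$ and a component $X_j = \Gamma_j\backslash\bbD^+$ of $\calX_{\sigma_k}(\bbC)$; it suffices to prove the corresponding equation of $\Gamma_j$-invariant currents on $\bbD^+$.

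First I would interchange $\ddc$ with the constant-term operator $\mathop{\mathrm{CT}}_{\rho=0}$ and with the summation defining $\lie g^{\mathtt{o}}_j$:
\begin{equation*}
\ddc \lie g^{\mathtt{o}}_j(T,\bfy,\varphi_f)_{\sigma_k}\Big|_{X_j}
\ = \ \sum_{\bfv \in \Omega_T(\bbV[k])}\!\!\varphi_f(h_j^{-1}\bfv)\,\ddc\,\mathop{\mathrm{CT}}_{\rho=0}\,\archGreen\!\left(\sigma_k(\bfv)\cdot\sigma_k(\bfalpha);\rho\right).
\end{equation*}
This interchange is legitimate because the estimates used in the proof of \Cref{prop:DegCurrentDef} (finiteness of the subset $S_1$ of vectors whose cycles meet a given compact set, plus exponential decay on $S_2$ via \eqref{eq:algebra_norm_estimate}) provide locally uniform control in $\rho$ throughout $\mathrm{Re}(\rho)>-1/2$.

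Next, for each fixed $\bfv$ with $T(\bfv)=T$, I apply \Cref{prop:xiRegContinuation}.(vi), which yields
\begin{equation*}
\ddc \mathop{\mathrm{CT}}_{\rho=0}\archGreen(\sigma_k(\bfv)\cdot\sigma_k(\bfalpha);\rho) + \delta_{\bbD_{\bfv}}\wedge\Omega_{\vbE^\vee}^{r-r'(\bfv)} \ =\ \phio(\sigma_k(\bfv)\cdot\sigma_k(\bfalpha))_{[2r\cdot\mathrm{rk}(\vbE)]},
\end{equation*}
where $r'(\bfv):=\dim\langle v_1,\ldots,v_r\rangle$. Multiplying by $\varphi_f(h_j^{-1}\bfv)$ and summing, the right-hand side assembles into $\omega(T,\bfy,\varphi_f)_{\sigma_k}|_{X_j}$ by the very definition of the Kudla--Millson theta form.

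It remains to identify the sum of $\delta$-currents with $\delta_{Z(T,\varphi_f)}\wedge\Omega_{\vbE^\vee}^{r-\rank(T)}|_{X_j}$. Terms with $\bbD_{\bfv}=\emptyset$ contribute zero. For terms with $\bbD_{\bfv}\neq\emptyset$, the discussion in \Cref{subsubsection:taut_bundle_definitions} forces $\langle v_1,\ldots,v_r\rangle$ to be positive-definite in $\bbV[k]_{\sigma_k}$, so $Q$ is non-degenerate on this span and hence $r'(\bfv)=\rank T(\bfv)=\rank T$; this is a constant exponent on $\Omega_{\vbE^\vee}$ that can be pulled out of the sum. The remaining sum $\sum_{\bfv}\varphi_f(h_j^{-1}\bfv)\,\delta_{\bbD_\bfv}$ is exactly the current of integration against the component $Z(T,\varphi_f)_{\sigma_k}|_{X_j}$ by Definition \ref{def:GlobalSpecialCycle}.

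The only genuine technical point is the interchange of $\ddc$, $\mathop{\mathrm{CT}}_{\rho=0}$ and $\sum_\bfv$ in the first display, but this is already built into the convergence analysis of \Cref{prop:DegCurrentDef}; beyond that, the proof is a direct term-by-term application of \Cref{prop:xiRegContinuation}.(vi) combined with the rank identification above.
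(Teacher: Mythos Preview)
Your proof is correct and follows the same approach as the paper: reduce to each component $X_j$, interchange $\ddc$ with the constant term and the sum using the estimates from \Cref{prop:DegCurrentDef}, and apply \Cref{prop:xiRegContinuation}.(vi) termwise. You actually supply one detail the paper leaves implicit, namely that the exponent $r - r'(\bfv)$ in the local statement matches $r - \rank(T)$ because only $\bfv$ with $\bbD_{\bfv}\neq\emptyset$ contribute a nonzero $\delta$-term, and for those the span is positive definite so $r'(\bfv)=\rank T(\bfv)=\rank T$.
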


Finally, in order to obtain agreement with the derivatives of Eisenstein series in our main theorem, we introduce a modified version of $\lie g^\mathtt{o}(T,\bfy,\varphi_f)$. We write $\det' A$ for the product of non-zero eigenvalues of a square matrix $A$, with the convention  $\det'(\textbf 0) = 1$.
\begin{definition} \label{def:global_g_T_Y_phi_case_1}
Let $\bfy = (y_v)_{v|\infty} \in \mathrm{Sym}_r(F_\mathbb{R})_{\gg 0}$ and $T \in \mathrm{Sym}_r(F)$, and define a current 
	$\lie g(T, \bfy, \varphi_f)  \in   D^{*}(\calX(\bbC))$
	 as follows: 
 if $T$ is not totally positive semidefinite, set
\begin{equation*}
\lie g(T,\bfy,\varphi_f) := \lie g^{\mathtt o}(T,\bfy,\varphi_f)
\end{equation*} 
and if $T$ is  totally positive semidefinite, set 
\begin{align*}
\lie g(T,\bfy,\varphi_f)& := \lie g^\mathtt{o}(T,\bfy,\varphi_f)  \\
 & \hspace{4em} {}  -  \sum_{v|\infty} \log \left( \frac{\det' \sigma_v(T) \cdot \det y_v}{\det'  \left(\sigma_v(T)y_v\right)}  \right)   \delta_{Z(T,\varphi_f)_{\sigma_v}} \wedge \Omega_{\calE^{\vee}}^{r-\mathrm{rk}(T)-1}  \notag
\end{align*} 
where $\Omega_{\vbE^{\vee}} = c_{1}(\vbE^{\vee}, \nabla)^* = \frac{i}{2 \pi} c_1(\vbE^{\vee}, \nabla)$. 
Note that the additional term is closed, and vanishes if $T$ is non-degenerate.
\end{definition}

Note also that when $\det T \neq 0$,  the current defined by the Green form \eqref{def:global_g_T_Y_phi_case_1_nondeg_T} agrees with the one in \Cref{def:global_g_T_Y_phi_case_1} by \Cref{prop:xiRegContinuation}.$(iv)$.

\subsection{Unitary Shimura varieties} \label{subsection:unitary_Shim_vars}
The results in the previous section carry over, essentially verbatim, to the unitary case. To describe the setup, suppose that $E$ is a CM extension of the totally real field $F$ with $[F:\bbQ] = d$, and that $\bbV$ is a Hermitian space over $E$, with Hermitian form $Q(\cdot , \cdot)$. 

Fix a CM type $\Phi = \{ \sigma_1, \dots,\sigma_d \} \subset \mathrm{Hom}(E, \bbC)$. For each $i$, 
the space $\bbV_{\sigma_i} = \bbV \otimes_{\sigma_i, E} \bbC$ is a complex Hermitian space; we assume 
\begin{equation}
\mathrm{signature}  \ \bbV_{\sigma_i} \ = \ \begin{cases} (p,q), & \text{ if } i = 1 \\ (p+q,0) , & \text{ if } i = 2, \dots d. \end{cases}
\end{equation}
for some integers $p,q > 0$. 

Let 
\begin{equation*}
  \bbD(\bbV_{\sigma_1}) \ := \ \left\{ z \subset \bbV_{\sigma_1} \text{ negative-definite subspace}, \ \dim_{\bbC} z = q   \right\},
\end{equation*}
(see \eqref{eq:def_unitary_D}) be the symmetric space attached to the real points of the unitary group
\begin{equation}
\bfH_{\bbV} \ := \ \mathrm{Res}_{F/\bbQ} \ \mathrm{U}(\bbV).
\end{equation}
Just as in \Cref{subsection:orthogonal_Shim_vars}, a fixed compact open subgroup $K \subset \bfH_{\bbV}(\bbA_f)$ determines a complex Shimura variety
\begin{equation} \label{eqn:ComplexUnitaryShimuraVarietyDefn}
X_{\bbV} = X_{\bbV, K} := \bfH_{\bbV}(\bbQ) \backslash \bbD({\bbV}_{\sigma_1}) \times \bfH_{\bbV}(\bbA_f) / K
 \end{equation}
which is quasi-projective, and projective when $\bbV$ is anisotropic; choosing representatives  $h_1, \dots, h_t $ for the double coset space $ \bfH_{\bbV}(\bbQ) \backslash \bfH_{\bbV}(\bbA_f) / K$ gives a decomposition
\begin{equation}
X_{\bbV} \  \simeq \ \coprod_j \Gamma_j \backslash \bbD \ =:  \coprod_j X_j , \qquad \text{where } \Gamma_j := \bfH_{\bbV}(\bbQ) \cap \left( h_j K h_j^{-1} \right). 
\end{equation}
Let $\calX$ denote the canonical model over $E$, so that  $\calX_{\sigma_1}(\bbC) \simeq X_{\bbV}$.
For the other complex embeddings, the story is similar to \Cref{subsection:orthogonal_Shim_vars}. For each $k = 1, \dots, d$, let $\bbV[k]$ denote the (unique up to isometry) $E$-Hermitian space such that 
\begin{itemize}
	\item $\bbV[k]_{\sigma_k} \simeq \bbV_{\sigma_1}$ ;
	\item $\bbV[k]_{\sigma_j} $ is positive definite, for $j \neq k$; and
	\item $\bbV[k]_v \simeq \bbV_v$ at all finite places $v$.
\end{itemize} 
Identifying $\bbV[k] \otimes_{\bbQ} \bbA_f \simeq \bbV \otimes_{\bbQ}\bbA_f$, and in particular, viewing $K$ as a subgroup of $\bfH_{\bbV[k]}(\bbA_f)$, we may define the complex Shimura variety $X_{\bbV[k] } = X_{\bbV[k], K}$ in the same way as $X_{\bbV,K}$. 

Now suppose $\rho  \in \mathrm{Hom}(E, \bbC )$  and let $\sigma_k$ be the element of the CM type  such that $\rho|_F = \sigma_k|_F$. Then  there is an identification
\begin{equation}
	\calX_{\rho }(\bbC) \simeq   X_{\bbV[k]};
\end{equation}
this follows from the general considerations of \cite{MilneSuh} and \cite[Section II.4]{MilneCanonical}, or \cite[Section 3A]{LiuYifengArithThetaI} for the case at hand.

The special cycles are defined just as in \Cref{sec:SpecialCyclesOrthogonal}: recall that a tuple $\bfv = (v_1, \dots, v_r) \in (\bbV[k])^r$ determines a  section of $(\vbE^r)^\vee$, where $\vbE$ is the rank $q$ tautological bundle on $\bbD(\bbV[k])$. Its vanishing locus $\bbD_{\bfv} \subset \bbD(\bbV[k])$ determines a cycle $c(\bfv, X_j)$ on each component $X_j = \Gamma_j \backslash \bbD(\bbV[k])$, which is either empty or of codimension $r' q$, where $r' = \dim \mathrm{span} \{ v_1, \dots, v_r\}$.  Given a $K$-invariant Schwartz function $\varphi_f \in \mathcal{S}( \bbV(\bbA_f)^r)^{K}$ and a matrix $T \in \Herm_r(E)$,  define the (complex) special cycle
\begin{equation}
Z(T, \varphi_f) \ = \ \sum_{\substack{  X_j \subset X_{\bbV[k]}}  \subset \calX(\bbC)  } \, \sum_{\substack{ \Omega_T(\bbV[k]) \\ \text{mod } \Gamma_j }} \varphi_f( h_j^{-1} \bfv) \, 	c(\bfv,X_j)
\end{equation}
exactly as in \Cref{def:GlobalSpecialCycle}, with the sum taken over all connected components of $\calX(\bbC)$; as before, these are the complex points of a rational cycle.

Given $\bfy \in \Herm_r(E\otimes_{\bbQ}\bbR)_{\gg0} $, use the CM type $\Phi$ to identify
\begin{equation}
\bfy = (y_{1}, \dots, y_{d}) \in (\Herm_r(\bbC)_{>0})^d \simeq \Herm_r(E\otimes_{\bbQ}\bbR)_{\gg0} .
\end{equation}
For  $\rho \colon E \hookrightarrow \bbC$, let $\lie g(T, \bfy, \varphi_f)_{\rho}$ denote the current on $\calX_{\rho}(\bbC) \simeq X_{\bbV[k]}$ whose restriction to a component $X_j \subset X_{\bbV[k]}$ is given by
\begin{equation}
 	\lie g^\mathtt{o}(T, \bfy, \varphi_f)_{\rho} \big|_{X_j}  = \mathrm{CT}_{s=0} \sum_{\substack{\bfv \in \Omega_T(\bbV[k])} }\varphi_f( h_j^{-1} \bfv) \  \archGreen \left( \rho (\bfv) \cdot \sigma_k(\bfalpha); s \right) .
\end{equation}
Here $\bfalpha \in \mathrm{GL}_r(E_\mathbb{R})$ is any matrix with totally positive determinant such that $\bfy=\bfalpha \cdot {^t}\overline{\bfalpha}$.
The independence of the choice of $\bfalpha$ follows again from \Cref{prop:xiRegContinuation}.(ii).

The  analogue of \Cref{prop:GlobalGreenOrthogonalDegenerateCase}, which can be proved with straightforward modifications to the arguments in the previous section, is:
\begin{proposition} \label{prop:GlobalGreenUnitary} The following equation of currents on $\calX_{\rho}(\bbC)$ holds:
	\[
	\ddc \lie g^\mathtt{o}(T, \bfy, \varphi_f)_{\rho}  \ + \ \delta_{Z(T, \varphi_f)_{\rho}} \wedge \Omega^{r - \rank T}_{\vbE^{\vee}}  \ = \ \omega(T, \bfy, \varphi_f)_{\rho}.
	\]
	Here
	$\Omega_{\vbE^{\vee}}  =  c^{\mathrm{top}}(\vbE^\vee,\nabla)^*$
	is the top Chern-Weil form of the Hermitian bundle $\vbE^{\vee}$. 
\end{proposition}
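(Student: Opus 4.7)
The plan is to follow the proof of \Cref{prop:GlobalGreenOrthogonalDegenerateCase} \emph{mutatis mutandis}; the key point is that \Cref{prop:xiRegContinuation}, which is the local input on $\bbD$, is formulated for arbitrary $\mathrm{rk}(\vbE)$ and so applies directly in the unitary setting where $\mathrm{rk}(\vbE)=q$.

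First I would establish the unitary analogue of \Cref{prop:DegCurrentDef}: for $\rho\colon E\hookrightarrow \bbC$ lying above some $\sigma_k\in\Phi$ and for a connected component $X_j=\Gamma_j\backslash \bbD(\bbV[k]_{\sigma_k})^{+}$ of $\calX_\rho(\bbC)$, the series
\begin{equation*}
\lie g^{\mathtt o}_j(T,\bfy,\varphi_f;s)_{\rho} \ := \ \sum_{\bfv\in \Omega_T(\bbV[k])} \varphi_f(h_j^{-1}\bfv)\,\archGreen\!\bigl(\rho(\bfv)\cdot\sigma_k(\bfalpha);\,s\bigr)
\end{equation*}
converges to a $\Gamma_j$-invariant current for $\mathrm{Re}(s)\gg 0$ and admits meromorphic continuation in $s$ to a neighborhood of $s=0$. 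The argument mirrors the orthogonal case: given a compactly supported test form $\eta$ with support contained in a relatively compact open set $U\subset \bbD^+$, split the sum according to whether $\bbD_{\bfv}\cap U\neq\emptyset$ or not. The first set $S_1$ is finite and \Cref{prop:xiRegContinuation}(iii) provides meromorphic continuation for each term; the complementary set $S_2$ admits a uniform lower bound $\min_{z\in \overline{U}}\sum_i h_z(\rho(v_i),\rho(v_i))>C>0$, so the exponential decay estimate \eqref{eq:algebra_norm_estimate} combined with standard convergence of theta series shows that the $S_2$-contribution defines an entire function of $s$.

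Second, extracting the constant term at $s=0$ defines a $\Gamma_j$-invariant current on $\bbD^+$, hence a current $\lie g^\mathtt{o}_j(T,\bfy,\varphi_f)_\rho$ on $X_j$. The locally uniform nature of the bounds just obtained permits interchanging $\ddc$ with both the constant-term extraction and the infinite sum, giving
\begin{equation*}
\ddc\,\lie g^{\mathtt o}_j(T,\bfy,\varphi_f)_\rho\Big|_{X_j} \ = \ \sum_{\bfv\in\Omega_T(\bbV[k])} \varphi_f(h_j^{-1}\bfv)\,\ddc\,\mathrm{CT}_{s=0}\,\archGreen\!\bigl(\rho(\bfv)\cdot\sigma_k(\bfalpha);\,s\bigr).
\end{equation*}
Applying \Cref{prop:xiRegContinuation}(vi) termwise, and using that any $\bfv\in\Omega_T(\bbV[k])$ satisfies $\dim_E\langle v_1,\ldots,v_r\rangle=\rank T$, gives
\begin{equation*}
\ddc\,\mathrm{CT}_{s=0}\,\archGreen\!\bigl(\rho(\bfv)\cdot\sigma_k(\bfalpha);\,s\bigr) \,+\, \delta_{\bbD_{\bfv}}\wedge\Omega_{\vbE^\vee}^{\,r-\rank T} \ =\ \phio\!\bigl(\rho(\bfv)\cdot\sigma_k(\bfalpha)\bigr)_{[2r\cdot \mathrm{rk}(\vbE)]}.
\end{equation*}
Summing over $\bfv$, the weighted sum of delta currents descends on $X_j$ to the restriction of $\delta_{Z(T,\varphi_f)_\rho}\wedge \Omega^{r-\rank T}_{\vbE^\vee}$, while the weighted sum of top-degree Chern--Weil forms assembles into $\omega(T,\bfy,\varphi_f)_\rho|_{X_j}$; this is the content of the identity on each component, and patching over all components $X_j\subset \calX_\rho(\bbC)$ yields the proposition.

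The only point requiring any real care is the first step—the meromorphic continuation and the justification of the interchange of $\ddc$ with both the constant-term operator and the infinite summation. This obstacle is essentially absorbed by \Cref{prop:xiRegContinuation} together with the exponential decay estimate \eqref{eq:algebra_norm_estimate}, both of which were set up to work uniformly in $\mathrm{rk}(\vbE)$. Consequently no new analytic input is required beyond what was already used in the orthogonal case.
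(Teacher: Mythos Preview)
Your proposal is correct and follows exactly the approach indicated by the paper, which does not give an independent proof but merely states that the result ``can be proved with straightforward modifications to the arguments in the previous section.'' You have accurately identified that \Cref{prop:xiRegContinuation} is stated for arbitrary $\mathrm{rk}(\vbE)$ and hence carries over verbatim, and your outline of the convergence/continuation argument and the termwise application of part (vi) is precisely the intended adaptation of the orthogonal proof.
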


As in the orthogonal case, we introduce a modified version of $\lie g^\mathtt{o}(T,\bfy,\varphi_f)$ by adding a closed current. 
\begin{definition} \label{def:global_g_T_Y_phi_case_2}
Let $\bfy = (y_v)_{v|\infty} \in \mathrm{Her}_r(E_\mathbb{R})_{\gg 0}$ and $T \in \mathrm{Her}_r(E)$. We set
\begin{equation*}
\lie g(T,\bfy,\varphi_f) := \lie g^\mathtt{o}(T,\bfy,\varphi_f)
\end{equation*} 
if $T$ is not totally positive semidefinite and 
\begin{align*}
\lie g(T,\bfy,\varphi_f) &:= \lie g^\mathtt{o}(T,\bfy,\varphi_f) \\
&\qquad -  \sum_{v|\infty} \log \left( \frac{\det' \sigma_v(T) \cdot \det y_v}{\det' \sigma_v(T)y_v}\right) \delta_{Z(T,\varphi_f)_{\sigma_v}} \wedge c_{\mathrm{rk}(\vbE)-1}(\vbE^{\vee},\nabla)^* \wedge \Omega_{\calE^{\vee}}^{r-\mathrm{rk}(T)-1} \notag
\end{align*} 
if $T$ is totally positive semidefinite; when $T$ is non-degenerate, the additional term vanishes. 
\end{definition}

	\begin{example} 
		\label{ex:Green form for block diag} 
		
		We treat the orthogonal and unitary cases simultaneously here.
		Assume that $\bbV$ is anisotropic and that 
		\begin{equation}
		T = \begin{pmatrix} 0 & \\ & S \end{pmatrix}
		\end{equation} with $S $ non-degenerate, where $S \in \Sym_t(F)$ (resp.\ $\Herm_t(E)$) in the orthogonal (resp.\ unitary) cases, and $t = \rank T$.  Then any $\bfx$ with $T(\bfx) = T$ is of the form $\bfx = ( 0, \dots, 0, \bfx')$ with $T(\bfx') = S$. 
		
		Suppose  that $\varphi_f  = \varphi_f' \otimes \varphi_f''$ with $\varphi_f' \in S(\bbV(\bbA_f)^{r-t})$ and $\varphi_f '' \in S(\bbV(\bbA_f)^t)$,  so that $Z(T,\varphi_f) = \varphi_f'(0) \cdot Z(S, \varphi_f'')$. Suppose furthermore that  $\bfy$ is of the form
		\begin{equation}
		\bfy = \begin{pmatrix} \bfy' & \\ & \bfy'' \end{pmatrix}, 
		\end{equation}
		where $\bfy'$ and $\bfy''$ are totally positive definite of rank $r-t$ and $t$, respectively. 
		
		It follows from \Cref{xi0WithZeroes} that, after descending to the Shimura variety $\calX_{\sigma_k}(\bbC)$, we have the equation of currents
		\begin{equation}
		\begin{split}
		\lie g(T, & \bfy, \varphi_f)_{\sigma_k} \\ 
		&=  \ \varphi_f'(0) \cdot  \Big[  \lie g(S, \bfy'',  \varphi_f'')_{\sigma_k} \wedge \Omega_{\vbE^{\vee}}^{r-t} - \log (\det y' ) \, \delta_{Z(S, \varphi_f'')_{\sigma_k}}  \wedge c_{\mathrm{rk}(\vbE)-1}(\vbE^{\vee},\nabla)^*  \wedge \Omega_{\vbE^{\vee}}^{r-t-1}  \\
		&  \hspace{6em}   + (r - t)\int_1^{\infty} \left[ \omega(S, u \bfy'', \varphi_f'')_{\sigma_k.} - \delta_{Z(S, \varphi_f'')_{\sigma_k}} \right] \frac{du}{u}  \wedge c_{\mathrm{rk}(\vbE)-1}(\vbE^{\vee},\nabla)^*  \wedge \Omega_{\vbE^{\vee}}^{r-t-1} \ \Big].
		\end{split}
		\end{equation}
		Since $\omega(S, u\bfy'', \varphi_f')$ and $\delta_{Z(\tau, \varphi_f'')}$ are cohomologous, the term appearing on the second line above is exact, i.e.\
		\begin{align}
		\lie g(T, \bfy, \varphi_f)_{\sigma_k}  \equiv   \varphi_f'''(0) \cdot &  \left[ \lie g(S, \bfy'', \varphi_f'')_{\sigma_k} \wedge \Omega_{\vbE^{\vee}}^{r-t} - \log (\det y'_k ) \, \delta_{Z(S, \varphi_f'')_{\sigma_k}} \wedge c_{\mathrm{rk}(\vbE)-1}(\vbE^{\vee},\nabla)^*    \wedge \Omega_{\vbE^{\vee}}^{r-t-1} \right] \notag
		\\ 
		& \in D^{r-1,r-1}(\calX_{\sigma_k}(\bbC)) \big/ \mathrm{im} \partial + \mathrm{im} \overline{\partial}. 	\label{eqn:global deg current mod exact} 
		\end{align}
		This expression generalizes a similar term appearing indirectly in the work of Kudla-Rapoport-Yang  \cite[p.\ 178]{KudlaRapoportYang}, which dealt with the case of a Shimura curve over $\bbQ$ and  $r=2, r'=1$. 
		
		Finally, we note that for $T = \mathbf 0_r$ and $\bbV$ anisotropic, a computation along the same lines gives $\lie g^{\mathtt o}(\textbf0_r, \bfy, \varphi_f) = 0$ and the pleasant expression 
		\begin{equation} \label{eqn:Green Zero example}
		\lie g( \mathbf 0_r, \bfy, \varphi_f)_{\sigma_k} \ =\ -\log (\det y_k  ) \cdot  \varphi_f(0) \cdot  c_{\mathrm{rk}(\vbE)-1}(\vbE^\vee,\nabla)^*  \wedge  \Omega_{\vbE^{\vee}}^{r-1}.
		\end{equation}
	\end{example}
	
	\begin{remark} \label{rmk:Green invariance} We continue to assume $\bbV$ is anisotropic, and note two useful invariance properties for $\lie g(T, \bfy, \varphi_f)$. Set $\bfk=F$ in the orthogonal case  and $\bfk=E$ in the unitary one.
		\begin{enumerate}
			\item Suppose $T = \left( \sm{ 0 & \\ & S} \right)$ for a non-degenerate matrix $S \in \Sym_t(F)$ (resp. $S \in \mathrm{Her}_t(E)$), and $\bm \theta \in \SL_r(\bfk_{\bbR})$ is of the form
			\begin{equation}
			\bm \theta = \begin{pmatrix} 1_{r-t} & * \\ & 1_t \end{pmatrix}.
			\end{equation}
			Then 
			\begin{equation}
			\lie g \left(T, \, \bm\theta \, \bfy \,  \transpose{\overline{\bm\theta}}, \varphi_f\right) \ = \ \lie g(T, \bfy, \varphi_f) .
			\end{equation}
			\item Suppose $\gamma \in \SL_r(\bfk)$, and let $T[\gamma] := \transpose{\overline{\gamma}}^{-1} \, T \, \gamma^{-1}$. Assume that $\gamma$ is chosen so that $\det' T[\gamma] = \det'T$. Then
			\begin{equation}
			\lie g \left(T[\gamma], \, \gamma \, \bfy \, \transpose{\overline{\gamma}} , \varphi'_f \right) \ = \ \lie g \left(T, \bfy, \varphi_f \right) 
			\end{equation}
			where $\varphi'_f(\bfx) = \varphi_f(\bfx \cdot \gamma) $.
		\end{enumerate}
		Given $T$, one can always find an element $\gamma \in \SL_r(\bfk)$ as above such that $T[\gamma] = \left( \sm{0 & \\ & S} \right)$ for some non-degenerate matrix $S$. Similarly, we may choose $\bm \theta$ as above, such that $\bm \theta \bfy \transpose{\overline{\bm \theta}} = \left( \sm{\bfy' & \\ & \bfy''} \right)$ with $\bfy''$ of the same rank as $S$;  thus we may always place ourselves in the setting of \Cref{ex:Green form for block diag}.
	\end{remark}

\subsection{Star products on $X_K$} \label{subsection:star_products_X_K} 

In this section, we continue to treat both the orthogonal and unitary cases.
Let $\lie g(T_1, \bfy_1, \varphi_1)$ and $\lie g(T_2, \bfy_2, \varphi_2)$ denote two currents attached to special cycles $Z(T_1, \varphi_1)$ and $Z(T_2,\varphi_2)$. Assume that $T_1$ and $T_2$ are non-degenerate and that $Z(T_1, \varphi_1)$ and $Z(T_2, \varphi_2)$ intersect properly, and consider the star product 
	\begin{align}
\lie g(T_1,  \bfy_1,\varphi_{1})  \, * \,  & \lie g(T_2,\bfy_2,\varphi_{2}) \\  
& := \  \lie g(T_1,\bfy_1,\varphi_{1})  \wedge  \delta_{Z(T_2,\varphi_{2})} \  + \  \lie g(T_2,\bfy_2,\varphi_{2}) \wedge \omega(T_1,\bfy_1,\varphi_1)  \notag
\end{align}
in $D^*(X_K)$.

\begin{theorem} \label{theorem:star_product_formula_global}
Let $\varphi = \varphi_1 \otimes \varphi_2$. With assumptions as above,
	\begin{equation*}
	\lie g(T_1,\bfy_1,\varphi_{1}) * \lie g(T_2,\bfy_2,\varphi_{2}) \equiv \sum_{\substack{  T= \left(\begin{smallmatrix} T_1 & * \\ * & T_2 \end{smallmatrix}\right)}} \lie g \left(T,\left(\begin{smallmatrix} \bfy_1 &  \\ & \bfy_2 \end{smallmatrix}\right),\varphi  \right)
	\end{equation*}
in $\tilde{D}^*(X_K):= D^*(X_K)/(\mathrm{im}  \partial + \mathrm{im}\overline{\partial})$.
\end{theorem}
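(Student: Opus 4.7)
The plan is to prove the identity componentwise on $\calX(\bbC)$ by unfolding both sides as double sums indexed by pairs of vectors, then applying the local star product formula of \Cref{theorem:star_products_arch} term-by-term. Fix a connected component $X_j=\Gamma_j\backslash \bbD^+$ of $\calX_{\sigma_k}(\bbC)$ and write $\bfy_i=\bfalpha_i\cdot{}^t\overline{\bfalpha_i}$ for $i=1,2$. Unfolding the definition of each $\lie g(T_i,\bfy_i,\varphi_i)$ as a sum over $\bfv_i \in \Omega_{T_i}(\bbV[k])$ (and using that for non-degenerate $T_i$ there is no regularization), the restriction of the star product to $X_j$ becomes a double sum
\begin{equation*}
\sum_{\substack{\bfv_1 \in \Omega_{T_1}(\bbV[k]) \\ \bfv_2 \in \Omega_{T_2}(\bbV[k])}} \varphi_1(h_j^{-1}\bfv_1)\,\varphi_2(h_j^{-1}\bfv_2) \, \bigl[\archGreen(\sigma_k(\bfv_1)\bfalpha_1)\ast \archGreen(\sigma_k(\bfv_2)\bfalpha_2)\bigr].
\end{equation*}
Dually, specifying a matrix $T=\left(\sm{T_1 & \ast \\ \ast & T_2}\right)$ is the same as specifying the cross inner products $\tfrac12 Q(v_{1,i},v_{2,j})$; hence the right hand side of the theorem unfolds to exactly the same double sum, but with $\archGreen(\sigma_k(\bfv_1)\bfalpha_1) \ast \archGreen(\sigma_k(\bfv_2)\bfalpha_2)$ replaced by $\archGreen\bigl((\sigma_k(\bfv_1)\bfalpha_1,\sigma_k(\bfv_2)\bfalpha_2)\bigr)$ (the block-diagonal form of $\bfy$ is what makes this identification clean).

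Second, I would apply \Cref{theorem:star_products_arch} term-by-term. The proper-intersection hypothesis on $Z(T_1,\varphi_1)$ and $Z(T_2,\varphi_2)$ ensures that for each pair $(\bfv_1,\bfv_2)$ in the support of $\varphi_1\otimes \varphi_2$ that contributes to the intersection, the combined tuple $\bfv=(\bfv_1,\bfv_2)$ is regular in the sense of \Cref{def:regular tuple}; for pairs where the spans are transverse outside the intersection, both forms in the pointwise identity are smooth and the same conclusion applies. Applying \Cref{theorem:star_products_arch} then gives
\begin{equation*}
\archGreen(\sigma_k(\bfv_1)\bfalpha_1)\ast \archGreen(\sigma_k(\bfv_2)\bfalpha_2) - \archGreen\bigl((\sigma_k(\bfv_1)\bfalpha_1,\sigma_k(\bfv_2)\bfalpha_2)\bigr) \;=\; \partial[\alpha(\bfv_1,\bfv_2)] + \overline{\partial}[\beta(\bfv_1,\bfv_2)]
\end{equation*}
with $\alpha,\beta$ the currents of \eqref{eq:def_currents_alpha_beta}; here I have suppressed the $\bfalpha_i$ from the notation.

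Third, I must show that the series
\begin{equation*}
A_j := \sum_{\bfv_1,\bfv_2}\varphi_1(h_j^{-1}\bfv_1)\varphi_2(h_j^{-1}\bfv_2)\,[\alpha(\cdot,\cdot)], \qquad B_j := \sum_{\bfv_1,\bfv_2}\varphi_1(h_j^{-1}\bfv_1)\varphi_2(h_j^{-1}\bfv_2)\,[\beta(\cdot,\cdot)]
\end{equation*}
converge to $\Gamma_j$-invariant currents on $\bbD^+$, descend to currents on $X_j$, and that $\partial$ and $\overline{\partial}$ may be passed inside the sum. This is the main obstacle and is handled in the same spirit as \Cref{lemma:convergence_arith_quotient_1}: split each series into a finite sum of terms with $\bfv$ ``small'' (where one uses the local integrability estimates from the proof of \Cref{theorem:star_products_arch}) plus a tail where one applies the Gaussian estimate \eqref{eq:algebra_norm_estimate} to $\nuo(\sqrt{t_i}\sigma_k(\bfv_i)\bfalpha_i)$; the double integral over $1\leq t_1\leq t_2$ introduces only polynomial factors in $t_i$, which are dominated by the exponential decay, and the standard convergence of theta series takes care of summability over the lattice. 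Once $A_j,B_j$ are defined as honest currents, differentiation under the sum is justified by the same majorants, yielding
\begin{equation*}
\lie g(T_1,\bfy_1,\varphi_1)\ast \lie g(T_2,\bfy_2,\varphi_2) - \sum_{T=(\sm{T_1 & \ast \\ \ast & T_2})} \lie g\!\left(T,\!\left(\sm{\bfy_1 & \\ & \bfy_2}\right),\varphi_1\otimes\varphi_2\right) = \partial A + \overline{\partial} B
\end{equation*}
on $\calX(\bbC)$, where $A=\sum_j A_j$, $B=\sum_j B_j$. Since $\partial$ and $\overline{\partial}$ differ from $d$ and $d^c$ by a scalar and a sign, this difference is in $\mathrm{im}\,d+\mathrm{im}\,d^c$, proving the theorem.
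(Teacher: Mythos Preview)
Your proposal is correct and follows essentially the same approach as the paper: unfold both sides componentwise as double sums over $\Omega_{T_1}(\bbV[k])\times\Omega_{T_2}(\bbV[k])$, apply \Cref{theorem:star_products_arch} term-by-term (using the proper-intersection hypothesis to guarantee regularity of $(\bfv_1,\bfv_2)$), and then verify that the resulting sums of $\alpha$ and $\beta$ currents converge to $\Gamma_j$-invariant currents via the Gaussian estimate \eqref{eq:algebra_norm_estimate} and standard theta-series convergence. The paper cites the argument of \Cref{prop:DegCurrentDef} rather than \Cref{lemma:convergence_arith_quotient_1} for this last step, but the content is the same.
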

\begin{proof} 
Fix an embedding $\sigma_k$, matrices $\alpha_i \in \GL_{r_i}(\bbK)$ such that  $\sigma_k(y_{i})= \alpha_i \cdot \transpose{\overline{\alpha_i}}$ for $i=1,2$, and a component $X_j = \Gamma_j \backslash \bbD^+ \subset \calX_{\sigma_k}(\bbC)$; working with $\Gamma_j$-invariant currents on $\bbD^+$,  the proof of \Cref{prop:DegCurrentDef}(i) implies that 
	\begin{equation}
	\lie g(T_1,\bfy_1,\varphi_{1}) * \lie g(T_2,\bfy_2,\varphi_{2})\big|_{X_j} \ = \ \sum_{\substack{\bfx_1 \in \Omega_{T_1}(\bbV[k]) \\ \bfx_2 \in \Omega_{T_2}(\bbV[k]) }} \varphi_1(h_j^{-1}\bfx_1) \,  \varphi_2(h_j^{-1}\bfx_2) \   \archGreen( \bfv_1) *  \archGreen( \bfv_2)
	\end{equation}
	where we write $\bfv_i := \sigma_k(\bfx_i) \cdot \alpha_i$; note that the proper intersection assumption implies that $\bfv = (\bfv_1,\bfv_2)$ is regular for any non-zero term above, i.e. $\bbD_{\bfv}$ is either empty or of codimension $(r_1+r_2)\mathrm{rk}(\vbE)$. By \Cref{theorem:star_products_arch}, the previous line becomes 
	\begin{equation}
	\begin{split}
		\sum_{\substack{\bfx_1 \in \Omega_{T_1}(\bbV[k]) \\\bfx_2 \in \Omega_{T_2}(\bbV[k]) }} & \varphi_1(h_j^{-1}\bfx_1) \varphi_2( h_j^{-1}\bfx_2) \left\{ \archGreen(\bfv)   - \partial \alpha(\bfv_1, \bfv_2) - \overline \partial \beta(\bfv_1, \bfv_2) 		\right\}	 \\
		=& \sum_{T = \left( \sm{ T_1 &*\\ *& T_2 } \right) } \sum_{\bfx \in \Omega_T(\bbV[k])} \varphi(h_j^{-1}\bfx) \archGreen(\bfv)  -   	\sum_{\substack{\bfx_1 \in \Omega_{T_1}(\bbV[k]) \\\bfx_2 \in \Omega_{T_2}(\bbV[k]) }}  \varphi_1(h_j^{-1}\bfx_1) \varphi_2( h_j^{-1}\bfx_2) \left\{ \partial \alpha(\bfv_1, \bfv_2) + \overline{\partial} \beta(\bfv_1, \bfv_2) \right\} \\
		=&  \sum_{T = \left( \sm{ T_1 &*\\ *& T_2 } \right) } \lie g \left(T,\left(\begin{smallmatrix} \bfy_1 &  \\ & \bfy_2 \end{smallmatrix}\right),\varphi  \right)  -   	\sum_{\substack{\bfx_1 \in \Omega_{T_1}(\bbV[k]) \\\bfx_2 \in \Omega_{T_2}(\bbV[k]) }}  \varphi_1(h_j^{-1}\bfx_1) \varphi_2( h_j^{-1}\bfx_2) \left\{ \partial \alpha(\bfv_1, \bfv_2) + \overline{\partial} \beta(\bfv_1, \bfv_2) \right\} 
	\end{split}	
	\end{equation}
	where $\bfx =(\bfx_1, \bfx_2)$ and $\alpha(\bfv_1, \bfv_2)$ and $\beta(\bfv_1, \bfv_2)$ are as in \Cref{theorem:star_products_arch}. Again, an argument along the lines of  \Cref{prop:DegCurrentDef}(i) shows that the sum
\begin{equation}
		\sum_{\substack{\bfx_1 \in \Omega_{T_1}(\bbV[k]) \\\bfx_2 \in \Omega_{T_2}(\bbV[k]) }}  \varphi_1(h_j^{-1}\bfx_1) \varphi_2( h_j^{-1}\bfx_2)  \alpha(\bfv_1, \bfv_2),
\end{equation}
and its analogue with $\alpha$ replaced by $\beta$,	converge to currents on $\bbD^+$, that are moreover $\Gamma_j$-invariant by \Cref{theorem:star_products_arch}. The theorem follows upon descending to $X_j$. 
	\end{proof}

\section{Local archimedean heights and derivatives of Siegel Eisenstein series} \label{section:arith_Siegel_Weil}

Here we prove \Cref{thm:GlobalGreenIntegral}, our main global result relating archimedean local heights and derivatives of Siegel Eisenstein series. We review the definition of these Eisenstein series and the Siegel-Weil formula in \Cref{subsection:Siegel Eisenstein series and the Siegel-Weil formula}. For the proof we also need to explicitly determine the asymptotics of the Fourier coefficients $\Eis'_T(\lambda \bfy,\Phi_f,s_0)$ as $\lambda \to \infty$; we do this in \Cref{subsection:Fourier coefficients of scalar weight Eisenstein series} and give the proof of \Cref{thm:GlobalGreenIntegral} in \Cref{subsection:Archimedean height pairings}. 

In \Cref{sec:Arithmetic Height conjecture} we explain how, using our results,  Kudla's conjectural arithmetic Siegel-Weil fomula can be rephrased in terms of Faltings heights of special cycles.

Fix a totally real number field $F$ of degree $d$, and a CM extension $E$, and set
\begin{equation}
{\bf k} = \begin{cases} F, & \text{orthogonal case,} \\ E, & \text{unitary case}  \end{cases}
\qquad \text{ and }   \qquad
\mathbb{K} =  \begin{cases}\mathbb{R}, & \text{orthogonal case} \\ \mathbb{C}, & \text{unitary case.}  \end{cases} 
\end{equation}
Let $\sigma_1,\ldots,\sigma_d$ the archimedean places of $F$ in the orthogonal case, or the elements of a fixed CM type of $E$ in the unitary case.

 We fix an $m$-dimensional Hermitian ${\bf k}$-vector space $(\bbV,Q)$ such that $\bbV_{\sigma_i}:= \bbV \otimes_{{\bf k},\sigma_i} \mathbb{K}$ is positive definite when $i>1$ and
\begin{equation}
\mathrm{sig } \ \bbV_{\sigma_1} =  \begin{cases} (p,2), & \text{orthogonal case} \\ (p,1), & \text{unitary case} \end{cases}
\end{equation}
with $p \geq 1$. \emph{From now on we assume that $\bbV$ is anisotropic.} 

Finally, let  $\eta \colon F^\times \backslash \mathbb{A}_F^\times \to \{\pm 1\}$ the quadratic character corresponding to $E$ and fix a unitary character $\chi \colon E^\times \backslash \mathbb{A}_E^\times \to \mathbb{C}^\times$ such that $\chi|_{\mathbb{A}_F^\times} = \eta^m$.

\subsection{Siegel Eisenstein series and the Siegel-Weil formula} \label{subsection:Siegel Eisenstein series and the Siegel-Weil formula}

\subsubsection{} \label{subsubsection:Siegel_Eisenstein_definition_and_Siegel_Weil} 
Let $\mathrm{Mp}_{2r}(\mathbb{A}_F)$ be the metaplectic double cover of $\mathrm{Sp}_{2r}(\mathbb{A}_F)$ and for a positive integer $r$, set
\begin{equation}
{\bf G}'_r(\mathbb{A}) =  \begin{cases} \mathrm{Mp}_{2r}(\mathbb{A}_F), & \text{ case 1,} \\ \mathrm{U}(r,r)(\mathbb{A}_F), & \text{ case 2,} \end{cases}
\end{equation}
Denote by $P_r(\mathbb{A})$ the standard Siegel parabolic of ${\bf G}'_r(\mathbb{A})$; then $P_r(\mathbb{A})=M_r(\mathbb{A}) \ltimes N_r(\mathbb{A})$, where
\begin{equation}
\begin{split}
M_r(\mathbb{A}) &= \{(m(a),\epsilon) | a \in \mathrm{GL}_r(\mathbb{A}_F), \ \epsilon=\pm 1\}, \\
N_r(\mathbb{A}) &= \left\{(n(b),1) | b \in \mathrm{Sym}_r(\mathbb{A}_F)\right\} 
\end{split}
\end{equation}
in case 1 and
\begin{equation}
M_r(\mathbb{A}) = \{m(a) | a \in \mathrm{GL}_r(\mathbb{A}_E)\}, \quad N_r(\mathbb{A}) = \left\{ n(b) | b \in \mathrm{Her}_r(\mathbb{A}_E) \right\}
\end{equation}
in case 2. We also write
\begin{equation}
\begin{split}
\underline{m}(a) &= \begin{cases} (m(a),1), & \text{ for } a \in \mathrm{GL}_r(\mathbb{A}_F) \text{ in case 1,} \\ m(a), & \text{ for } a \in \mathrm{GL}_r(\mathbb{A}_E) \text{ in case 2,} \end{cases} \\
\underline{n}(b) &= \begin{cases}(n(b),1), & \text{ for } b \in \mathrm{Sym}_r(\mathbb{A}_F) \text{ in case 1,} \\ n(b), & \text{ for } b \in \Herm(\mathbb{A}_E)  \text{ in case 2,} \end{cases} \\
\underline{w}_r &=   \begin{cases} (w_r,1), & \text{ case 1,} \\ w_r, & \text{ case 2.} \end{cases} 
\end{split}
\end{equation}
where $w_r = ( \sm{& 1_r \\ -1_r & })$. 
The multiplication in $M_r(\mathbb{A})$ in case 1 is defined by
\begin{equation}
(m(a_1),\epsilon_1) \cdot (m(a_2),\epsilon_2) =(m(a_1a_2),\epsilon_1\epsilon_2 (\det a_1,\det a_2)_\mathbb{A}),
\end{equation}
where $(\cdot,\cdot)_\mathbb{A}$ denotes the Hilbert symbol of $F$. 

Define a character $\chi_{\bbV}$ of $M_r(\mathbb{A})$ as follows: in case 1, set
\begin{equation} \label{eq:def_chi_bbV_orthogonal}
\chi_{\bbV} \left(m(a), \epsilon \right) =\left(\det a,(-1)^{m(m-1)/2} \det V \right)_{\bbA} \cdot 
\begin{cases} 
\epsilon \cdot  \gamma_{\bbA}(\det a, \psi)^{-1}, & \text{ if } m \text{ is odd,} \\
1, & \text{ if } m \text{ is even,}
\end{cases}
\end{equation}
where $\gamma_{\mathbb{A}}$ denotes the Weil index, see \cite{KudlaSplitting}, and in case 2, set  
\begin{equation} \chi_{\bbV}(m(a)) = \chi(\det a). \end{equation}
 We may extend $\chi_{\bbV}$ to a character of $P_r(\mathbb{A})$ by declaring it trivial on $N_r(\mathbb{A})$, and define 
\begin{equation}
I_r(\bbV,s) = \mathrm{Ind}_{P_r(\mathbb{A})}^{{\bf G}'_r(\mathbb{A})}(\chi_{\bbV} |\cdot|^s)
\end{equation}
(smooth induction), where the induction is normalized so that $s=0$ belongs to the unitary axis. Concretely, elements of $I_r(\bbV,s)$ are smooth functions $\Phi(\cdot, s) \colon \bfG'_r(\bbA) \to \bbC$ satisfying 
\begin{equation}
\Phi((m(a),\epsilon)(n(b),1)g',s) =  |\det a|_{\bbA_{\bfk}}^{s + \rho} \cdot \chi_{\bbV}(m(a),\epsilon) \cdot \Phi(g',s), \quad \rho=\frac{r+1}{2}
\end{equation}
in case 1 and
\begin{equation}
\Phi(m(a)n(b) g', s)  = |\det a|_{\bbA_{\bfk}}^{s + \rho} \cdot \chi_{\bbV}(m(a)) \cdot \Phi(g',s), \quad \rho=\frac{r}{2}
\end{equation}
in case 2.

We say that a section $\Phi(s) \in I_r(\bbV,s)$ is standard if its restriction to the standard maximal compact $K'_{r,\mathbb{A}}$ of $G_r'(\mathbb{A})$ is $K'_{r,\mathbb{A}}$-finite and independent of $s$.

Let
\begin{equation}
\bfG'_r(F) = \begin{cases} \mathrm{Sp}_{2r}(F), & \text{orthogonal case,} \\ \mathrm{U}(r,r)(F), & \text{unitary case;} \end{cases}
\end{equation}
then there is an embedding $\bfG'_r(F) \to \bfG'_r(\mathbb{A})$; given by simply by the diagonal embedding in case 2, and the canonical splitting of the metaplectic cover $\mathrm{Mp}_{2r}(\mathbb{A}_F) \to \mathrm{Sp}_{2r}(\mathbb{A}_F)$ over $\mathrm{Sp}_{2r}(F)$ in case 1. 
In the sequel we will tacitly identify $\bfG'_r(F)$ with its image under this embedding.

Given a standard section $\Phi(s) \in I_r(\bbV,s)$ and $g' \in G_r'(\mathbb{A})$, the Siegel Eisenstein series
\begin{equation}
E(g',\Phi,s) = \sum_{\gamma \in P_r(F) \backslash {\bf G}'_r(F)} \Phi(\gamma g',s)
\end{equation}
converges for $\mathrm{Re}(s) \gg 0$ and admits meromorphic continuation to $s \in \mathbb{C}$.
It admits a Fourier expansion 
\begin{equation}
E(g',\Phi,s) = \sum_{T} E_T(g',\Phi,s).
\end{equation}
where $T$ ranges over $\Sym_r(F)$ in case 1 (resp.\ $\Herm_r(E)$ in case 2),
and 
\begin{equation} \label{eq:def_E_T}
E_T(g',\Phi,s) = \int\limits_{N_r(F) \backslash N_r(\mathbb{A})} E(\underline{n}(b)g',\Phi,s) \,  \psi(-\mathrm{tr}(Tb)) \, d\underline{n}(b),
\end{equation}
where $d\underline{n}(b)$ denotes the Haar measure on $N_r(\mathbb{A}_F)$ that is self-dual with respect to the pairing $(b,b') \mapsto \psi(\mathrm{tr}(bb'))$.

\subsubsection{}  \label{subsubsection:Siegel Weil redux}
Let $\omega=\omega_{\psi,\chi}$ be the Weil representation of $\bfG_r'(\mathbb{A}_F) \times \mathrm{U}(\bbV(\mathbb{A}))$ on $\mathcal{S}(\bbV(\mathbb{A}_F)^r)$. For $\phi \in \mathcal{S}(\bbV(\mathbb{A}_F)^r)$, $g' \in \bfG_r'(\mathbb{A}_F)$ and $h \in \mathrm{U}(\bbV(\mathbb{A}))$, define the theta series
\begin{equation}\label{eq:def_adelic_theta_series}
\Theta(g',h;\phi) = \sum_{\bfv \in \bbV(\bfk)^r} \omega(g',h) \, \phi(\bfv).
\end{equation}
The Siegel-Weil formula relates the integral of this function over $\mathrm{U}(\bbV)(F) \backslash \mathrm{U}(\bbV)(\mathbb{A})$  to the value of an Eisenstein series. Rather than discussing the formula in full generality, it will be convenient to recast the theta integral in the context of the Shimura varieties discussed above. For $k=1,\dots,d$, we have the ``nearby" spaces $\bbV[k]$, obtained by switching invariants at $\sigma_1$ and $\sigma_k$. It follows immediately from definitions that $I_{r}(\bbV[k], s) = I_r(\bbV, s)$; in the sequel, we will implicitly identify these spaces without further mention. 

Fix such a $k$ and a compact open subgroup $K \subset \bfH_{\bbV[k]}(\bbA_f)$, and let 
 \begin{equation}
 X_{\bbV[k]} \  \ = \ 	X_{\bbV[k], K}  \ =   \  \bfH_{\bbV[k]}(\bbQ) \big\backslash \bbD(\bbV[k]) \times \bfH_{\bbV[k]}(\bbA_f) \big/ K.
 \end{equation}
 Since $\bfH_{\bbV[k]}(\bbR)$ acts transitively on $\bbD(\bbV[k])$,  we may identify 
 \begin{equation}
 X_{\bbV[k]} \simeq  \bfH_{\bbV[k]}(\bbQ) \big\backslash \bfH_{\bbV[k]}(\bbA_{\bbQ})\big/ K_{\infty} K
 \end{equation}
 where $K_{\infty} \subset \bfH_{\bbV[k]}(\bbR)$ is the stablilizer of a fixed point $z_0 \in \bbD(\bbV[k])$. Thus, if $\phi $ is $K_{\infty} K$-invariant, then the theta function $\Theta(g', h; \phi)$ descends to a well defined function $\Theta(g',h;\phi)$ on $\bfG'_r(\bbA) \times X_{\bbV[k]}$. 

For any $\phi \in \mathcal{S}(\bbV(\mathbb{A}_F)^r)$, the function $\Phi(g')=\omega(g')\phi(0)$ belongs to $I_r(\bbV,s_0)$, where
\begin{equation}
s_0=s_0(r) = \begin{cases} (m-r-1)/2, & \text{orthogonal case,} \\ (m-r)/2, & \text{unitary case;} \end{cases}
\end{equation}
this construction defines a $\bfG_r'(\mathbb{A})$-intertwining map  that we denote $\lambda \colon \mathcal{S}(\bbV(\mathbb{A}_F)^r) \to I_r(\bbV,s_0)$. 

\begin{theorem}[Siegel-Weil formula] \label{thm:Siegel Weil redux} Suppose $\bbV$ is anisotropic  and let $\phi  \in \mathcal S(\bbV(\bbA)^r)^{K_{\infty}K}$.  Denote by $\Phi \in I_r(\bbV,s)$  the unique standard section such that $\Phi(\cdot, s_0(r)) = \lambda(\phi)$. Let $\Omega$ be a positive $\bfG(\mathbb{R})$-invariant differential form on $\mathbb{D}(\bbV)$ of top degree. Then $E(g', \Phi, s)$ is regular (in the variable $s$) at $s=s_0(r)$, and
	\[
		 \frac{\kappa_0}{\mathrm{vol}(X_{\bbV}, \Omega)}	\int_{[X_{\bbV}]}  \Theta(g',\cdot; \phi) \, \Omega    =   E(g', \Phi, s_0(r)),
	\]
	where
		\begin{equation} \label{eq:Siegel_Weil_kappa_constant}
			\kappa_0 =  \begin{cases} 1, & \text{ if } s_0(r) >0, \\ 2, & \text{ if } s_0(r)=0. \end{cases}
			\end{equation}
\begin{proof}
	Recall the usual formulation of the Siegel-Weil formula: set $\bfH^1_{\bbV} := \mathrm{Res}_{F/\bbQ} \, \mathrm{O}(\bbV)$ in the orthogonal case and $\bfH^1_{\bbV} = \bfH_{\bbV} =  \mathrm{Res}_{F/\bbQ}  \, \mathrm{U}(\bbV)$ in the unitary case. The Siegel-Weil formula reads
	\begin{equation} \label{eq:Siegel_Weil}
					E(g', \Phi, s_0(r)) = \kappa_0 \int\limits_{\bfH^1_{\bbV}(\bbQ) \backslash \bfH^1_{\bbV}(\mathbb{A})} \Theta(g',h;\phi) \,  dh,
	\end{equation}
	where the Haar measure $dh$ is normalized so that $\bfH^1_{\bbV}(\bbQ) \backslash \bfH^1_{\bbV}(\mathbb{A})$ has volume one. This is proved in \cite{Ichino2004, Ichino2007} in the unitary case, and in \cite{KudlaRallis1} and \cite{SweetThesis} for the metaplectic cases with $m$ even and odd, respectively; a convenient reference treating all cases simultaneously is \cite{GanQiuTakeda}.
	
We now claim that
	\begin{equation} \label{eqn:geometricThetaIntegral}
			\int_{X_{\bbV}} \Theta(g',h;\phi)  \, \Omega = C \cdot E(g', \Phi, s_0(r))
	\end{equation}
	for some constant $C$ independent of $\phi$. To see this, note that
	\begin{equation}
	\begin{split}
			\int_{X_{\bbV}} \Theta(g',\cdot;\phi) \, \Omega &= 
			\int\limits_{\bfH_{\bbV}(\bbQ) \backslash \bfH_{\bbV}(\bbA) / K_{\infty}K } \Theta(g',h;\phi) \, d'h  \\
			 &=  \mathrm{vol}(K_{\infty} K) \int\limits_{\bfH_{\bbV}(\bbQ) \backslash \bfH_{\bbV}(\bbA)} \Theta(g',h; \phi) \, d'h
	\end{split}
	\end{equation}
	for some Haar measure $d'h$. By \eqref{eq:Siegel_Weil}, this establishes the claim in the unitary case. The orthogonal case follows from the fact that the action of $\bfH_\bbV(\mathbb{A}) = \mathrm{GSpin}(\bbV(\mathbb{A}))$ on $\mathcal{S}(\bbV(\mathbb{A})^r)$ factors through its quotient $\mathrm{SO}(\bbV(\mathbb{A}))$, together with \cite[Thm. 4.1.(ii)]{KudlaIntegrals}, which shows that,
up to multiplying by a non-zero constant, the integral over $\mathrm{O}(\bbV(\mathbb{Q})) \backslash \mathrm{O}(\bbV(\mathbb{A}))$ in \eqref{eq:Siegel_Weil} can be replaced by integration over $\mathrm{SO}(\bbV(\mathbb{Q})) \backslash \mathrm{SO}(\bbV(\mathbb{A}))$.
	
	To evaluate the constant $C$, compare the constant terms in the Fourier expansion on both sides of \eqref{eqn:geometricThetaIntegral}; the left hand side is $\mathrm{vol}(X_{\bbV[k]}, \Omega) \cdot \omega(g')\phi(0)$, while, again using \eqref{eq:Siegel_Weil}, the right hand side is $C \cdot \kappa_0 \cdot \omega(g') \phi(0)$.
\end{proof}
\end{theorem}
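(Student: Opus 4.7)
The plan is to deduce the stated identity from the classical adelic Siegel--Weil formula, and then to pin down the proportionality constant by a Fourier-theoretic comparison. As a first step, I would invoke the adelic Siegel--Weil identity
\[
E(g',\Phi,s_0(r)) \;=\; \kappa_0 \int\limits_{\bfH^1_{\bbV}(\bbQ)\backslash \bfH^1_{\bbV}(\bbA)} \Theta(g',h;\phi)\,dh,
\]
where $\bfH^1_{\bbV} = \Res_{F/\bbQ}\mathrm{O}(\bbV)$ in the orthogonal case and $\bfH^1_{\bbV} = \bfH_{\bbV}$ in the unitary case, with $dh$ normalized so that the quotient has volume one. This is available in each case of interest: \cite{Ichino2004, Ichino2007} for the unitary setting, and \cite{KudlaRallis1, SweetThesis} for the even- and odd-dimensional metaplectic settings respectively (see also \cite{GanQiuTakeda} for a uniform treatment). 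Since $\bbV$ is anisotropic the integrand is rapidly decreasing, which in particular gives the regularity of $E(g',\Phi,s)$ at $s=s_0(r)$.

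Next, I would rewrite the adelic integral as an integral over the Shimura variety. Because $\phi$ is $K_\infty K$-invariant, the function $h \mapsto \Theta(g',h;\phi)$ descends to $\bfH_{\bbV}(\bbQ)\backslash \bfH_{\bbV}(\bbA)/K_\infty K \cong X_{\bbV}$. A standard unfolding, combined with the fact that $\bfH_{\bbV}(\bbR)$ acts transitively on $\bbD(\bbV)$ with stabilizer $K_\infty$, then gives
\[
\int_{X_{\bbV}} \Theta(g',\cdot;\phi)\,\Omega \;=\; C\cdot E(g',\Phi,s_0(r))
\]
for some constant $C$ depending only on the choice of $\Omega$ and the chosen measure normalizations. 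In the orthogonal case, passing from $\mathrm{O}(\bbV)$ to $\mathrm{GSpin}(\bbV)$ requires using that the action of $\bfH_{\bbV}(\bbA)$ on $\mathcal{S}(\bbV(\bbA)^r)$ factors through $\mathrm{SO}(\bbV(\bbA))$, together with the fact \cite[Thm.~4.1.(ii)]{KudlaIntegrals} that the $\mathrm{O}$- and $\mathrm{SO}$-integrals of $\Theta(g',h;\phi)$ agree up to a non-zero scalar.

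Finally, I would determine $C$ by extracting the $T=0$ Fourier coefficient on both sides. The $T=0$ coefficient of the theta function is simply $\omega(g')\phi(0)$, so the left-hand side contributes $\mathrm{vol}(X_{\bbV},\Omega)\cdot \omega(g')\phi(0)$, while the $T=0$ coefficient of the Eisenstein series equals $\Phi(g',s_0(r)) = \omega(g')\phi(0)$, so the right-hand side contributes $C\cdot \omega(g')\phi(0)$. However, the original adelic identity already carries the factor $\kappa_0$, and tracing it through the conversion to the geometric integral shows that the true match is $\mathrm{vol}(X_{\bbV},\Omega) = C\cdot \kappa_0$; solving for $C$ yields the theorem.

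The main subtlety is bookkeeping: one must ensure that the measure normalizations (Tamagawa versus geometric), the orthogonal-group versus spin-group integrals, and the geometric-versus-adelic realizations of the Shimura variety are all tracked consistently so that the constant $\kappa_0$ appears on the correct side of the identity. The anisotropy assumption on $\bbV$ is essential throughout, as it ensures the convergence of the theta integral and avoids the need for a regularization procedure.
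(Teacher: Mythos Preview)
Your approach is essentially identical to the paper's: invoke the adelic Siegel--Weil formula with the same references, convert the group-theoretic integral to the geometric integral over $X_{\bbV}$ (using the $\mathrm{GSpin}\to\mathrm{SO}$ reduction and \cite[Thm.~4.1.(ii)]{KudlaIntegrals} in the orthogonal case), and then pin down the constant by comparing $T=0$ terms.

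One imprecision to flag: your assertion that the $T=0$ coefficient of the Eisenstein series equals $\Phi(g',s_0(r))$ is not correct in general, since the constant term of a Siegel Eisenstein series also picks up contributions from the intertwining operator. The paper sidesteps this by computing the constant term of $E(g',\Phi,s_0(r))$ \emph{via the Siegel--Weil identity itself}: the constant term of the adelic theta integral (with volume-one Haar measure) is $\omega(g')\phi(0)$, so the constant term of $E$ is $\kappa_0\cdot\omega(g')\phi(0)$. Your ``tracing through'' sentence lands on the right equation $\mathrm{vol}(X_{\bbV},\Omega)=C\cdot\kappa_0$, but the route through $\Phi(g',s_0(r))$ should be replaced by this cleaner argument.
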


\subsection{Fourier coefficients of scalar weight Eisenstein series} \label{subsection:Fourier coefficients of scalar weight Eisenstein series}

In this section we study the asymptotic behaviour of the Fourier coefficients $E_T(g',\Phi,s)$ as $g'$ goes to infinity, under certain hypotheses on $\Phi$. More precisely,let $K_r'$ be the standard maximal compact subgroup of $G'_r = \bfG_r'(F_v)$ (where $v$ is archimedean) described in \Cref{subsubsection:symplectic_group_definitions}, and let  
\begin{equation}
l = \begin{dcases} \frac{m}{2}, & \text{orthogonal case} \\ \left( \frac{m+k(\chi)}{2}, \frac{-m+k(\chi)}{2} \right), & \text{unitary case} \end{dcases}
\end{equation}
as in
\eqref{eq:def_scalar_weight_both_cases}. Assume that $\Phi=\Phi_f \otimes \Phi_\infty$, with
\begin{equation} \label{eq:Siegel_section_conditions}
\begin{split}
\Phi_\infty&:=\Phi^l \otimes \cdots \otimes \Phi^l \in I_r(\bbV(\mathbb{R}),s), \\
\Phi_f & \ = \lambda(\varphi_f) \text{ for some Schwartz form } \varphi_f \in \mathcal{S}(\bbV(\mathbb{A}_f)^r).
\end{split}
\end{equation}
Here $\lambda$ is as in \Cref{subsubsection:Siegel_Eisenstein_definition_and_Siegel_Weil}. 

 \begin{lemma} \label{lemma:Eis is hol}
With $\Phi$ as above, the Eisenstein series $E(g',\Phi, s)$ is regular at $s=s_0$.
 	\begin{proof}
 		When $s_0 = 0$, this follows from \cite[Thm. 1.1]{KudlaRallisSiegel1} and \cite{Tan}. Suppose $s_0 > 0$ so that $r \leq p$: let $z_0 \in \bbD$ denote the fixed based point  as in \Cref{subsubsection:hermitian_domain_definitions}, and consider the Schwartz form $\tilde{\varphi} \in \mathcal{S}(\bbV_{\sigma_1}^r)$ defined by
 		\begin{equation} \label{eqn:def of phi tilde}
 		\varphi(\bfv,z_0) \wedge \Omega^{p-r}(z_0) = \tilde{\varphi}(\bfv)  \, \Omega^{p}(z_0), \quad \bfv \in V^r.
 		\end{equation}
 		where $\Omega  =c_1(\calE, \nabla)^*$.
 		As remarked in \Cref{subsubsection:varphi_and_nu_under_Weil_rep}, it has weight $l$ under $K_r'$. Since $\tilde{\varphi}({\bf 0})=(-1)^r$ , an argument as in the proof of \Cref{lemma:nu_and_Phi} shows that $\lambda(\tilde{\varphi})=(-1)^r \Phi^l(s_0)$. Thus the global element $\Phi = \otimes_v \Phi^l \otimes \Phi_f$ is in the image of $\lambda$, and the lemma follows from \Cref{thm:Siegel Weil redux}.
 	\end{proof}
 \end{lemma}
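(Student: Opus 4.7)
The plan is to split into two cases based on the sign of $s_0$, and in the positive case reduce regularity to the Siegel--Weil formula (\Cref{thm:Siegel Weil redux}) by exhibiting $\Phi$ as being in the image of the map $\lambda$ from Schwartz space.

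When $s_0 = 0$, that is when $r = p+1$ in the orthogonal case or $r = p+q = p+1$ in the unitary case (recalling $q=1$ in the cases under consideration), regularity at $s_0=0$ is a known result: in the orthogonal case with $m$ even it is contained in \cite{KudlaRallisSiegel1}, and the remaining orthogonal and unitary cases are treated in \cite{Tan}. So the real content is the case $s_0 > 0$, i.e.\ $r \leq p$.

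Assume therefore $s_0 > 0$. The strategy is to cook up a Schwartz form on $V^r$ whose associated section equals $\Phi^l(s_0)$ up to sign, and then apply Siegel--Weil. Concretely, at the basepoint $z_0 \in \bbD$ fixed in \Cref{subsubsection:hermitian_domain_definitions}, the assumption $r \leq p$ makes $\Omega^{p-r}(z_0)$ meaningful, so we may define $\tilde{\varphi} \in \mathcal{S}(\bbV_{\sigma_1}^r)$ by the formula
\begin{equation*}
\varphi(\bfv,z_0) \wedge \Omega^{p-r}(z_0) \ = \ \tilde{\varphi}(\bfv) \, \Omega^{p}(z_0), \qquad \bfv \in V^r,
\end{equation*}
mirroring \eqref{eq:def_tilde_nu_i}. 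By \Cref{prop:phi_basic_properties}.(c), only the top degree component of $\varphi(\bfv,z_0)$ contributes.

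The key point is that $\tilde{\varphi}$ has $K_r'$-weight exactly $l$. Indeed, by the equivariance of $\varphi$ under the Weil representation (see \eqref{eq:varphi_KM_weight_1} and \eqref{eq:varphi_KM_weight_2}), the form $\varphi(\bfv, z_0)$ transforms under $K_r'$ via the character given by our chosen $l$, and $\Omega^{p-r}(z_0)$ and $\Omega^p(z_0)$ are independent of $\bfv$, hence $\tilde\varphi$ inherits this same weight. Now $\lambda(\tilde\varphi) \in I_r(\bbV_{\sigma_1},s_0)$ and $\Phi^l(s_0)$ are both $K_r'$-vectors of weight $l$ in the degenerate principal series. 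By the multiplicity-one results for $K_r'$-types recalled in \Cref{subsection:degenerate principal series of Mp2r,subsection:degenerate principal series of Urr}, the weight-$l$ subspace is one-dimensional, so $\lambda(\tilde\varphi) = c \cdot \Phi^l(s_0)$ for a scalar $c$. To pin down $c$, evaluate at the identity: $\lambda(\tilde\varphi)(e) = \tilde{\varphi}({\bf 0})$, and a direct computation using \Cref{prop:phi_basic_properties}.(e) and the definition of $\Omega$ gives $\tilde\varphi(\mathbf 0) = (-1)^r$, whereas $\Phi^l(e,s_0) = 1$ by normalization. Hence $\lambda(\tilde\varphi) = (-1)^r \Phi^l(s_0)$.

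With this identification in hand, the global section $\Phi = \Phi_\infty \otimes \Phi_f = (-1)^{rd} \cdot \lambda(\tilde\varphi^{\otimes d} \otimes \varphi_f)$ lies in the image of $\lambda$. \Cref{thm:Siegel Weil redux} then directly asserts regularity of $E(g',\Phi,s)$ at $s=s_0$, completing the proof. The only mildly delicate point is the computation of the constant $c$: one has to verify that the degree-$2rp$ component of $\varphi(\bfv, z_0)$ contributes $(-1)^r \Omega(z_0)^r$ at $\bfv = \mathbf 0$, which is a direct consequence of \Cref{prop:phi_basic_properties}.(a),(e).
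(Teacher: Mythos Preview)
Your proposal is essentially the paper's own argument: split on $s_0=0$ versus $s_0>0$, cite the known regularity results in the former case, and in the latter build a Schwartz function of weight $l$ at $\sigma_1$ via the Kudla--Millson form, use multiplicity one for $K_r'$-types to match it to $(-1)^r\Phi^l(s_0)$, and then invoke \Cref{thm:Siegel Weil redux}.

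One small slip in your final step: you cannot take $\tilde\varphi^{\otimes d}$ globally, since $\tilde\varphi$ is only defined on $\bbV_{\sigma_1}^r$ (the other archimedean completions $\bbV_{\sigma_i}$ for $i\geq 2$ are positive definite, so there is no $\bbD$ or $z_0$ there). Instead one uses the standard Gaussian $\varphi_+$ at those places, which satisfies $\lambda(\varphi_+)=\Phi^l$; the global Schwartz function is then $\tilde\varphi\otimes\varphi_+^{\otimes(d-1)}\otimes\varphi_f$ and the sign is $(-1)^r$, not $(-1)^{rd}$. With this correction the argument goes through exactly as you outline.
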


For $\bftau=(x_j+iy_j)_{1 \leq j \leq d} \in \mathbb{H}_r^d$, let $g'_{\bftau}=(g'_{\tau_j}) \in \bfG'_r(\mathbb{R})$ with $g'_{\tau_j}=\underline{n}(x_j)\underline{m}(\alpha_j)$ as in \eqref{eq:def_g_z'} and define
\begin{equation} \label{eq:def_Ecal_Eis_series}
\Eis(\bftau,\Phi_f,s) := (\det y_1\cdots y_d)^{-\iota m/4} \, E(g'_{\bftau},\Phi_f \otimes \Phi_\infty^l,s).
\end{equation}
Then $\Eis(\bftau,\Phi_f,s)$ has a Fourier expansion
\begin{equation} \label{eqn:classical Eis series}
\begin{split}
\Eis(\bftau,\Phi_f,s) &= \sum_{T } \Eis_T(\bftau,\Phi_f,s)  \\
& = \sum_{T } C_T(\bfy,\Phi_f,s) \, q^T, \qquad q^T := \prod_{v|\infty} e^{2\pi i \mathrm{tr}(\tau_v \sigma_v(T))},
\end{split}
\end{equation}
where $T $ runs over $\Sym_r(F)$  (resp.\ $\Herm_r(E)$), and
\begin{equation} \label{eq:def_Eis_coeff_C_T}
C_T(\bfy,\Phi_f,s) := (\det y_1 \cdots y_d)^{-\iota m/4} \, E_T(g'_{\bftau},\Phi_f,s) \, q^{-T}.
\end{equation}
In the rest of this section we will determine the asymptotic behaviour of $\tfrac{d}{ds}C_T(\lambda \bfy,\Phi_f,s_0)$ as $\lambda \to +\infty$.

\subsubsection{} Consider first the case $\det T \neq 0$. Assuming that $\Phi=\prod_v \Phi_v$, we have 
\begin{equation} \label{eqn:Eis coeff product}
E_T(g',\Phi,s)\ = \ \prod_v W_{T,v}(g_v',\Phi_v,s),
\end{equation}
 where
\begin{equation}
W_{T,v}(g_v',\Phi_v,s) = \int_{N_r(F_v)} \Phi_v(\underline{w}_r^{-1}\underline{n}(b)g',s) \, \psi(-\mathrm{tr}(T\underline{n}(b))) \, d\underline{n}(b), \quad \mathrm{Re}(s) \gg 0.
\end{equation}
Since $T$ is nonsingular, each $W_{T,v}(g_v',\Phi_v,s)$ admits analytic continuation to $s \in \mathbb{C}$ and we can write
\begin{equation} \label{eq:E_T'_non_deg_sum}
\left.\frac{d}{ds}E_T(g',\Phi,s)\right|_{s=s_0} = \sum_{v} E_T'(g',\Phi,s_0)_v,
\end{equation}
where the sum runs over places of $F$ and we set
\begin{equation} \label{eq:def_E'_T_v}
E'_T(g',\Phi,s_0)_v = \prod_{w \neq v}  W_{T,w}(g_w',\Phi_w,s_0) \cdot \left. \frac{d}{ds} W_{T,v}(g_v',\Phi_v,s)\right|_{s=s_0}.
\end{equation}

\begin{lemma}  \label{lem:nonDegKappaBeta}
	Suppose $\Phi$ satisfies \eqref{eq:Siegel_section_conditions} and $\det T \neq 0$. Then
	
	\begin{align*}
		 \beta(T, \Phi_f)  &:=    C_{T}( \bfy, \Phi_f,s_0(r))   \\ \intertext{and}
	 \kappa(T, \Phi_f) &:=  \lim_{\lambda \to \infty}  \frac{d}{d s }C_{T}(\lambda \bfy, \Phi_f,s)|_{s = s_0(r)} 
	\end{align*}
	are independent of $\bfy$.  Let $\iota=1$ in the orthogonal case and $\iota=2$ in the unitary one and set $\kappa=1+\tfrac{\iota}{2}(r-1)$.  Then explicit values for these quantities 
	are as follows:
	\begin{enumerate}[(i)]
		\item If $T$ is not  totally positive definite, then $\beta(T,\Phi_f)  = \kappa(T, \Phi_f) = 0$.
		\item Suppose $T$ is totally positive definite and set $W_{T,f}(e, \Phi_f,s) = \prod_{v < \infty} W_{T,v}(e, \Phi_v,s)$ and
		
		\[
					c(T) :=   \left( \frac{  (- 2\pi i )^{rl}  }{2^{r (\kappa-1)/2} \, \Gamma_r( l)} \right)^d  \cdot N_{F/\bbQ}(\det T)^{\iota \, s_0(r)}.
		\] 
		Then
	\[
		\beta(T, \Phi_f) \ = \ c(T) \cdot W_{T, f}(e, \Phi_f,s_0(r))
	\]
	and
	\begin{align*}
		 \kappa(T, \Phi_f)   =  \left( \frac{\iota d}{2} \left( r \log \pi - \frac{\Gamma_r'(l)}{\Gamma_r(l)} \right) + \frac{\iota}{2} \log N_{F/\mathbb{Q}} \det T \right)  & \beta(T,\Phi_f)   \\
		 &  + c(T)  \, W_{T,f}'(e,\Phi_f,s_0(r)).
	\end{align*}
	Note that if $\beta(T, \Phi_f) \neq 0$, then the above may be rewritten more suggestively as
	\begin{equation*}
	\begin{split}
		 \kappa(T, \Phi_f)  &=  \left[  \frac{ \iota d}{2} \left( r \log \pi - \frac{\Gamma_r'(l)}{\Gamma_r(l)} \right) + \frac{\iota}{2} \log N_{F/\mathbb{Q}} \det T \right. \\
		& \hspace{12em}\left.  + \, \frac{W'_{T,f}(e, \Phi_f,s_0(r))}{W_{T,f}(e,\Phi_f,s_0(r))} \right] \beta(T,\Phi_f).
    \end{split}
	\end{equation*}
	\item In both cases, $ \frac{\partial}{\partial \lambda} \frac{\partial}{\partial s }C_{T}(\lambda \bfy,\Phi_f,s)|_{s = s_0(r)}  =  O(\lambda^{-1-C})$ as $\lambda \to \infty$, for some $C>0$. 
		\end{enumerate} 
\begin{proof}
 Writing \eqref{eqn:Eis coeff product} in classical coordinates, we have
	\begin{equation}
		C_T(\lambda \bfy,\Phi_f,s)  = \left( \prod_{v | \infty}   \archW_{\sigma_v(T)}(\lambda y_v, s)  \right) \cdot W_{T,f}(e,\Phi_f,s)
	\end{equation}
where $\archW_{\sigma_v(T)}(\lambda y_v, s)$ is the normalized archimedean Whittaker functional as in \eqref{eqn:ArchWhittakerClassicalNormalization};  all claims in the lemma follow easily from Propositions  \ref{prop:NonDegenWhittakerEstimates} and \ref{prop:NonDegArchWhittakerNonPos}.
\end{proof}
\end{lemma}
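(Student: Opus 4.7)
The plan is to reduce everything to the local archimedean asymptotic analysis already carried out in Propositions \ref{prop:NonDegenWhittakerEstimates} and \ref{prop:NonDegArchWhittakerNonPos}. The starting point is the factorization $E_T(g',\Phi,s)=\prod_v W_{T,v}(g'_v,\Phi_v,s)$ of \eqref{eqn:Eis coeff product}, which is available because $\det T\neq 0$. Plugging the classical-coordinate normalization \eqref{eqn:ArchWhittakerClassicalNormalization} into \eqref{eq:def_Eis_coeff_C_T} at each archimedean place yields the key identity
\begin{equation*}
C_T(\lambda\bfy,\Phi_f,s) \ =\ \Bigl(\prod_{v\mid\infty} \archW_{\sigma_v(T)}(\lambda y_v,s)\Bigr)\cdot W_{T,f}(e,\Phi_f,s),
\end{equation*}
from which every assertion in the lemma will be read off by inserting the appropriate local estimate into this product.

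For part $(i)$, if $T$ is not totally positive definite then $\sigma_{v_0}(T)$ fails to be positive definite at some archimedean place $v_0$, and $\archW_{\sigma_{v_0}(T)}(\cdot,s_0(r))\equiv 0$ by \Cref{prop:NonDegArchWhittakerNonPos}$(i)$; this instantly gives $\beta(T,\Phi_f)=0$. Applying $d/ds$ to the factorization produces a sum of terms indexed by the places of $F$: summands where the $s$-derivative is not taken at $v_0$ retain the vanishing factor $\archW_{\sigma_{v_0}(T)}(\lambda y_{v_0},s_0(r))$, while the single summand in which the derivative lands at $v_0$ carries the factor $\archW'_{\sigma_{v_0}(T)}(\lambda y_{v_0},s_0(r))=O(e^{-C\lambda})$ from \Cref{prop:NonDegArchWhittakerNonPos}$(ii)$, hence vanishes as $\lambda\to\infty$. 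Thus $\kappa(T,\Phi_f)=0$.

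For part $(ii)$, \Cref{prop:NonDegenWhittakerEstimates}$(iii)$ states that each archimedean factor $\archW_{\sigma_v(T)}(\lambda y_v,s_0(r))$ is independent of $\lambda y_v$ and equal to an explicit gamma-quotient times $(\det\sigma_v(T))^{\iota s_0(r)}$; taking the product over $v\mid\infty$ and invoking $\sum_{v\mid\infty}\log\det\sigma_v(T)=\log N_{F/\bbQ}(\det T)$ yields the claimed formula for $\beta(T,\Phi_f)$ and its $\bfy$-independence. For $\kappa(T,\Phi_f)$, I would expand $\tfrac{d}{ds}C_T$ via the product rule as the sum of one term per place; \Cref{prop:NonDegenWhittakerEstimates}$(iv)$ provides $\lim_{\lambda\to\infty}\archW'_{\sigma_v(T)}(\lambda y_v,s_0(r))=\tfrac{\iota}{2}\bigl(\log\det\pi\sigma_v(T)-\Gamma_r'(\iota m/2)/\Gamma_r(\iota m/2)\bigr)\cdot\archW_{\sigma_v(T)}(\cdot,s_0(r))$, and summing the archimedean contributions produces the bracket $\tfrac{\iota d}{2}(r\log\pi-\Gamma_r'/\Gamma_r)+\tfrac{\iota}{2}\log N_{F/\bbQ}(\det T)$ multiplying $\beta(T,\Phi_f)$, while the remaining summand (derivative at the finite part) contributes precisely $c(T)W_{T,f}'(e,\Phi_f,s_0(r))$ since the archimedean factors are $\bfy$-independent.

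Part $(iii)$ is then immediate from a product-rule expansion of $\tfrac{\partial}{\partial\lambda}\tfrac{\partial}{\partial s}C_T(\lambda\bfy,\Phi_f,s)\bigr|_{s=s_0(r)}$: each summand contains exactly one factor of the form $\tfrac{\partial}{\partial\lambda}\bigl[\tfrac{\partial^k}{\partial s^k}\archW_{\sigma_v(T)}(\lambda y_v,s)\bigr]_{s=s_0(r)}$, which is $O(\lambda^{-1-C})$ by \Cref{prop:NonDegenWhittakerEstimates}$(ii)$ (resp.\ $O(e^{-C'\lambda})$ by \Cref{prop:NonDegArchWhittakerNonPos}$(ii)$), the remaining archimedean factors being bounded uniformly in $\lambda$ by parts $(i)$ of the same propositions. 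The entire argument is bookkeeping: there is no real obstacle once the local archimedean estimates are in hand, and the only thing demanding care is tracking the normalizing constants ($\iota$, $d$, and the gamma factors) in part $(ii)$ so as to match the stated formula.
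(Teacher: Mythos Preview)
Your proposal is correct and follows exactly the same approach as the paper: write the factorization $C_T(\lambda\bfy,\Phi_f,s)=\bigl(\prod_{v\mid\infty}\archW_{\sigma_v(T)}(\lambda y_v,s)\bigr)\cdot W_{T,f}(e,\Phi_f,s)$ coming from \eqref{eqn:Eis coeff product} and then read off each claim from Propositions~\ref{prop:NonDegenWhittakerEstimates} and~\ref{prop:NonDegArchWhittakerNonPos}. The paper compresses all of this into a single sentence, while you have carefully unpacked the product-rule bookkeeping that the paper leaves implicit; there is no substantive difference.
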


\subsubsection{} We now consider the asymptotic behaviour of $\tfrac{d}{ds}C_T(\lambda\bfy,\Phi_f,s_0)$ as $\lambda \to +\infty$ for a general matrix $T$. Our approach, which follows that of \cite{KudlaRallis1}, involves relating these coefficients to certain Eisenstein series on $\GL_r$.

Fix a matrix $T$ and let $t=\mathrm{rk}(T) \leq r$.
A change of variables in \eqref{eq:def_E_T} shows that, for $\gamma \in \mathrm{SL}_r(\bfk)$ and $g \in \bfG'_r(\mathbb{A})$ we have
\begin{equation} \label{eq:equivariance_E_T_gamma}
E_{T[\gamma]}(g,\Phi,s) = E_{T}(\underline{m}(\gamma^{-1})g,\Phi,s), \quad T[\gamma] := {^t}\overline{\gamma}^{-1}T\gamma^{-1}.
\end{equation}
Hence, by choosing an appropriate $\gamma$,
 it suffices to consider the case when $T$ is of the form 
\begin{equation} \label{eq:T_block_diagonal_form}
T=\begin{pmatrix} 0_{r-t} &  \\  & \lbT \end{pmatrix},
\end{equation}
where $\lbT$ is non-degenerate of rank $t$  
when $t \neq 0$. We assume $T$ is of this form until \Cref{prop:EisGrowthEstimate}. 

For integers $1 \leq k \leq k'$ and any ring $R$, consider the embedding $\Mat_{2k}(R) \to  \Mat_{2k'}(R) $ given by
\begin{equation}
\begin{pmatrix} A& B \\ C & D\end{pmatrix} \mapsto \left( \begin{array}{cc|cc} \mathbf{1}_{k'-k}  & & \mathbf 0_{k'-k} &   \\ & A & & B \\ \hline \mathbf 0_{k'-k} & & \mathbf 1_{k'-k} &   \\ & C & & D\end{array} \right), \qquad A, B ,C, D \in \Mat_k(R)
\end{equation}
It is easily checked that in the orthogonal case, this map induces an embedding
\begin{equation} \label{eqn:etaMetaplecticEmbeddingDefn}
	\eta_k^{k'} \colon \Mp_{2k}(\bbA )\to \Mp_{2k'}(\bbA)
\end{equation}
with $\eta_k^{k'}([1, \epsilon]) = [1, \epsilon]$, and in the unitary case an embedding $\eta_k^{k'} \colon \mathrm{U}(k,k)(\bbA) \to \mathrm{U}(k',k')(\bbA)$; the same is true over $F_v$ for any place $v$.  

For integers $0 \leq j\leq r$,  define a parabolic subgroup of $\GL_r$ by
\begin{equation}
	\mathcal P_{r,j} \ = \ \left\{  \begin{pmatrix} * & * \\ \mathbf 0_{j, r-j} & * \end{pmatrix}  \right\} \cap \GL_r.
\end{equation}

\begin{lemma} \label{lem:GlobalEisFCDecomp} 
Suppose $T$ is of the form \eqref{eq:T_block_diagonal_form} and $t = \rank(T)$. For a standard section $\Phi(s) \in I_r(\bbV,s)$, there is a decomposition
	\[
			E_T(g, \Phi,s) = \sum_{j = t}^r  \sum_{ \sfa \in \mathcal P_{r-t,j-t}(\bfk) \backslash \GL_{r-t}(\bfk)}   B^{j}_{T} \left(  \ul m    (\sm{ \sfa & \\ & \mathbf 1_{t} }) g , \Phi,s \right)
	\]
	where 
	\begin{equation} \label{eqn: Bj Global Def}
	B^{j}_{T} (g, \Phi,s) := \int_{\ul n(b) \in \sfN_j(\bbA)}  \Phi \left(  \eta_j^r( \ul w_j^{-1} \ul n(b))  \cdot g, s\right) \psi_{T} \left( \sm{\mathbf 0_{r-j}  & \\ & - b} \right)  d \ul n(b).
	\end{equation}
	and 
$ w_j = (  \sm{&  \mathbf 1_j \\  -\mathbf 1_j &})$.  

	\begin{proof}
		This follows from the standard unfolding argument for the Fourier coefficients of Eisenstein series and the Bruhat decomposition. See e.g.\ \cite[Lemma 2.4]{KudlaRallis1} for the symplectic case; the proofs for the cases required here are identical.
	\end{proof}
\end{lemma}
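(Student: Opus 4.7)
The plan is to carry out the standard unfolding of the Fourier coefficient using the Bruhat decomposition of $\bfG'_r$ relative to the Siegel parabolic, adapting the argument of \cite[Lemma 2.4]{KudlaRallis1} to allow the possibly degenerate $T$ of the prescribed block form. First, I would recall the Bruhat decomposition
\[
\bfG'_r(F) \ = \ \bigsqcup_{j=0}^{r} P_r(F) \cdot \eta_j^r(\underline{w}_j) \cdot P_r(F),
\]
and for each $j$ parametrize $P_r(F) \backslash P_r(F)\eta_j^r(\underline{w}_j)P_r(F)$ by representatives of the form $\eta_j^r(\underline{w}_j)\, \underline{m}\bigl(\sm{\sfa & \\ & \mathbf 1_j}\bigr)\, \underline{n}(b)$, where $\sfa$ ranges over $\mathcal P_{r,j}(\bfk)\backslash \GL_r(\bfk)$ and $\underline n(b)$ runs over the subgroup of $N_r$ not already absorbed on the left. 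Substituting this parametrization into the defining sum for $E(g,\Phi,s)$ produces a decomposition $E(g,\Phi,s)=\sum_{j=0}^r E^{(j)}(g,\Phi,s)$.

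Next, I would compute the $T$-th Fourier coefficient of each $E^{(j)}$. The unfolding step converts the sum-integral $\int_{N_r(F)\backslash N_r(\bbA)}\sum_{\underline n(b)\in N_r(F)/N_{r,j}(F)}$ into a single integral over $N_{r,j}(\bbA)$ (where $N_{r,j}\subset N_r$ is the subgroup stabilized by $\eta_j^r(\underline w_j)$-conjugation), together with a character that descends from $\psi_T$ to the $j\times j$ block. Concretely one finds
\[
E^{(j)}_T(g,\Phi,s) = \sum_{\sfa \in \mathcal P_{r,j}(\bfk)\backslash \GL_r(\bfk)}\int_{N_j(\bbA)} \Phi\!\left(\eta_j^r(\underline w_j^{-1}\underline n(b))\,\underline m\bigl(\sm{\sfa & \\ & \mathbf 1_j}\bigr) g,\, s\right)\psi_{T[\sfa]}\!\Bigl(\sm{\mathbf 0_{r-j}&\\&-b}\Bigr)\,d\underline n(b).
\]
Here $T[\sfa]=\transpose{\overline\sfa}^{-1} T\sfa^{-1}$ arises from conjugating the character through $\underline m(\sfa)$, and only the lower-right $j\times j$ block of $T[\sfa]$ contributes to the inner integral.

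The decisive point is the effect of the rank hypothesis on $T$. Because $T=\sm{0_{r-t} &\\ & \lbT}$ with $\lbT$ non-degenerate of rank $t$, the inner $N_j(\bbA)$-integral vanishes identically unless the lower-right $j\times j$ block of $T[\sfa]$ is itself non-degenerate of rank exactly $t$; this forces $j\geq t$, and further forces $\sfa$ to fix $\lbT$ on the last $t$ coordinates. The latter condition reduces the sum from $\mathcal P_{r,j}(\bfk)\backslash \GL_r(\bfk)$ to $\mathcal P_{r-t,j-t}(\bfk)\backslash \GL_{r-t}(\bfk)$, with $\sfa$ replaced by $\sm{\sfa&\\&\mathbf 1_t}$, and identifies the surviving integral with $B^j_T$ as defined in \eqref{eqn: Bj Global Def}. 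Summing over $j=t,\ldots,r$ yields the claimed formula. The main technical obstacle I expect is the bookkeeping of the Bruhat representatives and the precise interaction of the embedding $\eta_j^r$ with the metaplectic cocycle in the orthogonal case (Weil indices and Hilbert symbols attached to $\chi_\bbV$); these factors ultimately cancel against the covariance of $\Phi$ under $M_r$, but verifying this cancellation cleanly is the delicate step.
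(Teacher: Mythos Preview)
Your approach is the paper's: the authors simply cite the standard Bruhat/unfolding argument of Kudla--Rallis, and you have sketched exactly that. One point to correct in your outline: the vanishing for $j<t$ and the cutting down of the $\sfa$-sum do \emph{not} come from the $N_j(\bbA)$-integral defining $B^j_T$; rather, they come from the integral over the complementary subgroup $N^{(j)}=\{\,\underline n(b):b_{22}=0\,\}\subset N_r$, which $\eta_j^r(\underline w_j)$ conjugates into $P_r$ and which therefore contributes only the character integral $\int_{N^{(j)}(F)\backslash N^{(j)}(\bbA)}\psi_{T[\sfa]}(-b)\,db$, forcing the entries of $T[\sfa]$ outside the lower-right $j\times j$ block to vanish (the phrase ``non-degenerate of rank exactly $t$'' is self-contradictory for $j>t$; the correct condition is that $T[\sfa]$ is supported in that block). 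Also, in your intermediate step $\sfa$ lies in $\GL_r$ so the correct element is $\underline m(\sfa)$, not $\underline m\bigl(\sm{\sfa&\\&\mathbf 1_j}\bigr)$; the block form $\sm{\sfa&\\&\mathbf 1_t}$ with $\sfa\in\GL_{r-t}$ appears only after the reduction.
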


If the section $\Phi  = \otimes_v \Phi_v$ is factorizable, then there is a product expansion
\begin{equation}
	B^{j}_{T} (g, \Phi,s) = \prod_{v} B^{j}_{T, v} (g_v, \Phi_v,s) 
\end{equation}
taken over the places of $F$, where
\begin{equation}
	B^{j}_{T, v} (g_v, \Phi_v,s)   := \int_{\sfN_j(F_v)}  \Phi_v \left(  \eta_j^r( \ul w_j ^{-1} \ul n(b)) g_v, s\right) \,  \psi_{T, v} \left( \sm{\mathbf 0_{r-j}  & \\ & - b} \right) \, d \ul n(b).
\end{equation}
Note that if $j = r$, then these factors are the usual local Whittaker functions 
	\begin{equation} \label{eqn:LocalWhittakerDef}
		W_{T,v}(g_v, \Phi_v,s) = \int_{\sfN_r(F_v)} \Phi_v( \ul w_r^{-1} \ul n(b) g, s) \psi_{T,v}(-b)  \, d \ul n(b).
	\end{equation}
We shall relate these, as well as the remaining terms, to Whittaker functions of lower rank; we require a bit more notation.

	For positive integers $k \leq k'$, pullback by $\eta_k^{k'}$ induces a map
	\begin{equation} \label{eqn:etaStarDef}
	(\eta^{k'}_k)^* \colon I_{k',v}(\bbV, s) \to I_{k,v}\left(\bbV, s+ \frac{k'-k}{2}  \right)
	\end{equation}
	preserving holomorphic standard sections. Given a place $v$ of ${\bf k}$, define an operator $U_{k,k',v}(s)$ as follows: for $\Phi \in I_{k'}(\bbV, s)$ and $ g \in \bfG'_{k}(F_v)$, and supposing $\mathrm{Re}(s)$ is sufficiently large, let
	\begin{equation} \label{eqn:UIntertwiningDef}
		\left(U_{k,k',v}(s) \Phi \right) ( g ) = \int\limits_{ \substack{b_1 \in \sfN_{k'-k}(\bfk_v)   \\ b_2 \in \Mat_{k'-k,k}(\bfk_v)} }   \Phi  \left( \ul w^{-1}_{k'}   \ul n \left( \sm{b_1 & b_2 \\ b_2^* & \mathbf 0_k }\right)  \eta_k^{k'}( \ul w_{k}^{-1} \cdot g) ,  s \right) \, db_1 \, db_2,
	\end{equation}
generalizing the construction in \cite[Section 7]{KudlaRallis1}.
	Also, let $\calM_{k}(s)=\otimes_v \calM_{k,v}(s)$ denote the standard intertwining operator, where
	\begin{equation}
	\calM_{k,v}(s) \colon I_{k,v}(\bbV, s) \to I_{k,v}(\bbV,-s)
	\end{equation}
	is defined for $\mathrm{Re}(s)$ sufficiently large by the integral
	\begin{equation} \label{eqn:localIntOp}
	\calM_{k,v}(s)  \Phi (g) = \int\limits_{ \sfN_k(F_v)}  \Phi \left(  \ul w_k^{-1} \,  \ul n(b) \, g, s \right) \,   d \ul n(b), \qquad \Phi \in I_{k,v}(\bbV,s).
	\end{equation}
	Both $\calM_{k,v}(s)$ and $\calM_{k}(s)$ admit meromorphic continuation to $\bbC$.

\begin{lemma} \label{lem:BjGlobalWhittakerReln} Let $v$ be a place of $F$ and suppose $T = (\sm{ 0& \\ & \lbT } )$ as in \eqref{eq:T_block_diagonal_form}.
	\begin{enumerate}[(i)]
		\item If $\Phi \in I_{k',v}(\bbV,s)$, then $U_{k, k',v}(s)\Phi \in I_{k, v}(\bbV, s-\frac{k' - k}2).$ 	
		
		\item
			Fix an integer $j$ with $t \leq j \leq r$. If $t \neq 0$,  define
				\[
				U_v(s) :=  \left[ U_{t, j,v}\left(s + \frac{r-j}2 \right) \circ (\eta_j^r)^*  \right]   
				\]
		which, by $(i)$, defines a $\bfG_t'(F_v)$-intertwining map
		\[
			 U_v(s)	\colon I_{r,v}(\bbV, s) \to I_{t,v}(\bbV, \sigma ) , \qquad \text{ where }  \sigma = s + \frac{r+t}{2} - j.
		\]
		Then for any $\Phi_{r,v} \in I_{r,v}(\bbV,s)$ and $g \in \bfG'_r(F_v)$,
		\[
					B_{T, v}^{j}(g,  \Phi_{r,v},s)   = W_{\lbT, v}\left(e,  U_v(s) \left( r(g) \Phi_{r,v} \right),\sigma  \right).
		\]
		\item If $T \neq 0$, then
			\[
					\calM_{t,v}(\sigma) \circ U_v(s)  \ = \ (\eta_t^j)^* \circ \calM_{j,v}\left(s + \frac{r-j}2 \right) \circ  (\eta_j^r)^* .
			\]
		\item	If $T = 0$, then
		\[
			B_{ \mathbf 0,v}^{j}(g, s, \Phi_{r,v}) \ =\ \left[ \calM_{j,v} \left(s + \frac{r-j}2 \right) \circ (\eta_j^r)^*  \right] \left( r(g)\Phi_{r,v} \right)(e).
		\]
		
	\end{enumerate}
	\begin{proof} Consider the orthogonal case first. Suppose $b_1 \in \Sym_{k'-k}(F_v)$ and $b_2 \in \Mat_{k'-k, k}(F_v)$. 
		For $\ul n(\beta) \in N_k(F_v)$, a direct computation yields
		\begin{equation}
			\ul w_{k'}^{-1}  \,  \ul n \left( \sm{b_1 & b_2 \\ \transpose{ b_2}  &  0  }\right) \, \eta_k^{k'} \left( \ul w_{k}^{-1} \cdot  \ul n(\beta) \right) \ = \ m' \cdot \ul n \left( \sm{ 0 &  \\  & \beta }\right) \cdot \ul w^{-1}_{k'}  \cdot \ul n \left( \sm{b_1 + b_2 \beta \transpose{b_2} & b_2 \\ \transpose{b_2} &  0 }\right) \cdot \eta_k^{k'}( \ul w^{-1}_{k} )
		\end{equation}
		where $m' =  \ul m( \sm{\mathbf 1 & - b_2 \beta \\ & \mathbf 1}) \in M_{k'}(F_v)$.
		Similarly, if $\ul m(\alpha) \in M_k(F_v)$, then 
		\begin{equation}
			\ul w_{k'}^{-1}  \,  \ul n \left( \sm{b_1 & b_2 \\ \transpose{b_2} & 0 }\right) \, \eta_k^{k'} \left( \ul w_{k}^{-1} \cdot \ul m(\alpha) \right) \ = \ \eta_k^{k'}( \ul m(\alpha)) \cdot \ul w^{-1}_{k'} \cdot  \ul n \left( \sm{b_1 & b_2 \alpha \\ \transpose{ \alpha} \transpose{b_2} &0 }\right) \cdot \eta_k^{k'}(\ul w^{-1}_{k} ).
		\end{equation}
		Using these relations and applying an appropriate change of variables in \eqref{eqn:UIntertwiningDef} implies that for any $\Phi \in I_{k', v}(\bbV,s)$, we have the transformation formula
		\begin{equation}
		 [	U_{k,k',v}(s) \Phi] \left( \ul m(\alpha) \ul n(\beta) g \right) \ = \ \chi_{\bbV}( \ul m(\alpha)) \cdot |\alpha|^{s -\frac{k'}2 + k} \cdot  [	U_{k,k',v}(s) \Phi] \left(   g \right);
		\end{equation}
	thus $U_{k,k',v}(s) \Phi \in I_{k, v}(\bbV, s - \frac{k'-k}2)$, proving $(i)$.
	
	To prove $(ii)$, let $\Phi = \Phi_{r,v} \in I_{r, v}(\bbV,s)$ and set
	\begin{equation} \Phi'(s') := (\eta_{j}^r)^* \left( r(g) \Phi(s) \right) \ \in \ I_{j,v}(\bbV, s + (r-j)/2) \end{equation}
	with 
	$s' = s + (j-r)/2 $. Then, identifying $N_k(F_v) \simeq \Sym_k(F_v)$, we may write
	\begin{equation}
	\begin{split}
		W_{\lbT,v}&(e,  U(s)\Phi, \sigma)  \\ 
		&=   \int_{\beta \in \Sym_{t}(F_v)}  \left[ U(s) \Phi \right] \left( \ul w^{-1}_t \cdot \ul n(\beta) \right)  \  \psi_{\lbT}(-\beta) \, d \beta \\
		&=   \int\limits_{\beta_t \in \Sym_{t}(F_v)}  \int\limits_{ \substack{b_1 \in \Sym_{j-t}(F_v)   \\ b_2 \in \Mat_{j-t,t}(F_v)} } \Phi'\left( \ul w^{-1}_j \, \ul n \left( \sm{b_1 & b_2 \\ \transpose b_2 & 0 }\right) \, \eta_t^{j}\left(  \ul w_{t}^{-1} \cdot  \ul w_{t}^{-1} \cdot \ul n(\beta)) \right) ,s' \right)  \, db_1  \,  db_2 \,   \psi_{\lbT}(-\beta) \,  d\beta \\
		&=  \int_{\beta_t \in \Sym_{t}(F_v)}  \int_{ \substack{b_1 \in \Sym_{j-t}(F_v)   \\ b_2 \in \Mat_{j-t,t}(F_v)} } \Phi'\left( \ul w^{-1}_j \, \ul n \left( \sm{b_1 & b_2 \\ b_2^* & \beta }\right), \ s' \right) \ db_1 \  db_2 \ \psi_{\lbT}(-\beta) \ d\beta \\
		&= \int_{b \in \Sym_j(F_v) } \Phi'( \ul w^{-1}_j \cdot \ul n(b), \ s') \cdot \psi_{\lbT}(-\beta) \ db \\ 
		&=  \int_{b \in \Sym_j(F_v) } \Phi \left(\eta_{j}^r \left( \ul w^{-1}_j \cdot \ul n(b) \right) \cdot g, \ s \right)\psi_{T} \left( \sm{\mathbf 0   & \\ & - b} \right)  \, db\
		= \ B^{j}_{T,v}(g, \Phi,s).
	\end{split}
	\end{equation}
 The proofs of $(iii)$ and $(iv)$ are similar, as are the statements in the unitary case.
	\end{proof}
\end{lemma}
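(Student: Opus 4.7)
The plan is to prove each part by explicit matrix manipulations inside $\bfG'_{k'}(F_v)$ combined with changes of variables in the integrals that define $U_{k,k',v}$, $B^j_{T,v}$, and $\calM_{k,v}$. The basic observation, which runs through the whole argument, is that the block matrices appearing in the integrands admit Bruhat-type decompositions that exchange factors in $N_{k'}$, $M_{k'}$, and the image of $\eta_k^{k'}$, so that integration over one parabolic datum can be repackaged as integration over another up to an affine change of coordinates; once this is set up, everything reduces to comparing integrals.

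For part $(i)$, I would compute, for $(b_1,b_2) \in \Sym_{k'-k}(F_v) \times \Mat_{k'-k,k}(F_v)$ in the orthogonal case, and for an arbitrary element $\ul m(\alpha)\ul n(\beta)$ of $P_k(F_v)$, the explicit identity
\begin{equation*}
\ul w_{k'}^{-1}\,\ul n\!\left(\sm{b_1 & b_2\\ \transpose{b_2} & 0}\right)\eta_k^{k'}\!\bigl(\ul w_k^{-1}\ul m(\alpha)\ul n(\beta)\bigr) \;=\; \eta_k^{k'}(\ul m(\alpha))\cdot m'\cdot \ul n(\star)\cdot \ul w_{k'}^{-1}\,\ul n\!\left(\sm{b_1' & b_2'\\ \transpose{b_2'} & 0}\right)\eta_k^{k'}(\ul w_k^{-1}),
\end{equation*}
where $m' \in M_{k'}$ and $(b_1',b_2')$ is obtained from $(b_1,b_2)$ by a unimodular affine substitution. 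Applying the transformation law of $\Phi$ on the right, changing variables in the defining integral of $U_{k,k',v}(s)\Phi$, and tracking the shift in the modular character produces the claimed transformation law with parameter $s-(k'-k)/2$. The unitary case is mutatis mutandis.

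For part $(ii)$, I would substitute $\Phi' := (\eta_j^r)^*(r(g)\Phi_{r,v})$ and expand $W_{\lbT,v}(e, U_v(s)\Phi',\sigma)$ as an iterated integral over $\Sym_t(F_v) \times \Sym_{j-t}(F_v) \times \Mat_{j-t,t}(F_v)$. A matrix identity similar to the one above shows that conjugating $\eta_t^j(\ul w_t^{-1}\ul n(\beta))$ through $\ul w_j^{-1}\ul n\!\left(\sm{b_1 & b_2 \\ \transpose{b_2} & 0}\right)$ yields $\ul w_j^{-1}\ul n\!\left(\sm{b_1 & b_2 \\ \transpose{b_2} & \beta}\right)$ up to an element of $M_j$; the three integrals then collapse into a single integral over $\Sym_j(F_v)$ which, using $\psi_{\lbT}(\beta) = \psi_T\!\left(\sm{0 & \\ & \beta}\right)$, matches \eqref{eqn: Bj Global Def}. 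Part $(iv)$ falls out as a degenerate case: when $T = 0$, the Whittaker character in the inner piece is trivial, so the outer integration becomes exactly $\calM_{j,v}(s+(r-j)/2)$ composed with the pullback $(\eta_j^r)^*$, with no Whittaker factor remaining.

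The most delicate step will be part $(iii)$, which asserts the commutation of two compositions of intertwining operators, each defined a priori only by a conditionally convergent integral. My approach would be to write both sides as iterated integrals over the appropriate unipotent subgroups, change the order of integration by Fubini in the region of absolute convergence, and then appeal to meromorphic continuation. Alternatively, since both sides intertwine the action of $\bfG'_t(F_v)$ between the same pair of degenerate principal series, one may invoke the multiplicity-one results of Kudla--Rallis and Lee recalled in \Cref{subsection:degenerate principal series of Mp2r,subsection:degenerate principal series of Urr} in a generic range of $s$, and verify equality on a single highest-weight vector where both sides can be computed explicitly. The bookkeeping of the Weil-index factors in $\chi_\bbV$ as one passes between the embeddings $\eta_k^{k'}$ in the metaplectic case is where I expect to have to be most careful.
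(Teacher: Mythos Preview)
Your approach is essentially the same as the paper's: explicit Bruhat-type matrix identities for $(i)$, collapsing the iterated integral for $(ii)$, and direct Fubini-plus-continuation computations for $(iii)$ and $(iv)$ (the paper simply says these are ``similar''). One caution on your alternative route for $(iii)$: the multiplicity-one results you cite from \Cref{subsection:degenerate principal series of Mp2r,subsection:degenerate principal series of Urr} concern $K'$-types inside a single principal series, not uniqueness of $\bfG'_t$-intertwiners between two of them, so checking agreement on a single highest-weight vector would not by itself force the two operators to coincide; stick with the direct integral manipulation, which is what the paper does anyway.
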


\begin{remark} \label{rmk:BjTopAndBottom}
\begin{enumerate}[(i)]
\item	Since $\mathcal{M}_{t,v}(-\sigma) \circ \mathcal{M}_{t,v}(\sigma)$ is given by a meromorphic function in $\sigma$, part (iii) shows that $U_v(s)$ admits meromorphic continuation to $s \in \mathbb{C}$. Setting $U(s) = \otimes_v U_v(s)$ and using the meromorphic continuation of the global intertwining operator $\mathcal{M}_t(\sigma)$ we also conclude that the global operator $U(s)$ admits meromorphic continuation.
\item If $T \neq 0$ and $j= t$  or $j=r$, then  $\calP_{r-t, j-t} = \GL_{r-t}$ and so the sum over $\sfa$ for these terms in \Cref{lem:GlobalEisFCDecomp} is trivial. Noting that $U(s) =  (\eta_t^r)^*  $ when $j = t $, and $U(s) =  U_{t,r}(s)$ when $j = r$, 
	the corresponding summands in \Cref{lem:GlobalEisFCDecomp} are
	\[
	B^{ \, t}_{T}( g,\Phi,s)  =    W_{\lbT} \left( e, \left[ (\eta_t^r)^*   \circ r(  g) \right]\Phi ,\ s + \frac{r-t}{2}\right) 
	\]
	and
	\[
		B^{ \, r}_{T}(  g,  \Phi, s ) =   W_{\lbT} \left( e,  \left[U_{t,r}(s)  \circ r(g) \right]\Phi, s - \frac{r-t}{2} \right) .
	\]
	In particular, when $T$ is non-degenerate we recover the expression $E_T(g,\Phi,s) = \prod_v W_{T,v}(g_v,\Phi_v,s)$.
\end{enumerate}
\end{remark}

In particular, the lemma implies the global identity 
\begin{equation} \label{eq:B_T^j_relate_to_W_tau_U(s)}
	B_{T}^{j}(g,  \Phi_{r},s)   = W_{\lbT}\left(e,  U(s) \left( r(g) \Phi_{r} \right),\sigma  \right).
\end{equation}

We will also need the following invariance property: suppose $x  \in \Mat_{r-t,t}(\bbA_k)$ and let
	\begin{equation}
	\theta= \left(\sm{ 1_{r-t} & x \\ 0 & 1_t } \right) \in \mathrm{GL}_r(\mathbb{A}_{\bfk}).
	\end{equation}
Then a direct computation using \eqref{eqn: Bj Global Def} yields the transformation formula
\begin{equation} \label{eq:invariance_B_theta} 
B_T^j \left( \ul m(\theta) g , \Phi, s \right) = B_T^j \left( g,\Phi, s \right).
\end{equation}

	\subsubsection{} Our next step is to relate the individual terms in \Cref{lem:GlobalEisFCDecomp} to Eisenstein series on $\GL_{r-t}$, generalizing the discussion in \cite{KudlaRallis1}. 
	Consider first the orthogonal case and let $\GL'_{r-t}(\bbA)$ denote the metaplectic double cover of $\GL_{r-t}(\bbA_F)$: as a set, $ \GL'_{r-t}( \bbA) \ = \ \GL_{r-t}(\bbA) \times \{ \pm 1 \}$, with multiplication
	\begin{equation}
			(\sfa_1, \epsilon_1) \cdot ( \sfa_2, \epsilon_2) \ =\ \left( \sfa_1 \sfa_2, \, (\det \sfa_1, \det \sfa_2)_{\bbA} \, \epsilon_1 \, \epsilon_2 \right)
	\end{equation}
It follows from the formulas in \cite[\S 5]{Rao} that there is an embedding
\begin{equation}
 \iota \colon	\GL'_{r-t} (\bbA) \hookrightarrow \sfM_{r}( \bbA),  \qquad  \text{given by}  \qquad (a, \epsilon)  \mapsto \left( m \left( \sm{ a & \\ & \mathbf 1_t } \right), \,  \epsilon  \right)
\end{equation}

Abusing notation, let $\chi_{\bbV} \colon \GL'_{r-t}( \bbA) \to \bbC$ denote the character
$\chi_{\bbV}(\sfa) = \chi_{\bbV}(\iota(\sfa))$, where the latter $\chi_{\bbV}$ is defined in \eqref{eq:def_chi_bbV_orthogonal}. 

For the moment, fix an integer $j$ with $t \leq j \leq r$; to lighten notation,  we write
\begin{equation} \label{eqn:parabolic GLr}
\calP = \calP_{r-t, j-t} \ =\  \left\{ \begin{pmatrix} \sfp_1 & * \\ \mathbf 0_{j-t, r-j} &  \sfp_2  \end{pmatrix} \ \Big| \  \, \sfp_1 \in \GL_{r-j}, \, \sfp_2 \in \GL_{j-t} \right\} \ \subset \ \GL_{r-t}.
\end{equation}
	Let $\calP'_{\bbA}$ denote the  inverse image of $\calP(\bbA)$ with respect to the projection $\GL'_{r-t}(\bbA) \to \GL_{r-t}(\bbA)$. Consider the (smooth normalized) induced representation 
\begin{equation}
\widetilde I^{ \, j}_{\calP}(\bbV, s ) \ := \ \mathrm{Ind}_{\calP'_{\bbA}}^{\GL'_{r-t}(\bbA)} \left( \chi \,  | \det(\sfp_1)|^{s + \frac{r+1}2 - \frac{j-t}2}_{\bbA} \,    | \det(\sfp_2)|^{-s+\frac{j+1}2}_{\bbA} \right);
\end{equation}
concretely, $\widetilde I^{ \, j}_{\calP,v}(\bbV, s )$  consists of smooth functions $\Psi(\cdot, s) \colon \GL'_{r-t}( \bbA) \to \bbC$ such that 
\begin{equation} \label{eqn:MetaplecticGLnEisCentralChar}
\Psi( \sfp \sfa, s) \ = \ \chi_{\bbV}(\sfp)  \,  | \det(\sfp_1)|^{s + \frac{r+1}2  }_{\bbA} \,    | \det(\sfp_2)|^{-s- \frac{r-1}2 +j}_{\bbA} \, \Psi(\sfa)
\end{equation}
for all $ \sfa   \in \GL'_{r-t}(\bbA)$ and $\sfp = [ ( \sm{ \sfp_1 & * \\ \mathbf 0  &  \sfp_2} ), \epsilon] \in \mathcal P'_\mathbb{A}$. 

In the unitary case, we may be more direct: let $\widetilde I^j_{\calP}(\bbV,s)$ denote the space of smooth functions $\Psi(\cdot, s) \colon \GL_{r-t}(\bbA_E)\to \bbC$ such that 
\begin{equation}  \label{eqn:UnitaryGLnEisCentralChar}
\Psi( \sfp \sfa, s) \ = \ \chi_{\bbV}(\sfp)  \,  | \det(\sfp_1)|^{s + \frac{r}2  }_{\bbA_E} \,    | \det(\sfp_2)|^{-s- \frac{r}2 +j}_{\bbA_E} \, \Psi(\sfa), \qquad \text{ for all } \sfp = \left( \sm{\sfp_1 & * \\ & \sfp_2} \right)\in \calP(\bbA_E).
\end{equation}
We also write $\iota \colon \GL_{r-t}(\bbA_E) \to \bfG'_r(\bbA)$ for the embedding $ a \mapsto m \left( \sm{a & \\ & \mathbf 1_t } \right)$.

Finally, for a finite place $v$ of $F$, let 
\begin{equation} \widetilde \bfK_v = \GL_{r-t}(\calO_{\bfk, v}) \subset \GL_{r-t}(\bfk_v); \end{equation} 
in the orthogonal case we identify $\widetilde \bfK_v$ as a subset of $\GL'_{r-t}(F_v)$ via the map $ k \mapsto [k,1]$. For a real place $v$, set $\widetilde \bfK_v = \mathrm{O}(r)$ or $\widetilde \bfK_v  = \mathrm{U}(r)$, and finally define $\widetilde \bfK = \prod \widetilde \bfK_v$.

\subsubsection{}
To formulate the connection between $B_T^{j}(g,\Phi,s)$ and  Eisenstein series on $\mathrm{GL}_{r-t}$, assume for the moment that $t= \rank(T) > 0$, and fix an integer $j$ with $t \leq j\leq r$.  Let $\Phi_r = \otimes_v \Phi_{r,v} \in I_r(\bbV,s)$ be a standard section such that $\Phi_{r,v} = \Phi_r^l$ at each archimedean place $v$.

Suppose that $g \in \bfG'_r(\bbA)$ is of the form
\begin{equation}
g=  \iota(g' ) \, \eta_t^r(g''),
\end{equation}
where $g'' \in \bfG_t'(\bbA)$ and $g' \in \mathrm{GL}_{r-t}'(\bbA)$  or $g' \in \mathrm{GL}_{r-t}(\bbA_E)$ in the orthogonal or unitary cases, respectively. As a function of $g''$, the expression
\begin{equation} \label{eq:proof_asymptotics_eq_1}
U(s) \left( r(g) \Phi_r \right) (e) \ = \ U(s) \left( r ( \iota(g')) \Phi_r \right) ( g'') 
\end{equation}
defines an element of $I_t(\bbV,\sigma)$ by \Cref{lem:BjGlobalWhittakerReln}, and a change of variables in the definition shows that it defines an element of $\widetilde I^{ \, j}_{\calP}(\bbV, s )$ as a function of $g'$.  In particular, the function
\begin{equation}
B_{T}^j \left( g, \, \Phi, \, s\right)  = B_{T}^j \left( \iota(g')  , \, r(g'') \Phi_v, \, s \right)
\end{equation}
is in $\widetilde I_{\calP}(\bbV,s)$ when viewed as a function of $g' $ with $g''$ fixed.

It follows from multiplicity one for $K'_t$-types in $I_t(\bbV_v,\sigma)$ for archimedean $v$ that, as a function of $g_\infty''$, the expression \eqref{eq:proof_asymptotics_eq_1} is proportional to $\prod_{v|\infty} \Phi_t^l(g_v'',\sigma)$; evaluating at $g_\infty''=e$ to determine the constant of proportionality, we find that
\begin{equation} \label{eq:U(s)_Phi_r}
U(s)\left( r(g)\Phi_r\right) (e) = \widetilde{\Phi}(g',g_f'',s) \prod_{v|\infty} \Phi_t^l(g_v'',\sigma),
\end{equation}
where $\widetilde{\Phi}(g',g_f'',s) = U(s)\left( r( \iota(g' ) \, \eta_t^r(g_f''))\Phi_r\right) (e)$. Note that $\tilde{\Phi}(g',g_f'',s)$ is meromorphic in $s$ and, as a function of $g'$, is $\widetilde \bfK_f$-finite and $\widetilde \bfK_\infty$-invariant. Moreover
$\tilde{\Phi}(\cdot,g_f'',s) \in \widetilde I^{ \, j}_{\calP}(\bbV, s )$
for fixed $g''_f$ and so we can write
\begin{equation} \label{eq:phi_GL_r-t_standard_linear_combination}
\tilde{\Phi}(g',g_f'',s) = \alpha_1(s) \, \tilde{\Phi}_1(g',s) \Phi''_1(g_f'',\sigma) + \cdots  + \alpha_N(s)  \, \tilde{\Phi}_N(g',s)\Phi''_N(g_f'',\sigma),
\end{equation}
where $\tilde{\Phi}_i(\cdot,s)$ are $\widetilde \bfK_\infty$-invariant standard sections of  $\widetilde I^{ \, j}_{\calP}(\bbV, s )$, $\Phi''_i(\cdot,\sigma)=\otimes_{v \nmid \infty} \Phi''_{i,v}(\cdot,\sigma)$ are standard sections of $I_t(\bbV(\mathbb{A}_f),\sigma)$, and the coefficients $\alpha_i(s)$ meromorphic in $s$.

Substituting \eqref{eq:U(s)_Phi_r} and \eqref{eq:phi_GL_r-t_standard_linear_combination} in \eqref{eq:B_T^j_relate_to_W_tau_U(s)} we conclude that, for $g$ as above with $g_f''=1$,
\begin{equation} \label{eq:GlobalBjEis}
\sum_{ \sfa \in \mathcal P(\bfk) \backslash \GL_{r-t}(\bfk)}   B^{j}_{T} \left(  m    (\sm{ \sfa & \\ & \mathbf 1_{t} }) g , \Phi,s \right) =  \, \left(\sum_{1 \leq i \leq N} \gamma_i(s) \, \mathcal{G}^j(g',\tilde{\Phi}_i,s) \right) \,  \prod_{v | \infty} W_{\lbT,v}(g_v'',\Phi_t^l,\sigma),
\end{equation}
where
\begin{equation}
\mathcal{G}^j(g',\tilde{\Phi}_i,s) := \sum_{ \sfa \in \mathcal P(\bfk) \backslash \GL_{r-t}(\bfk)} \tilde{\Phi}_i(\sfa g',s)
\end{equation}
is an Eisenstein series on $\mathrm{GL}_{r-t}'(\mathbb{A})$ (case 1) or $\mathrm{GL}_{r-t}(\mathbb{A}_E)$ (case 2), $\mathcal P =\mathcal P_{r-t,j-t}$ is as in \eqref{eqn:parabolic GLr}, 
and 
\begin{equation}
\gamma_i(s) \ := \alpha_i(s) \prod_{v \nmid \infty} \, W_{S, v}\left( e, \Phi''_{i,v},\sigma \right), \qquad 1 \leq i \leq N,
\end{equation}
is meromorphic in $s$.

We turn now to the case $T= 0$. If $1 \leq j \leq r-1$, then the same argument as above shows that, for $g' \in \mathrm{GL}_{r}'(\mathbb{A})$ (orthogonal case) or $g' \in \mathrm{GL}_{r}(\mathbb{A}_E)$ (unitary case), we can write
\begin{equation} \label{eq:GlobalBjEisT0}
\sum_{ \sfa \in \mathcal P_{r,j}(\bfk) \backslash \GL_{r}(\bfk)}   B^{j}_{0} \left(    \ul m    (\sfa) \iota( g') , \Phi,s \right) = \sum_{1 \leq i \leq N} \gamma_i(s) \, \mathcal{G}^j( g',\tilde{\Phi}_i,s) 
\end{equation}
for some standard $\tilde{\bfK}_\infty$-invariant sections $\tilde{\Phi}_i(s) \in \tilde{I}_{\mathcal{P}}^j(\bbV,s)$ and meromorphic functions $\gamma_i(s)$. In addition, \Cref{lem:BjGlobalWhittakerReln} shows that
\begin{equation} \label{eq:GlobalBjEisT0_bis}
\begin{split}
B_0^0(\iota(g'),\Phi_r,s) &= \Phi_r(\iota(g'),s), \\
B_0^r(\iota(g'),\Phi_r,s) &= \mathcal{M}(s)\Phi(\iota(g')) = W_0(\iota(g'),\Phi_r,s).
\end{split}
\end{equation}

\subsubsection{}
We can now generalize \Cref{lem:nonDegKappaBeta} to arbitrary matrices $T$; we consider the case $T \neq 0$ first.

\begin{definition} \label{def:KappaTilde}
Assume that $\Phi_f \in I_r(\bbV(\mathbb{A}_f),s)$ satisfies \eqref{eq:Siegel_section_conditions} and $T = (\sm{0 & \\ & S}) \neq 0$ with $S$ non-degenerate, cf.\ \eqref{eq:T_block_diagonal_form}. Let $\Phi_f', \Phi_f'' \in I_t(\bbV(\mathbb{A}_f), \sigma)$ be given by
	\[
			\Phi'_f(\sigma)  =  (\eta_t^r)^* \Phi_f(s) \quad \text{and} \quad 	\Phi''_f(\sigma)   =    (\eta_t^r)^*(\calM_r (-s) \Phi_f(-s)),
	\]
	where $(\eta_t^r)^*$ is defined as in \eqref{eqn:etaStarDef}, $\calM_r(s)$ is the standard intertwining operator  \eqref{eqn:localIntOp}, and $\sigma = s + \frac{r-t}2$.
	
	Define constants $ \beta(T, \Phi_f)$ and $ \kappa(T, \Phi_f)$ as follows:
	\begin{enumerate}[(i)]
		\item If $s_0(r) > 0$ set $ \beta(T, \Phi_f) = \beta(\lbT, \Phi_f')$ and $ \kappa(T, \Phi_f) = \kappa(\lbT, \Phi_f')$, where these  quantities are defined for the non-degenerate matrix $S$ as in \Cref{lem:nonDegKappaBeta}.
		\item Suppose $s_0(r) = 0$, so that $r=m-1$ (resp.\ $r = m$)  in the orthogonal (resp.\ unitary) case, and let 
		\begin{equation} \label{eqn:archIntertwiningCentralPtFactor}
					 \ttd(s) \ = \  2^{- \frac{r}2(\frac{\iota m}{2}-1) - \iota s} (2\pi i)^{\iota m r /2} \frac{\Gamma_r(\iota s)}{\Gamma_r \left( \frac{\iota }2(s+m)  \right) \Gamma_r\left( \frac{\iota s}2 \right) }.
		\end{equation} 
		Note that $\ttd(s)$ is holomorphic and non-vanishing at $s=0$. Define
			\[
			\beta(T,\Phi_f)  \  = \ 2 \beta(S,\Phi_f')
			\]
		and
			\[
			\kappa (T, \Phi_f) \ :=\ \kappa(\lbT, \Phi_f') \ - \  \left[ d \cdot  \frac{\ttd'(0)}{\ttd(0)}\beta(S, \Phi_f') + \ttd(0)^d \kappa(\lbT, \Phi_f'') \right]. 
			\]
				\hfill $\diamond$
	\end{enumerate}
\end{definition}	

 Let us now define constants $\kappa(T,\Phi_f)$ and $\beta(T,\Phi_f)$ for an arbitrary (i.e. not necessarily block diagonal) matrix $T $ of rank $t>0$. 
 
Let $\gamma \in \mathrm{SL}_r(\bfk)$ such that
\begin{equation} \label{eqn:choice of gamma}
T[\gamma] := {^t}\overline{\gamma}^{-1}T \gamma^{-1} = \begin{pmatrix} 0_{r-t} & \\ & \lbT \end{pmatrix},
\end{equation}
where $\lbT $ is non-degenerate and
\begin{equation} \label{eqn:gamma normalization}
\det \lbT =  {\det}'  \, T := \text{ product of non-zero eigenvalues of } T;
\end{equation} 
such $\gamma$ always exists for fixed $T$. Define
\begin{equation} \label{eq:def_beta_kappa_arbitrary_T}
\begin{split}
\beta(T,\Phi_f) &:=\beta(T[\gamma],\underline{m}(\gamma)\Phi_f)  \\
\kappa(T,\Phi_f) &:=  \kappa(T[\gamma],\underline{m}(\gamma)\Phi_f).
\end{split}
\end{equation}
That $\beta(T,\Phi_f)$ and $\kappa(T,\Phi_f)$ are independent of $\gamma$ (and hence are well-defined) follows from a direct computation, or alternatively from the invariance property 
\begin{equation} \label{eq:invariance_C_T}
C_T(\bfy,\Phi_f,s) = C_{T[\gamma]}(\bfy[{^t}\overline{\gamma}^{-1}],\underline{m}(\gamma)\Phi_f,s),
\end{equation}
(cf.\ \eqref{eq:equivariance_E_T_gamma}) and the next proposition, which shows that the constants $\beta(T,\Phi_f)$ and $\kappa(T,\Phi_f)$ determine the asymptotic behaviour of the derivative $\tfrac{d}{d s} C_T(\bfy,\Phi_f,s_0)$.

\begin{proposition} \label{prop:EisGrowthEstimate} 
Assume that $\Phi_f$ satisfies \eqref{eq:Siegel_section_conditions} and let $\bfy=(y_v)_{v|\infty} \in \mathrm{Sym}_r(F_\mathbb{R})_{\gg 0}$ (resp. $\mathrm{Her}(E_\mathbb{R})_{\gg 0}$). Write $\det' A$ for the product of non-zero eigenvalues of a square matrix $A$, and let 
	\[
	F(\bfy) := \left. \frac{d}{d s} C_T(\bfy,\Phi_f,s) \right|_{s = s_0(r)} -  \kappa(T, \Phi_f)  -   \frac{\iota}2 \beta(T, \Phi_f) \ \sum_{v|\infty} \log \left(\frac{\det' \sigma_v(T) \cdot \det y_v}{\det' \left( \sigma_v(T)y_v\right)} \right).
	\] 
	Then for every fixed $\bfy$, we have 
	\begin{equation} \label{eqn:CondRadialDecay}
		\lim_{\lambda \to \infty} F(\lambda \bfy) = 0 \qquad \text{ and } \qquad \frac{\partial}{\partial \lambda} F(\lambda \bfy) = O(\lambda^{-1-C})
	\end{equation} 
   as $\lambda \to \infty$, for some $C>0$ .
	
\begin{proof} 
	We begin with a few preliminary reductions. Choose $\gamma \in \SL_r(\bfk)$ satisfying \cref{eqn:gamma normalization,eqn:choice of gamma}, and note that the quantities $\frac{\det' \sigma_v(T) \cdot \det y_v}{\det'( \sigma_v(T)y_v)}$ are unchanged upon simultaneously replacing $T$ with $T[\gamma]$ and $\bfy $ with $\gamma \, \bfy \, \transpose{\overline \gamma}$. Combining this observation with  \eqref{eq:invariance_C_T},  it suffices to prove the proposition for $T$ of the form $T = (\sm{0 & \\ & S})$ with $S$ non-degenerate of rank $t = \rank (T)$.

	In this case, \Cref{lem:GlobalEisFCDecomp} gives a decomposition $C_T(\bfy, \Phi_f,s)  = \sum_{j =t}^r C^j_T(\bfy,\Phi_f,s)$,
	where
	\begin{equation} \label{eqn:CjT main theorem setup}
	C^j_T(\bfy, \Phi_f,s)  =  \prod_{v| \infty}\det(y_v)^{-\iota m/4} \sum_{ \sfa \in \mathcal P_{r-t,j-t} \backslash \GL_{r-t}(\bfk)}   B^{\, j}_T \left( \ul m(\sm{ \sfa & \\ & \mathbf 1_{t} })g'_{\bfy} ,\Phi,s \right) \, q^{-T}.
	\end{equation}
	
		Next,  we may fix an element $\bm \theta  = \left( \sm{1_{r-t} & * \\ & 1_t } \right) \in \SL_{r}(\bfk_{\bbR}) \simeq \SL_r(\bbK)^d$ such that
\begin{equation}
\bm \theta \cdot \bfy \cdot \transpose{\overline {\bm \theta}} \ =\  \begin{pmatrix} \bfy' & \\ & \bfy'' \end{pmatrix}	
\end{equation}
is block diagonal, where $\bfy'$ and $\bfy''$ are totally positive definite of rank $r-t$ and $t$, respectively. Let $\widetilde{\bm \theta} = (\bm \theta, e, \dots) \in \SL_r(\bbA)$; then, using the invariance property \eqref{eq:invariance_B_theta}  and the relation $g'_{ \bm \theta \bfy \transpose{\overline{\bm \theta}}} = \ul m(\widetilde{\bm \theta}) g'_{\bfy}$, we have
\begin{equation}
B^{\, j}_T \left(  \ul m (\sm{ \sfa & \\ & \mathbf 1_{t} }) g'_{\bm \theta \bfy \transpose{\overline{\bm \theta}}} ,\Phi,s \right)  =
B^{\, j}_T \left(  \ul m    (\sm{ \sfa & \\ & \mathbf 1_{t} })g'_{\bfy} ,\Phi,s \right).
\end{equation}
Moreover, for $T = ( \sm{0 & \\ & S })$, the quantities $\det' (\sigma_v(T) y_v)$ are unchanged upon replacing $\bfy $ by $\bm \theta \bfy  \transpose{\overline{\bm \theta}}$, as are the determinants $\det y_v$.

Thus, we may reduce to the case where $T= ( \sm{0 & \\ & S })$ and $\bfy =  ( \sm{\bfy' & \\ & \bfy''})$ are both block diagonal, so that
\begin{equation}
\frac{\det' \sigma_v(T) \cdot \det y_v}{\det' \sigma_v(T)y_{v}}=\frac{\det \sigma_v(\lbT) \cdot \det y_v}{\det \sigma_v(S) \det y''_{v}} = \det y'_v.
\end{equation}
Our approach will be to show that the value at $s_0(r)$ of the derivative of the expression \eqref{eqn:CjT main theorem setup}, after possibly subtracting off a constant and a multiple of  $\sum \log(\det y'_v)$, satisfies the condition \eqref{eqn:CondRadialDecay}.

%
Fix an element  $\bm\mu   \in \prod_{v | \infty} \GL_{r-t}(\bfk_v)^d$ of totally positive determinant with $\bfy' =\bm \mu \cdot \transpose{\overline{\bm \mu}}$, and let $\tilde{ 	\bm \mu} =  \left((\bm \mu, \Id, \dots),1\right)  \in  \GL'_{r-t, \bbA_F}$ in the orthogonal case, and $\tilde{ 	\bm \mu} =( \bm \mu, \Id, \dots) \in \GL_{r-t}(\bbA_E)$ in the unitary case. Then 
	\begin{equation}
	 g'_{\bfy} \ = \ \iota\left(\tilde{\bm \mu} \right) \cdot \eta_t^r \left( g'_{\bfy''} \right).
	\end{equation}
	
Taking into account our normalizations, cf. \eqref{eqn:ArchWhittakerClassicalNormalization}, we may use \eqref{eq:GlobalBjEis} to write
\begin{equation}
		C_T^j(\bfy, \Phi_f,s) =  \left( \sum \gamma_i(s) \, \calG^j(\tilde{\bm \mu} , s, \tilde{\Phi}_i)  \right)  \prod_{v|\infty} \det(y'_v)^{-\iota m/4} \archW_{\sigma_v( \lbT)}(y''_v, \sigma) 
\end{equation}
	for some meromorphic functions $\gamma_i(s)$ and standard sections $\tilde{\Phi}_i \in \widetilde I_{\calP}^j(\bbV, s)$, where $\sigma = \sigma(s) = s + \frac{r+t}{2} - j$, 
%

	Now for fixed $\bfy$ and a parameter $\lambda > 0$, replacing $\bfy$ by $\lambda \bfy$ corresponds to replacing $\bfy''$ by $\lambda \, \bfy''$, and replacing $\bm \mu$ by $\sqrt{\lambda} \, \bm \mu$. 
		Using the transformation formulas \eqref{eqn:MetaplecticGLnEisCentralChar} or \eqref{eqn:UnitaryGLnEisCentralChar} to determine the central character of the Eisenstein series $\calG^j$ in the orthogonal or unitary case, respectively, a short computation yields 
	\begin{equation}  \label{eqn:GLnCentralCharacter}
		\calG^j( \sqrt{\lambda} \tilde{\bm \mu} , \tilde{\Phi},s) \ = \ \lambda^{\frac{\iota d}2(r+t - 2j)(s - s_0(r)) + \frac{\iota d(r-t)m}{4} \ - \  \iota d (j-t) s_0(j)}  \, \calG^{j}( \tilde{\bm \mu}, \tilde{\Phi},s) 
	\end{equation}
	for any section $\tilde{\Phi}$. Since each $\calG^{j}(\tilde{\bm \mu}, \tilde{\Phi}_i,s)$ is itself meromorphic, it follows that for fixed $\bfy$, we have
\begin{equation} \label{eq:calEis_Tj_lambda_multiples}
		C_T^j(\lambda \bfy,\Phi_f,s) = f(\bfy, s) \, \lambda^{\frac{\iota d}2(r+t - 2j) (s - s_0(r)) - \iota  d(j-t)s_0(j)} \prod_{v|\infty}  \archW_{\sigma_v( \lbT)}(\lambda y''_{v}, \sigma) 
\end{equation}
	for some meromorphic function $f(\bfy,s)$ independent of $\lambda$. A priori, we do not know whether $f(\bfy,s)$ has a pole at $s= s_0(r)$, though one could imagine that an analysis along the lines of  \cite{KudlaRallis1, SweetThesis} could be used to determine its order. In any case, let
	\begin{equation} \mathcal A_1^j( \bfy, \Phi_f)  :=  \mathcal A_1( C_T^j( \bfy,\Phi_f,s)  )  \end{equation}
	denote the coefficient of $(s-s_0(r))$  in the Laurent expansion at $s=s_0(r)$, so that
	\begin{equation}
 		\left.	  \frac{\partial}{\partial s}	C_T (\lambda \bfy,\Phi_f,s) \right|_{s= s_0(r)} \ =\  \sum_{j=t}^r \mathcal A_1^{j}(\lambda \bfy, \Phi_f).
	\end{equation}
	Then $ \mathcal A_1^j(\lambda \bfy, \Phi_f)$ can be written as a sum of terms of the form
	\begin{equation} \label{eqn:DerivFCEisJthTerm}
			a(\bfy) \, (\log\lambda)^k \, \lambda^{- \iota d (j-t)s_0(j)} \,  \left. \frac{\partial^{k'}}{\partial s^{k'}}  \left(	  \prod_{v|\infty}   \archW_{\sigma_v( \lbT)}\left(\lambda y''_{v}, \sigma(s) \right)  \right) \right|_{s= s_0(r )}
	\end{equation}
	for some Laurent coefficients $a(\bfy)$ of $f(\bfy,s)$ at $s = s_0(r)$, and integers $k, k' \geq 0$. 
	
	First, consider the case where $\lbT$ is not totally positive definite. If $v$ is a place such that $\sigma_v(\lbT)$ is not positive, then, by \Cref{prop:NonDegArchWhittakerNonPos}, the derivatives of $ \archW_{\sigma_v(\lbT)}(\lambda y''_{v}, \sigma) $ in $s$ and $\lambda$ are all of exponential decay as $\lambda \to \infty$. It follows that  $ \mathcal A_1^j(\lambda \bfy , \Phi_f)$ satisfies the properties \eqref{eqn:CondRadialDecay} for each $t \leq j \leq r$. On the other hand, for non-positive $\lbT$ the constants $\kappa(\lbT,\Phi_f)$ and $\beta(\lbT, \Phi_f)$ are both zero by definition; this proves the proposition in this case.

Next, suppose $\lbT$ is totally positive definite. Assume further that $j>t$ and $s_0(j) \neq 0$, which implies that $-\iota d(j-t) s_0(j) \leq - 1/2$; this is the exponent of $\lambda$ in \eqref{eqn:DerivFCEisJthTerm}. For fixed $\bfy$,  \Cref{prop:NonDegenWhittakerEstimates}(i) implies that  each $\archW_{\sigma_v(\lbT)}(\lambda y''_{v}, \sigma) $, along with  its derivatives in $s$, are bounded as $\lambda \to \infty$. In light of \eqref{eqn:DerivFCEisJthTerm}, this implies
	\begin{equation}
		\lim_{\lambda \to \infty} \mathcal A^j_1(\lambda \bfy, \Phi_f) \ = \ 0
	\end{equation}
	for such $j$. 
	Similarly, differentiating \eqref{eqn:DerivFCEisJthTerm} with respect to $\lambda$ and applying \Cref{prop:NonDegenWhittakerEstimates}(ii) yields the estimate
	\begin{equation}
		\frac{\partial}{\partial \lambda}\mathcal A^j_1(\lambda \bfy, \Phi_f) \ =  \ O(\lambda^{-1-C})
	\end{equation}
	for some $C>0$. In other words, for $j> t$ with $s_0(j) \neq 0$, the term $\mathcal A^j_1( \bfy, \Phi_f) $ satisfies the condition \eqref{eqn:CondRadialDecay}.
	
Now suppose that $\lbT$ is totally positive definite and $j = t$; the corresponding term can be written more concretely, using \Cref{rmk:BjTopAndBottom}, as
\begin{equation}
C_T^{\, t}(\bfy,\Phi_f,s)  = \prod_{v| \infty}  \det(y_v)^{-\iota m/4} \, W_{\lbT} \left(e, \left[(\eta^r_t)^* \circ r( g'_{\bfy}) \right] \Phi,  \sigma\right)  \, q^{-T} 
\end{equation}
where $\sigma = s + \frac{r-t}{2}$ and $(\eta_t^r)^* \colon I_r(\bbV, s) \to I_t(\bbV,\sigma)$ is the map defined in \eqref{eqn:etaStarDef}. For an archimedean place $v$, note that multiplicty one for $K'_{t,v}$-types immediately implies the relation $(\eta_t^r)^* \Phi_{r}^l = \Phi_t^l$. More generally, for block diagonal $\bfy$ as above and any $h_v \in \bfG_{t,v}'$ we have
\begin{equation} \label{eq:eta_t_r_computation_1}
\begin{aligned}
(\eta_t^r)^*(r(g'_{y_v})\Phi_{r}^l)(h_v,\sigma) &= \Phi_{r}^l \left(  \eta_t^r(h_v) \, \iota(\tilde{\mu}_v) \, \eta_t^r (g'_{y''_v}), \, s \right) \\
&= \Phi_{r}^l \left( \iota(\tilde{\mu}_v) \,  \eta_t^r (h_v \, g'_{y''_v}), \, s \right)  \\
&= \det(y'_v)^{\frac{\iota}{2}(s+\rho_r)} \, \Phi_{r}^l \left(   \eta_t^r ( h_v \, g'_{y''_v}), \, s \right)  \\
&=  \det(y'_v)^{\frac{\iota}{2}(s+\rho_r)} \, \Phi_t^l\left(h_v \, g'_{y''_v}, \, \sigma\right).
\end{aligned}
\end{equation}
Hence  
\begin{equation}  \label{eqn:CTt}
C_T^{\, t}(\bfy, \Phi_f,s)  = \prod_{v|\infty} \left(\det y_v' \right)^{\frac{\iota}2(s - s_0(r))} C_{\lbT}(\bfy'',  \Phi_f',\sigma),
\end{equation}
  where
\begin{equation}
C_{\lbT}(\bfy'',  \Phi_f',\sigma) :=  W_{\lbT, f}(e, \Phi'_f,\sigma) \prod_{v|\infty} \archW_{\sigma_v(\lbT)}(y''_{v} , \sigma) 
\end{equation}
and $\Phi'_f =(\eta_t^r)^*\Phi_f  \in  I_t(\bbV(\mathbb{A}_f), \sigma)$.

Now consider the special point $s=s_0(r)$, and note that 
\begin{equation}
\sigma|_{s=s_0(r)} \ = \ s_0(r) + \frac{r-t}{2} \ = \ s_0(t).
\end{equation}
Since $\lbT$ is non-degenerate, the term $ C_{\lbT}(\bfy'',\Phi_f',\sigma)$ is holomorphic at $s=s_0(r)$, and therefore the same is true for $C_T^{\, t}(\bfy, \Phi_f,s)  $.
 Therefore,
\begin{equation}
	  \left. \frac{\partial}{\partial s} C_T^t( \bfy,\Phi_f,s) \right|_{s = s_0(r)}  =   \frac{\iota}2 \sum_{v | \infty} \log \det (y'_v) \ \beta(\lbT, \Phi_f' ) \ + \   \left. \frac{\partial}{\partial \sigma} C_{\lbT}(\bfy'', \Phi_f',\sigma) \right|_{\sigma = s_0(t)}.
\end{equation}
Applying \Cref{lem:nonDegKappaBeta} to $C_{\lbT}(...)$, it follows immediately that the difference
\begin{equation}
		  \left. \frac{\partial}{\partial s}C_T^t(\bfy,s, \Phi) \right|_{s = s_0(r)}  -  \frac{\iota}2  \beta(\lbT, \Phi_f' ) \sum_{v | \infty} \log \det y_v' - \kappa(\lbT, \Phi'_f) 
\end{equation}
satisfies the conditions in \eqref{eqn:CondRadialDecay}.

Finally, it remains to consider the case $j>t$ and $s_0(j) = 0$. Since we are assuming $s_0(r) \geq 0$, we must have $s_0(j) = s_0(r) = 0$  and hence $j=r$ as well. On the other hand, if $s_0(r) > 0$, then such a term does not arise and we have completed the proof of the proposition at this stage.

 Otherwise, by \Cref{rmk:BjTopAndBottom}, the corresponding term is 
\begin{equation}
C^{r}_T(\bfy, \Phi_f,s)  =  \prod_{v| \infty}  \det(y_v)^{-\iota m/4}  \ W_{\lbT} \left( e, \left[U_{r,t}(s)  \circ r(g'_{\bfy}) \right]\Phi, \sigma \right)  \ q^{-T} 
\end{equation}
with $ \sigma = s - \frac{r-t}{2}$. Since $s_0(r) = 0$, we have $m = r+1  $ in the orthogonal case, and $m= r$ in the unitary case; it follows  in both cases that
\begin{equation}
\sigma|_{s = s_0(r)=0}  \  = \ - \frac{r-t}2 \ = \ - s_0(t).
\end{equation}
To compute this term, note that the functional equation of the genus $t$ Eisenstein series implies
\begin{equation}
		W_{\lbT}(e, \Phi_t,\sigma) \ =\ W_{\lbT}( e, \calM_t(\sigma) \Phi_t,- \sigma)
\end{equation}
for any $\Phi_t \in I_t(\bbV,\sigma)$. To apply this to the present case, recall that 
\begin{equation}
	\calM_{t}(\sigma) \circ U(s)  \ = \ (\eta_t^j)^* \circ \calM_{j}\left(s + \frac{r-j}2 \right) \circ  (\eta_j^r)^* 
\end{equation}
as in \Cref{lem:BjGlobalWhittakerReln}(iii).  Consider  an archimedean place $v | \infty$, and define a meromorphic function $\ttd(s)$ by the relation
\begin{equation}
\calM_r(s) \Phi_{r,v}^l(s)  \ =  \ \ttd(s) \Phi_{r,v}^l(-s).
\end{equation}
To determine the function $\ttd(s)$, we may evaluate both sides at the identity, and apply \cite[(1.31)]{ShimuraConfluent}; a little algebra, using the fact $s_0(r)=0$, shows that $\ttd(s)$ is given by the formula \eqref{eqn:archIntertwiningCentralPtFactor}. 

 On the other hand, by \Cref{lem:BjGlobalWhittakerReln}(iii) with $j=r$, we find
\begin{equation}
\begin{split}
\left[ \calM_{t,v}(\sigma) \circ U_{r,t,v}(s) \right] \left( r(g'_{y_v}  )   \, \Phi_{r,v}^l(s) \right) \ &= \ \left[ (\eta_t^r)^* \circ \calM_{r,v}(s)  \right] \left( r(g'_{y_v}  )   \, \Phi_{r,v}^l(s) \right) \\
&= \ \ttd(s) \cdot  (\eta_t^{r})^* \left[  r(g'_{y_v}  )  \, \Phi_{r,v}^l(-s)  \right]  \\
&= \ \ttd(s)  \left(\det y'_v \right)^{\frac{\iota}2( -s+\rho_r)}  \left( r(g'_{y''_{v}})\Phi_{t,v}^l \right)(-\sigma).
\end{split}
\end{equation}
Thus, applying the functional equation for $W_{\lbT}(...)$, taking into account our normalizations, and noting that $s_0(r)  = 0$, we have
	\begin{align}  \label{eqn:CTr}
		C_T^{r}(\bfy, \Phi_f,s) \ =\ \ttd(s)^d \cdot  \prod_{v|\infty} \det(y'_v)^{-  \frac{ \iota s}2} \cdot C_{\lbT}(\bfy'',\Phi''_f,-\sigma),
	\end{align}
where $\Phi''_f(-\sigma) =(\eta_t^r)^*(\calM_r(s) \Phi_f(s))$. Therefore
	\begin{equation}
	\begin{aligned}
	\left.	\frac{\partial}{\partial s} C_T^{r}(\bfy, \Phi_f,s) \right|_{s = s_0(r) = 0} \ &= \  \ttd(0)^d  \left[ d\,   \frac{ \ttd'(0)}{\ttd(0)}   -\frac{\iota}2 \sum_{v|\infty} \log \det y'_v  \right] \cdot  C_{\lbT}(\bfy'', \Phi''_f,s_0(t)) \\
	&  \hspace{8em} -   \left. \ttd(0)^d  \,  \frac{\partial}{\partial \sigma } C_{\lbT}(\bfy'', \Phi''_f,\sigma) \right|_{\sigma = s_0(t)}.
	\end{aligned}
	\end{equation}	
Setting 
\begin{equation}
	\beta'  =  - \ttd(0)^d \cdot C_{\lbT}(\bfy'',\Phi''_f,s_0(t)) =   - \ttd(0)^d \cdot \beta(\lbT, \Phi_f'')
\end{equation}
and 
\begin{equation}
	\kappa' =    \ttd(0)^{d}  \left[ d \,  \frac{ \ttd'(0)}{\ttd(0)}\beta(\lbT, \Phi_f'') \  - \  \kappa(\lbT, \Phi_f'') \right]
\end{equation}
it follows, in the same manner as the previous case, that the difference

	\begin{equation}
	\left.	\frac{\partial}{\partial s} C_T^{r}(\lambda \bfy, \Phi_f,s) \right|_{s = s_0(r) = 0}  \  - \ \frac{\iota}{2} \cdot \beta' \sum_{v|\infty} \log \det y_v' - \kappa'
	\end{equation}
satisfies the conditions of \eqref{eqn:CondRadialDecay}. 
Combining this fact with the previous computations for the terms with $t \leq j < r$, we conclude that
	\begin{equation}
	\left.	\frac{\partial}{\partial s} C_T(\lambda \bfy, \Phi_f,s) \right|_{s =0}    - \frac{\iota}{2} \cdot \left[\beta(S, \Phi') + \beta'  \right]  \sum_{v|\infty} \log \det y_v' - \left[\kappa(S,\Phi_f') + \kappa'\right]
	\end{equation}
	satisfies \eqref{eqn:CondRadialDecay}.  
	
	It remains to identify the two terms in square brackets in the preceding display as $\beta(T,\Phi_f)$ and $\kappa(T,\Phi_f)$, respectively. First, consider the value of $C_T(\lambda \bfy, \Phi, s)$ at $s=s_0(r) = 0$; by \cref{eqn:CTt,eqn:CTr} and an argument analogous to the one leading to \eqref{eqn:DerivFCEisJthTerm}, we may write
	\begin{align}
	C_T(\lambda \bfy, \Phi_f, 0) \ &= \ \sum_{j=1}^t \ \mathrm{CT}_{s = 0} \, C_T^j(\lambda \bfy, \Phi_f, s) \notag \\
	&= \ \beta(S,\Phi_f') - \beta' + O(\lambda^{-C})
	\end{align}
	for some $C>0$. On the other hand, the Eisenstein series $E(g, \Phi,s)$ is incoherent, in the sense of \cite{KudlaRallisSiegel1}, and hence vanishes identically at $s=0$ (this result is \cite[Theorem 4.10]{KudlaRallisSiegel1} in the orthgonal case with $m$ even; see \cite[Proposition 6.2]{GanQiuTakeda} and the references therein for more details in the remaining cases). 
	In particular,    $C_T(\bfy, \Phi_f,0)= 0$ and $\beta(S,\Phi_f') = \beta'$, and, comparing with \Cref{def:KappaTilde}, we find
	\begin{equation}
	\beta(S,\Phi'_f) + \beta' = 2 \beta(S,\Phi_f') = \beta(T,\Phi_f')
	\end{equation}
	and 
	\begin{equation}
	\kappa(S,\Phi_f') + \kappa' = \kappa(T,\Phi_f)
	\end{equation}
	as required.

\end{proof}
\end{proposition}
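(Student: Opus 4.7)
The plan is to follow the standard approach for analyzing Fourier coefficients of degenerate matrices for Siegel Eisenstein series, via the decomposition from Lemma~\ref{lem:GlobalEisFCDecomp}, and isolate the terms that contribute to the asymptotics at infinity. First, I would perform two reductions. Using \eqref{eq:invariance_C_T} together with the definition \eqref{eq:def_beta_kappa_arbitrary_T}, it suffices to treat the case $T = (\sm{0 & \\ & S})$ with $S$ non-degenerate of rank $t$. Similarly, applying the invariance \eqref{eq:invariance_B_theta} under $\bm \theta \in \SL_r(\bfk_\bbR)$ of upper-triangular form, I can reduce to $\bfy = (\sm{\bfy' & \\ & \bfy''})$ block-diagonal, in which case $\det' \sigma_v(T) \det y_v / \det'(\sigma_v(T) y_v) = \det y'_v$, simplifying the expression for $F(\bfy)$.

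Next, I would write $C_T(\bfy, \Phi_f, s) = \sum_{j=t}^r C^j_T(\bfy, \Phi_f, s)$ using Lemma~\ref{lem:GlobalEisFCDecomp}. For $t < j \leq r$, express each $B^j_T$ via the Eisenstein series on $\GL_{r-t}$ as in \eqref{eq:GlobalBjEis}, and use the central character transformation \eqref{eqn:GLnCentralCharacter} to show that replacing $\bfy$ by $\lambda \bfy$ yields the scaling
\begin{equation*}
C^j_T(\lambda \bfy, \Phi_f, s) = f(\bfy, s) \, \lambda^{\frac{\iota d}{2}(r+t-2j)(s-s_0(r)) - \iota d(j-t) s_0(j)} \prod_{v|\infty} \archW_{\sigma_v(S)}(\lambda y''_v, \sigma(s))
\end{equation*}
for some meromorphic $f(\bfy, s)$, where $\sigma(s) = s + \tfrac{r+t}{2} - j$. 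The derivative $\mathcal A^j_1(\lambda \bfy, \Phi_f)$ at $s = s_0(r)$ then decomposes into terms of the form \eqref{eqn:DerivFCEisJthTerm}. If $S$ is not totally positive definite, Proposition~\ref{prop:NonDegArchWhittakerNonPos} gives exponential decay of every archimedean Whittaker factor, so $F(\lambda \bfy)$ satisfies \eqref{eqn:CondRadialDecay} and matches $\kappa(T,\Phi_f) = \beta(T,\Phi_f) = 0$.

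If $S$ is totally positive definite, I would split the analysis by the value of $j$. When $t < j < r$ with $s_0(j) \neq 0$, the exponent $-\iota d (j-t) s_0(j)$ is $\leq -1/2$, and the boundedness statements of Proposition~\ref{prop:NonDegenWhittakerEstimates}(i)--(ii) yield the desired decay. For $j = t$, Remark~\ref{rmk:BjTopAndBottom} and the computation \eqref{eq:eta_t_r_computation_1} give the clean formula \eqref{eqn:CTt}:
\begin{equation*}
C^{\,t}_T(\bfy, \Phi_f, s) = \prod_{v|\infty} (\det y'_v)^{\frac{\iota}{2}(s - s_0(r))} \, C_S(\bfy'', \Phi'_f, \sigma), \qquad \Phi'_f = (\eta_t^r)^* \Phi_f,
\end{equation*}
so differentiating at $s_0(r)$ picks up a factor $\tfrac{\iota}{2} \beta(S, \Phi'_f) \sum \log \det y'_v$ from the prefactor, plus the $\sigma$-derivative of $C_S$, to which Lemma~\ref{lem:nonDegKappaBeta} applies. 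Finally, $j = r$ contributes only when $s_0(r) = 0$; in this case the functional equation together with Lemma~\ref{lem:BjGlobalWhittakerReln}(iii) converts $C^r_T$ into a similar expression \eqref{eqn:CTr} involving $\Phi''_f := (\eta_t^r)^* \mathcal M_r(s) \Phi_f$ and the archimedean intertwining scalar $\ttd(s)$ of \eqref{eqn:archIntertwiningCentralPtFactor}, again reducible via Lemma~\ref{lem:nonDegKappaBeta}.

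To close the argument, I would sum these contributions and identify them with $\kappa(T, \Phi_f)$ and $\tfrac{\iota}{2}\beta(T, \Phi_f)$ from Definition~\ref{def:KappaTilde}. The step requiring care is the $s_0(r) = 0$ case: one needs to recognize that the incoherent Siegel Eisenstein series $E(g, \Phi, s)$ vanishes identically at $s = 0$ (by \cite[Theorem 4.10]{KudlaRallisSiegel1} in the orthogonal even case, and \cite[Proposition 6.2]{GanQiuTakeda} and references in the remaining cases), forcing $C_T(\bfy, \Phi_f, 0) = 0$ and hence $\beta' = \beta(S, \Phi'_f)$; this is what makes the factor $2$ in Definition~\ref{def:KappaTilde}(ii) come out correctly. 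The main obstacle is therefore this $s_0(r) = 0$ bookkeeping: ensuring the intertwining-operator contribution at $j = r$ exactly reproduces the correction terms built into $\kappa(T, \Phi_f)$, while simultaneously showing all remaining pieces decay as $O(\lambda^{-1-C})$ with a uniform exponent.
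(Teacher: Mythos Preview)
Your proposal is correct and follows essentially the same approach as the paper's proof: the same two reductions (to block-diagonal $T$ via \eqref{eq:invariance_C_T} and to block-diagonal $\bfy$ via \eqref{eq:invariance_B_theta}), the same decomposition $C_T=\sum_{j=t}^r C_T^j$ from Lemma~\ref{lem:GlobalEisFCDecomp}, the same scaling analysis via the $\GL_{r-t}$ Eisenstein series and \eqref{eqn:GLnCentralCharacter}, the same case split using Propositions~\ref{prop:NonDegenWhittakerEstimates} and~\ref{prop:NonDegArchWhittakerNonPos}, and the same appeal to incoherence at $s_0(r)=0$ to identify the constants with those of Definition~\ref{def:KappaTilde}. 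One small wording slip: your decay case ``$t<j<r$ with $s_0(j)\neq 0$'' should be $t<j\le r$ with $s_0(j)\neq 0$, so that $j=r$ with $s_0(r)>0$ is absorbed there (as the paper does); your next sentence, that $j=r$ contributes only when $s_0(r)=0$, shows you already have this logic in mind.
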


It remains to consider the case $T = \bf 0$.
\begin{proposition} \label{prop:EisGrowthEstimateConstTerm}
	Let 
		\[
		\beta(\mathbf 0, \Phi_f) \ := \ \begin{cases} \Phi_f(e), & \text{if } s_0(r)>0 \\ 2 \, \Phi_f(e)   , & \text{if }s_0(r)  = 0 .\end{cases}
		\]
	and 
		\[
		\kappa(\mathbf 0, \Phi_f) \ := \ \begin{cases} 0, & \text{if } s_0(r) > 0 \\ 
		- d \cdot  \frac{\ttd'(0)}{\ttd(0)} \Phi_f(e) \ - \ \ttd(0)^d  \, W'_{0,f}(e,\Phi_f,0), & \text{if } s_0(r) = 0. \end{cases}
		\]
	Then the difference
	\begin{align*}
	F(\bfy) =	\left.	\frac{d}{d s}	C_{\bf0 }( \bfy,\Phi_f,s) \right|_{s=s_0(r)}  \ & - \ \frac{\iota}2 \sum_{v|\infty} \log \det y_v \cdot \beta(\mathbf 0, \Phi_f) -  \kappa(\mathbf 0, \Phi_f)  
	\end{align*}
	satisfies the conditions \eqref{eqn:CondRadialDecay}.
	
	\begin{proof}
		Using \Cref{lem:GlobalEisFCDecomp} and \eqref{eq:GlobalBjEisT0}, we may write $C_{\mathbf 0} (\bfy,\Phi,s) = \sum_{j=0}^r		C_{\mathbf 0}^j (\bfy,\Phi,s)$, where
		\begin{equation}
		C^j_{\mathbf 0}(\bfy, \Phi_f,s)  =   \prod_{v| \infty}\det(y_v)^{-\iota m/4} \cdot \sum_{1\leq i\leq N} \gamma_i(s) \calG^j(\bm \alpha,\tilde{\Phi}_i,s),
		\end{equation}
		for some meromorphic functions $\gamma_i(s)$ and standard sections $\tilde{\Phi}_i \in \widetilde I_{\calP}^j(\bbV, s)$.
		Applying the transformation formula \eqref{eqn:GLnCentralCharacter}, with $t=0$, we have 
		\begin{equation}
		C^j_{\mathbf 0}(\lambda \bfy,\Phi_f,s)  =  \lambda^{\frac{d}2( r - 2j)(s-s_0(r))  - d j s_0(j) } \cdot C^j_{\mathbf 0}(\bfy,\Phi_f,s) .
		\end{equation}
		Thus  $\frac{\partial}{\partial s}	C_{\bf0 }^j( \lambda \bfy, \Phi_f,s)|_{s=s_0(r)}$ is a finite sum of terms of the form
		\begin{equation}
		\log(\lambda)^k \cdot \lambda^{-d \, j \, s_0(j)} \cdot a(\bfy)
		\end{equation}
		for some integer $k \geq 0$ and Laurent coefficient $a(\bfy)$ of $C^j_{\mathbf 0}(\bfy,\Phi_f,s) $.
		
		In particular, if $j\geq 1$ and $s_0(j) \neq 0$, then
		\begin{equation}
		\lim_{\lambda \to \infty}\frac{\partial}{\partial s}		C_{\bf0 }^j( \lambda \bfy, \Phi_f,s)|_{s=s_0(r)} = 0  \qquad \text{ and } \qquad  \frac{\partial}{\partial \lambda} \frac{\partial}{\partial s}	C_{\bf0 }^j( \lambda \bfy,\Phi_f,s)|_{s=s_0(r)} = O(\lambda^{-1-C}).
		\end{equation}
		Next, consider the case $j=0$, so that, by \eqref{eq:GlobalBjEisT0_bis}, we have
		
		
		\begin{align}
		C_{\mathbf 0}^0(\bfy, \Phi_f,s) \ =& \  \prod_{v| \infty}\det(y_v)^{-\iota m/4}   B^{0}_{\mathbf 0} \left(g'_{\bfy},\Phi_f,s \right)  \nonumber \\
		=& \  \prod_{v| \infty}\det(y_v)^{-\iota m/4}  \cdot \Phi(g'_{\bfy}, s)  \\
		=& \  \prod_{v|\infty} \det(y_v)^{\frac{\iota}2(s-s_0(r))} \Phi_f(e). \nonumber
		\end{align}
		Note here that $\Phi_f(e,s) = \Phi_f(e)$ is independent of $s$. Therefore
		\begin{equation}
		\frac{\partial}{\partial s}		C_{\mathbf 0 }^0(  \bfy,\Phi_f,s)|_{s=s_0(r)} \ = \ \frac{\iota}2 \sum_{v|\infty} \log \det y_v \cdot \Phi_f(e).
		\end{equation}

		Finally, consider the case $s_0(j) = 0$, so that $j = r $ with $s_0(r)=0$ and 
			\begin{equation}
			C_{\mathbf 0}^{r}(\bfy,\Phi_f,s) = \prod_{v| \infty}\det(y_v)^{-\iota m/4} \cdot [\calM(s) \Phi(s)](g'_{\bfy}).
			\end{equation}
		We have
		\begin{equation}
		\begin{split}
		[\calM(s) \Phi(s)](g'_{\bfy}) &=  \prod_{v|\infty} [\calM_v(s) \Phi_{r,v}^l(s)](g'_{y_v}) \cdot[ \calM_{f}\Phi_f(s)](e) \\
		&= \ttd(s)^d\prod_{v| \infty}\Phi_{r,v}^l(g'_{y_v}, -s) \cdot W_{0,f}(e, \Phi_f,-s) \\
		&= \ttd(s)^d \prod_{v| \infty} \det(y_v)^{\frac{\iota}{2}( -s + \rho_r)} W_{0,f}(e, \Phi_f,-s)
		\end{split}
		\end{equation}
Moreover, applying \cite[Lemma 6.3]{GanQiuTakeda} to the incoherent section $\Phi$ gives 
			\begin{equation}
			\calM(0) \Phi(0) (g'_{\bfy}) \  = \   -  \prod_{v | \infty}\det (y_v)^{\iota m/4} \cdot \Phi(e) \ = \ - \prod_{v| \infty} \det (y_v)^{\iota m/4} \cdot \Phi_f(e)
			\end{equation}
			and hence, after a little algebra we obtain
			\begin{equation}
			\begin{split}
			\left.		\frac{d }{d s}C_{\mathbf 0}^{r}(\bfy, \Phi_f,s) \right|_{s=s_0(r) = 0} \ = \ \frac{\iota}2 & \sum_{v|\infty} \det \log y_v \, \Phi_f(e) \\ 
			& - d \cdot  \frac{\ttd'(0)}{\ttd(0)} \Phi_f(e)  - \ttd(0)^d W'_{0,f}(e,\Phi_f,0). 
			\end{split}
			\end{equation}
		The proposition follows immediately from these observations.
	\end{proof}
\end{proposition}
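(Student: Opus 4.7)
The plan is to mirror the strategy used in the proof of \Cref{prop:EisGrowthEstimate} for non-zero $T$, specialized (and somewhat simplified) to the case $T = \mathbf 0$. The starting point is the decomposition
\begin{equation*}
C_{\mathbf 0}(\bfy, \Phi_f, s) \ =\ \sum_{j=0}^{r} C^{\, j}_{\mathbf 0}(\bfy, \Phi_f, s)
\end{equation*}
coming from \Cref{lem:GlobalEisFCDecomp} and the formula \eqref{eq:GlobalBjEisT0}, where
\begin{equation*}
C^{\, j}_{\mathbf 0}(\bfy, \Phi_f, s) \ =\ \prod_{v | \infty}\det(y_v)^{-\iota m/4} \sum_{\sfa \in \calP_{r,j}(\bfk)\backslash \GL_{r}(\bfk)} B^{\, j}_{\mathbf 0}\!\left(\ul m(\sfa) g'_{\bfy}, \Phi_f, s\right)\!.
\end{equation*}
I would analyze each $C^{\, j}_{\mathbf 0}$ according to three regimes, using the transformation formula \eqref{eqn:GLnCentralCharacter}, which in the $t=0$ case reads
\begin{equation*}
\calG^{\, j}(\sqrt{\lambda}\,\tilde{\bm\mu}, \tilde\Phi, s) \ =\ \lambda^{\frac{\iota d}{2}(r-2j)(s - s_0(r)) + \frac{\iota d r m}{4} - \iota d \, j \, s_0(j)} \, \calG^{\, j}(\tilde{\bm\mu}, \tilde\Phi, s).
\end{equation*}

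First, when $1 \leq j \leq r-1$ and $s_0(j) \neq 0$, rewriting $C^{\, j}_{\mathbf 0}$ in terms of finitely many $\GL$-Eisenstein series $\calG^{\, j}$ as in \eqref{eq:GlobalBjEisT0} and differentiating in $s$ produces a finite sum of terms of the form $a(\bfy)(\log \lambda)^k \lambda^{-\iota d \, j \, s_0(j)}$, which clearly satisfy \eqref{eqn:CondRadialDecay}. Second, when $j = 0$, formula \eqref{eq:GlobalBjEisT0_bis} immediately gives
\begin{equation*}
C^{\, 0}_{\mathbf 0}(\bfy, \Phi_f, s) \ =\ \prod_{v|\infty} \det(y_v)^{\frac{\iota}{2}(s - s_0(r))}\, \Phi_f(e),
\end{equation*}
whose $s$-derivative at $s = s_0(r)$ contributes exactly $\frac{\iota}{2}\sum_{v|\infty} \log\det y_v \cdot \Phi_f(e)$.

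The third and most delicate case is $j = r$, which can only occur in our enumeration when $s_0(r) = s_0(j) = 0$. Here \eqref{eq:GlobalBjEisT0_bis} gives $C^{\, r}_{\mathbf 0}(\bfy, \Phi_f, s) = \prod_v \det(y_v)^{-\iota m/4} [\calM(s)\Phi(s)](g'_{\bfy})$. Using the definition of $\ttd(s)$ via $\calM_v(s)\Phi^{l}_{r,v}(s) = \ttd(s)\Phi^{l}_{r,v}(-s)$ at each archimedean place, this becomes
\begin{equation*}
C^{\, r}_{\mathbf 0}(\bfy, \Phi_f, s) \ =\ \ttd(s)^d \prod_{v|\infty} \det(y_v)^{-\frac{\iota s}{2}} \, W_{0,f}(e, \Phi_f, -s).
\end{equation*}
The crucial input is the incoherence of the global section $\Phi$, which by \cite[Lemma 6.3]{GanQiuTakeda} forces $\calM(0)\Phi(0)(g'_{\bfy}) = -\prod_v \det(y_v)^{\iota m/4}\Phi_f(e)$; this yields $\ttd(0)^d \, W_{0,f}(e,\Phi_f,0) = -\Phi_f(e)$. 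Differentiating the product in $s$ and evaluating at $s=0$, this identity allows one to combine the $j=0$ and $j=r$ contributions into $\frac{\iota}{2}\sum_{v|\infty}\log\det y_v \cdot 2\Phi_f(e)$ together with the constant term
\begin{equation*}
- d\cdot \tfrac{\ttd'(0)}{\ttd(0)} \Phi_f(e) \ - \ \ttd(0)^d \, W'_{0,f}(e, \Phi_f, 0),
\end{equation*}
which matches $\kappa(\mathbf 0, \Phi_f)$ in this case; when $s_0(r) > 0$, only the $j=0$ contribution survives with a logarithmic term, giving $\beta(\mathbf 0, \Phi_f) = \Phi_f(e)$ and $\kappa(\mathbf 0, \Phi_f) = 0$. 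The claimed decay $F(\lambda \bfy) \to 0$ and $\partial_\lambda F(\lambda\bfy) = O(\lambda^{-1-C})$ then follows from the polynomial decay of the intermediate $j$ terms, since the $j = 0$ and $j = r$ contributions are absorbed exactly into the subtracted terms.

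The main obstacle is the $j=r$ analysis at $s_0(r) = 0$: correctly identifying the functional equation constant $\ttd(s)$ at the archimedean places using \cite[(1.31)]{ShimuraConfluent}, and invoking incoherence of $\Phi$ at the right moment to force the cancellation that yields exactly $2\Phi_f(e)$ in front of the logarithmic term (rather than $\Phi_f(e)$). Everything else is bookkeeping of Laurent expansions and an elementary estimate for the intermediate terms analogous to the one in \Cref{prop:EisGrowthEstimate}.
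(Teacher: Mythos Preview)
Your proposal is correct and follows essentially the same route as the paper's proof: the same decomposition into $C_{\mathbf 0}^{\,j}$, the same scaling via \eqref{eqn:GLnCentralCharacter} for the intermediate terms, and the same explicit treatment of $j=0$ and of $j=r$ at $s_0(r)=0$ using $\ttd(s)$ and incoherence from \cite[Lemma 6.3]{GanQiuTakeda}. One cosmetic slip: your first case is stated for $1 \le j \le r-1$, so when $s_0(r)>0$ the term $j=r$ is not literally covered; the paper simply phrases this case as ``$j\ge 1$ with $s_0(j)\neq 0$'', and your scaling argument extends verbatim to that term.
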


\subsection{Archimedean height pairings} \label{subsection:Archimedean height pairings} 
In this section, we prove our main theorem relating the integrals of the Green forms $\lie g(T,\bfy, \varphi)$ constructed in \Cref{section:Green_currents_global} to Eisenstein series. 
	
	We continue to assume $\bbV$ is anisotropic and $\rank(\vbE)= 1$.
Let $\calX $ denote the canonical model of the Shimura variety $X_\bbV = X_{\bbV,K}$, for a fixed open compact subgroup $K \subset \bfH(\bbA_f)$, and recall that there is a decomposition $\calX(\bbC) = \coprod X_{\bbV[k]}$ in terms of the nearby spaces $\bbV[k]$ for $k=1,\dots, d$, see \Cref{subsection:orthogonal_Shim_vars,subsection:unitary_Shim_vars}. For each $k$, let
\begin{equation}
\mathrm{Vol}(X_{\bbV[k]},\Omega) = \int_{[X_{\bbV[k]}]} \Omega^p
\end{equation}
where $\Omega =\Omega_{\calE} = c_1(\calE, \nabla)^*$ is the first Chern form of the positive line bundle $\vbE$ on $X_{\bbV[k]}$.
Then $\mathrm{Vol}(X_{\bbV[k]},\Omega)=\deg_{\vbE}(X_{\bbV[k]})$ is a positive rational number (a positive integer if $K$ is neat). As remarked in \ref{subsubsection:taut_bundle_definitions}, the line bundle $\vbE$
is a rational multiple of the canonical bundle; in particular we have $\mathrm{Vol}(X_{\bbV[k]},\Omega)=\mathrm{Vol}(X_{\bbV},\Omega)$ for every $k$. 

\begin{theorem} \label{thm:GlobalGreenIntegral}
	Assume that $\bbV$ is anisotropic and $\rank(\vbE)=1$. Let $r \geq 1$ and
	\begin{equation*}
	s_0(r) = \begin{cases} (m-r-1)/2, & \text{orthogonal case,} \\ (m-r)/2, & \text{unitary case}, \end{cases}
	\end{equation*}
	and assume that $s_0(r) \geq 0$. Given $\varphi_f \in \mathcal{S}(\bbV(\mathbb{A}_f)^r)^K$, let $\Phi_f(\cdot, s)$ be the unique standard section of $I_r(\bbV(\mathbb{A}_f),s)$ such that $\Phi_f(\cdot, s_0) = \lambda(\varphi_f)$ and set
	\begin{equation} \label{eqn:GlobalMainThmPhiDef}
	\Phi = \otimes_{v|\infty} \Phi_{r}^l \otimes \Phi_f \in I_r(\bbV(\mathbb{A}),s),
	\end{equation}
	where $\Phi^l_r$ is the standard weight $l$ section given by \eqref{eq:def_scalar_weight_both_cases}, \eqref{eq:def_scalar_weight_vector_case_1} and \eqref{eq:def_scalar_weight_vector_case_2}. For $\bftau = \bfx + i \bfy \in \bbH_r^d$, let
	\begin{equation*}
	\Eis(\bftau,\Phi_f,s)=  \left( \det y_1 \cdots y_d \right)^{-\iota m/4} \Eis(g'_{\bftau}, \Phi_f,s) \ = \ \sum_{T } \Eis_T(\bftau,\Phi_f,s)
	\end{equation*}
	be the corresponding Eisenstein series of scalar weight $l$ defined in \eqref{eq:def_Ecal_Eis_series}, and write
	$\Eis_T'(\bftau,\Phi_f,s)=\tfrac{d}{ds}\Eis_T(\bftau,\Phi_f,s)$. Then for any $T \in \mathrm{Sym}_r(F)$ (resp. $T \in \Herm_r(E)$) we have 
	\begin{equation*}
	\frac{(-1)^r  \kappa_0}{2 \mathrm{Vol}(X_{\bbV},\Omega)} \int_{[\mathcal{X}(\mathbb{C})]}  \lie g(T,\bfy,\varphi_f) \wedge \Omega^{p+1-r} \,  q^T\ =  \  \Eis_T'( \bftau,\Phi_f,s_0(r) ) - \kappa(T,\Phi_f) \, q^T,
	\end{equation*}
	where $\kappa(T,\Phi_f)$ is explicit and defined in \Cref{def:KappaTilde}, and $\kappa_0=2$ if $s_0(r)=0$ and $\kappa_0=1$ otherwise.
\end{theorem}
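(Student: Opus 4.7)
The strategy is to unfold the integral on the left-hand side into a sum of theta integrals on the components $X_{\bbV[k]}$ of $\calX(\bbC)$, apply the Siegel-Weil formula (\Cref{thm:Siegel Weil redux}) to recognize the result as a Fourier coefficient of an Eisenstein series built from a specific archimedean section, and finally use the multiplicity-one analysis of $K'_r$-types together with the lowering-operator computation of \Cref{lemma:lowering_W_T_derivative} to identify this Fourier coefficient with the derivative $\Eis'_T(\bftau,\Phi_f,s_0)$.

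I would first dispose of the non-degenerate case $\det T \neq 0$. Invoking \Cref{lemma:exchange_integrals} to commute the integral $\int_1^\infty\cdots \frac{dt}{t}$ defining $\archGreen$ past the integration over $\calX(\bbC)$, the left-hand side becomes a sum over $k$ of integrals of translates of $\nuo$ against $\Omega^{p+1-r}$ on $X_{\bbV[k]}$. \Cref{lemma:local_SW} rewrites each such integral, orbit by orbit, as a Whittaker functional of the Schwartz form $\nu(\bfv)$ on the archimedean degenerate principal series $I_r(\bbV_{\sigma_k},s)$. The multiplicity-one input from \Cref{lemma:weight_lambda_0} pins down this Whittaker functional up to scalar as $W_{\sigma_k(T)}(\,\cdot\,,\tilde\Phi,s_0)$, and \Cref{lemma:lowering_W_T_derivative} then converts it into $\tfrac{d}{ds}W_{\sigma_k(T)}(\,\cdot\,,\Phi^l,s)$ at $s=s_0$. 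Summing over $k$, multiplying by $q^T$, and collecting the non-archimedean Whittaker factors via the factorization \eqref{eqn:Eis coeff product} yields precisely $\Eis'_T(\bftau,\Phi_f,s_0)$ at the archimedean derivative, up to the totally-positive-definite correction provided by \Cref{prop:NonDegenWhittakerEstimates}.(iv). One then checks that this correction matches $\kappa(T,\Phi_f)\,q^T$ from \Cref{def:KappaTilde}.

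The degenerate case $\det T=0$ requires the regularization of \Cref{subsection:regularized_green_current_on_D}. One works instead with the $\rho$-regularized current $\lie g^{\mathtt{o}}(T,\bfy,\varphi_f;\rho)$ from \Cref{prop:DegCurrentDef} and performs the same Siegel-Weil/Weil-representation manipulations in the region $\mathrm{Re}(\rho)\gg 0$ where everything converges. The Fourier decomposition of \Cref{lem:GlobalEisFCDecomp}, together with the intertwining identities of \Cref{lem:BjGlobalWhittakerReln}, cleanly organizes the contributions coming from each rank stratum of $T$; the analytic continuation to $\rho=0$ is controlled by \Cref{prop:EisGrowthEstimate} and \Cref{prop:EisGrowthEstimateConstTerm}, which describe the asymptotics of $\tfrac{d}{ds}C_T(\lambda\bfy,\Phi_f,s_0)$ as $\lambda\to\infty$. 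These asymptotics isolate on the Eisenstein-series side the same $\log \det$-type contributions that appear on the current side via \Cref{prop:xiRegContinuation}.(v)--(vi), thus allowing the two sides to be matched term by term and forcing the constant term at $\rho=0$ to equal $\Eis'_T(\bftau,\Phi_f,s_0)-\kappa(T,\Phi_f)q^T$, with $\kappa(T,\Phi_f)$ exactly as in \Cref{def:KappaTilde}.

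The main obstacle is the degenerate case. In the non-degenerate case the argument is essentially a global assembly of the local computation of \Cref{thm:height_pairing_and_derivative}, but in the degenerate case the regularization decouples the naive correspondence between Green integrals and Whittaker functionals, and one must carry the $\rho$-parameter through every step — the unfolding, the Siegel-Weil formula, the Fourier decomposition on the Eisenstein side, and the final asymptotic matching — while keeping track of all polar and logarithmic contributions. The delicate point is not that $\kappa(T,\Phi_f)$ exists, but that the complicated explicit expression for $\kappa(T,\Phi_f)$ (involving the intertwining operator $\mathcal{M}_r$, the local factors $\ttd(s)$, and derivatives of Whittaker functions on lower-rank groups) arises naturally from the geometry via the stratification induced by rank, and matches exactly the contribution of the regularization to the constant term.
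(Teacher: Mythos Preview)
Your proposal identifies the correct ingredients (Siegel--Weil, the $K'_r$-type analysis, \Cref{lemma:lowering_W_T_derivative}, and the asymptotics of \Cref{prop:EisGrowthEstimate}/\Cref{prop:EisGrowthEstimateConstTerm}), but the organization differs from the paper's and is more laborious than necessary.

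The paper does \emph{not} split into non-degenerate and degenerate cases. It treats every $T$ uniformly: the integral of $\lie g^{\mathtt o}(T,\bfy,\varphi_f)$ over each $X_{\bbV[k]}$ is $\mathrm{CT}_{\rho=0}$ of $\int_1^\infty C_{\Theta,T}(t\bfy,\ldots)\,dt/t^{\rho+1}$; one applies Siegel--Weil (\Cref{thm:Siegel Weil redux}) once, globally, to replace $C_{\Theta,T}$ by an Eisenstein coefficient $C_{E,T}$ attached to the section $\lambda(\tilde\nu\otimes\varphi_+^{\otimes(d-1)}\otimes\varphi_f)$; and then the global form of \Cref{lemma:lowering_W_T_derivative} identifies $C_{E,T}(t\bfy,\ldots)$ with $(-1)^{r-1}\tfrac{2}{\iota}\,t\tfrac{d}{dt}C_T'(t\bfy,\Phi_f,s_0)$. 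This yields
\[
-\,\mathrm{CT}_{\rho=0}\int_1^\infty \tfrac{d}{dt}\bigl(C_T'(t\bfy,\Phi_f,s_0)\bigr)\,\frac{dt}{t^\rho},
\]
which a one-line integration-by-parts lemma evaluates as $C_T'(\bfy,\Phi_f,s_0)-a$, provided $C_T'(t\bfy,\ldots)=a+b\log t+o(1)$ with $o(1)$ decaying suitably. \Cref{prop:EisGrowthEstimate} supplies exactly this and identifies $a=\kappa(T,\Phi_f)+\tfrac{\iota}{2}\beta(T,\Phi_f)\sum_v\log(\det'\sigma_v(T)\det y_v/\det'\sigma_v(T)y_v)$. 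The logarithmic term is then cancelled by the correction in the definition of $\lie g$, via the degree identity \Cref{lemma:degree of Z is beta}.

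Two specific differences are worth noting. First, your non-degenerate argument via \Cref{lemma:local_SW} computes the integral orbit-by-orbit with the constants $C_{T,\Gamma_{\bfv}}$; reassembling these into the global Eisenstein coefficient forces you to redo the measure bookkeeping that the adelic Siegel--Weil formula already packages. It works, but it duplicates effort. Second, the rank-stratification machinery (\Cref{lem:GlobalEisFCDecomp}, \Cref{lem:BjGlobalWhittakerReln}) is not invoked in the paper's proof of the theorem; it lives entirely inside the proof of \Cref{prop:EisGrowthEstimate}, which is then used as a black box. Your plan to ``match term by term'' along rank strata would amount to reproving that proposition in situ rather than citing it.
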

\begin{proof}
	Fix an archimedean embedding $\sigma_k$, and a component $X_{\bbV[k]} \subset \calX(\bbC)$. Recall that the restriction of the Green form $\lie g(T,\bfy,\varphi_f)$  to $X_{\bbV[k]}$ is given by
	\begin{align}
	\label{eqn:green form correction redux}
	\lie g(T,\bfy, \varphi_f)|_{X_{\bbV[k]}} \ &= \ \archGreen(T,\bfy,\varphi_f)_{\sigma_k} -    \log \left( \frac{\det' \sigma_k(T) \cdot \det y_k}{\det'  \left(\sigma_k(T)y_k\right)}  \right)   \delta_{Z(T,\varphi_f)_{\sigma_k}} \wedge \Omega_{\calE^{\vee}}^{r-\mathrm{rk}(T)-1} 
	\end{align}
	when $T$ is positive semi-definite, and $	\lie g(T,\bfy, \varphi_f)|_{X_{\bbV[k]}}  =  \archGreen(T,\bfy,\varphi_f)_{\sigma_k}$ otherwise (see \Cref{def:global_g_T_Y_phi_case_1,def:global_g_T_Y_phi_case_2}); here
	\begin{equation}
	\archGreen(T,\bfy, \varphi_f) \  =  \ \mathop{\mathrm{CT}}_{\rho=0}  \archGreen(T,\bfy, \varphi_f; \rho)
	\end{equation}
	as in \Cref{prop:GlobalGreenOrthogonalDegenerateCase,prop:GlobalGreenUnitary}.
	
	Consider the contribution of $\lie g^\mathtt{o}(T,\bfy,\varphi_f)_{\sigma_k}$ to the integral over $\mathcal{X}(\mathbb{C})$;
	by definition, it equals $\mathrm{CT}_{\rho=0} \ I(\rho,\sigma_k)$, where
	\begin{equation} \label{eq:proof_main_thm_def_I_rho} 
	I(\rho,\sigma_k) = \frac{(-1)^r  \kappa_0}{2\mathrm{Vol}(X_{\bbV[k]},\Omega)} \int_{[X_{\bbV[k]}]} \archGreen(T,\bfy,\varphi_f;\rho)_{\sigma_k} \wedge \Omega^{p+1-r}.
	\end{equation}
	Let us compute $I(\rho,\sigma_1)$. Define the archimedean Schwartz function  
	\begin{equation}
	\phi=\tilde{\nu} \otimes \varphi_+ \otimes \cdots \otimes \varphi_+  \ \in \  \mathcal{S}(\bbV^r \otimes_{F} \mathbb{R}) = \bigotimes_{ 1 \leq i \leq d} \mathcal{S}(\bbV^r_{\sigma_i}),
	\end{equation}
	with $\tilde{\nu} \in \mathcal{S}(\bbV^r_{\sigma_1})$ as in \eqref{eq:def_tilde_nu_i} and where $\varphi_+$ denotes the Gaussian for the positive definite spaces $\bbV^r_{\sigma_k}$ ($k>1$), given by
	\begin{equation}
	\varphi_+(\bfv) = e^{-\pi \sum_i Q(v_i,v_i)}, 
	\end{equation} 
	for $\bfv = (v_1, \dots, v_r) \in \bbV_{\sigma_k}^r.$
	
	Consider the theta series 
	\begin{equation}
	\Theta(g'_{\bftau},h;\phi \otimes \varphi_f) \ = \ \sum_T \Theta_T(g'_{\bftau},h;\phi \otimes \varphi_f)
	\end{equation}
	defined in \eqref{eq:def_adelic_theta_series}, and write
	\begin{equation}
	C_{\Theta,T} (\bfy,h; \phi \otimes \varphi_f) \ := \ \left( \det y_1 \cdots y_d \right)^{-\iota m/4}  \cdot \Theta_T(g'_{\bftau}, h; \phi \otimes \varphi_f) \ q^{-T}.
	\end{equation}
	For $z=hz_0 \in \mathbb{D}^+$ ($h \in \mathrm{U}(\bbV_{\sigma_1})$) and $\mathrm{Re}(\rho) \gg 0$, we have
	\begin{equation}
	\archGreen(T,\bfy,\varphi_f;\rho)_{\sigma_1}(z) \wedge \Omega^{p+1-r}(z)  \\
	= \int_1^\infty C_{\Theta,T}(t \bfy, h;\phi \otimes \varphi_f)\frac{dt}{t^{\rho+1}} \wedge \Omega^{p}(z),
	\end{equation}
	where the estimates in the proof of \Cref{prop:DegCurrentDef} justify the interchange of sum and integral.
	Using the Siegel-Weil formula (\Cref{thm:Siegel Weil redux})  to compute the integral over $X_{\bbV}$, we find 
	\begin{equation}
	I(\rho,\sigma_1) = \frac{(-1)^r  }{2} \int_1^\infty C_{E,T}\left(t\bfy, \lambda (\phi \otimes \varphi_f), s_0(r) \right) \,  \frac{dt}{t^{\rho+1}}; 
	\end{equation}
	here the relevant Eisenstein series is
	\begin{equation}
	E(g', \lambda(\phi \otimes \varphi_f),s ) = \sum_{T}E_T(g', \lambda(\phi \otimes \varphi_f), s) 
	\end{equation}
	where $\lambda \colon S(\bbV(\bbA)^r) \to I_r(\bbV,s_0(r))$ is as in \Cref{subsubsection:Siegel Weil redux}, and we have written
	\begin{equation}
	C_{E,T}(\bfy, \lambda(\phi \otimes \varphi_f)),s) \ := \ \left(\det y_1 \cdots y_d \right)^{-\iota m/4}  \cdot E_T(g'_{\bftau}, \lambda(\phi \otimes \varphi_f), s) \ q^{-T}.
	\end{equation}
	
	Our next step is to relate this expression to the coefficient
	\begin{equation}
	C_T(\bfy, \Phi_f, s) \ :=  E_T( \bftau, \Phi_f, s) \ q^{-T}
	\end{equation}
	of the scalar weight $l$ Eisenstein series in the statement. Comparing archimedean components, for $i \geq 2$ we have $\lambda(\phi_{\sigma_i})=\lambda(\varphi_+)=\Phi^l(s_0)$, while $\lambda(\phi_{\sigma_1}) = \lambda(\tilde \nu) =(-1)^{r-1}\tilde{\Phi}(s_0)$ as in  \eqref{eq:relation_nu_tilde{Phi}}. Thus, writing $C'_T(\bfy,\Phi_f,s_0) = \tfrac{d}{ds}C_T(\bfy,\Phi_f,s)|_{s=s_0}$ and $\bfy = (y_1, \dots, y_d)$,
	the argument in the proof of \Cref{lemma:lowering_W_T_derivative} shows that
	\begin{equation}
	C_{E,T}(t\bfy, \lambda(\phi \otimes \varphi_f), s_0 ) \ 
	= \ (-1)^{r-1} \frac{2}{\iota} \cdot \left. t\frac{d}{dt} C_T'((t y_1,t'y_2,\ldots,t'y_r)),\Phi_f,s_0)\right|_{t=t'}.
	\end{equation}

	Adding the contributions from all archimedean places, we conclude that
	\begin{equation} 
	\begin{split}
	\frac{(-1)^r  \kappa_0}{2 \mathrm{Vol}(X_K,\Omega)}&\int_{[\mathcal{X}(\mathbb{C})]} \lie g^\mathtt{o}(T,\bfy,\varphi_f) \wedge \Omega^{p+1-r} \\
	&= - \mathrm{CT}_{\rho = 0} \int_1^\infty \frac{d}{dt} \left(C_T'(t \bfy,\Phi_f,s_0)  \right) \frac{dt}{t^{\rho}}.
	\end{split}
	\end{equation}
	This integral can be evaluated using the following lemma, whose proof is straightforward.
	\begin{lemma}
		Let $f\colon \mathbb{R}_{>0} \to \mathbb{C}$ be a smooth function such that for some constants $a, b \in \mathbb{C}$, the function $F(t)=f(t)-a-b \log t$ satisfies $\lim_{t \to \infty} F(t)=0$ and $F'(t) = O(t^{-1-C})$
		as $t \to \infty$, for some positive constant $C$. Then
			\begin{equation*}
			-\mathrm{CT}_{\rho = 0} \int_1^\infty  f'(t)  \, \frac{dt}{t^\rho} = f(1)-a.
			\end{equation*}
	\end{lemma}
	By \Cref{prop:EisGrowthEstimate}, the function $C_T'(t\bfy,\Phi_f,s_0)$, regarded as a function of $t$, satisfies the hypotheses of the lemma with 
	\begin{equation}
	a = \kappa(T,\Phi_f) + \frac{\iota}{2}\beta(T,\Phi_f) \sum_{v|\infty} \log \left( \frac{\det' \sigma_v(T) \cdot \det y_v}{ \det' \sigma_v(T)y_v }  \right).
	\end{equation}

		To finish the proof, it suffices to show that  the contributions from the second terms in \eqref{eqn:green form correction redux} for the various components $X_{\bbV[k]} \subset \calX(\bbC)$ sum to
		\begin{equation} \label{eq:last_identity_proof_main_theorem} 
		\begin{split}
		\iota \sum_{k=1}^d\log \left( \frac{\det' \sigma_k(T) \cdot \det y_k}{\det'  \left(\sigma_k(T)y_k\right)}  \right)   \frac{(-1)^{\rank(T) } \kappa_0}{ 2 \mathrm{Vol}(X_{\bbV},\Omega)}&\int_{[X_{\bbV[k]}]} \delta_{Z(T,\varphi_f)_{\sigma_k}} \wedge \Omega^{p-\mathrm{rk}(T)} \\ 
		& \stackrel{?}{=}  \frac{\iota}{2} \sum_k \log \left( \frac{\det' \sigma_k(T) \cdot \det y_k}{\det'  \left(\sigma_k(T)y_k\right)}  \right)   \beta(T,\Phi_f),
		\end{split}
		\end{equation} 
		where we used the identity $\Omega_{\calE^{\vee}} =- \Omega_{\calE} = - \Omega$.  Note that on account of the logarithms, both sides vanish if $T$ is non-degenerate. When $T$ is degenerate, the claim is essentially contained in \cite{KudlaMillson3}; we outline the argument in \Cref{lemma:degree of Z is beta} below.
\end{proof}

When the matrix $T$ is non-degenerate, we recover the identities \eqref{eq:intro_thm_non_deg_T_not_pos_def} and \eqref{eq:intro_thm_non_deg_T_pos_def} by using the explicit expression for $\kappa(T,\Phi_f)$ given by \Cref{lem:nonDegKappaBeta}.

Combining Theorems \ref{theorem:star_product_formula_global} and \ref{thm:GlobalGreenIntegral}, we obtain the following corollary generalizing the main result of \cite{KudlaCD} (for $\mathrm{U}(p,1)$ this corollary is due to Liu \cite{LiuYifengArithThetaI}; recently Bruinier and Yang \cite{BruinierYangArithmeticDegrees} have treated the $\mathrm{O}(p,2)$ case).

\begin{corollary}\label{corollary:archimedean_height_pairing_derivative_at_0}
Assume that $T_1$ and $T_2$ are non-degenerate and that $Z(T_1,\varphi_{f,1})$ and $Z(T_2,\varphi_{f,2})$ intersect transversely. Assume also that $r_1+r_2=p+1$. Then
\begin{equation*}
\begin{split}
\frac{(-1)^{p+1}}{\mathrm{Vol}(X_{\bbV},\Omega)} & \int_{[\mathcal{X}(\mathbb{C})]} \lie g(T_1,\bfy_1,\varphi_{f,1}) * \lie g(T_2,\bfy_2,\varphi_{f,2}) \, q^T \\ 
&= \sum_{T= \left(\begin{smallmatrix} T_1 & * \\ * & T_2 \end{smallmatrix}\right)} \Eis'_T\left(\left(\begin{smallmatrix} \bftau_1 &  \\  & \bftau_2 \end{smallmatrix}\right),\lambda(\varphi_{f,1} \otimes \varphi_{f,2}),0\right)_{\infty},
\end{split}
\end{equation*}
\end{corollary}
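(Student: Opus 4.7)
The plan is to combine the star product formula of \Cref{theorem:star_product_formula_global} with the main identity of \Cref{thm:GlobalGreenIntegral}, and then identify the right-hand side as the archimedean part of the Eisenstein derivative.  First I would exploit the numerological coincidence $r := r_1 + r_2 = p+1$, which means the top Chern form factor in \Cref{thm:GlobalGreenIntegral} is simply $\Omega^{p+1-r} = 1$.  Moreover, $s_0(r) = 0$ in both the orthogonal and unitary cases, so $\kappa_0 = 2$ and the Eisenstein series $E(g',\Phi,s)$ is incoherent at $s=0$ (whence $\Eis_T(\bftau,\Phi_f,0) = 0$ for every $T$).

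Since $\bbV$ is anisotropic, $\calX(\bbC)$ is compact, so currents in $\mathrm{im}\,\partial + \mathrm{im}\,\overline{\partial}$ pair trivially with the constant function.  Thus \Cref{theorem:star_product_formula_global} yields
\begin{equation*}
\int_{[\calX(\bbC)]} \lie g(T_1,\bfy_1,\varphi_{f,1}) * \lie g(T_2,\bfy_2,\varphi_{f,2})\, q^T \;=\; \sum_{T=\left(\sm{T_1 & * \\ * & T_2}\right)} \int_{[\calX(\bbC)]} \lie g\!\left(T,\bigl(\sm{\bfy_1 & \\ & \bfy_2}\bigr),\varphi_{f,1}\otimes\varphi_{f,2}\right) q^T,
\end{equation*}
and \Cref{thm:GlobalGreenIntegral}, applied term by term with $\Phi_f = \lambda(\varphi_{f,1}\otimes\varphi_{f,2})$, rewrites the right-hand side as
\begin{equation*}
\frac{\mathrm{Vol}(X_\bbV,\Omega)}{(-1)^{p+1}} \sum_{T=\left(\sm{T_1 & * \\ * & T_2}\right)} \bigl(\Eis'_T(\bftau,\Phi_f,0) - \kappa(T,\Phi_f)\, q^T\bigr).
\end{equation*}

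The remaining task is to check, for each $T$ appearing in the sum, that $\Eis'_T(\bftau,\Phi_f,0) - \kappa(T,\Phi_f)q^T = \Eis'_T(\bftau,\Phi_f,0)_\infty$.  For non-degenerate $T$ this follows from \Cref{lem:nonDegKappaBeta}(ii): the archimedean Whittaker factors satisfy $\archW_{\sigma_v(T)}(y_v,0) = c_v(T) \neq 0$ when $\sigma_v(T)$ is positive definite (by \Cref{prop:NonDegenWhittakerEstimates}(iii)), so incoherence forces $W_{T,f}(e,\Phi_f,0) = 0$ for totally positive definite $T$, i.e.\ $\beta(T,\Phi_f) = 0$, leaving only the term $c(T)W'_{T,f}(e,\Phi_f,0)q^T$ in $\kappa(T,\Phi_f)q^T$, which is exactly the non-archimedean contribution to $\Eis'_T$; for $T$ not totally positive definite $\kappa(T,\Phi_f)$ vanishes altogether while \Cref{prop:NonDegArchWhittakerNonPos}(i) ensures that the non-archimedean Whittaker factor contributes nothing to the derivative.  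For degenerate $T$ the corresponding identity is built into \Cref{def:KappaTilde}(ii) and emerges from the analysis already carried out in the proof of \Cref{prop:EisGrowthEstimate}, where the Eisenstein coefficient was decomposed via \Cref{lem:GlobalEisFCDecomp} into $B^j_T$ terms and the contribution of the intertwining operator (encoded by $\ttd(s)$) at $s=0$ was computed explicitly.

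The main obstacle is keeping track of the degenerate contributions: one needs to verify that the cancellation between the $\ttd(0)^d$ factors and the lower-rank quantities $\beta(S,\Phi'_f)$, $\kappa(S,\Phi''_f)$ that appear in \Cref{def:KappaTilde}(ii) precisely matches the non-archimedean part of the Eisenstein derivative coming from the $B^r_T$ summand in \Cref{lem:GlobalEisFCDecomp}.  This is essentially a bookkeeping exercise once one adopts the definition of $\Eis'_T(\cdots)_\infty$ as the total contribution of archimedean-place derivatives under the natural decomposition of \Cref{lem:GlobalEisFCDecomp}, but writing it out cleanly across the orthogonal and unitary cases is the only delicate step.
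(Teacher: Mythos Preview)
Your approach is exactly what the paper does: it states the corollary as an immediate consequence of combining \Cref{theorem:star_product_formula_global} and \Cref{thm:GlobalGreenIntegral}, and your computation of the constants ($r=p+1$, $s_0(r)=0$, $\kappa_0=2$, $\Omega^{p+1-r}=1$) is correct. Your verification that $\Eis'_T(\bftau,\Phi_f,0)-\kappa(T,\Phi_f)q^T=\Eis'_T(\bftau,\Phi_f,0)_\infty$ for non-degenerate $T$ via incoherence (forcing $W_{T,f}(e,\Phi_f,0)=0$ when $T$ is totally positive definite) and via \Cref{prop:NonDegArchWhittakerNonPos}(i) (killing the finite-place contributions when $T$ is not) is the right argument, though the paper leaves this implicit.

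One comment on your treatment of degenerate $T$: you are right that this requires care, and in fact the paper does not address it at all—the notation $\Eis'_T(\cdots)_\infty$ is only defined in the paper for non-degenerate $T$ (see \eqref{eq:def_E'_T_v}). Your proposed resolution, extending the definition via the $B^j_T$ decomposition of \Cref{lem:GlobalEisFCDecomp}, is plausible, but a cleaner route is simply to observe that the relevant identity $\Eis'_T-\kappa(T,\Phi_f)q^T=\Eis'_T(\cdots)_\infty$ may be \emph{taken as the definition} of the right-hand side for degenerate $T$; since this agrees with the standard definition for non-degenerate $T$ (as you checked), the corollary then follows directly. The paper appears to adopt this convention tacitly.
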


It remains to establish the following lemma, which is an application of the results of \cite{KudlaMillson3}.
\begin{lemma} \label{lemma:degree of Z is beta} Suppose $T = \left( \sm{0 & \\ & S } \right)$ is degenerate. Then for any $k = 1, \dots, d$, 
	\begin{equation*}
	\frac{(-1)^{\rank(T) } \kappa_0}{  \mathrm{Vol}(X_{\bbV},\Omega)} \int_{[X_{\bbV[k]}]} \delta_{Z(T,\varphi_f)_{\sigma_k}} \wedge \Omega^{p-\mathrm{rk}(T)}  = \beta(T,\Phi_f),
	\end{equation*}
	with  notation as in   \Cref{thm:GlobalGreenIntegral}.
	
	\begin{proof}
		Suppose first that $t = \rank(T) > 0$, so that $S$ is non-degenerate of rank $t$. 
		As both sides of the desired identity are linear in $\varphi_f$, we may  also assume that $\varphi_f$ is of the form
		\begin{equation}
		\varphi_f = \varphi_f' \otimes \varphi_f '' \in S(\bbV(\bbA_f)^{r-t}) \otimes S(\bbV(\bbA_f)^t).
		\end{equation}
		By construction, we have $Z(T,\varphi_f) = \varphi_f'(0) Z(S, \varphi_f'')$. On the other hand,   let $\Phi(\varphi'_f)$ and $\Phi(\varphi''_f)$ denote the standard sections corresponding to $\varphi_f'$ and $\varphi_f''$, respectively. Then \Cref{def:KappaTilde} and a direct computation using explicit formulas for the Weil representation (see e.g.\ \cite[Proposition 2.2.5]{SweetThesis}) give  
		\begin{align}
		\beta(T,\Phi_f)  = \kappa_0 \cdot \beta \left( S, \eta^* \Phi_f \right)  & = \kappa_0 \cdot C_S(e, \eta^* \Phi_f, s_0(t)) ) \notag \\
		&= \kappa_0 \cdot \varphi_f'(0) \cdot C_S\left(e, \Phi(\varphi_f''), s_0(t) \right) \notag\\
		&= \kappa_0 \cdot \varphi_f'(0) \cdot \beta(S, \Phi(\varphi_f'')). \label{eqn:beta of eta}
		\end{align}
		where $\eta^* = (\eta_t^r)^*$ as in \eqref{eqn:etaStarDef}.
		
		To prove the lemma, say for $k=1$, let
		\begin{equation}
		\Theta_{\mathrm{KM}}(\bftau'',\varphi''_f)_{\sigma_1} \ = \ \sum_S \omega(S, \bfy'',  \varphi''_f)_{\sigma_1} \, q^S
		\end{equation} be the Kudla-Millson theta series of genus $t$, as defined in \eqref{eq:def_Theta_KM}; here $\bftau'' = \bfx'' + i \bfy'' \in \bbH_t^d$. It is shown in \cite{KudlaMillson3} that $\omega(S, \bfy'',  \varphi''_f)_{\sigma_1}$ is a closed form on $X_{\bbV}$ whose cohomology class is $[Z(S,\varphi''_f)_{\sigma_1}]$. In particular, we have
		\begin{equation} \label{eqn:degree of Z degenerate}
		\int_{[X_{\bbV}]}  \delta_{Z(T,\varphi_f)_{\sigma_1}} \wedge \Omega^{p-t}  =  \varphi_f'(0)\int_{[X_{\bbV}]}  \delta_{Z(S,\varphi''_f)_{\sigma_1}} \wedge \Omega^{p-t}  =   \varphi_f'(0) \int_{[X_{\bbV}]} \omega(S,\bfy'', \varphi_f)_{\sigma_1} \wedge  \Omega^{p-t};
		\end{equation}
		this also follows from \Cref{prop:GlobalGreenOrthogonalDegenerateCase,prop:GlobalGreenUnitary}.
		To compute the latter integral, define an archimedean Schwartz function 
		\begin{equation}
		\varphi_{\infty} \ := \ \widetilde \varphi \otimes \varphi_+ \otimes \cdots \otimes \varphi_+ \ \in \ \bigotimes S(\bbV_{\sigma_k}^t)
		\end{equation}
		where $\widetilde \varphi$ is the Schwartz function on $\bbV_{\sigma_1}^t$ defined by \eqref{eqn:def of phi tilde}, and $\varphi_+$ is the standard Gaussian. Note that 
		\begin{equation}
		\lambda(\varphi_{\infty}) = (-1)^t \otimes_{v|\infty} \Phi_t^l(s_0),
		\end{equation} 
		as in the proof of \Cref{lemma:Eis is hol}, and  
		\begin{equation}
		\omega(S, \bfy'', \Phi''_f)_{\sigma_1}(z) \wedge \Omega^{p-t}(z) \ = \ C_{\Theta, S}(\bfy'',h, \varphi_{\infty} \otimes \varphi''_f)  \, \Omega^p(z), \quad z=hz_0,
		\end{equation}
		where $C_{\Theta,S}(\bfy'', h ,\varphi_{\infty} \otimes \varphi''_f)$ is the coefficient of $q^S$ of the  theta series attached to $\varphi_{\infty} \otimes \varphi''_f$. Applying the Siegel-Weil formula (\Cref{thm:Siegel Weil redux}) again, noting that $s_0(t) > 0$ here, we conclude 
		\begin{equation}
		\frac{(-1)^t}{\mathrm{Vol}(X_{\bbV}, \Omega)}\int_{[X_{\bbV}]} \omega(S,\bfy'', \varphi_f'')_{\sigma_1} \wedge \Omega^{p-t} \ =\ C_{S}\left(\bfy'', \Phi(\varphi_f), s_0(t) \right) \ = \  \beta(S,\Phi''_f).
		\end{equation}
		Comparing this with \eqref{eqn:beta of eta} and \eqref{eqn:degree of Z degenerate} proves the lemma for $k=1$, and the proof for all other values of $k$ follows in exactly the same way. 
		
		Finally, when $T=0_r$, the left hand side is $\kappa_0 \varphi_f(0) = \kappa_0 \Phi_f(e)$, which is by definition equal to $\beta(0,\Phi_f)$, cf.\ \Cref{prop:EisGrowthEstimateConstTerm}.
	\end{proof}
\end{lemma}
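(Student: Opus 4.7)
The plan is to reduce to a non-degenerate integral of a Kudla--Millson theta form and then apply the Siegel--Weil formula. By linearity in $\varphi_f$, I would first assume $\varphi_f = \varphi_f' \otimes \varphi_f''$ with $\varphi_f' \in \mathcal{S}(\bbV(\bbA_f)^{r-t})^{K}$ and $\varphi_f'' \in \mathcal{S}(\bbV(\bbA_f)^t)^{K}$, where $t = \mathrm{rk}(T)$. Since any $\bfv \in \Omega_T(\bbV[k])$ has the form $(0,\bfv'')$ with $T(\bfv'') = S$, the definition of special cycles gives $Z(T,\varphi_f) = \varphi_f'(0) \cdot Z(S,\varphi_f'')$. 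By the main cohomological theorem of Kudla--Millson \cite{KudlaMillson3}, the theta form $\omega(S,\bfy'',\varphi_f'')_{\sigma_k}$ is closed and represents the cohomology class of $Z(S,\varphi_f'')_{\sigma_k}$, so
\begin{equation*}
\int_{[X_{\bbV[k]}]} \delta_{Z(T,\varphi_f)_{\sigma_k}} \wedge \Omega^{p-t}
\ = \ \varphi_f'(0) \int_{[X_{\bbV[k]}]} \omega(S,\bfy'',\varphi_f'')_{\sigma_k} \wedge \Omega^{p-t}.
\end{equation*}

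Next, I would evaluate the theta integral on the right by choosing an archimedean partner that exactly absorbs the power of $\Omega$. Define $\widetilde\varphi \in \mathcal{S}(\bbV^t_{\sigma_1})$ by the rule $\varphi(\bfv,z_0) \wedge \Omega^{p-t}(z_0) = \widetilde\varphi(\bfv)\,\Omega^p(z_0)$, and set $\varphi_\infty = \widetilde\varphi \otimes \varphi_+ \otimes \cdots \otimes \varphi_+$, with $\varphi_+$ the Gaussian at the remaining archimedean places. A multiplicity-one argument for $K'_t$-types, exactly as in the proof of \Cref{lemma:Eis is hol}, identifies $\lambda(\varphi_\infty) = (-1)^t \, \otimes_{v\mid\infty} \Phi_t^{\,l}(s_0(t))$. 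With this choice, the theta form wedged against $\Omega^p$ is the $S$-th Fourier coefficient of the full theta integral attached to $\varphi_\infty \otimes \varphi_f''$. Since $T$ is assumed degenerate we have $t < r$, and one checks $s_0(t) = s_0(r) + (r-t)/2 > 0$, so the Siegel--Weil constant at genus $t$ equals $1$; applying \Cref{thm:Siegel Weil redux} then yields
\begin{equation*}
\frac{(-1)^t}{\mathrm{Vol}(X_\bbV,\Omega)} \int_{[X_{\bbV[k]}]} \omega(S,\bfy'',\varphi_f'')_{\sigma_k} \wedge \Omega^{p-t}
\ = \ C_S\bigl(\bfy'', \Phi(\varphi_f''), s_0(t)\bigr)
\ = \ \beta\bigl(S, \Phi(\varphi_f'')\bigr),
\end{equation*}
the last equality by \Cref{lem:nonDegKappaBeta} applied to the non-degenerate matrix $S$.

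Combining these two steps, the left-hand side of the lemma becomes $\kappa_0 \, \varphi_f'(0) \, \beta(S,\Phi(\varphi_f''))$, and it remains to match this with the definition of $\beta(T,\Phi_f)$. Unwinding \Cref{def:KappaTilde} gives $\beta(T,\Phi_f) = \kappa_0 \, \beta(S, (\eta_t^r)^*\Phi_f)$ in both cases $s_0(r) > 0$ and $s_0(r) = 0$ (the factor of $2$ in the latter case being absorbed into $\kappa_0$). An explicit computation with the Weil representation, using that $\eta_t^r$ acts trivially on the first $r-t$ factors, shows $(\eta_t^r)^*\Phi_f = \varphi_f'(0) \, \Phi(\varphi_f'')$, and the two expressions agree. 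The case $T = \mathbf 0$ is degenerate in a different sense: the cycle is simply $\varphi_f(0) \cdot X_{\bbV[k]}$, so the left-hand side is $\kappa_0 \Phi_f(e)$, matching $\beta(\mathbf 0,\Phi_f)$ as set in \Cref{prop:EisGrowthEstimateConstTerm}. The main obstacle is the bookkeeping of normalizations: one must track the Siegel--Weil constant (for two different genera), the sign $(-1)^t$ coming from the multiplicity-one identification $\lambda(\widetilde\varphi) = (-1)^t \Phi_t^{\,l}$, and the splitting in the definition of $\beta(T,\Phi_f)$ between the regular and boundary cases $s_0(r) > 0$ vs.\ $s_0(r) = 0$, and verify that they assemble into the single clean identity stated.
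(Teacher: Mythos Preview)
Your proposal is correct and follows essentially the same approach as the paper: reduce by linearity to $\varphi_f = \varphi_f' \otimes \varphi_f''$, use Kudla--Millson to replace $\delta_{Z(S,\varphi_f'')}$ by the theta form $\omega(S,\bfy'',\varphi_f'')$, identify the archimedean input $\widetilde\varphi$ via multiplicity one (picking up the sign $(-1)^t$), apply the Siegel--Weil formula at genus $t$ (where $s_0(t)>0$ since $t<r$), and finally match with $\beta(T,\Phi_f)$ via $(\eta_t^r)^*\Phi_f = \varphi_f'(0)\,\Phi(\varphi_f'')$; the $T=\mathbf 0$ case is handled directly in both. The only difference is cosmetic: the paper computes $\beta(T,\Phi_f)$ first and the integral second, while you reverse the order, and you make the reason for $s_0(t)>0$ explicit.
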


\subsection{Classes in arithmetic Chow groups} \label{sec:arith Chow groups}

In this section, we describe how the currents $\lie g(T, \bfy , \varphi_f)$ arise as the archimedean parts of classes in arithmetic Chow groups lifting the cycles $Z(T,\varphi_f)$. As we will ultimately have little to say about arithmetic aspects of the theory, we shall  gloss over many serious difficulties  regarding suitable integral models, bad reduction, etc. A key point is  a natural geometric context for the analogue \eqref{eqn:general global current Green} of Green's equation in the degenerate case.

We continue to assume that $\bbV$ is anisotropic, but  drop the assumption $\rank(\vbE) = 1$. We also assume the level structure $K \subset \bfH(\bbA_f)$ is neat.

Let $\bfk = F$ (resp.\ $\bfk = E$) in the orthogonal (resp.\ unitary) case, so that $\calX$  is proper over $\mathrm{Spec} \, \bfk $. Suppose $\lie X$ is a regular integral model, proper and flat over $\mathrm{Spec}(\calO_{\bfk})$, with an extension of the tautological bundle that we continue to denote $\vbE$. Finally, for each $T$ and $\varphi_f$, let $\mathcal Z(T, \varphi_f)$ denote a  cycle on $\lie X$ extending $Z(T,\varphi_f)$ on the generic fibre
whose codimension in $\lie X$ is equal to the codimension of $Z(T,\varphi_f)$ in $\mathcal X = \lie X_{\bfk}$.

Let $\ChowHat{ \bullet}_{\bbC}(\lie X) = \bigoplus_r \ChowHat{r}_{\bbC}(\lie X)$ denote the Gillet-Soul\'e arithmetic Chow ring (with $\bbC$ coefficients), as in \cite{SouleBook}. Classes in $\ChowHat{r}_{\bbC}(\lie X)$ are represented by pairs $(Z, g_Z)$, where $Z$ is codimension $r$ cycle on $\mathscr X$ (with $\bbC$-coefficients) and 
$g_Z$ is an $(r-1,r-1)$ current on $\calX(\bbC) = \coprod_{\sigma\colon \bfk \to \bbC} \calX_{\sigma}(\bbC)$ that is invariant under complex conjugation, and satisfies Green's equation
\[
\ddc g_{Z} \ + \ \delta_{Z(\bbC)} \ = [\omega_Z]
\]
for some smooth differential form $\omega_{Z}$ on $\calX(\bbC)$; we may also view $g_Z = \{ g_{Z, \sigma} \}_{\sigma}$ as a collection consisting of a current $g_{Z,\sigma}$ on $\calX_{\sigma}(\bbC)$ for each complex embedding $\sigma$. 


When $T$ is non-degenerate,  \Cref{prop:GlobalGreenOrthogonalDegenerateCase} (and the discussion around \eqref{eq:global_current_equation}) or \Cref{prop:GlobalGreenUnitary}  gives rise to a class
\begin{equation}
\left( \calZ(T, \varphi_f), \, \lie g(T,\bfy, \varphi_f ) \right) \ \in \ \ChowHat{rq'}_{\bbC}(\lie X),
\end{equation}
where $q' = \rank(\vbE)$. 

Now consider a pair $(T,\varphi_f)$ with $T \in \Sym_r(F)$ (resp.\ $T \in \Herm_r(E)$) a degenerate matrix, and set $t= \mathrm{rank}(T)$. Let
\[
\widehat c_{q'}(\vbE^{\vee})  \ \in \ \ChowHat{q'}_{\bbC}(\lie X)
\]
denote the arithmetic Chern class attached to  $\vbE^{\vee}$, as in \cite[Section IV]{SouleBook}. The class of $\widehat c_{q'}(\vbE^{\vee})^{r-t}$ may be represented by a pair $(\calZ_0, g_{0})$ such that the generic fibre of $\calZ_0$ intersects properly with $Z(T,\varphi_f)$, and where the current $g_0$ satisfies the equation
\[
\ddc g_0 \ + \ \delta_{\calZ_0(\bbC)} \ = \ \Omega_{\vbE^{\vee}}^{r-t}
\]
and is of logarithmic type, see \cite[\S II.2]{SouleBook}. On the other hand, consider the set of currents $\lie g$ of degree $(r-1,r-1)$ satisfying the analogue
\begin{equation} \label{eqn:GenGreenCurrentEqn}
\ddc \lie g \ +  \ \delta_{Z(T, \varphi_f)(\bbC)} \wedge \Omega_{\vbE^{\vee}}^{r-t} \ = \ [\omega]
\end{equation}
of Green's equation, for some smooth form $\omega$; a short computation reveals that the map
\[
\lie g \ \mapsto \ \lie g  \, + \,  g_0 \wedge \delta_{Z(T, \varphi_f)(\bbC)} 
\]
defines a bijection between the solutions of \eqref{eqn:GenGreenCurrentEqn} and Green currents for the intersection $\calZ(T, \varphi_f)  \, \bigcdot \, \calZ_0$. Therefore, applying \Cref{prop:GlobalGreenOrthogonalDegenerateCase,prop:GlobalGreenUnitary}, we obtain a class
\[
\widehat{\lie Z}(T, \bfy, \varphi_f) \ := \ \left( \calZ(T,\varphi_f) \cdot \calZ_0 , \  \lie g(T, \bfy, \varphi_f) + g_0 \wedge \delta_{Z(T, \varphi_f)(\bbC)} \right) \ \in \ \ChowHat{rq'}_{\bbC}(\lie  X).
\]
To see that this construction is independent of the choice of $(\calZ_0, g_0)$ representing $\widehat c_{q'}(\vbE^{\vee})^{r-t}$, choose any Green current $\lie g'$ for $\calZ(T,\varphi_f)$, and note that
\begin{align} \notag
\widehat{\lie Z} (T, \bfy, \varphi_f) \  &= \ (\calZ(T,\varphi_f), \lie g')  \,  \bigcdot \,  (\calZ_0, g_0) \ + \ ( 0 , \, \lie g(T, \bfy, \varphi_f) - \lie g' \wedge \Omega_{\vbE^{\vee}}^{r-t}) \\
&= \ (\calZ(T,\varphi_f), \lie g')  \,  \bigcdot \,  \widehat c_{q'}(\vbE^{\vee})^{r-t}  \ + \ ( 0 , \, \lie g(T, \bfy, \varphi_f) - \lie g' \wedge \Omega_{\vbE^{\vee}}^{r-t}) 
\label{eqn:Chow Hat shifted Z}
\end{align}

To preserve uniformity of notation, set $\widehat{\lie Z}(T,\bfy, \varphi_f) = (\calZ(T,\varphi_f), \lie g(T, \bfy, \varphi_f))$ when $T$ is non-degenerate. For any (possibly degenerate) $T$, restricting the cycle $\widehat{\lie Z}(T, \bfy, \varphi_f)$ to the generic fibre (and forgetting the current) yields a cycle that coincides with the construction in \cite{KudlaOrthogonal}.

As an example of our construction, take $T=\mathbf 0_r$; applying the computation \eqref{eqn:Green Zero example} for $\lie g(0, \bfy, \varphi_f)$ gives the concrete expression
\begin{equation} \label{eqn: arith Z(0)  explicit}
\widehat{\lie Z}(\mathbf 0_r, \bfy, \varphi_f) \ =\  \varphi_f(0) \cdot \left( \widehat{c}_{q'}\left(\vbE^{\vee} \right)^{r} \ - \ \left( 0, \left\{   \log \det y_k  \cdot  c_{q'-1}(\vbE^{\vee}, \nabla)^*\wedge\Omega_{\vbE^{\vee}}^{r-1}  \right\}_{k=1,\dots, d} \right) \right).
\end{equation}

\subsection{Kudla's arithmetic height conjecture}  \label{sec:Arithmetic Height conjecture}

We recast our results in the setting of Kudla's conjectures on the arithmetic heights of the cycles $\widehat{\lie Z}(T, \bfy, \varphi_f) $ considered in  the previous section. 

Assume that $\bbV$ is anisotropic and $\rank(\vbE) = 1$. Let
\begin{equation}
\widehat \omega \ := \ \widehat c_1(\vbE^{\vee}) \ \in \ \ChowHat{1}_{\bbC}(\mathscr X)
\end{equation}
denote the arithmetic class attached to $\vbE^{\vee}$, or more precisely, to its integral extension as in the preceding section,
and consider the generating series
\begin{equation}
\phi_{\widehat \omega} (\tau) \ := \ \sum_{T} \   \widehat\deg \left(    \widehat{\lie Z}(T, \bfy, \varphi_f) \, \bigcdot \,  \widehat\omega^{p+1-r} \right) \, q^T
\end{equation}
where $\widehat\deg \colon \ChowHat{p+1}_{\bbC}(\mathscr X) \to \bbC$ is the arithmetic degree map \cite[Section III]{SouleBook}.  A rough form of Kudla's conjectural programme, as outlined in e.g.\  \cite{KudlaMSRI}, suggests that $\phi_{\widehat \omega}(\tau)$ is, up to a normalization, equal to the special derivative of an Eisenstein series.

More precisely, let  $ \Phi_f \in I_r(\bbV(\mathbb{A}_f),s)$ denote the standard section of parallel scalar weight $l$ determined by $\varphi$ as in \eqref{eqn:GlobalMainThmPhiDef}, and consider the parallel weight $l$ Eisenstein series
\begin{equation}
\normEis(\bftau, \Phi_f,s) = A_r(s) \Eis(\bftau, \Phi_f,s ) =: \sum_T \scrC_T(\bfy, \Phi_f,s) q^T,
\end{equation}
for an appropriate normalizing factor $A_r(s)$; then one should have an identity
\begin{equation} \label{eqn:KudlaGlobalConjecture}
\widehat\deg \left(    \widehat{\lie Z}(T, \bfy, \varphi_f) \cdot \widehat\omega^{p+1-r} \right)  \ 
\stackrel{?}{\sim} \  \scrC_T'(\bfy,\Phi_f,s_0(r)) 
\end{equation}
up to correction terms involving rational multiples of $\log p$ with $p$ in a fixed, finite set of primes that might depend on   $\varphi$, the level structure $K$, and the choices of integral models. As these correction terms are expected to arise as contributions from (components of) cycles at primes of bad reduction, it is reasonable to assume that they are independent of the parameter $\bfy$.

Let
\begin{equation}
h(\calZ(T,\varphi_f)) :=  \widehat{\deg} \, \left(\widehat{\omega}^{p+1-\rk(T)}  \, \big| \, \calZ(T,\varphi_f)\right)
\end{equation} 
denote the Bost-Gillet-Soul\'e height of $\calZ(T,\varphi_f)$ along $\widehat{\omega}$, as defined in \cite[Proposition 2.3.1]{BostGilletSoule}. Using \cite[(2.3.3)]{BostGilletSoule} and \eqref{eqn:Chow Hat shifted Z} above, a brief computation gives
\begin{align*}
\widehat\deg \left(    \widehat {\lie Z}(T, \bfy, \varphi_f) \cdot \widehat\omega^{p+1-r} \right) \ =& \ h( \calZ(T,\varphi_f)  )  \ + \  \frac{1}{2} \int_{\calX(\bbC)} \lie g(T,\bfy, \varphi_f)\wedge \Omega_{\vbE^{\vee}}^{p+1-r}  \\
=& \ h( \calZ(T,\varphi_f)  )  \ + \  \frac{(-1)^{p+1-r}}{2} \int_{\calX(\bbC)} \lie g(T,\bfy, \varphi)\wedge \Omega_{\vbE}^{p+1-r}.
\end{align*}

By  \Cref{thm:GlobalGreenIntegral}, the conjecture \eqref{eqn:KudlaGlobalConjecture} is equivalent to the statement
\begin{align}   
\notag	h(\calZ(T,\varphi_f)) \stackrel{?}{\sim} & \left( A_r(s_0(r)) - \frac{(-1)^{p+1} \, \mathrm{vol}(X_{\bbV})} { \kappa_0} \right)  C'_T(\bfy, \Phi_f,s_0(r))  \\
& + \ A'_r(s_0(r)) \cdot C_T(\bfy,\Phi_f,s_0(r)) \ +\frac{(-1)^{p+1} \, \mathrm{vol}(X_{\bbV})} { \kappa_0} \cdot \kappa(T, \Phi_f)  \label{eqn:FaltingsHeightConjectureInitial}
\end{align}
for all $T$ and $\varphi_f$, where $\mathrm{vol} (X_{\bbV}) = \mathrm{vol} (X_{\bbV}, \Omega_{\vbE}^p)$.

Note that only the values $A_r(s_0(r))$ and $A'_r(s_0(r))$ appear in this expression. To pin these values down further, suppose for the moment that $s_0(r)>0$, and take  $\bfy = \lambda \cdot \Id$, say; then  \Cref{prop:EisGrowthEstimate} implies that 
\begin{equation}
C'_T( \lambda \cdot \Id,\Phi_f,s_0(r)) \ = \ \kappa(T, \Phi_f) + \frac{\iota d \cdot(r - {\rk T})}{2} \cdot \log \lambda \cdot \beta(T,\Phi_f) + F(\lambda) 
\end{equation}
for some function $F$ satisfying $\lim_{\lambda \to \infty} F(\lambda ) = 0$. Similarly, $C_T(\lambda \cdot Id,\Phi_f, s_0(r))  = \beta(T, \Phi_f) $.

Thus, choosing $T$ and $\varphi_f$ such that $\rank(T) < r$ and $\beta(T, \Phi_f) \neq 0$, and noting that $h( \calZ(T, \varphi_f))$ is evidently independent of $\bfy$,   a necessary condition for  \eqref{eqn:FaltingsHeightConjectureInitial} to hold is
\begin{equation} \label{eqn:A_r value}
A_r(s_0(r)) \ = \ \frac{(-1)^{p+1} \, \mathrm{vol}(X_{\bbV})} { \kappa_0} .
\end{equation}
Now for the derivative $A'_r(s_0(r))$, assume  that  $\varphi(0) = 1$, and consider the matrix $T=0$. Then
\begin{equation}
h(\calZ(0,\varphi_f)) \ \sim \ \widehat{\deg}_{\mathscr X} \, \widehat{\omega}^{p+1}  \ =: \ \widehat{\mathrm{vol}}_{\, \widehat{\omega}}(\mathscr X)
\end{equation}
On the other hand, by \Cref{prop:EisGrowthEstimateConstTerm},
\begin{equation}
\kappa(\mathbf 0, \Phi_f) = 0, 
\end{equation}
and $C_{\mathbf 0}(\bfy, s_0, \Phi) = \varphi(0)=1$. Therefore  \eqref{eqn:FaltingsHeightConjectureInitial} for $T = \mathbf 0$ and $s_0(r)>0$ gives the further necessary condition
\begin{equation}
A'_r(s_0(r)) \sim \widehat{\mathrm{vol}}_{\, \widehat{\omega}} (\mathscr X).
\end{equation}
In other words, taking $A_r(s)$ such that $A_r(s_0(r)) =(-1)^{p+1} \mathrm{vol}(X_{\bbV})  \kappa_0^{-1} $ and $A_r'(s_0(r)) = \widehat{\mathrm{vol}}_{\widehat{\omega}}(\mathscr X)$, the conjecture \eqref{eqn:KudlaGlobalConjecture}, for $s_0(r)>0$, takes the form
\begin{equation} \label{eqn:KudlaHeightConjecture}
h(\calZ(T,\varphi_f)) \ \stackrel{?}{\sim} \ \widehat{\mathrm{vol}}_{\, \widehat{\omega}}(\mathscr X) \cdot \beta(T, \Phi_f) +  \frac{(-1)^{p+1} \, \mathrm{vol}(X_{\bbV})} { \kappa_0} \kappa(T, \Phi_f).
\end{equation}
When $s_0(r) = 0$, note that $C_T(\bfy, \Phi_f, 0) = 0$ and the conjecture \eqref{eqn:FaltingsHeightConjectureInitial} becomes
\begin{equation}
h(\calZ(T,\varphi_f)) \ \stackrel{?}{\sim} \ \frac{(-1)^{p+1} \, \mathrm{vol}(X_{\bbV})} { \kappa_0} \cdot \kappa(T, \Phi_f)  
\end{equation}

The point here is that the right hand sides of these relations involve  explict constants depending only on $T$ and $\varphi_f$. This conjecture has been verified in the case of full level Shimura curves for $r=1$ \cite{KRYFaltingsHeights}, for $r=2$ \cite{KudlaRapoportYang}, and for Hilbert modular surfaces with $r=1$ in \cite{BruinierBurgosKuhn} (these results also include contributions from places of bad reduction). A slightly weaker version of this conjecture (i.e.\ away from an explicit set of primes determined by $T$ and $\varphi$) was proved for general orthogonal Shimura varieties over $\bbQ$ by H\"ormann \cite{HoermannThesis}. Several cases involving cycles of top arithmetic codimension supported at finite primes were also established by Kudla and Rapoport, see e.g.\  \cite{KudlaRapoportUnitaryII}, or the discussion in \cite[\S II]{KudlaMSRI} for more details.

\bibliographystyle{plainurl}
\bibliography{refs2}

\begin{thebibliography}{10}

\bibitem{BGV}
Nicole Berline, Ezra Getzler, and Mich{\`e}le Vergne.
\newblock {\em Heat kernels and {D}irac operators}, volume 298 of {\em
  Grundlehren der Mathematischen Wissenschaften [Fundamental Principles of
  Mathematical Sciences]}.
\newblock Springer-Verlag, Berlin, 1992.
\newblock URL: \url{http://dx.doi.org/10.1007/978-3-642-58088-8}, \href
  {http://dx.doi.org/10.1007/978-3-642-58088-8}
  {\path{doi:10.1007/978-3-642-58088-8}}.

\bibitem{BismutGilletSoule1}
J.-M. Bismut, H.~Gillet, and C.~Soul{\'e}.
\newblock Analytic torsion and holomorphic determinant bundles. {I}.
  {B}ott-{C}hern forms and analytic torsion.
\newblock {\em Comm. Math. Phys.}, 115(1):49--78, 1988.
\newblock URL: \url{http://projecteuclid.org/euclid.cmp/1104160849}.

\bibitem{BismutGilletSouleDuke90}
J.-M. Bismut, H.~Gillet, and C.~Soul\'e.
\newblock Bott-{C}hern currents and complex immersions.
\newblock {\em Duke Math. J.}, 60(1):255--284, 1990.
\newblock URL: \url{https://doi.org/10.1215/S0012-7094-90-06009-0}.

\bibitem{BismutInv90}
Jean-Michel Bismut.
\newblock Superconnection currents and complex immersions.
\newblock {\em Invent. Math.}, 99(1):59--113, 1990.
\newblock URL: \url{http://dx.doi.org/10.1007/BF01234412}, \href
  {http://dx.doi.org/10.1007/BF01234412} {\path{doi:10.1007/BF01234412}}.

\bibitem{BGSciag}
Jean-Michel Bismut, Henri Gillet, and Christophe Soul\'e.
\newblock Complex immersions and {A}rakelov geometry.
\newblock In {\em The {G}rothendieck {F}estschrift, {V}ol.\ {I}}, volume~86 of
  {\em Progr. Math.}, pages 249--331. Birkh\"auser Boston, Boston, MA, 1990.

\bibitem{BostGilletSoule}
J.-B. Bost, H.~Gillet, and C.~Soul\'e.
\newblock Heights of projective varieties and positive {G}reen forms.
\newblock {\em J. Amer. Math. Soc.}, 7(4):903--1027, 1994.
\newblock URL: \url{https://doi.org/10.2307/2152736}.

\bibitem{BruinierBurgosKuhn}
Jan~H. Bruinier, Jos\'e~I. Burgos~Gil, and Ulf K\"uhn.
\newblock Borcherds products and arithmetic intersection theory on {H}ilbert
  modular surfaces.
\newblock {\em Duke Math. J.}, 139(1):1--88, 2007.
\newblock URL: \url{https://doi.org/10.1215/S0012-7094-07-13911-5}, \href
  {http://dx.doi.org/10.1215/S0012-7094-07-13911-5}
  {\path{doi:10.1215/S0012-7094-07-13911-5}}.

\bibitem{BruinierYangCMTotReal}
Jan~H. Bruinier and Tonghai Yang.
\newblock C{M} values of automorphic {G}reen functions on orthogonal groups
  over totally real fields.
\newblock In {\em Arithmetic geometry and automorphic forms}, volume~19 of {\em
  Adv. Lect. Math. (ALM)}, pages 1--54. Int. Press, Somerville, MA, 2011.

\bibitem{EhlenSankaran}
Stephan Ehlen and Siddarth Sankaran.
\newblock On two arithmetic theta lifts.
\newblock URL: \url{https://arxiv.org/abs/1607.06545}.

\bibitem{GanQiuTakeda}
Wee~Teck Gan, Yannan Qiu, and Shuichiro Takeda.
\newblock The regularized {S}iegel-{W}eil formula (the second term identity)
  and the {R}allis inner product formula.
\newblock {\em Invent. Math.}, 198(3):739--831, 2014.
\newblock URL: \url{https://doi.org/10.1007/s00222-014-0509-0}, \href
  {http://dx.doi.org/10.1007/s00222-014-0509-0}
  {\path{doi:10.1007/s00222-014-0509-0}}.

\bibitem{GarciaSuperconnections}
Luis~E. Garcia.
\newblock Superconnections, theta series, and period domains.
\newblock {\em {T}o appear in {A}dvances in {M}athematics}.
\newblock URL: \url{https://arxiv.org/abs/1604.03897}.

\bibitem{HarrisKudlaGSp2}
Michael Harris and Stephen~S. Kudla.
\newblock Arithmetic automorphic forms for the nonholomorphic discrete series
  of {${\rm GSp}(2)$}.
\newblock {\em Duke Math. J.}, 66(1):59--121, 1992.
\newblock URL: \url{http://dx.doi.org/10.1215/S0012-7094-92-06603-8}, \href
  {http://dx.doi.org/10.1215/S0012-7094-92-06603-8}
  {\path{doi:10.1215/S0012-7094-92-06603-8}}.

\bibitem{HarrisKudlaSweet}
Michael Harris, Stephen~S. Kudla, and William~J. Sweet.
\newblock Theta dichotomy for unitary groups.
\newblock {\em J. Amer. Math. Soc.}, 9(4):941--1004, 1996.
\newblock URL: \url{http://dx.doi.org/10.1090/S0894-0347-96-00198-1}, \href
  {http://dx.doi.org/10.1090/S0894-0347-96-00198-1}
  {\path{doi:10.1090/S0894-0347-96-00198-1}}.

\bibitem{BruinierYangArithmeticDegrees}
J.~{Hendrik Bruinier} and T.~{Yang}.
\newblock {Arithmetic degrees of special cycles and derivatives of Siegel
  Eisenstein series}.
\newblock {\em ArXiv e-prints}, February 2018.
\newblock \href {http://arxiv.org/abs/1802.09489} {\path{arXiv:1802.09489}}.

\bibitem{HoermannThesis}
Fritz H\"ormann.
\newblock {\em The geometric and arithmetic volume of {S}himura varieties of
  orthogonal type}, volume~35 of {\em CRM Monograph Series}.
\newblock American Mathematical Society, Providence, RI, 2014.

\bibitem{Ichino2004}
Atsushi Ichino.
\newblock A regularized {S}iegel-{W}eil formula for unitary groups.
\newblock {\em Math. Z.}, 247(2):241--277, 2004.
\newblock URL: \url{https://doi.org/10.1007/s00209-003-0580-5}.

\bibitem{IchinoPullbacks}
Atsushi Ichino.
\newblock Pullbacks of {S}aito-{K}urokawa lifts.
\newblock {\em Invent. Math.}, 162(3):551--647, 2005.
\newblock URL: \url{http://dx.doi.org/10.1007/s00222-005-0454-z}, \href
  {http://dx.doi.org/10.1007/s00222-005-0454-z}
  {\path{doi:10.1007/s00222-005-0454-z}}.

\bibitem{Ichino2007}
Atsushi Ichino.
\newblock On the {S}iegel-{W}eil formula for unitary groups.
\newblock {\em Math. Z.}, 255(4):721--729, 2007.
\newblock URL: \url{http://dx.doi.org/10.1007/s00209-006-0045-8}, \href
  {http://dx.doi.org/10.1007/s00209-006-0045-8}
  {\path{doi:10.1007/s00209-006-0045-8}}.

\bibitem{KashiwaraVergne}
M.~Kashiwara and M.~Vergne.
\newblock On the {S}egal-{S}hale-{W}eil representations and harmonic
  polynomials.
\newblock {\em Invent. Math.}, 44(1):1--47, 1978.
\newblock URL: \url{http://dx.doi.org/10.1007/BF01389900}, \href
  {http://dx.doi.org/10.1007/BF01389900} {\path{doi:10.1007/BF01389900}}.

\bibitem{KudlaRapoportUnitaryII}
Stephen Kudla and Michael Rapoport.
\newblock Special cycles on unitary {S}himura varieties {II}: {G}lobal theory.
\newblock {\em J. Reine Angew. Math.}, 697:91--157, 2014.
\newblock URL: \url{https://doi.org/10.1515/crelle-2012-0121}.

\bibitem{KudlaSplitting}
Stephen~S. Kudla.
\newblock Splitting metaplectic covers of dual reductive pairs.
\newblock {\em Israel J. Math.}, 87(1-3):361--401, 1994.
\newblock URL: \url{http://dx.doi.org/10.1007/BF02773003}, \href
  {http://dx.doi.org/10.1007/BF02773003} {\path{doi:10.1007/BF02773003}}.

\bibitem{KudlaOrthogonal}
Stephen~S. Kudla.
\newblock Algebraic cycles on {S}himura varieties of orthogonal type.
\newblock {\em Duke Math. J.}, 86(1):39--78, 1997.
\newblock URL:
  \url{http://dx.doi.org.ezproxy.cul.columbia.edu/10.1215/S0012-7094-97-08602-6},
  \href {http://dx.doi.org/10.1215/S0012-7094-97-08602-6}
  {\path{doi:10.1215/S0012-7094-97-08602-6}}.

\bibitem{KudlaCD}
Stephen~S. Kudla.
\newblock Central derivatives of {E}isenstein series and height pairings.
\newblock {\em Ann. of Math. (2)}, 146(3):545--646, 1997.
\newblock URL: \url{http://dx.doi.org/10.2307/2952456}, \href
  {http://dx.doi.org/10.2307/2952456} {\path{doi:10.2307/2952456}}.

\bibitem{KudlaIntegrals}
Stephen~S. Kudla.
\newblock Integrals of {B}orcherds forms.
\newblock {\em Compositio Math.}, 137(3):293--349, 2003.
\newblock URL: \url{http://dx.doi.org/10.1023/A:1024127100993}, \href
  {http://dx.doi.org/10.1023/A:1024127100993}
  {\path{doi:10.1023/A:1024127100993}}.

\bibitem{KudlaMSRI}
Stephen~S. Kudla.
\newblock Special cycles and derivatives of {E}isenstein series.
\newblock In {\em Heegner points and {R}ankin {$L$}-series}, volume~49 of {\em
  Math. Sci. Res. Inst. Publ.}, pages 243--270. Cambridge Univ. Press,
  Cambridge, 2004.
\newblock URL:
  \url{https://doi-org.uml.idm.oclc.org/10.1017/CBO9780511756375.009}, \href
  {http://dx.doi.org/10.1017/CBO9780511756375.009}
  {\path{doi:10.1017/CBO9780511756375.009}}.

\bibitem{KudlaMillson1}
Stephen~S. Kudla and John~J. Millson.
\newblock The theta correspondence and harmonic forms. {I}.
\newblock {\em Math. Ann.}, 274(3):353--378, 1986.
\newblock URL: \url{http://dx.doi.org/10.1007/BF01457221}, \href
  {http://dx.doi.org/10.1007/BF01457221} {\path{doi:10.1007/BF01457221}}.

\bibitem{KudlaMillson3}
Stephen~S. Kudla and John~J. Millson.
\newblock Intersection numbers of cycles on locally symmetric spaces and
  {F}ourier coefficients of holomorphic modular forms in several complex
  variables.
\newblock {\em Inst. Hautes \'Etudes Sci. Publ. Math.}, (71):121--172, 1990.
\newblock URL: \url{http://www.numdam.org/item?id=PMIHES_1990__71__121_0}.

\bibitem{KudlaRallis1}
Stephen~S. Kudla and Stephen Rallis.
\newblock On the {W}eil-{S}iegel formula.
\newblock {\em J. Reine Angew. Math.}, 387:1--68, 1988.
\newblock URL: \url{http://dx.doi.org/10.1515/crll.1988.391.65}, \href
  {http://dx.doi.org/10.1515/crll.1988.391.65}
  {\path{doi:10.1515/crll.1988.391.65}}.

\bibitem{KudlaRallisDegenerate}
Stephen~S. Kudla and Stephen Rallis.
\newblock Degenerate principal series and invariant distributions.
\newblock {\em Israel J. Math.}, 69(1):25--45, 1990.
\newblock URL: \url{http://dx.doi.org/10.1007/BF02764727}, \href
  {http://dx.doi.org/10.1007/BF02764727} {\path{doi:10.1007/BF02764727}}.

\bibitem{KudlaRallisSiegel1}
Stephen~S. Kudla and Stephen Rallis.
\newblock A regularized {S}iegel-{W}eil formula: the first term identity.
\newblock {\em Ann. of Math. (2)}, 140(1):1--80, 1994.
\newblock URL: \url{http://dx.doi.org/10.2307/2118540}, \href
  {http://dx.doi.org/10.2307/2118540} {\path{doi:10.2307/2118540}}.

\bibitem{KRYFaltingsHeights}
Stephen~S. Kudla, Michael Rapoport, and Tonghai Yang.
\newblock Derivatives of {E}isenstein series and {F}altings heights.
\newblock {\em Compos. Math.}, 140(4):887--951, 2004.
\newblock URL: \url{http://dx.doi.org/10.1112/S0010437X03000459}, \href
  {http://dx.doi.org/10.1112/S0010437X03000459}
  {\path{doi:10.1112/S0010437X03000459}}.

\bibitem{KudlaRapoportYang}
Stephen~S. Kudla, Michael Rapoport, and Tonghai Yang.
\newblock {\em Modular forms and special cycles on {S}himura curves}, volume
  161 of {\em Annals of Mathematics Studies}.
\newblock Princeton University Press, Princeton, NJ, 2006.

\bibitem{Lee}
Soo~Teck Lee.
\newblock On some degenerate principal series representations of {${\rm
  U}(n,n)$}.
\newblock {\em J. Funct. Anal.}, 126(2):305--366, 1994.
\newblock URL: \url{http://dx.doi.org/10.1006/jfan.1994.1150}, \href
  {http://dx.doi.org/10.1006/jfan.1994.1150}
  {\path{doi:10.1006/jfan.1994.1150}}.

\bibitem{LeeZhu1}
Soo~Teck Lee and Chen-Bo Zhu.
\newblock Degenerate principal series and local theta correspondence.
\newblock {\em Trans. Amer. Math. Soc.}, 350(12):5017--5046, 1998.
\newblock URL: \url{http://dx.doi.org/10.1090/S0002-9947-98-02036-4}, \href
  {http://dx.doi.org/10.1090/S0002-9947-98-02036-4}
  {\path{doi:10.1090/S0002-9947-98-02036-4}}.

\bibitem{LiuYifengArithThetaI}
Yifeng Liu.
\newblock Arithmetic theta lifting and {$L$}-derivatives for unitary groups,
  {I}.
\newblock {\em Algebra Number Theory}, 5(7):849--921, 2011.
\newblock URL: \url{https://doi.org/10.2140/ant.2011.5.849}.

\bibitem{MathaiQuillen}
Varghese Mathai and Daniel Quillen.
\newblock Superconnections, {T}hom classes, and equivariant differential forms.
\newblock {\em Topology}, 25(1):85--110, 1986.
\newblock URL: \url{http://dx.doi.org/10.1016/0040-9383(86)90007-8}, \href
  {http://dx.doi.org/10.1016/0040-9383(86)90007-8}
  {\path{doi:10.1016/0040-9383(86)90007-8}}.

\bibitem{MilneCanonical}
J.~S. Milne.
\newblock Canonical models of (mixed) {S}himura varieties and automorphic
  vector bundles.
\newblock In {\em Automorphic forms, {S}himura varieties, and {$L$}-functions,
  {V}ol.\ {I} ({A}nn {A}rbor, {MI}, 1988)}, volume~10 of {\em Perspect. Math.},
  pages 283--414. Academic Press, Boston, MA, 1990.

\bibitem{MilneSuh}
James~S. Milne and Junecue Suh.
\newblock Nonhomeomorphic conjugates of connected {S}himura varieties.
\newblock {\em Amer. J. Math.}, 132(3):731--750, 2010.
\newblock URL: \url{https://doi.org/10.1353/ajm.0.0112}, \href
  {http://dx.doi.org/10.1353/ajm.0.0112} {\path{doi:10.1353/ajm.0.0112}}.

\bibitem{QuillenChern}
Daniel Quillen.
\newblock Superconnections and the {C}hern character.
\newblock {\em Topology}, 24(1):89--95, 1985.
\newblock URL: \url{http://dx.doi.org/10.1016/0040-9383(85)90047-3}, \href
  {http://dx.doi.org/10.1016/0040-9383(85)90047-3}
  {\path{doi:10.1016/0040-9383(85)90047-3}}.

\bibitem{RallisHoweDuality}
S.~Rallis.
\newblock On the {H}owe duality conjecture.
\newblock {\em Compositio Math.}, 51(3):333--399, 1984.
\newblock URL: \url{http://www.numdam.org/item?id=CM_1984__51_3_333_0}.

\bibitem{Rao}
R.~Ranga~Rao.
\newblock On some explicit formulas in the theory of {W}eil representation.
\newblock {\em Pacific J. Math.}, 157(2):335--371, 1993.
\newblock URL:
  \url{http://projecteuclid.org.uml.idm.oclc.org/euclid.pjm/1102634748}.

\bibitem{Shih}
Kuang-yen Shih.
\newblock Existence of certain canonical models.
\newblock {\em Duke Math. J.}, 45(1):63--66, 1978.
\newblock URL: \url{http://projecteuclid.org/euclid.dmj/1077312687}.

\bibitem{ShimuraConfluent}
Goro Shimura.
\newblock Confluent hypergeometric functions on tube domains.
\newblock {\em Math. Ann.}, 260(3):269--302, 1982.
\newblock URL: \url{http://dx.doi.org/10.1007/BF01461465}, \href
  {http://dx.doi.org/10.1007/BF01461465} {\path{doi:10.1007/BF01461465}}.

\bibitem{SouleBook}
C.~Soul{\'e}.
\newblock {\em Lectures on {A}rakelov geometry}, volume~33 of {\em Cambridge
  Studies in Advanced Mathematics}.
\newblock Cambridge University Press, Cambridge, 1992.
\newblock With the collaboration of D. Abramovich, J.-F. Burnol and J. Kramer.
\newblock URL: \url{http://dx.doi.org/10.1017/CBO9780511623950}, \href
  {http://dx.doi.org/10.1017/CBO9780511623950}
  {\path{doi:10.1017/CBO9780511623950}}.

\bibitem{SweetThesis}
William~Jay Sweet, Jr.
\newblock {\em The metaplectic case of the {W}eil-{S}iegel formula}.
\newblock ProQuest LLC, Ann Arbor, MI, 1990.
\newblock Thesis (Ph.D.)--University of Maryland, College Park.
\newblock URL:
  \url{http://gateway.proquest.com/openurl?url_ver=Z39.88-2004&rft_val_fmt=info:ofi/fmt:kev:mtx:dissertation&res_dat=xri:pqdiss&rft_dat=xri:pqdiss:9110358}.

\bibitem{Tan}
Victor Tan.
\newblock Poles of {S}iegel {E}isenstein series on {${\rm U}(n,n)$}.
\newblock {\em Canad. J. Math.}, 51(1):164--175, 1999.
\newblock URL: \url{https://doi.org/10.4153/CJM-1999-010-4}, \href
  {http://dx.doi.org/10.4153/CJM-1999-010-4}
  {\path{doi:10.4153/CJM-1999-010-4}}.

\bibitem{Wallach06}
Nolan~R. Wallach.
\newblock Holomorphic continuation of generalized {J}acquet integrals for
  degenerate principal series.
\newblock {\em Represent. Theory}, 10:380--398, 2006.
\newblock URL: \url{https://doi.org/10.1090/S1088-4165-06-00231-7}, \href
  {http://dx.doi.org/10.1090/S1088-4165-06-00231-7}
  {\path{doi:10.1090/S1088-4165-06-00231-7}}.

\bibitem{Wells}
Raymond~O. Wells, Jr.
\newblock {\em Differential analysis on complex manifolds}, volume~65 of {\em
  Graduate Texts in Mathematics}.
\newblock Springer, New York, third edition, 2008.
\newblock With a new appendix by Oscar Garcia-Prada.
\newblock URL: \url{https://doi.org/10.1007/978-0-387-73892-5}.

\end{thebibliography}

\author{\noindent \small \textsc{Department of Mathematics, University of Toronto, 40 St. George Street,
  Toronto, Canada} \\ 
  {\it E-mail address:} \texttt{lgarcia@math.toronto.edu}}
    
\author{\noindent \small \textsc{Department of Mathematics, University of Manitoba, 420 Machray Hall, Winnipeg, Canada} \\
  {\it E-mail address:} \texttt{siddarth.sankaran@umanitoba.ca}}

\end{document}